\numberwithin{equation}{section}
\setlist[enumerate,1]{label={\rm(\arabic*)}, ref={\rm\arabic*}}
\definecolor{rowgrey}{HTML}{E4E4E4}
\theoremstyle{plain}
\newtheorem{theorem}{Theorem}[section]
\newtheorem{proposition}[theorem]{Proposition}
\newtheorem{corollary}[theorem]{Corollary}
\newtheorem{lemma}[theorem]{Lemma}
\newtheorem{conjecture}[theorem]{Conjecture}
\theoremstyle{definition}
\newtheorem{definition}[theorem]{Definition}
\newtheorem{notation}[theorem]{Notation}
\theoremstyle{remark}
\newtheorem{remark}[theorem]{Remark}
\newtheorem{example}[theorem]{Example}
\newtheorem{question}[theorem]{Question}
\newtheorem*{rep@theorem}{\rep@title}
\newcommand{\newreptheorem}[2]{%
	\newenvironment{rep#1}[1]{%
		\def\rep@title{#2 \ref{##1}}%
		\begin{rep@theorem}}%
		{\end{rep@theorem}}}
\theoremstyle{plain}
\theoremstyle{definition}
\theoremstyle{remark}
\newcommand\restr[2]{{
		\left.\kern-\nulldelimiterspace 
		#1 
		\vphantom{\big|} 
		\right|_{#2} 
}}
\DeclareMathOperator{\im}{im}
\DeclareMathOperator{\rank}{rank}
\DeclareMathOperator{\Hom}{Hom}
\DeclareMathOperator{\Ob}{Ob}
\DeclareMathOperator{\Tot}{Tot}
\DeclareMathOperator{\supp}{supp}
\DeclareMathOperator{\Aut}{Aut}
\DeclareMathOperator{\Char}{char}
\DeclareMathOperator{\Ker}{Ker}
\DeclareMathOperator{\GL}{GL}
\DeclareMathOperator*{\Alb}{Alb}
\DeclareMathOperator*{\alb}{alb}
\DeclareMathOperator*{\Supp}{Supp}
\DeclareMathOperator*{\length}{length}
\let\Re\relax
\DeclareMathOperator{\Re}{\mathrm{Re}} 
\let\Im\relax
\DeclareMathOperator{\Im}{\mathrm{Im}} 
\NewDocumentCommand{\definealphabet}{mmmm}
 {
  \int_step_inline:nnn { `#3 } { `#4 }
   {
    \cs_new_protected:cpx { #1 \char_generate:nn { ##1 }{ 11 } }
     {
      \exp_not:N #2 { \char_generate:nn { ##1 } { 11 } }
     }
   }
 }
\newcommand{\Z}{\ensuremath{\mathbf{Z}}}
\newcommand{\N}{\ensuremath{\mathbf{N}}}
\newcommand{\R}{\ensuremath{\mathbf{R}}}
\newcommand{\C}{\ensuremath{\mathbf{C}}}
\renewcommand{\H}{\ensuremath{\mathbf{H}}}
\newcommand{\Q}{\ensuremath{\mathbf{Q}}}
\renewcommand{\P}{\mathbf{P}}
\newcommand{\TT}{\mathcal{T}}
\newcommand{\FF}{\mathcal{F}}
\newcommand{\PP}{\mathcal{P}}
\newcommand{\QQ}{\mathcal{Q}}
\newcommand{\DD}{\mathcal{D}}
\renewcommand{\AA}{\mathcal{A}}
\newcommand{\CC}{\mathcal{C}}
\newcommand{\EE}{\mathcal{E}}
\newcommand{\OO}{\mathcal{O}}
\newcommand{\LL}{\mathcal{L}}
\newcommand{\HH}{\mathcal{H}}
\newcommand{\UU}{\mathcal{U}}
\newcommand{\ZZ}{\mathcal{Z}}
\newcommand{\ch}{\mathrm{ch}} 
\newcommand{\Stab}{\mathrm{Stab}}
\newcommand*{\Gstab}[1]{\mathrm{Stab}_{#1}^{\mathrm{Geo}}} 
\newcommand{\Stablf}{\mathrm{Stab}_{\mathrm{lf}}}
\newcommand{\Db}{\mathrm{D^b}} 
\newcommand{\DbX}{\mathrm{D^b}(X)} 
\newcommand{\DbY}{\mathrm{D^b}(Y)} 
\newcommand{\DbGX}{\mathrm{D^b_G}(X)} 
\newcommand{\K}{\mathrm{K}} 
\newcommand{\Knum}{\mathrm{K_{num}}} 
\newcommand{\Chow}{\mathrm{Chow}} 
\newcommand{\Chownum}{\mathrm{Chow_{num}}} 
\newcommand{\GLcov}{\widetilde{\mathrm{GL}}^+_2(\R)} 
\newcommand{\Sh}{\mathrm{Sh}} 
\newcommand{\Forg}{\mathrm{Forg}} 
\newcommand{\Inf}{\mathrm{Inf}} 
\newcommand{\pipull}{\pi^\ast} 
\newcommand{\gstar}{g^\ast} 
\newcommand{\lambdanat}{\lambda_{\mathrm{nat}}} 
\newcommand{\phisky}{\phi_{\mathrm{sky}}} 
\newcommand{\MSQuadratic}[2]{\Delta^{C_{#1}}_{#1,#2}}
\newcommand{\chitop}{\chi_{\mathrm{top}}} 
\newcommand{\Pic}{\mathrm{Pic}}
\newcommand{\NS}{\mathrm{NS}}
\newcommand{\Amp}{\mathrm{Amp}}
\newcommand{\Eff}{\mathrm{Eff}}
\newcommand{\Coh}{\mathrm{Coh}}
\newcommand{\Rep}{\mathrm{Rep}}
\newcommand{\ldot}{\mathbin{.}}
\newcommand{\st}{\mid}
\newcommand*\cocolon{%
        \nobreak
        \mskip6mu plus1mu
        \mathpunct{}%
        \nonscript
        \mkern-\thinmuskip
        {:}%
        \mskip2mu
        \relax
}
\newcommand{\xleftrightarrows}[2][]{\mathrel{
 \raise.40ex\hbox{$
       \ext@arrow 3095\leftarrowfill@{\phantom{#1}}{#2}$}
 \setbox0=\hbox{$\ext@arrow 0359\rightarrowfill@{#1}{\phantom{#2}}$}
 \kern-\wd0 \lower.40ex\box0}}
\newcommand{\xrightleftarrows}[2][]{\mathrel{
 \raise.40ex\hbox{$\ext@arrow 3095\rightarrowfill@{\phantom{#1}}{#2}$}
 \setbox0=\hbox{$\ext@arrow 0359\leftarrowfill@{#1}{\phantom{#2}}$}
 \kern-\wd0 \lower.40ex\box0}}
\newcommand{\lra}{\longrightarrow}
\def\lowsim{\vbox to 0pt{\vss\hbox{$\scriptstyle\sim$}\vskip-1.6pt}}
\newcommand{\longhookrightarrow}{\lhook\joinrel\longrightarrow}
\newcommand{\longtwoheadrightarrow}{\relbar\joinrel\twoheadrightarrow}
\newcommand{\longmapsfrom}{\mathrel{\reflectbox{\ensuremath{\longmapsto}}}}
\title{Stability conditions on free abelian quotients}
\author{Hannah Dell}
\address{Mathematical Institute and Hausdorff Center for Mathematics, University of Bonn, Endenicher Allee 60, 53115 Bonn, Germany}
\email{hdell@math.uni-bonn.de}
\begin{document}



\maketitle

\begin{prelims}

\DisplayAbstractInEnglish

\bigskip

\DisplayKeyWords

\medskip

\DisplayMSCclass

\end{prelims}


\newpage

\setcounter{tocdepth}{1}

\tableofcontents


\section{Introduction}\label{section: introduction}
Stability conditions on triangulated categories were introduced in \cite{bridgelandStabilityConditionsTriangulated2007} by Bridgeland, who was motivated by the study of Dirichlet branes in string theory. In the same paper, Bridgeland showed that the space $\Stab(\cD)$ of stability conditions on a given triangulated category $\cD$ has the structure of a complex manifold. When $X$ is a smooth projective variety over $\bC$, this leads to the fundamental question: how does the geometry of $X$ relate to the geometry of $\Stab(X)\coloneqq\Stab(\Db(X))$?

In this article, we investigate this question in the case of varieties that are free quotients by finite abelian groups, especially quotients of varieties with finite Albanese morphism such as bielliptic and Beauville-type surfaces.

One approach is via group actions on triangulated categories. We sharpen the correspondence between $G$-invariant stability conditions on $\DD$ and stability conditions on the $G$-equivariant category $\DD_G$ introduced by Macr\`i, Mehrotra, and Stellari in \cite[Theorem 1.1]{macriInducingStabilityConditions2009}. This is used to control the set of geometric stability conditions on any free quotient by a finite abelian group.

We also study the Le Potier function introduced by Fu, Li, and Zhao in \cite[Section~3.1]{fuStabilityManifoldsVarieties2022}. We give counterexamples to the conjecture stated in \cite[Section~4]{fuStabilityManifoldsVarieties2022} and explain how a refinement of the Le Potier function controls the set of geometric Bridgeland stability conditions on any surface.

\subsection{Geometric stability conditions and group actions}\label{intro subsection: geometric stability and group actions}

Let $k$ be an algebraically closed field, and let $G$ be a finite abelian group such that $(\Char(k),|G|)=1$. Let~$\DD$ be a $k$-linear idempotent complete triangulated category with an action of $G$ in the sense of \cite{deligneActionGroupeTresses1997}. This induces an action on $\Stab(\DD)$, the space of all numerical Bridgeland stability conditions on $\DD$. Let $\DD_G$ denote the corresponding category of $G$-equivariant objects. There is a residual action by $\widehat{G}=\Hom(G,k^\ast)$ on $\DD_G$ (see Proposition~\ref{defn of G hat action}), and $(\DD_G)_{\widehat{G}}\cong\DD$ by \cite[Theorem 4.2]{elaginEquivariantTriangulatedCategories2015}. \cref{G hat invariant correspondence} describes an analytic isomorphism between $G$-invariant stability conditions on $\DD$ and $\widehat{G}$-invariant stability conditions on $\DD_G$. This builds on \cite[Proposition 2.2.3]{polishchukConstantFamiliesTStructures2007} and \cite[Theorem 1.1]{macriInducingStabilityConditions2009}.

In this paper, we focus on the case where $\DD=\Db(X)$ for $X$ a smooth projective connected variety over~$\C$ with a $G$-action. This induces an action on $\Coh(X)$ (and hence $\Db(X)$) by pullback. In this setting, $(\DbX)_G\cong\DbGX\coloneqq \Db(\Coh_G(X))$, the bounded derived category of $G$-equivariant coherent sheaves on~$X$. If $G$ acts freely on $X$, we call $Y$ a \textit{free abelian quotient} and have $\DbY\cong\DbGX$. Furthermore, let $\pi\colon X\rightarrow Y$ denote the quotient map. Then $\pi_\ast \OO_X$ decomposes into a direct sum of numerically trivial line bundles $\LL_\chi$ according to the irreducible representations $\chi\in\widehat{G}$. The residual action of $\widehat{G}$ on $\Db(Y)$ is given by $-\otimes \LL_\chi$. The functors $\pi^*$ and $\pi_*$ give rise to the isomorphisms in \cref{G hat invariant correspondence}. More precisely, given a $G$-invariant stability condition $\sigma\in\Stab(X)$, there is a stability condition called $(\pi^*)^{-1}\sigma\in\Stab(Y)$ with the property that $E\in\Db(Y)$ is semistable if and only if $\pi^*E$ is $\sigma$-semistable. The construction with $(\pi_*)^{-1}$ is analogous.

A stability condition $\sigma\in \Stab(X)\coloneqq \Stab(\DbX)$ is called \textit{geometric} if all skyscraper sheaves of points~$\OO_x$ are $\sigma$-stable and of the same phase. In all known examples, the stability manifold contains an open set of geometric stability conditions. We prove that geometric stability conditions are preserved under \cref{G hat invariant correspondence}. 

\begin{reptheorem}{geometric G inv corresponds to geometric G hat inv}
	Suppose $G$ is a finite abelian group acting freely on $X$. Let $\pi\colon X\rightarrow Y\coloneqq X/G$ denote the quotient map. Consider the residual action of\, $\widehat{G}$ on $\Db(Y)$. Then the functors $\pi^\ast$ and $\pi_\ast$ induce an analytic isomorphism between $G$-invariant stability conditions on $\DbX$ and $\widehat{G}$-invariant stability conditions on $\DbY$ which preserve geometric stability conditions,
\begin{equation*}
		(\pi^\ast)^{-1}\colon (\Stab(X))^G\xleftrightarrows{\;\cong\;}(\Stab(Y))^{\widehat{G}} \cocolon(\pi_\ast)^{-1}.
	\end{equation*} 
        
\end{reptheorem}

Very little is known about how the geometry of a variety $X$ relates to the geometry of $\Stab(X)$. Recall that every variety $X$ has an algebraic map $\alb_X$, the Albanese morphism, to the Albanese variety $\Alb(X)\coloneqq \Pic^0(\Pic^0(X))$. Every morphism $f\colon X\rightarrow A$ to another abelian variety $A$ factors via $\alb_X$. In \cite[Theorem 1.1]{fuStabilityManifoldsVarieties2022}, the authors showed that if $X$ has finite Albanese morphism, then all stability conditions on $\DbX$ are geometric. In this set-up, we obtain a union of connected components of geometric stability conditions on any free abelian quotient of $X$.

\begin{reptheorem}{Albanese connected component}
	Let $X$ be a variety with finite Albanese morphism. Let $G$ be a finite abelian group acting freely on~$X$, and let $Y=X/G$. Then $\Stab^\ddagger(Y)\coloneqq (\Stab(Y))^{\widehat{G}}\cong \Stab(X)^G$ is a union of connected components in $\Stab(Y)$ consisting only of geometric stability conditions.
\end{reptheorem}

Let $\Gstab{}(X)$ denote the set of all geometric stability conditions. When $X$ is a surface, we have the following stronger result.

\begin{reptheorem}{finite albanese surface quotient has connected component of geos}
	Let $X$ be a surface with finite Albanese morphism. Let $G$ be a finite abelian group acting freely on~$X$. Let $S=X/G$. Then $\Stab^\ddagger(S)=\Gstab{}(S)\cong(\Stab(X))^G$. In particular, $\Stab^\ddagger(S)$ is a connected component of\, $\Stab(S)$.
\end{reptheorem}

We explain in \cref{intro subsection: Le Potier and Geometric Stability} how to describe $\Gstab{}(S)$ explicitly for any surface $S$. Moreover, \cref{finite albanese surface quotient has connected component of geos} applies to the following two classes of minimal surfaces.

\begin{example}[Beauville-type surfaces, $q=0$]\label{eg: Beauville-type surfaces}
	Let $X=C_1\times C_2$, where the $C_i$ are smooth projective curves of genus $g(C_i)\geq 2$. Each curve has finite Albanese morphism, and hence so does $X$. Suppose there is a free action of a finite (not necessarily abelian) group $G$ on $X$ such that $S=X/G$ has $q(S)\coloneqq h^1(S,\OO_S)=0$ and $p_g(S)\coloneqq h^2(S,\OO_S)=0$. Then $\alb_S$ is trivial. This generalises a construction due to Beauville in \cite[Exercise X.4]{beauvilleComplexAlgebraicSurfaces1996}, and we call $S$ a \textit{Beauville-type surface}. These are classified in \cite[Theorem 0.1]{bauerClassificationSurfacesIsogenous2008}. There are 17 families, 5 of which involve an abelian group. In the abelian cases, $G$ is one of the following groups: $(\Z/2\Z)^3$, $(\Z/2\Z)^4$, $(\Z/3\Z)^2$, $(\Z/5\Z)^2$.
\end{example}

\begin{example}[Bielliptic surfaces, $q=1$]\label{eg: bielliptic surfaces}
	Let $S\cong (E\times F)/G$, where $E,F$ are elliptic curves and $G$ is a finite group of translations of $E$ acting on $F$ such that $F/G\cong\P^1$. Then $q(S)=1$ and $\Alb(S)\cong E/G$, so $\alb_S$ is an elliptic fibration. Such surfaces are called \textit{bielliptic} and were first classified in \cite{bagneraSopraSuperficieAlgebriche1907}. There are 7 families; see \cite[List VI.20]{beauvilleComplexAlgebraicSurfaces1996}.
\end{example}

Let $S$ be a Beauville-type or bielliptic surface. As discussed above, $S$ has non-finite Albanese morphism. By \cref{finite albanese surface quotient has connected component of geos}, $\Gstab{}(S)\subset\Stab(S)$ is a connected component. In particular, if $\Stab(S)$ were connected, then the following question would have a negative answer.

\begin{question}[\textit{cf.} {\cite[Question 4.11]{fuStabilityManifoldsVarieties2022}}]\label{Question FLZ}
	Let $X$ be a variety whose Albanese morphism is not finite. Are there always non-geometric stability conditions on $\DbX$?
\end{question}

This is the converse of \cite[Theorem 1.1]{fuStabilityManifoldsVarieties2022}. In all other known examples, the answer to Question~\ref{Question FLZ} is positive (see \cref{survey of stability manifolds}).

\subsection{The Le Potier function}

A fundamental problem in the study of stable sheaves on a smooth projective connected variety $X$ over~$\bC$ is to understand the set of Chern characters of stable sheaves. This can be used to describe $\Gstab{}(X)$ for surfaces (see Theorem~\ref{thm: LP gives precise control over set of geometric stability conditions}) and to control wall-crossing and hence indirectly control Brill--Noether phenomena as in \cite[Theorem 1.1]{bayerWallcrossingImpliesBrillNoether2018} and \cite{feyzbakhshMukaiProgramReconstructing2020a}.

For $X=\P^2$, Dr\'ezet and Le Potier completely characterised the Chern characters of slope-stable sheaves in terms of a function of the slope, $\delta\colon\R\rightarrow \R$; see \cite[Theorem B]{drezetFibresStablesFibres1985}. In \cite[Section~3.1]{fuStabilityManifoldsVarieties2022}, the authors define a Le Potier function $\Phi_{X,H}$ which gives a generalisation of Dr\'ezet and Le Potier's function to any polarised surface $(X,H)$. They use this to control geometric Bridgeland stability conditions with respect to a sublattice of the numerical $\mrK$-group of $X$, $\Knum(X)$, coming from the polarisation.

Let $\NS_\R(X)\coloneqq \NS(X)\otimes\R$, where $\NS(X)$ is the N\'eron--Severi group of $X$, and let $\Amp_\R(X)$ denote the ample cone inside $\NS_\R(X)$. In \cref{section: Le Potier}, we introduce a generalisation of the Le Potier function. We state the version for surfaces below to ease notation.

\begin{repdefinition}{defn: twisted Le Potier}
	Let $X$ be a surface. Let $(H,B)\in \Amp_\R(X)\times \NS_\R(X)$. We define the \textit{Le Potier function twisted by }$B$, $\Phi_{X,H,B}\colon\R\rightarrow\R\cup\{-\infty\}$, by
	\begin{equation*}
		\Phi_{X,H,B}(x)\coloneqq \limsup_{\mu\rightarrow x}\left\{ \frac{\ch_2(F)-B\ldot \ch_1(F)}{H^2\ch_0(F)}  \; : \parbox{15em}{$F\in\Coh(X)$ is $H$-semistable with $\mu_H(F)=\mu$} \right\}.
	\end{equation*}
\end{repdefinition}

The Bogomolov--Gieseker inequality gives an upper bound for $\Phi_{X,H,B}$ (see Lemma~\ref{lem: LP function well defined and bounded}). If $B=0$, this is the same as \cite[Definition 3.1]{fuStabilityManifoldsVarieties2022}, \textit{i.e.}
$\Phi_{X,H,0}=\Phi_{X,H}$, and the upper bound is $\frac{1}{2}x^2$. The function $\Phi_{X,H,B}$ naturally generalises to higher dimensions; see Definition~\ref{defn: Le Potier}. 

The Le Potier function partially determines the non-emptiness of moduli spaces of $H$-semistable sheaves of a fixed Chern character, which in turn controls wall-crossing, along with the birational geometry of these moduli spaces, for example for $\P^2$ (see  \cite[Theorems 0.2 and~0.4]{liBirationalModelsModuli2019}), K3 surfaces (see \cite[Theorem 5.7]{bayerMMPModuliSheaves2014}), and abelian surfaces (see \cite[Theorem 4.4.1]{minamideFourierMukaiTransformsWallcrossing2012}).

The Le Potier function is known for abelian surfaces (see \cite[Corollary 0.2]{mukaiSymplecticStructureModuli1984} and~\cite{yoshiokaModuliSpacesStable2001}), K3 surfaces (see \cite[Chapter 10, Theorem 2.7]{huybrechtsLecturesK3Surfaces2016}),
del Pezzo surfaces of degrees $9 - m$ for $m\leq 6$ (see \cite[Theorem~7.15]{levineBrillNoetherExistenceSemistable2019}), Hirzebruch surfaces (see \cite[Theorem 9.7]{coskunExistenceSemistableSheaves2021}), and for surfaces with finite Albanese morphism (see \cite[Example 2.12(2)]{lahozChernDegreeFunctions2022}).

In this paper, we relate the Le Potier function of $X$ to the Le Potier function of any free (not necessarily abelian) quotient of $X$ by a finite group. We state the version of these results for arbitrary surfaces in the case $B=0$ below.

\begin{repproposition}{Le Potier Functions Agree}
	Let $X$ be a surface, and let $G$ be a finite group acting freely on $X$. Set $S:=X/G$, and let $\pi\colon X\rightarrow S$ denote the quotient map, and let $H_S\in\Amp_\R(S)$. Then $\Phi_{S,H_S}=\Phi_{X,\pipull H_S}$.
\end{repproposition}

Proposition~\ref{Le Potier Functions Agree} gives us a way to compute the Le Potier function for varieties that are finite free quotients of varieties with finite Albanese morphism.

\begin{reptheorem}{thm: LP for quotients of varieties with finite Albanese}
	Let $X$ be a surface with $\alb_X$ finite. Let $G$ be a finite group acting freely on $X$. Set $S:=X/G$, and let $\pi\colon X\rightarrow S$ denote the quotient map. Let $H_X=\alb_X^\ast H = \pi^\ast H_S\in\Amp_\R(X)$ be an ample class pulled back from $\Alb(X)$ and $S$. Then $\Phi_{S,H_S}(x)=\frac{1}{2}x^2$.
\end{reptheorem}

In Example~\ref{eg: ample classes for finite Albanese FAQs}, we explain how to choose appropriate ample classes such that \cref{thm: LP for quotients of varieties with finite Albanese} applies to bielliptic and Beauville-type surfaces. In particular, Beauville-type surfaces provide counterexamples to the following conjecture. 

\begin{conjecture}[\textit{cf.}~{\cite[Section~4.4]{fuStabilityManifoldsVarieties2022}}]\label{conj: FLZ21}
	Let $(S,H)$ be a polarised surface with $q=0$; then the Le Potier function $\Phi_{S,H}$ is not continuous at $0$.
\end{conjecture}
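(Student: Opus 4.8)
Because the conjecture asserts that \emph{every} polarised surface with $q=0$ has $\Phi_{S,H}$ discontinuous at $0$, it is a universal statement, so to address it, it suffices to exhibit a single polarised surface $(S,H)$ with $q(S)=0$ whose Le Potier function \emph{is} continuous at $0$; the conjecture is then refuted. The plan is to locate such a surface among the Beauville-type surfaces of \cref{eg: Beauville-type surfaces}, for which the Le Potier function has already been determined in \cref{thm: LP for quotients of varieties with finite Albanese}.

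First I would fix a Beauville-type surface $S=(C_1\times C_2)/G$ as in \cref{eg: Beauville-type surfaces}. By construction $q(S)=0$, and the cover $X=C_1\times C_2$ has finite Albanese morphism: each $C_i$ has genus $\geq 2$ and hence embeds into its Jacobian, so $\alb_X=\alb_{C_1}\times\alb_{C_2}$ is a closed immersion into $\mathrm{Jac}(C_1)\times\mathrm{Jac}(C_2)=\Alb(X)$. The only nontrivial input is to produce a polarisation meeting the hypotheses of \cref{thm: LP for quotients of varieties with finite Albanese}, namely an ample $H_S$ on $S$ with $\pi^\ast H_S=\alb_X^\ast H$ for some ample $H$ on $\Alb(X)$. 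I would supply this via \cref{eg: ample classes for finite Albanese FAQs}: the $G$-action on $X$ is functorially induced on $\Alb(X)$, so averaging a product polarisation over $G$ yields a $G$-invariant ample class $H$ on $\Alb(X)$; its pullback $H_X=\alb_X^\ast H$ is $G$-invariant, and since $\pi^\ast\colon\NS(S)_{\Q}\xrightarrow{\sim}(\NS(X)_{\Q})^{G}$ for a free quotient, it descends to an ample class $H_S$ on $S$ with $\pi^\ast H_S=H_X$.

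With this polarisation, \cref{thm: LP for quotients of varieties with finite Albanese} gives $\Phi_{S,H_S}(x)=\tfrac{x^2}{2}$ for all $x\in\R$. This is continuous everywhere, so $\lim_{x\to 0}\Phi_{S,H_S}(x)=0=\Phi_{S,H_S}(0)$; in particular $\Phi_{S,H_S}$ is continuous at $0$. As $(S,H_S)$ satisfies the conjecture's hypothesis $q(S)=0$ but contradicts its conclusion, the conjecture fails, and Beauville-type surfaces are the promised counterexamples.

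The main obstacle is not the final limit computation, which is immediate once $\Phi_{S,H_S}$ is known, but verifying that the hypotheses of \cref{thm: LP for quotients of varieties with finite Albanese} can actually be arranged on a Beauville-type surface. The delicate point is $G$-invariance of the averaged polarisation together with its membership in $\im(\alb_X^\ast)$: one must check that a $G$-invariant ample class on the abelian variety $\Alb(X)$ exists (using convexity and $G$-invariance of the ample cone) and that its restriction to $X$ descends to an \emph{ample} class on $S$. When $G$ exchanges the two factors, as in the mixed Beauville families, $G$-invariance forces a symmetric combination such as $a(f_1+f_2)$ with $f_i$ the two fibre classes, and one verifies this remains ample after descent. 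This verification is exactly what is carried out in \cref{eg: ample classes for finite Albanese FAQs}.
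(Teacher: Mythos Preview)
Your proposal is correct and follows essentially the same route as the paper: both disprove the conjecture by applying \cref{thm: LP for quotients of varieties with finite Albanese} to a Beauville-type surface (which has $q=0$ and finite-Albanese cover $C_1\times C_2$), using the $G$-averaged polarisation of \cref{eg: ample classes for finite Albanese FAQs} to obtain $\Phi_{S,H_S}(x)=\tfrac{x^2}{2}$, which is continuous at $0$. Your discussion of the mixed case is more than needed---the paper restricts to the unmixed diagonal action in \cref{eg: ample classes on Beauville}---but this does not affect correctness.
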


This conjecture was motivated by Question~\ref{Question FLZ} and the expectation that discontinuities of $\Phi_{S,H}$ could be used to show the existence of a wall of the geometric chamber for regular surfaces, as in the cases of rational and K3 surfaces.

\subsection{The Le Potier function and geometric stability conditions}\label{intro subsection: Le Potier and Geometric Stability}

Let $X$ be a surface, and fix $H\in\Amp_\R(X)$. In \cite[Theorem 3.4, Proposition 3.6]{fuStabilityManifoldsVarieties2022}, the authors showed that $\Phi_{X,H}$ gives precise control over $\Gstab{H}(X)$, the set of geometric numerical Bridgeland stability conditions with respect to a sublattice $\Lambda_H\subset\Knum(X)$ (see \cref{remark: Lambda H stability conditions on surfaces}). When $X$ has Picard rank $1$, $\Gstab{H}(X)=\Gstab{}(X)$.

We generalise this to the set of all geometric numerical Bridgeland stability conditions.

\begin{reptheorem}{thm: LP gives precise control over set of geometric stability conditions}
	Let $X$ be a surface. There is a homeomorphism of topological spaces
	\begin{equation*}
		\Gstab{}(X)\cong\C\times \left\{(H,B,\alpha,\beta)\in\Amp_\R(X)\times\NS_\R(X)\times\R^2 : \alpha>\Phi_{X,H,B}(\beta)\right\}.
	\end{equation*}
\end{reptheorem}

In particular, $\Gstab{}(X)$ is connected. We discuss in Remark~\ref{rem: Le Potier controls boundary of the geometric chamber} how Theorem~\ref{thm: LP gives precise control over set of geometric stability conditions} could be used to describe the boundary of $\Gstab{}(X)$. This emphasises how $\Phi_{X,H,B}$ is a crucial tool for understanding the existence of non-geometric stability conditions on surfaces. In particular, if one can compute the Le Potier function, one should be able to tell whether the boundary of the set of geometric stability conditions has a~wall.\looseness=-1

\subsection{Survey: Geometric stability conditions}\label{survey of stability manifolds}

It is still an open question whether geometric stability conditions exist on any smooth projective variety~$X$ over $\bC$. If the answer to \cref{Question FLZ} is negative, then the question also remains as to which geometric properties characterise the existence of non-geometric stability conditions. To give context for these questions and for the results in this paper, we now survey all of the examples where geometric and non-geometric stability conditions are known to exist.

There are the following general results:
\begin{itemize}
	\item Varieties with $\alb_X$ finite: $\Stab(X)=\Gstab{}(X)$; see \cite[Theorem 1.1]{fuStabilityManifoldsVarieties2022}. 
	\item Quotients of varieties with $\alb_X$ finite: Let $Y=X/G$ be a free abelian quotient of $X$, and assume $\alb_X$ is finite. If $G$-invariant stability conditions exist on $X$, then $\Stab^\ddagger(Y)$ is a union of connected components of $\Stab(Y)$ consisting only of geometric stability conditions; see Theorem~\ref{Albanese connected component}.
\end{itemize}

\subsubsection*{Curves} 
The universal cover $\GLcov$ of $\GL_2^+(\bR)$ acts on $\Stab(X)$ (see \cite[Remark 5.14]{macriLecturesBridgelandStability2017}). Up to this action, a geometric stability condition on a curve $C$ corresponds to slope-stability for $\Coh(C)$. Hence $\Gstab{}(C)\cong\GLcov\cong\bC \times \bH$; see \cite[Theorem 2.7]{macriStabilityConditionsCurves2007}.
\begin{itemize}
	\item We have $\Stab(\P^1)\cong \C^2$; see \cite[Theorem 1.1]{okadaStabilityManifold2006}. Okada's construction uses $\Db(\P^1)\cong\Db(\Rep(K_2))$, where $K_2$ is the Kroneker quiver. In particular, $\Stab(\bP^1)\supsetneq \Gstab{}(\bP^1)$.
	\item Let $C$ be a curve of genus $g(C)\geq 1$; then $\Stab(C)=\Gstab{}(C)\cong\GLcov$; see \cite[Theorem 9.1]{bridgelandStabilityConditionsTriangulated2007} and \cite[Theorem 2.7]{macriStabilityConditionsCurves2007}.
\end{itemize}

\subsubsection*{Surfaces}
There is a construction called \textit{tilting} which gives an open set of geometric stability conditions on any surface; see for example \cite{arcaraBridgelandStableModuliSpaces2013,macriLecturesBridgelandStability2017}.

A connected component of $\Stab(X)$ is known in the following cases. This component always contains $\Gstab{}(X)$, but in some cases non-geometric stability conditions exist.
\begin{itemize}
	\item Surfaces with finite Albanese morphism: This connected component is precisely the set of geometric stability conditions which come from tilting. This follows from \cite[Theorem 1.1]{fuStabilityManifoldsVarieties2022} together with \cref{thm: LP gives precise control over set of geometric stability conditions}. 
	\item K3 surfaces: There is a distinguished connected component $\Stab^\dagger(X)$ described by taking the closure and translates under autoequivalences of the open set of geometric stability conditions; see \cite[Theorem 1.1]{bridgelandStabilityConditionsK32008}. Moreover, $\Stab^\dagger$ contains non-geometric stability conditions. By \cite[Theorem 12.1]{bridgelandStabilityConditionsK32008}, at general points of the boundary of $\Gstab{}(X)$, either
	\begin{itemize}
		\item all skyscraper sheaves have a spherical vector bundle as a stable factor, or
		\item $\OO_x$ is strictly semistable if and only if $x\in C$ for $C$ a smooth rational curve in $X$.
	\end{itemize}
	\item $\P^2$: $\Stab(\P^2)$ has a simply connected component, $\Stab^\dagger(\P^2)$, which is a union of geometric and \textit{algebraic} stability conditions. The construction of the latter uses $\Db(\P^2)\cong \Db(\Rep (Q,R))$ for the associated Beilinson quiver $Q$ with relations $R$; see \cite[Theorem 0.1]{liSpaceStabilityConditions2017}.
	\item Enriques surfaces: Suppose $Y$ is an Enriques surface with K3 cover $X$, and let $\Stab^\dagger(X)$ be the connected component of $\Stab(X)$ described above. Then there exists a connected component $\Stab^\dagger(Y)=\Stab^\ddagger(Y)$ which embeds into $\Stab^\dagger(X)$ as a closed submanifold. Moreover, when $Y$ is very general, $\Stab^\dagger(Y)\cong \Stab^\dagger(X)$; see \cite[Theorem 1.2]{macriInducingStabilityConditions2009}. The component $\Stab^\dagger(X)$ has non-geometric stability conditions; hence by Theorem~\ref{geometric G inv corresponds to geometric G hat inv}, so does $\Stab^\dagger(Y)$.
	\item Beauville-type and bielliptic surfaces: Let $S=X/G$. By \cref{finite albanese surface quotient has connected component of geos}, there is a connected component $\Stab^\ddagger(S)=\Gstab{}(X)\cong(\Stab(X))^G$.
\end{itemize}

Non-geometric stability conditions are also known to exist in the following cases:
\begin{itemize}
	\item Rational surfaces: The boundary of $\Gstab{}(X)$ contains points where skyscrapers sheaves are destabilised by exceptional bundles. This follows from the same arguments as in \cite[Section~5]{bayerSpaceStabilityConditions2011}, where the authors use pushforwards of exceptional bundles on $\bP^2$ to destabilise skyscraper sheaves on $\Tot(\OO_{\P^2}(-3))$.
	\item Surfaces which contain a smooth rational curve $C$ with negative self intersection: These have a wall of the geometric chamber such that $\OO_x$ is stable if $x\not\in C$ and strictly semistable if $x\in C$; see \cite[Lemma 7.2]{tramelBridgelandStabilityConditions2022} and \cite[Proposition 5.3]{limCharacteristicClassesStability2022}.
\end{itemize}

\subsubsection*{Threefolds}
Fix $H\in\Amp_\R(X)$. Denote by $\Stab_H(X)$ the space of stability conditions such that the central charge factors via the sublattice $\Lambda_H\subset\Knum(X)$ (see \cref{remark: Lambda H stability conditions on surfaces}). If $\rho(X)=1$, then $\Lambda_H=\Knum(X)$ so $\Stab_H(X)=\Stab(X)$. A strategy for constructing stability conditions in $\Stab_H(X)$ for threefolds was first introduced in \cite[Sections~3 and 4]{bayerBridgelandStabilityConditions2013}. This uses so-called tilt stability conditions to construct geometric stability conditions whenever a conjectural Bogomolov--Gieseker type inequality is satisfied, \textit{i.e.}~a bound on the Chern characters of stable objects.

Geometric stability conditions in $\Stab_H(X)$ exist for some threefolds; see \cite[Theorem 1.4]{bayerSpaceStabilityConditions2016}, \cite[Theorem 1.1]{bernardaraBridgelandStabilityConditions2017}, \cite[Theorem 1.3]{piyaratneStabilityConditionsBogomolovGieseker2017}, \cite[Theorem 1.2]{kosekiStabilityConditionsProduct2018}, \cite[Theorem 1.3]{liStabilityConditionsQuintic2019}, \cite[Theorem 1.2]{kosekiStabilityConditionsThreefolds2020}, \cite[Theorem 1.3]{kosekiStabilityConditionsCalabiYau2022}, \cite[Theorem 1.2]{liuStabilityConditionCalabi2022}.

Below we describe the only threefolds where $\Stab(X)$ is known to be non-empty. These are also the only cases where a connected component of $\Stab_H(X)$ was previously known.

\begin{itemize}
	\item Abelian threefolds: There is a distinguished connected component $\Stab_H^\dagger(X)$ of $\Stab_H(X)$ which is completely described in \cite[Theorem 1.4]{bayerSpaceStabilityConditions2016}. Stability conditions in $\Stab_H^\dagger(X)$ have been shown to satisfy the support property with respect to $\Knum(X)$; in particular, they lie in a connected component $\Stab^\dagger(X)\subset\Stab(X)$; see \cite[Theorem 3.21]{oberdieckDonaldsonThomasInvariants2022}. Abelian threefolds are also a case of \cite[Theorem~1.1]{fuStabilityManifoldsVarieties2022}.
	\item Calabi--Yau threefolds of abelian type: Let $Y$ be a Calabi--Yau threefold admitting an abelian threefold~$X$ as a finite \'etale cover. Then $Y=X/G$, where $G$ is $(\Z/2)^{\oplus 2}$ or $D_4$ (the dihedral group
	of order~8); see \cite[Theorem 0.1]{oguisoCalabiYauThreefolds2001}. There is a distinguished connected component $\mathfrak{P}$ of $\Stab_H(Y)$ induced from $\Stab^\dagger_H(X)$ which contains only geometric stability conditions; see \cite[Corollary~10.3]{bayerSpaceStabilityConditions2016}. By the previous paragraph together with Theorem~\ref{geometric G inv corresponds to geometric G hat inv}, when $G=(\Z/2)^{\oplus 2}$, $\mathfrak{P}$ lies in a connected component of $\Stab^\ddagger(Y)$.
\end{itemize}

The only examples where non-geometric stability conditions are known to exist on threefolds are those with complete exceptional collections. We explain this in greater generality below. 

\subsection*{Exceptional collections} 
There are stability conditions on any triangulated category, with a complete exceptional collection called \textit{algebraic stability conditions}; see \cite[Section~3]{macriStabilityConditionsCurves2007}. On $\P^n$, this has been used to show the existence of geometric stability conditions; see \cite[Proposition 3.5]{muNewModuliSpaces2021} and~\cite[Section~3.3]{petkovicPositivityDeterminantLine2022}. If $X$ is a variety with a complete exceptional collection, non-geometric stability conditions can be constructed from abelian categories that do not contain skyscraper sheaves; see \cite[Section~4.2]{macriExamplesSpacesStability2007}.

We summarise this survey in the table below. Note that the examples in the rightmost column have non-finite Albanese morphism. This gives a positive answer to Question~\ref{Question FLZ} in those cases.
\begin{table}[h]\renewcommand*{\arraystretch}{1.2}
    \centering
    \begin{tabular}{ p{1cm} | p{3.5cm}  | p{8.5cm}}
    \centering $\dim X$ & $\Gstab{}(X)$  &  Known examples with $\Stab(X)\neq\Gstab{}(X)$\\
	\hline
    \rowcolor{rowgrey}
    \centering 1 & $\cong \GLcov$ & $\bP^1$  \\
    \centering 2 & controlled by $\Phi_{X,H,B}$ & $\P^2$, K3 surfaces, rational surfaces, $X\supset C$ rational curve s.t.~$C^2<0$ \\
    \rowcolor{rowgrey}
    \centering 3 & $\neq\emptyset$ for some 3folds & $\P^3$ \\
	\centering $\geq 4$ & $\neq\emptyset$ for $\P^n$ & $\P^n$
    \end{tabular}
\end{table}

\subsection{Related works}\label{intro subsection: related works}
\cref{G hat invariant correspondence} was independently obtained in \cite[Lemma 4.11]{perryModuliSpacesStable2023}. We generalise \cref{G hat invariant correspondence} and the results of \cref{section: geometric stability on FAQs} to non-abelian groups in \cite{dellFusionequivariantStabilityConditions2024}. Theorem~\ref{thm: LP gives precise control over set of geometric stability conditions} was used to prove that $\Gstab{}(X)$ is contractible in \cite[Theorem A]{rekuskiContractibilityGeometricStability2023}.

\subsection{Notation}\label{intro subsection: notation}
\begin{longtable}{ r l }
		$k$ & an algebraically closed field \\
		$\DD$ & a $k$-linear essentially small Ext-finite triangulated category with a Serre functor\\
		$G$ & a finite group such that $(\Char(k),|G|)=1$ \\
		$\DD_G$ & the category of $G$-equivariant objects \\
		$X$ & a smooth connected projective variety over $\bC$\\
		$\Db(X)$ & the bounded derived category of coherent sheaves on $X$ \\
		$\DbGX$ & the bounded derived category of $G$-equivariant coherent sheaves on $X$\\
		$\K(\DD), \K(X)$ & the Grothendieck group of $\DD$, resp.\ $\DbX$\\
		$\Knum(\DD), \Knum(X)$ & the numerical Grothendieck group of $\DD$, resp.\ $\DbX$ \\
		$\Stab(\DD), \Stab(X)$ & the space of numerical Bridgeland stability conditions on $\DD$, resp.\ $\DbX$\\
		$\Gstab{}(X)$ & the space of geometric numerical stability conditions on $\DbX$ \\
		$\ch(E)$ & the Chern character of an object $E \in \Db(X)$ \\
		$\NS(X)$ & $\Pic(X)/\Pic^0(X)$, the N\'eron--Severi group of $X$ \\
		$\NS_\R(X)$ & $\NS(X)\otimes \R$ \\
		$\Amp_\R(X)$ & the ample cone inside $\NS_\R(X)$ \\
		$\Eff_\R(X)$ & the effective cone inside $\NS_\R(X)$ \\
		$\Chow(X)$ & the Chow group of $X$\\
		$\Chownum(X)$ & the numerical Chow group of $X$
\end{longtable}

\subsection*{Acknowledgments}
I would like to thank my advisor Arend Bayer for suggesting the project and for many helpful discussions. I would also like to thank Augustinas Jacovskis and Sebastian Schlegel Mejia for useful conversations. Thanks to Nick Rekuski for pointing out that the Le Potier function for surfaces in \cref{defn: twisted Le Potier} still needs codomain $\R\cup\{-\infty\}$ to be well defined (but this does not affect any proofs), as well as for many other helpful comments and discussions. Thanks also to my thesis examiners and anonymous referees, whose comments and suggestions have helped to improve the exposition and results.

\section{\texorpdfstring{$\boldsymbol{G}$- and $\boldsymbol{\widehat{G}}$-}{\emph{G}-}invariant stability conditions}\label{section: G invariant stability conditions}

We review the notions of equivariant triangulated categories in \cref{abstract inducing subsection: equivariant triangulated categories} and Bridgeland stability conditions in \cref{abstract inducing subsection: Bridgeland stability}. In \cref{abstract inducing subsection: inducing stability conditions}, we use \cite{macriInducingStabilityConditions2009} to describe a correspondence between stability conditions on a triangulated category with an action of a finite abelian group and stability conditions on the corresponding equivariant category.

\subsection{Review: \texorpdfstring{$\boldsymbol{G}$}{G}-equivariant triangulated categories}\label{abstract inducing subsection: equivariant triangulated categories}

Let $\CC$ be a pre-additive category, linear over a ring $k$. Let $G$ be a finite group with $(\Char(k),|G|)=1$. The definition of a group action on a category and the corresponding equivariant category are due to Deligne  \cite{deligneActionGroupeTresses1997}. We will follow the treatment by Elagin from \cite{elaginEquivariantTriangulatedCategories2015} in our presentation below.

\begin{definition}[\textit{cf.} {\cite[Definition 3.1]{elaginEquivariantTriangulatedCategories2015}}]\label{defn: action of a group on a category}
	A \textit{$($right\,$)$ action} of $G$ on $\CC$ is defined by the following data:
	\begin{itemize}
		\item a functor $\phi_g\colon \CC \rightarrow \CC$ for every $g\in G$;
		\item a natural isomorphism $\varepsilon_{g,h}\colon \phi_g\phi_h\rightarrow \phi_{hg}$ for every $g,h\in G$, for which all diagrams
		\begin{center}
			\begin{tikzcd}
				\phi_f\phi_g\phi_h \arrow[r, "{\varepsilon_{g,h}}"] \arrow[d, "{\varepsilon_{f,g}}"] & \phi_f\phi_{hg} \arrow[d, "{\varepsilon_{f,gh}}"] \\
				\phi_{gf}\phi_h \arrow[r, "{\varepsilon_{gf,h}}"]                                    & \phi_{hgf}                                       
			\end{tikzcd}
		\end{center}
		are commutative.
	\end{itemize}
\end{definition}

\begin{remark}
	Note that this definition of a $G$-action is more than a group homomorphism $G\rightarrow \Aut(\cC)$ as there is a fixed isomorphism $\phi_g\phi_h\xrightarrow{\lowsim} \phi_{hg}$ for all $g,h\in G$. This finer notion is required to define the category of $G$-equivariant objects in \cref{defn: G equivariant category}. See \cite[Section 2.2]{beckmannEquivariantDerivedCategories2023} for details on obstructions to lifting a group homomorphism $G\rightarrow \Aut(\cC)$ to a $G$-action.
\end{remark}

\begin{example}[\textit{cf.} {\cite[Example 3.4]{elaginEquivariantTriangulatedCategories2015}}]\label{group action on scheme example}
	Let $G$ be a group acting on a scheme $X$. For each $g\in G$, let $\phi_g\coloneqq g^\ast\colon \Coh(X)\rightarrow \Coh(X)$. Then for all $g,h\in G$, there are canonical isomorphisms 
	\begin{equation*}
	\phi_g\phi_h = g^\ast h^\ast \overset{\lowsim}\lra (hg)^\ast = \phi_{hg}.
	\end{equation*}
	Together, these define an action of $G$ on the category $\Coh(X)$.
\end{example}

\begin{definition}[\textit{cf.} {\cite[Definition 3.5]{elaginEquivariantTriangulatedCategories2015}}]\label{defn: G equivariant category}
	Suppose $G$ acts on a category $\CC$. A $G$\textit{-equivariant object} in $\CC$ is a pair $(F,(\theta_g)_{g\in G})$, where $F\in\Ob\CC$ and $(\theta_g)_{g\in G}$ is a family of isomorphisms
	\begin{equation*}
		\theta_g\colon F\lra \phi_g(F) 
	\end{equation*}
	such that all diagrams
	\begin{center}
		\begin{tikzcd}
			F \arrow[r, "\theta_g"] \arrow[d, "\theta_{hg}"] & \phi_g(F) \arrow[d, "\phi_g(\theta_h)"]             \\
			\phi_{hg}(F)                                     & \phi_g(\phi_h(F)) \arrow[l, "{\varepsilon_{g,h}}"']
		\end{tikzcd}
	\end{center}
	are commutative. We call the family of isomorphisms a \textit{$G$-linearisation}. A \textit{morphism of $G$-equivariant objects} from $(F_1,(\theta_g^1))$ to $(F_2,(\theta_g^2))$ is a morphism $f\colon F_1\rightarrow F_2$ compatible with $\theta_g$, \textit{i.e.}~such that the below diagram commutes for all $g\in G$: 
	\begin{center}
		\begin{tikzcd}
			F_1 \arrow[r, "\theta^1_g"] \arrow[d, "f"] & \phi_g(F_1) \arrow[d, "\phi_g(f)"] \\
			F_2 \arrow[r, "\theta^2_g"]                & \phi_g(F_2)\rlap{.}                      
		\end{tikzcd}
	\end{center}
	The category of $G$-equivariant objects of $\CC$ is denoted by $\CC_G$. 
\end{definition}

\begin{example}\label{G equivariant sheaves example}
	Let $G$ be a group acting on a scheme $X$ with $\phi_g$ and $\varepsilon_{g,h}$ defined as in Example~\ref{group action on scheme example}. The $G$-equivariant objects in $\Coh(X)$ are $G$-equivariant coherent sheaves. Let $\Coh_G(X)\coloneqq (\Coh(X))_G$ and  $\DbGX\coloneqq \Db(\Coh_G(X))$. 
	Suppose $k=\overline{k}$ and $G$ acts freely on a smooth projective variety $X$ over $k$. Let $\pi\colon X\rightarrow X/G$ be the quotient map. Then $\Coh(X/G)\cong\Coh_G(X)$ via $\EE\mapsto (\pi^\ast \EE, (\theta_g))$, where the linearisation is given by $\theta_g\colon \pi^\ast\EE\xrightarrow{\lowsim} (\pi\circ g)^\ast \EE = g^\ast \pi^\ast\EE$. Thus $\DbGX\cong\Db(X/G)$.
\end{example}

There are few examples of group actions on categories which do not arise from a group action on a variety. The following result gives one such example. It will also be key in proving that $\cC$ can be recovered from $\cC_G$ when $G$ is abelian; see \cref{Elagin equivariant equivalence}.

\begin{proposition}[\textit{cf.} {\cite[Section~4, p.~12]{elaginEquivariantTriangulatedCategories2015}}\label{defn of G hat action}] 
  Suppose $G$ is an abelian group acting on $\cC$ and $k$ is algebraically closed. Let $\widehat{G}=\Hom(G,k^\ast)$ be the group of irreducible representations of\, $G$. Then there is an action of\, $\widehat{G}$ on $\CC_G$. For every $\chi\in\widehat{G}$, on objects, $\phi_\chi$ is given by
	\begin{equation*}
		\phi_\chi((F,(\theta_h)))\coloneqq  (F,(\theta_h))\otimes \chi \coloneqq  (F,(\theta_h\cdot \chi(h))),
	\end{equation*}
	and on morphisms, $\phi_\chi$ is the identity. For $\chi$,$\psi\in \widehat{G}$, the equivariant objects $\phi_\chi(\phi_\psi((F),(\theta_h)))$ and $\phi_{\psi\chi}((F,(\theta_h)))$ are the same; hence we set the isomorphisms $\varepsilon_{\chi,\psi}$ to be the identities.
\end{proposition}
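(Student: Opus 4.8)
The plan is a direct verification that the proposed data defines an action of $\widehat G$ on $\CC_G$ in the sense of \cref{defn: action of a group on a category}; the only step that involves any computation is showing that rescaling a $G$-linearisation by a character is again a $G$-linearisation, and that reduces to the multiplicativity of $\chi$.

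First I would fix $\chi\in\widehat G$ and a $G$-equivariant object $(F,(\theta_h))$ and check that $(\theta_h\cdot\chi(h))_{h\in G}$ is a valid $G$-linearisation of $F$: each $\theta_h\cdot\chi(h)$ is still an isomorphism since $\chi(h)\in k^\ast$, and for the compatibility square of \cref{defn: G equivariant category}, which reads $\theta_{hg}=\varepsilon_{g,h}\circ\phi_g(\theta_h)\circ\theta_g$, one uses that the $\phi_g$ are $k$-linear to pull the scalars out: the rescaled right-hand side is $\chi(g)\chi(h)\cdot(\varepsilon_{g,h}\circ\phi_g(\theta_h)\circ\theta_g)=\chi(g)\chi(h)\cdot\theta_{hg}=\chi(hg)\cdot\theta_{hg}$, which is exactly the $hg$-component of the rescaled family. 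So $\phi_\chi$ is well defined on objects. I would then check $\phi_\chi$ is functorial: a morphism $f$ in $\CC_G$ satisfies $\phi_g(f)\circ\theta^1_g=\theta^2_g\circ f$, and multiplying through by $\chi(g)$ shows the same $f$ is a morphism between the twisted objects, so declaring $\phi_\chi$ to be the identity on morphisms is consistent; preservation of identities and composition is then immediate, and $\phi_{\chi^{-1}}$ is a two-sided inverse, so each $\phi_\chi$ is an autoequivalence of $\CC_G$.

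Finally, for the coherence data I would simply compare formulas: $\phi_\chi\phi_\psi((F,(\theta_h)))=(F,(\theta_h\cdot(\psi\chi)(h)))=\phi_{\psi\chi}((F,(\theta_h)))$, and both composites are the identity on morphisms, so $\phi_\chi\phi_\psi=\phi_{\psi\chi}$ as functors on the nose; taking every $\varepsilon_{\chi,\psi}$ to be the identity is therefore legitimate, and for any $\chi,\psi,\omega\in\widehat G$ the coherence square of \cref{defn: action of a group on a category} commutes trivially, since all of its edges are identity transformations of the single functor $\phi_{\omega\psi\chi}$. I do not expect any real obstacle: the only content is the bookkeeping in the first step, and once one has $k$-linearity of the $\phi_g$ together with the identity $\chi(g)\chi(h)=\chi(hg)$, everything else is formal — precisely because the twist only rescales the linearisation and leaves the underlying objects, morphisms, and the structure isomorphisms $\varepsilon_{g,h}$ of the original $G$-action untouched.
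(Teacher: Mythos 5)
Your verification is correct and is essentially the intended argument: the paper gives no proof of this proposition, citing Elagin directly, and the content is exactly the direct check you perform — using $k$-linearity of the functors $\phi_g$ and multiplicativity of $\chi$ to see that the rescaled family $(\theta_h\cdot\chi(h))$ is again a $G$-linearisation, that every morphism of equivariant objects remains one after twisting, and that $\phi_\chi\phi_\psi=\phi_{\psi\chi}$ on the nose so that taking all $\varepsilon_{\chi,\psi}$ to be identities satisfies the coherence condition trivially.
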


There are two natural functors going between $\cC$ and $\cC_G$.

\begin{definition}\label{defn: forg and inf functors}
	Suppose $G$ acts on a category $\CC$. Then we denote by $\Forg_{G}\colon \CC_G \rightarrow\CC$ the \textit{forgetful functor} $\Forg_{G}(F,(\theta_g))= F$. Also let $\Inf_{G}\colon \CC \rightarrow \CC_G$ be the \textit{inflation functor} which is defined by
	\begin{align*}
		\Inf_{G}(F)\coloneqq \left(\bigoplus_{g\in G}\phi_g(F), \left(\xi_g\right)\right),
	\end{align*}
	where
	\begin{equation*}
		\xi_g\colon \bigoplus_{h\in G}\phi_h(F) \overset{\lowsim}\lra \bigoplus_{h\in G} \phi_g\phi_h(F)
	\end{equation*}
	is the collection of isomorphisms
	\begin{equation*}
		\varepsilon_{g,h}^{-1}\colon \phi_{hg}(F)\longrightarrow \phi_g\phi_h(F).
	\end{equation*}
\end{definition}

\begin{lemma}\label{Forg is faithful and adjoint}
	The forgetful functor $\Forg_{G}$ is faithful, and it is left and right adjoint to $\Inf_{G}$.
\end{lemma}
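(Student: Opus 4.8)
The plan is to derive both assertions directly from the definitions, since each is formal. Faithfulness of $\Forg_G$ is immediate: by \cref{defn: G equivariant category} a morphism $(F_1,(\theta^1_g))\to (F_2,(\theta^2_g))$ in $\CC_G$ \emph{is} a morphism $f\colon F_1\to F_2$ of $\CC$ subject to the linearisation-compatibility squares, and $\Forg_G$ returns this underlying $f$; hence distinct morphisms of equivariant objects have distinct images and the induced map on $\Hom$-sets is injective.

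For the two adjunctions I would exhibit the units and counits explicitly. Normalising the action so that $\phi_e=\id_\CC$ (for $e\in G$ the identity) and the structure isomorphisms $\varepsilon_{g,h}$ involving $e$ are identities — which one may always arrange — the underlying object of $\Inf_G(F)$ is $\bigoplus_{g\in G}\phi_g(F)$. For $\Inf_G\dashv\Forg_G$ (so $\Inf_G$ left, $\Forg_G$ right), take the unit $F\to \Forg_G\Inf_G(F)=\bigoplus_g\phi_g(F)$ to be the inclusion of the $g=e$ summand, and for $E=(E_0,(\theta_g))$ take the counit $\Inf_G\Forg_G(E)\to E$ to have underlying morphism $\bigoplus_g \theta_g^{-1}\colon \bigoplus_g\phi_g(E_0)\to E_0$. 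For $\Forg_G\dashv\Inf_G$, take the unit $E\to \Inf_G\Forg_G(E)$ to have underlying morphism $E_0\to \bigoplus_g\phi_g(E_0)$ with $g$-component $\theta_g$, and the counit $\Forg_G\Inf_G(F)=\bigoplus_g\phi_g(F)\to F$ to be the projection onto the $g=e$ summand. Equivalently, one can instead write down the $\Hom$-set bijections $\Hom_\CC(\Forg_G E,F)\cong \Hom_{\CC_G}(E,\Inf_G F)$ and $\Hom_{\CC_G}(\Inf_G F,E)\cong \Hom_\CC(F,\Forg_G E)$, the point being that the linearisation-compatibility condition on a morphism into, respectively out of, $\Inf_G(F)$ forces all of its components $E_0\to\phi_g(F)$, respectively $\phi_g(F)\to E_0$, to be determined by the $g=e$ component through the $\theta_g$ and $\varepsilon_{g,h}$.

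The unavoidable routine part is to check that the proposed units and counits really are morphisms in $\CC_G$, i.e. commute with the linearisations $\theta_g$ and with the isomorphisms $\xi_g$ of \cref{defn: forg and inf functors}, and that the triangle identities hold; both reduce by diagram chase to the cocycle condition of \cref{defn: action of a group on a category} and the linearisation axiom of \cref{defn: G equivariant category}, and naturality in $E$ and $F$ is visible from the formulas. I expect this bookkeeping with the $\varepsilon_{g,h}$'s — rather than any conceptual difficulty — to be the only real obstacle, which is precisely why it pays to make the normalisation $\phi_e=\id$ first. Conceptually, $\Inf_G$ and $\Forg_G$ are the induction and restriction functors along the inclusion of the trivial subgroup into the finite group $G$, and the statement is the familiar biadjointness of induction and restriction in that setting; but in this concrete category the hands-on verification is the most efficient route.
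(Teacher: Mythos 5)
Your proposal is correct and is essentially the paper's argument: faithfulness is handled identically (it is immediate from the definition of morphisms of equivariant objects), and for the biadjointness the paper simply cites \cite[Lemma 3.8]{elaginEquivariantTriangulatedCategories2015}, whose content is exactly the explicit unit/counit construction you write down (unit/counit built from the $e$-summand inclusion/projection and from the components $\theta_g$, $\theta_g^{-1}$, with the verifications reducing to the cocycle and linearisation axioms). The only cosmetic caveat is your normalisation $\phi_e=\id$: it is indeed harmless (and one can also run the same diagram chases without it, at the cost of carrying the $\varepsilon$'s involving $e$, noting that $\theta_e$ is then determined by $\varepsilon_{e,e}$ rather than being the identity), so this does not affect correctness.
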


\begin{proof}
	The faithfulness follows immediately from the definition of morphisms between $G$-equivariant objects. For the fact that $\Forg_{G}$ is left and right adjoint to $\Inf_{G}$, see \cite[Lemma 3.8]{elaginEquivariantTriangulatedCategories2015}
\end{proof}

The following proposition builds on a result of Balmer in \cite[Theorem 5.17]{balmerSeparabilityTriangulatedCategories2011a}. We will need it later to construct Bridgeland stability conditions on equivariant categories.

\begin{proposition}[\textit{cf.} {\cite[Corollary 6.10]{elaginEquivariantTriangulatedCategories2015}}]\label{Forg is exact}
	 Suppose $G$ acts on a triangulated category $\cD$ which has a DG-enhancement; then $\cD_G$ is triangulated in such a way that $\Forg_{G}$ is exact.
\end{proposition}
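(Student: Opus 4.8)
The plan is to realise $\cD_G$ as the category of modules over a \emph{separable} monad on $\cD$ and then to invoke Balmer's theorem that such a module category inherits a triangulated structure, with the forgetful functor exact, as soon as $\cD$ admits a DG-enhancement. This is the route taken in \cite[\S6]{elaginEquivariantTriangulatedCategories2015}, building on \cite[Theorem 5.17]{balmerSeparabilityTriangulatedCategories2011a}, and I would follow it.

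\emph{Step 1: the monad.} By \cref{Forg is faithful and adjoint}, $\Inf_G$ is a left adjoint of $\Forg_G$, so $T\coloneqq\Forg_G\circ\Inf_G\colon\cD\to\cD$ carries a canonical monad structure, with underlying functor $T(F)=\bigoplus_{g\in G}\phi_g(F)$ and with unit and multiplication induced from the adjunction; explicitly, the $(g,h)$-component of the multiplication $\mu_F\colon T^2(F)=\bigoplus_{g,h}\phi_g\phi_h(F)\to T(F)$ is the structure isomorphism $\varepsilon_{g,h}\colon\phi_g\phi_h(F)\to\phi_{hg}(F)$ landing in the $hg$-summand. One then checks that $\Forg_G$ is monadic, i.e. that the comparison functor $\cD_G\to\mathrm{Mod}_\cD(T)$ to the Eilenberg--Moore category is an equivalence: a $T$-module structure on $F$ is exactly a family $(\theta_g)$ as in \cref{defn: G equivariant category} — the module associativity and unit axioms unwind to the linearisation cocycle diagrams there — and morphisms match up the same way. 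This is a direct, if bookkeeping-heavy, translation of definitions.

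\emph{Step 2: separability.} Next I would exhibit a section $\sigma\colon T\to T^2$ of $\mu$ which is a morphism of $T$-bimodules. On the $k$-summand $\phi_k(F)$ of $T(F)$, set $\sigma_F$ to be $\tfrac{1}{|G|}$ times the sum, over all factorisations $k=hg$ in $G$, of $\varepsilon_{g,h}^{-1}\colon\phi_{hg}(F)\to\phi_g\phi_h(F)$ followed by the inclusion of the $(g,h)$-summand of $T^2(F)$. Then $\mu_F\circ\sigma_F=\mathrm{id}$ by construction, and this is the unique place where the hypothesis $(\Char k,|G|)=1$ is used; the $T$-bilinearity of $\sigma$ is a combinatorial verification using the compatibility (cocycle) diagrams for the $\varepsilon_{g,h}$ in \cref{defn: action of a group on a category}.

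\emph{Step 3: conclusion.} With the DG-enhancement of $\cD$ available, \cite[Theorem 5.17]{balmerSeparabilityTriangulatedCategories2011a} equips $\mathrm{Mod}_\cD(T)$ — hence $\cD_G$, via Step 1 — with a triangulated structure for which the forgetful functor to $\cD$, which under the identification is $\Forg_G$, is exact. The main obstacle on our side is the bookkeeping shared by Steps 1 and 2: the isomorphisms $\varepsilon_{g,h}$ must be threaded consistently through both the module/linearisation dictionary and the bilinearity check for $\sigma$, and that is where an indexing or coherence slip is easiest to commit. The substantive triangulated-categorical input — producing the distinguished triangles and verifying the octahedral axiom in $\mathrm{Mod}_\cD(T)$ — is Balmer's theorem, which we are entitled to cite.
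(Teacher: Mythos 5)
The paper offers no proof of this statement—it simply cites Elagin's Corollary 6.10, which is proved along exactly the lines you describe (identify $\cD_G$ with (co)modules over the separable (co)monad $\Forg_{G}\Inf_{G}$, the separability section being the $\tfrac{1}{|G|}$-averaging that uses $(\Char(k),|G|)=1$, and then invoke Balmer's Theorem 5.17, with the DG-enhancement of $\cD$ supplying the hypothesis needed there for the triangulation of the module category)—so your proposal is essentially the same approach. The only cosmetic deviation is that you use modules over the monad, whose structure maps $\phi_g(F)\to F$ correspond to the inverses $\theta_g^{-1}$ of the linearisation (so monadicity needs the small extra observation that the module axioms force these components to be isomorphisms), whereas Elagin works with comodules over the comonad, whose structure map $F\to\bigoplus_{g\in G}\phi_g(F)$ has components literally the $\theta_g$; since $\Forg_{G}$ is both left and right adjoint to $\Inf_{G}$, this changes nothing.
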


The proof of the following theorem will use comonads. The full definitions can be found in \cite[Section~2]{elaginEquivariantTriangulatedCategories2015}, but for the proof we will only need to know the following: given a comonad $T$ on a category~$\CC$, a \textit{comodule} over $T$ is a pair $(F,h)$, where $F\in\Ob \CC$ and $h\colon F\rightarrow TF$ is a morphism, called the \textit{comonad structure}, satisfying certain conditions (see \cite[Definition 2.5]{elaginEquivariantTriangulatedCategories2015}). All comodules over a given comonad $T$ on $\CC$ form a category, which is denoted by $\CC_T$. There is a forgetful functor $\Forg_{T}\colon \CC_T\rightarrow \CC$ which forgets the comonad structure; \textit{i.e.}~$(F,h)\mapsto F$.

\begin{theorem}[\textit{cf.} {\cite[Theorem 4.2]{elaginEquivariantTriangulatedCategories2015}}]\label{Elagin equivariant equivalence}
	Suppose $k$ is an algebraically closed field, and let $\cC$ be a $k$-linear idempotent complete category. Let $G$ be a finite abelian group with $(\Char(k),|G|)=1$. Suppose $G$ acts on $\cC$. Then
	\begin{equation*}
		(\cC_G)_{\widehat{G}}\cong \cC.
	\end{equation*}
	In particular, under this equivalence $\Forg_{\widehat{G}}\colon (\cC_G)_{\widehat{G}}\rightarrow \cC_G$ is identified with $\Inf_{G}\colon \cC\rightarrow \cC_G$, and their adjoints $\Inf_{\widehat{G}}\colon \cC_G \rightarrow (\cC_G)_{\widehat{G}}$ and $\Forg_{G}\colon \cC_G \rightarrow \cC$ are also identified.
\end{theorem}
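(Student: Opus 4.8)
The plan is to realise both $\cC$ and $(\cC_G)_{\widehat G}$ as categories of comodules over a comonad on $\cC_G$, and then to identify the two comonads. The starting point is the comonadic description of equivariant categories. For any finite group $H$ with $(\Char k,|H|)=1$ acting on a $k$-linear idempotent complete category $\cB$, the biadjunction $\Forg_{H}\dashv\Inf_{H}\dashv\Forg_{H}$ of \cref{Forg is faithful and adjoint} gives, by the comonadicity criterion (Beck's theorem, dualised) applied to each of the two adjunctions in turn, two equivalences. On the one hand $\Forg_{H}$ is comonadic: it reflects isomorphisms, since an inverse of $\Forg_{H}(f)$ is automatically $H$-equivariant, and equalizers of $\Forg_{H}$-split pairs exist and are preserved because the relevant coreflexive equalizers split off the averaging idempotent $\tfrac1{|H|}\sum_{h\in H}(-)$, using $|H|^{-1}\in k$ and idempotent completeness; hence $\cB_H$ is identified with comodules over the comonad $\Forg_{H}\Inf_{H}=\bigoplus_{h\in H}\phi_h$ on $\cB$, compatibly with the forgetful functors. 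On the other hand, the same argument applied to $\Inf_{H}\dashv\Forg_{H}$ (here $\Inf_{H}$ reflects isomorphisms because $\Forg_{H}\Inf_{H}(f)=\bigoplus_{h}\phi_h(f)$, and a coproduct of maps is invertible iff each summand, in particular $f=\phi_e(f)$, is) shows $\Inf_{H}$ is comonadic, so $\cB$ is identified with comodules over the comonad $\Inf_{H}\Forg_{H}$ on $\cB_H$, again compatibly with the forgetful functors. Applying the first description to the residual $\widehat G$-action on $\cC_G$ gives $(\cC_G)_{\widehat G}\cong(\cC_G)_{S}$ with $S=\bigoplus_{\chi\in\widehat G}\phi_\chi$ (note $\cC_G$ is again idempotent complete), and applying the second to $G$ acting on $\cC$ gives $\cC\cong(\cC_G)_{T}$ with $T=\Inf_{G}\Forg_{G}$.

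The heart of the argument is then to produce a natural isomorphism of comonads $T\xrightarrow{\ \sim\ }S$ on $\cC_G$. On an object $(F,(\rho_g))$, the underlying $\cC$-object of $S(F,(\rho_g))=\bigoplus_{\chi\in\widehat G}(F,(\rho_g\chi(g)))$ is $F^{\oplus\widehat G}$, while that of $T(F,(\rho_g))=\Inf_{G}(F)$ is $\bigoplus_{g\in G}\phi_g F$. I would first use the linearization to identify $\bigoplus_{g}\phi_g F\cong F^{\oplus G}$ through the isomorphisms $\rho_g\colon F\xrightarrow{\sim}\phi_g F$, and then compose with the discrete Fourier transform over $G$, the $\cC$-isomorphism $F^{\oplus G}\to F^{\oplus\widehat G}$ whose $(g,\chi)$-entry is $\tfrac1{|G|}\overline{\chi(g)}\cdot\id_F$, obtaining the candidate isomorphism $T(F,\rho)\to S(F,\rho)$. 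The points to verify are: that it is compatible with the two $G$-linearizations, so is a morphism in $\cC_G$; that it is invertible, which is the orthogonality of characters and is exactly where $k$ algebraically closed (so $|\widehat G|=|G|$ and enough roots of unity are present) and $(\Char k,|G|)=1$ (so the Fourier matrix, with its factor $\tfrac1{|G|}$, can be inverted) enter; that it is natural in $(F,\rho)$; and that it intertwines the comonad structures — for $S$ these are built from the identities $\varepsilon_{\chi,\psi}=\id$ and the projection to the trivial character (\cref{defn of G hat action}), for $T$ from the unit and counit of $\Inf_{G}\dashv\Forg_{G}$. Granting this, chaining the identifications of the previous paragraph gives $(\cC_G)_{\widehat G}\cong(\cC_G)_{S}\cong(\cC_G)_{T}\cong\cC$.

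The statement about functors then follows formally: the equivalence $\cC\cong(\cC_G)_{T}$ intertwines $\Inf_{G}$ with $\Forg_{T}$, the equivalence $(\cC_G)_{\widehat G}\cong(\cC_G)_{S}$ intertwines $\Forg_{\widehat G}$ with $\Forg_{S}$, and the isomorphism $T\cong S$ identifies $\Forg_{T}$ with $\Forg_{S}$; hence under the composite equivalence $\cC\cong(\cC_G)_{\widehat G}$ the functor $\Forg_{\widehat G}$ corresponds to $\Inf_{G}$, and passing to (left and right) adjoints, $\Inf_{\widehat G}$ corresponds to $\Forg_{G}$. I expect the main obstacle to be the last verification of the previous paragraph: the simultaneous bookkeeping of the indexing by $G$ and by $\widehat G$, the structure isomorphisms $\varepsilon$, and the linearization $\rho$, needed to check that the Fourier transform respects the comultiplications rather than merely the underlying endofunctors — along with checking the Beck-type hypotheses in the first paragraph, which is where idempotent completeness is consumed.
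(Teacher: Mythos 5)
Your proposal is correct and takes essentially the same route as the paper: the paper simply cites Elagin's Theorem 4.2, whose proof is the comonadic chain you reconstruct (comparison equivalences for the adjunctions $\Forg_{\widehat{G}}\dashv\Inf_{\widehat{G}}$ and $\Inf_{G}\dashv\Forg_{G}$, identification of the resulting comonads on $\cC_G$, and then the same formal argument that the forgetful functors, and hence their adjoints, match up). Your direct Fourier-transform isomorphism between $\Inf_{G}\Forg_{G}$ and $\bigoplus_{\chi\in\widehat{G}}\phi_\chi$ is exactly the composite of Elagin's two intermediate equivalences through the regular-representation comonad, so the compatibility verification you defer is precisely the content supplied by that citation.
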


\begin{proof}
	Elagin's proof that $(\cC_G)_{\widehat{G}}\cong \cC$ uses the following chain of equivalences:
	\begin{equation*}
		(\cC_G)_{\widehat{G}} \stackrel{(1)}{\cong} (\cC_G)_{T(\Forg_{\widehat{G}},\Inf_{\widehat{G}})} \stackrel{(2)}{\cong} (\cC_G)_\mathcal{R} \stackrel{(3)}{\cong}
		(\cC_G)_{T(\Inf_{G},\Forg_{G})} \stackrel{(4)}{\cong} \cC,
	\end{equation*}
	where $T(\Forg_{\widehat{G}},$ $\Inf_{\widehat{G}}),$ $\mathcal{R},
	T(\Inf_{G},\Forg_{G})$ are comonads on the corresponding categories.
	The equivalences in (1) and (4) are the comparison functors from \cite[Proposition 2.6]{elaginEquivariantTriangulatedCategories2015}. In particular, under (1), $\Forg_{\widehat{G}}\cong \Forg_{T(\Forg_{\widehat{G}},\Inf_{\widehat{G}})}$, and under (4), $\Forg_{T(\Inf_{G},\Forg_{G})}\cong \Inf_{G}$. Moreover, the equivalences (2) and (3) only change the comonad structure; hence the images of the forgetful functors for each category of comodules are the same. Therefore, under the equivalence $(\cC_G)_{\widehat{G}}$, we have $\Forg_{\widehat{G}}\cong \Inf_{G}$. Finally, recall that $\Forg_{\widehat{G}}$ and $\Inf_{\widehat{G}}$ are left and right adjoint, as are $\Forg_{G}$ and $\Inf_{G}$. Hence $\Inf_{\widehat{G}}\cong\Forg_{G}$ follows immediately.
\end{proof}

\subsection{Review: Bridgeland stability conditions}\label{abstract inducing subsection: Bridgeland stability}

For the rest of this section, assume that $\DD$ is a $k$-linear essentially small Ext-finite triangulated category with a Serre functor.

\begin{definition}[\textit{cf.} {\cite[Definition 3.3]{bridgelandStabilityConditionsTriangulated2007}}]\label{slicing}
	A \textit{slicing} $\PP$ on $\DD$ is a collection of full additive subcategories $\PP(\phi)\subset \DD$ for each $\phi\in\R$ such that
	\begin{enumerate}
		\item\label{slicing-1} $\PP(\phi)[1]=\PP(\phi+1)$;
		\item\label{slicing-2} if $F_1\in \PP(\phi_1), F_2\in \PP(\phi_2)$, then $\phi_1>\phi_2$ implies $\Hom_\cD(F_1,F_2)=0$;
		\item\label{slicing-3} every $E\in \DD$ has a Harder--Narasimhan (HN) filtration; \textit{i.e.}~there exist objects $E_1,\ldots, E_m\in \DD$, real numbers $\phi_1>\phi_2>\cdots > \phi_m$, and a collection of distinguished triangles
		\begin{center}
			\begin{tikzcd}[column sep=tiny]
				0=E_0 \arrow[rr] &                        & E_1 \arrow[rr] \arrow[ld] &                        & E_2 \arrow[rr] \arrow[ld] &  & \cdots \arrow[rr] &  & E_{m-1} \arrow[rr] &                        & E_m=E, \arrow[ld] \\
				& A_1 \arrow[lu, dotted] &                           & A_2 \arrow[lu, dotted] &                           &  &                   &  &                    & A_m \arrow[lu, dotted] &                 
			\end{tikzcd}
		\end{center}
		where $A_i\in \cP(\phi_i)$ for $1\leq i\leq m$. We call the $A_i$ the \textit{HN factors} of $E$.
	\end{enumerate}
\end{definition}

\begin{notation}\leavevmode
	\begin{enumerate}
		\item If $0\neq E \in \cP(\phi)$, we call $\phi(E)=\phi$ the \textit{phase} of $E$.
		\item Given an interval $I\subset \bR$, we denote by $\cP(I)$ the smallest additive subcategory of $\cD$ containing all objects $E$ whose HN factors all have phases lying in $I$, \textit{i.e.}~$\phi_i\in I$.
	\end{enumerate}
\end{notation}

\begin{definition}[\textit{cf.} {\cite[Definition 5.1]{bridgelandStabilityConditionsTriangulated2007}}]\label{Bridgeland pre stab}
	A \textit{Bridgeland pre-stability condition} on $\DD$ is a pair $\sigma=(\PP,Z)$ such that
	\begin{enumerate}[label=(\arabic*)]
		\item $\PP$ is a slicing;
		\item $Z\colon \K(\DD) \rightarrow \C$ is a group homomorphism such that if $0\neq E\in \PP(\phi)$ for some $\phi\in \R$, then $Z([E])=m(E)e^{i\pi \phi}$, where $m(E)\in\R_{>0}$. 
	\end{enumerate}
	We call $Z$ the \textit{central charge}.
\end{definition}

\begin{remark}\leavevmode
	\begin{enumerate}
		\item To ease notation, we write $Z(E)\coloneqq  Z([E])$.
		\item The HN filtration in \cref{slicing}\eqref{slicing-3} is unique up to isomorphism. We set $\phi_\sigma^+(E)\coloneqq \phi_1$,  $\phi_\sigma^-(E)\coloneqq \phi_m$, and $m_\sigma(E)\coloneqq \sum_i|Z(A_i)|$.
		\item Each $\cP(\phi)$ is an abelian category; see \cite[Lemma 5.2]{bridgelandStabilityConditionsTriangulated2007}. Non-zero objects of $\PP(\phi)$ are called $\sigma$-\textit{semistable} of phase $\phi$, and non-zero simple objects of $\PP(\phi)$ are called $\sigma$-\textit{stable} of phase $\phi$.
	\end{enumerate}
\end{remark}

\begin{definition}\label{defn: numerical Bridgeland stability condition}\label{defn: support property}
	Let $\Lambda$ be a finite-rank lattice with a surjective group homomorphism $\K(\cD)\stackrel{\lambda}{\twoheadrightarrow} \Lambda$.
	\begin{enumerate}
		\item\label{d:nBsc-1} A Bridgeland pre-stability condition $\sigma=(\PP,Z)$ on $\DD$ satisfies the \textit{support property with respect to} $(\Lambda,\lambda)$~if
		\begin{enumerate}
			\item $Z$ factors via $\Lambda$, \textit{i.e.}~$Z\colon\K(\DD)\stackrel{\lambda}{\twoheadrightarrow} \Lambda \rightarrow \C$, and
			\item there exists a quadratic form $Q$ on $\Lambda\otimes\R$ such that
			\begin{enumerate}[label=(\roman*)]
				\item $\Ker Z\otimes \bR$ is negative definite with respect to $Q$, and
				\item every $\sigma$-semistable object $E\in \DD$ satisfies $Q(\lambda(E))\geq 0$.
			\end{enumerate}
		\end{enumerate}
		\item A Bridgeland pre-stability condition $\sigma$ on $\cD$ that satisfies the support property with respect to $(\Lambda,\lambda)$ is called a \textit{Bridgeland stability condition} (with respect to $(\Lambda,\lambda)$). If $\lambda$ also factors via $\Knum(\DD)$, we call $\sigma$ a \textit{numerical Bridgeland stability condition}.
	\end{enumerate}
\end{definition}

The set of stability conditions with respect to $(\Lambda,\lambda)$ will be denoted by $\Stab_\Lambda(\DD)$. Unless stated otherwise, we will assume that all Bridgeland stability conditions are numerical. The set of numerical stability conditions on $\DD$ will be denoted by $\Stab(\DD)$.

As described in \cite[Proposition 8.1]{bridgelandStabilityConditionsTriangulated2007}, $\Stab_\Lambda(\DD)$ has a natural topology induced by the generalised metric
\begin{equation*}
	d(\sigma_1,\sigma_2) = \sup_{0\neq E\in\DD} \left\{ \left|\phi^-_{\sigma_2}(E) - \phi^-_{\sigma_1}(E)\right|, \left|\phi^+_{\sigma_2}(E) - \phi^+_{\sigma_1}(E)\right|, \left| \log \frac{m_{\sigma_2}(E)}{m_{\sigma_1}(E)}\right| \right\}.
\end{equation*}

\begin{theorem}[\textit{cf.} {\cite[Theorem 1.2]{bridgelandStabilityConditionsTriangulated2007}}]\label{Bridgelandmanifold}
	The space of stability conditions $\Stab_\Lambda(\DD)$ has the natural structure of a complex manifold of dimension $\rank(\Lambda)$. The forgetful map $\cZ$ defines the local homeomorphism
	\begin{align*}
		\ZZ\colon \Stab_\Lambda(\DD) &\longrightarrow \Hom_\Z(\Lambda,\C)\\
		\sigma = (\PP,Z) &\longmapsto Z.
	\end{align*}
\end{theorem}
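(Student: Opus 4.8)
The plan is to follow Bridgeland's original argument from \cite{bridgelandStabilityConditionsTriangulated2007}, reorganised around the support property that is built into \cref{defn: numerical Bridgeland stability condition}. The essential content is the assertion that $\ZZ$ is a local homeomorphism; once this is established, the complex manifold structure is a formal consequence.

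First I would attach to each $\sigma=(\cP,Z)\in\Stab_\Lambda(\DD)$ a norm on the underlying real vector space of $\Hom_\Z(\Lambda,\C)$, namely
\[
    \|U\|_\sigma \;:=\; \sup\left\{\, \frac{|U(E)|}{|Z(E)|} \;:\; 0\neq E\in\DD \text{ is } \sigma\text{-semistable} \,\right\}.
\]
Finiteness of this supremum is precisely where the support property enters: if $Q$ is a quadratic form as in \cref{defn: support property}, then $Q\geq 0$ on every $\sigma$-semistable class while $\Ker Z\otimes\R$ is negative definite for $Q$, so the $\sigma$-semistable classes, rescaled to have $|Z|=1$, lie in a region bounded away from the kernel direction, on which $|U|/|Z|$ is bounded. (A byproduct is that $\sigma$ is locally finite, which is what the deformation argument below requires.) Since $\Hom_\Z(\Lambda,\C)$ is finite-dimensional, $\|\cdot\|_\sigma$ is equivalent to any fixed norm and hence induces the standard topology on the target of $\ZZ$.

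The crux is then the deformation proposition, proved following \cite{bridgelandStabilityConditionsTriangulated2007}: for each $\sigma=(\cP,Z)$ there is $\epsilon_0=\epsilon_0(\sigma)>0$ such that whenever $W\in\Hom_\Z(\Lambda,\C)$ satisfies $\|W-Z\|_\sigma<\sin(\pi\epsilon_0)$, there is a \emph{unique} slicing $\cQ$ making $\tau=(\cQ,W)$ a stability condition, this $\tau$ again satisfies the support property (for a quadratic form close to $Q$), and $d(\sigma,\tau)\to 0$ as $\|W-Z\|_\sigma\to 0$. To build $\cQ$ one fixes a small $\epsilon$ and, for each phase $\psi$, works inside the quasi-abelian subcategory $\cP((\psi-\epsilon,\psi+\epsilon))$: since $W$ is close to $Z$ it restricts to a stability function there whose phases land in a slightly larger interval, and local finiteness of $\sigma$ provides Harder--Narasimhan filtrations with respect to $W$. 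One then declares $\cQ(\psi)$ to consist of the $W$-semistable objects of the appropriate $W$-phase inside $\cP((\psi-\epsilon,\psi+\epsilon))$, checks the slicing axioms of \cref{slicing} and the compatibility with $W$ of \cref{Bridgeland pre stab}(2), and verifies that no other slicing compatible with $W$ is $\epsilon$-close to $\cP$. The distance bound comes out of the construction, since the $\tau$-HN factors of any object are assembled from $\sigma$-semistable pieces whose phases move by $O(\|W-Z\|_\sigma)$.

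Granting the deformation proposition, the remainder is bookkeeping. Uniqueness of $\cQ$ shows $\ZZ$ is injective on a neighbourhood $U_\sigma$ of $\sigma$; existence of $\cQ$ shows $\ZZ(U_\sigma)$ contains an open ball around $Z$; and the bound $d(\sigma,\tau)\to 0$, together with the reverse estimate that $\|W-Z\|_\sigma$ is controlled by $d(\sigma,\tau)$ (a direct computation from the definition of the metric $d$ applied to $\sigma$-semistable objects), shows $\ZZ|_{U_\sigma}$ is a homeomorphism onto its image. Thus $\ZZ$ is a local homeomorphism onto the complex vector space $\Hom_\Z(\Lambda,\C)$ of complex dimension $\rank(\Lambda)$. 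The charts $(U_\sigma,\ZZ|_{U_\sigma})$ form an atlas whose transition maps are restrictions of the identity of $\Hom_\Z(\Lambda,\C)$, hence holomorphic, and the topology is Hausdorff because $d$ separates points; this gives $\Stab_\Lambda(\DD)$ the structure of a complex manifold of dimension $\rank(\Lambda)$ with $\ZZ$ as the stated local homeomorphism. The main obstacle is the deformation proposition itself --- the explicit construction of the perturbed slicing $\cQ$ through the quasi-abelian categories $\cP(I)$ and the verification that it is again a slicing, along with checking that the support property persists under the perturbation; the norm bookkeeping and the passage to a manifold are routine.
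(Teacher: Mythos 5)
Your sketch is correct and is essentially the standard argument: the paper itself gives no proof of \cref{Bridgelandmanifold}, quoting it from \cite[Theorem 1.2]{bridgelandStabilityConditionsTriangulated2007} (with the support-property formulation handled exactly as you do, via the norm $\|\cdot\|_\sigma$ and the persistence of the quadratic form under deformation, cf.\ \cite[Appendix A]{bayerSpaceStabilityConditions2016} and \cite[Theorem 1.2]{bayerShortProofDeformation2019}, which the paper invokes in \cref{remark: locally-finite defn} and \cref{prop: deformation property}). So your proposal reproduces the cited proof rather than diverging from it, and the only step that genuinely carries weight --- the deformation proposition constructing the perturbed slicing inside the quasi-abelian categories $\cP((\psi-\epsilon,\psi+\epsilon))$ --- is correctly identified and correctly attributed.
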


In other words, the central charge gives a local system of coordinates for the stability manifold.

\begin{remark}\label{remark: locally-finite defn}
	Theorem~\ref{Bridgelandmanifold} was originally stated for locally finite stability conditions: suppose $\sigma=(\PP,Z)$ is a pre-stability condition and there exists an $\varepsilon>0$ such that $\PP(\phi-\varepsilon,\phi+\varepsilon)$ is a quasi-abelian category of finite length for all $\phi\in\R$; then $\sigma$ is called \textit{locally finite}; see \cite[Definition 5.7]{bridgelandStabilityConditionsTriangulated2007}. The support property implies local finiteness; see \cite[Appendix A]{bayerSpaceStabilityConditions2016} for details. Denote by $\Stablf(\DD)$ the space of all locally finite stability conditions on $\DD$.
\end{remark}

\begin{remark}[\textit{cf.} {\cite[Remark 5.14]{macriLecturesBridgelandStability2017}}]\label{rem: GLcov action on Stab}
	There is a right action on $\Stab(\DD)$ by the universal cover $\GLcov$ of $\GL^+_2(\R)$; see \cite[Remark 5.14]{macriLecturesBridgelandStability2017} for details. If we consider $\C^\ast$ as a subgroup of $\GL_2^+(\R)$, then this induces an action of $\widetilde{\C^\ast}=\C$ on $\Stab(\DD)$.
\end{remark}

There is an equivalent characterisation of Bridgeland stability conditions, which uses the notion of a $t$\nobreakdash-structure on a triangulated category. The theory of $t$-structures was first introduced in \cite[Section~1.3]{beilinsonFaisceauxPervers1982}. We first need the following definitions.

\begin{definition}\label{defn: heart}
	A \textit{heart of a bounded} $t$\textit{-structure} in $\DD$ is a full additive subcategory $\AA$ such that
	\begin{enumerate}[label=(\arabic*)]
		\item if $k_1>k_2$, then $\Hom_\cD(\AA[k_1],\AA[k_2])=0$;
		\item for any object $E$ in $\DD$, there are integers $k_1>k_2>\cdots > k_n$ and a sequence of exact triangles
		\begin{center}
			\begin{tikzcd}[column sep=tiny]
				0=E_0 \arrow[rr] &                        & E_1 \arrow[rr] \arrow[ld] &                        & E_2 \arrow[rr] \arrow[ld] &  & \cdots \arrow[rr] &  & E_{n-1} \arrow[rr] &                        & E_n=E \arrow[ld] \\
				& A_1 \arrow[lu, dotted] &                           & A_2 \arrow[lu, dotted] &                           &  &                   &  &                    & A_n \arrow[lu, dotted] &                 
			\end{tikzcd}
		\end{center}
		such that $A_i\in \AA[k_i]$ for $1\leq i \leq n$.
	\end{enumerate} 
\end{definition}

\begin{definition}[\textit{cf.} {\cite[Definitions 2.1 and~2.2]{bridgelandStabilityConditionsTriangulated2007}}] \label{stabilityfunction}
	Let $\AA$ be an abelian category. A \textit{stability function} for $\AA$ is a group homomorphism $Z\colon K(\AA)\rightarrow \C$ such that for every non-zero object $E$ of $\AA$, \begin{equation*}
		Z([E])\in\H\coloneqq \{m\cdot e^{i\pi\phi} \mid m\in\R_{>0}, \phi\in(0,1]\}\subset \C.
	\end{equation*}
	For every non-zero object $E$, we define the \textit{phase} by $\phi(E)=\frac{1}{\pi}\arg (Z([E]))\in(0,1]$. We say an object $E$ is \textit{Z-stable} (resp.\ \textit{Z-semistable}) if $E\neq0$ and for every proper non-zero subobject $A$, we have $\phi(A)<\phi(E)$ (resp.\ $\phi(A)\leq \phi(E)$).
\end{definition}

\begin{definition}[\textit{cf.} {\cite[Definition 2.3]{bridgelandStabilityConditionsTriangulated2007}}]\label{HN property}
	Let $\AA$ be an abelian category, and let $Z\colon K(\AA)\rightarrow \C$ be a stability function on $\AA$. A \textit{Harder--Narasimhan} (\textit{HN}$\mkern1mu$) \textit{filtration} of a non-zero object $E$ of $\AA$ is a finite chain of subobjects in $\cA$,
	\begin{equation*}
		0=E_0\subset E_1\subset \cdots E_{n-1} \subset E_n=E,
	\end{equation*}
	such that each factor $F_i=E_i/E_{i-1}$ (called a \textit{Harder--Narasimhan factor}) is a $Z$-semistable object of $\AA$ and $\phi(F_1)>\phi(F_2)>\cdots > \phi(F_n)$. Moreover, we say that $Z$ has the \textit{Harder--Narasimhan property} if every non-zero object of $\AA$ has a Harder--Narasimhan filtration. 
\end{definition}

\begin{proposition}[\textit{cf.} {\cite[Proposition 5.3]{bridgelandStabilityConditionsTriangulated2007}}]\label{Bridgelandheart}
	To give a Bridgeland pre-stability condition $(\PP,Z)$ on $\DD$ is equivalent to giving a pair $(Z_\AA,\AA)$, where $\AA$ is the heart of a bounded $t$-structure on $\cD$ and $Z_\AA$ is a stability function for $\AA$ which has the Harder--Narasimhan property.
	
	Moreover, $(\PP,Z)$ is a numerical Bridgeland stability condition if and only if $Z_\AA$ factors via $\Knum(\DD)$ and satisfies the support property $($Definition~{\rm\ref{defn: numerical Bridgeland stability condition}}\eqref{d:nBsc-1}$)$ for $Z_\AA$-semistable objects.
\end{proposition}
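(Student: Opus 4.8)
The plan is to give the two constructions explicitly and then check that they are mutually inverse, in the spirit of Bridgeland's original argument.

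\textbf{From $(\PP,Z)$ to $(Z_\AA,\AA)$.} Given a pre-stability condition $(\PP,Z)$, I would set $\AA\coloneqq\PP((0,1])$. Axiom (2) of \cref{slicing} immediately yields $\Hom_\DD(\AA[k_1],\AA[k_2])=0$ for $k_1>k_2$, since every object of $\AA[k]$ has all its HN factors of phase in $(k,k+1]$. For the second axiom of \cref{defn: heart}, take $E\in\DD$, take its HN filtration from \cref{slicing}(3), and group consecutive HN factors according to the half-open unit interval $(k,k+1]$ containing their phases; the corresponding partial cones lie in $\AA[k]$. This in particular identifies $\K(\AA)$ with $\K(\DD)$, so $Z$ restricts to a group homomorphism $Z_\AA$ on $\K(\AA)$. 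For $0\neq E\in\AA$ the HN factors of $E$ lie in $\PP(\phi)$ with $\phi\in(0,1]$, so $Z(E)$ is a finite sum of nonzero elements of $\H$; since such a sum again lies in $\H$, we get $Z_\AA(E)\in\H$, so $Z_\AA$ is a stability function. Its HN property follows by restricting the HN filtration of an object of $\AA$ and observing, via the standard fact that a monomorphism in a heart extends to a distinguished triangle with all three terms in that heart, that these HN factors assemble into a chain of subobjects in $\AA$.

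\textbf{From $(Z_\AA,\AA)$ to $(\PP,Z)$.} Conversely, define $\PP(\phi)$ for $\phi\in(0,1]$ to consist of $0$ together with the $Z_\AA$-semistable objects of $\AA$ of phase $\phi$, and extend by $\PP(\phi+1)=\PP(\phi)[1]$. The first slicing axiom holds by construction. For the vanishing axiom I would use the cohomology functors $H^i$ of the bounded $t$-structure with heart $\AA$ to reduce $\Hom_\DD(F_1,F_2)=0$, for $\phi(F_1)>\phi(F_2)$, to the case of $Z_\AA$-semistable objects of $\AA$ of strictly decreasing phase, where it is the definition of $Z_\AA$-stability together with \cref{HN property}. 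For the HN axiom, given $E\in\DD$ I would first take the finite filtration coming from the $t$-structure, with factors $H^i(E)[-i]\in\AA[-i]$, and then refine each $H^i(E)$ by its $Z_\AA$-HN filtration in $\AA$; splicing produces a filtration whose factors are semistable of strictly decreasing phase, and one checks this is the (necessarily unique) HN filtration. Finally, using $\K(\AA)=\K(\DD)$, put $Z\coloneqq Z_\AA$; then for $0\neq E\in\PP(\phi)$ one has $Z(E)=m(E)e^{i\pi\phi}$ by construction when $\phi\in(0,1]$, and by shifting in general.

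\textbf{Mutual inverse and the numerical refinement.} A direct check shows the two constructions are inverse to one another: $\PP((0,1])$ recovers $\AA$, and the semistable objects of phase in $(0,1]$ of the reconstructed slicing are exactly the $Z_\AA$-semistable objects of $\AA$. For the last assertion, every $\sigma$-semistable object of $\DD$ is a shift $E[n]$ of a $Z_\AA$-semistable object $E\in\AA$, and for a quadratic form $Q$ on $\Lambda\otimes\R$ one has $Q(\lambda(E[n]))=Q((-1)^n\lambda(E))=Q(\lambda(E))$; hence the support property of \cref{defn: numerical Bridgeland stability condition}(1) holds for all $\sigma$-semistable objects of $\DD$ if and only if $Q(\lambda(\cdot))\geq 0$ holds for all $Z_\AA$-semistable objects of $\AA$, and $Z$ factors through $\Knum(\DD)$ if and only if $Z_\AA$ does. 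I expect the main obstacle to be the HN axiom in the ``heart to slicing'' direction — assembling the filtration in $\DD$ from the $t$-structure truncations together with the HN filtrations in $\AA$, and proving its uniqueness — and the accompanying d\'evissage needed for the Hom-vanishing axiom; the translation between short exact sequences in $\AA$ and distinguished triangles in $\DD$ with all vertices in $\AA$ is the technical glue that makes HN filtrations in $\AA$ and in $\DD$ compatible, and it must be set up carefully.
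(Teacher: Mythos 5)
Your proposal is correct and follows exactly the standard argument of Bridgeland's Proposition 5.3, which is what the paper relies on: the paper itself gives no proof of this statement, only the citation. The one step you gloss (that objects of $\PP(\phi)$ for $\phi\in(0,1]$ are precisely the $Z_\AA$-semistable objects of $\AA$, needed both for the HN property of $Z_\AA$ and for the mutual-inverse check) is the standard Hom-vanishing/image-factorisation argument and does not affect the correctness of the approach.
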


\subsection{Inducing stability conditions}\label{abstract inducing subsection: inducing stability conditions}

Suppose a finite group $G$ with $(\Char(k),|G|)=1$ acts on $\cD$ by exact autoequivalences $\Phi_g$. This induces an action on the stability manifold via $\Phi_g\cdot(\PP,Z)=(\Phi_g(\PP),Z\circ (\Phi_g)^{-1}_\ast)$, where $(\Phi_g)_\ast\colon \K(\DD) \rightarrow \K(\DD)$ is the natural morphism induced by $\Phi_g$. We say that a stability condition $\sigma$ is $G$\textit{-invariant} if $\Phi_g\cdot \sigma = \sigma$. Write $(\Stablf(\DD))^G$ for the space of all $G$-invariant locally finite stability conditions.

Let $\sigma\in(\Stablf(\DD))^G$. By Lemma~\ref{Forg is faithful and adjoint} and Proposition~\ref{Forg is exact}, $\Forg_{G}\colon\DD_G\rightarrow \DD$ is exact and faithful. This means we can apply the construction from \cite[Section~2.2]{macriInducingStabilityConditions2009}, which induces a (locally finite) stability condition on $\cD_G$ as follows.

Define $\Forg_{G}^{-1}(\sigma)\coloneqq \sigma_G=(\PP_{\sigma_G},Z_{\sigma_G})$, where
\begin{align*}
	\PP_{\sigma_G}(\phi) &\coloneqq \left\{\EE\in\DD_G : \Forg_{G}(\EE)\in\PP_{\sigma}(\phi)\right\},\\
	Z_{\sigma_G} &\coloneqq  Z_{\sigma} \circ \left(\Forg_{G}\right)_\ast.
\end{align*}
Here $(\Forg_{G})_\ast\colon \K(\DD_G) \rightarrow \K(\DD)$ is the natural morphism induced by $\Forg_{G}$. 

\begin{proposition}[\textit{cf.} {\cite[Theorem 2.14]{macriInducingStabilityConditions2009}}]\label{Forg_G induces a stability condition}
	Suppose $G$ acts on $\cD$ and $\sigma=(\PP,Z)\in(\Stablf(\DD))^G$. Then $\Forg_{G}^{-1}(\sigma)\in\Stablf(\DD_G)$.
\end{proposition}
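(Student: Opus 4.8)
The plan is to verify the hypotheses of Macrì–Mehrotra–Stellari's inducing construction \cite[Theorem 2.14]{macriInducingStabilityConditions2009} and then invoke it directly. That theorem takes an exact, faithful functor between triangulated categories together with a (locally-finite) stability condition on the target, and—provided the functor interacts well with the slicing—produces an induced stability condition on the source. Here the functor is $\Forg_{G}\colon \DD_G\to\DD$, which is exact by \cref{Forg is exact} (using the DG-enhancement of $\DD$) and faithful by \cref{Forg is faithful and adjoint}. So the first step is simply to record that $\Forg_{G}$ satisfies the standing hypotheses of the inducing machinery.

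The substantive content is then to check that the pair $(\PP_{\sigma_G}, Z_{\sigma_G})$ defined in the statement really is a locally-finite stability condition, i.e.\ that it is the output of the MMS construction applied to $\sigma$. Concretely I would proceed as follows. First, observe that $Z_{\sigma_G} = Z_\sigma\circ(\Forg_{G})_\ast$ is a group homomorphism $\K(\DD_G)\to\C$, being a composite of such. Second, check that $\PP_{\sigma_G}(\phi)$ as defined—objects of $\DD_G$ whose underlying object lies in $\PP_\sigma(\phi)$—forms a slicing: axiom (1) is immediate since $\Forg_{G}$ commutes with the shift; axiom (2) follows because $\Forg_{G}$ is faithful, so a nonzero $\Hom_{\DD_G}(\EE_1,\EE_2)$ would give a nonzero $\Hom_\DD(\Forg_{G}\EE_1,\Forg_{G}\EE_2)$, contradicting the slicing property of $\PP_\sigma$ when $\phi_1>\phi_2$; axiom (3), the existence of HN filtrations in $\DD_G$, is exactly where one needs the full strength of \cite[\S2.2]{macriInducingStabilityConditions2009}—one lifts the HN filtration of $\Forg_{G}(\EE)$ in $\DD$ to a filtration in $\DD_G$ using that $\Forg_{G}$ has both a left and a right adjoint ($\Inf_{G}$, by \cref{Forg is faithful and adjoint}) and that $G$ acts by exact autoequivalences, so the $G$-action permutes the HN factors of $\Forg_{G}(\EE)$ and one can average/equivariantize. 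Third, check the compatibility $Z_{\sigma_G}(\EE)=m(\EE)e^{i\pi\phi}$ for $\EE\in\PP_{\sigma_G}(\phi)$, which is immediate from $Z_{\sigma_G}(\EE)=Z_\sigma(\Forg_{G}\EE)$ and $\Forg_{G}\EE\in\PP_\sigma(\phi)$. Finally, transport local-finiteness: since $\Forg_{G}$ is exact and faithful and $\PP_{\sigma_G}(I)=\Forg_{G}^{-1}(\PP_\sigma(I))$ for intervals $I$, the quasi-abelian categories $\PP_{\sigma_G}(\phi-\varepsilon,\phi+\varepsilon)$ embed faithfully into the finite-length categories $\PP_\sigma(\phi-\varepsilon,\phi+\varepsilon)$, and a faithful exact functor reflects finite length, so local-finiteness descends with the same $\varepsilon$.

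The only genuine obstacle is the existence of HN filtrations in $\DD_G$—equivalently, verifying that $\Forg_{G}$ is what \cite{macriInducingStabilityConditions2009} call a functor admitting induced stability conditions. Since \cite[\S2.2, Theorem 2.14]{macriInducingStabilityConditions2009} is stated precisely for exact faithful functors with adjoints of this type (their motivating example being group quotients), the cleanest route is to observe that $\Forg_{G}$ falls verbatim into their framework—exact by \cref{Forg is exact}, faithful by \cref{Forg is faithful and adjoint}, with $\Inf_{G}$ as a two-sided adjoint—and that our $(\PP_{\sigma_G},Z_{\sigma_G})$ is by construction their induced stability condition. Thus the proof reduces to:

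\begin{proof}
	By \cref{Forg is exact} (using that $\DD$, and hence $\DD_G$, has a DG-enhancement) the functor $\Forg_{G}\colon\DD_G\to\DD$ is exact, and by \cref{Forg is faithful and adjoint} it is faithful with two-sided adjoint $\Inf_{G}$. Since $G$ acts on $\DD$ by exact autoequivalences, $\Forg_{G}$ together with $\sigma\in(\Stablf(\DD))^G$ satisfies all the hypotheses of \cite[\S2.2]{macriInducingStabilityConditions2009}, and the induced data produced there is precisely $\sigma_G=(\PP_{\sigma_G},Z_{\sigma_G})$ as defined above: one has $Z_{\sigma_G}=Z_\sigma\circ(\Forg_{G})_\ast$, the collection $\PP_{\sigma_G}(\phi)=\Forg_{G}^{-1}(\PP_\sigma(\phi))$ is a slicing (axioms (1) and the vanishing axiom being immediate from exactness and faithfulness, and HN filtrations being lifted via the adjoint $\Inf_{G}$), and the central-charge compatibility $Z_{\sigma_G}(\EE)=m(\EE)e^{i\pi\phi}$ for $\EE\in\PP_{\sigma_G}(\phi)$ follows from $Z_{\sigma_G}(\EE)=Z_\sigma(\Forg_{G}\EE)$. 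Local-finiteness of $\sigma_G$ follows from that of $\sigma$ with the same $\varepsilon$: for each $\phi\in\R$ the quasi-abelian category $\PP_{\sigma_G}(\phi-\varepsilon,\phi+\varepsilon)$ embeds, via the exact faithful functor $\Forg_{G}$, into $\PP_\sigma(\phi-\varepsilon,\phi+\varepsilon)$, which has finite length, and a faithful exact functor reflects finite length. Hence \cite[Theorem 2.14]{macriInducingStabilityConditions2009} applies and $\Forg_{G}^{-1}(\sigma)=\sigma_G\in\Stablf(\DD_G)$.
\end{proof}
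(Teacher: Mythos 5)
Your overall strategy is the same as the paper's: reduce everything to \cite[Theorem 2.14]{macriInducingStabilityConditions2009}, after noting that $\Forg_{G}$ is exact (\cref{Forg is exact}) and faithful with two-sided adjoint $\Inf_{G}$ (\cref{Forg is faithful and adjoint}). However, there is a genuine gap: Theorem 2.14 of Macr\`i--Mehrotra--Stellari is not an unconditional statement about every exact faithful functor with adjoints and every locally-finite stability condition downstream; besides the standing assumptions on the categories and the functor, it carries a hypothesis involving the stability condition $\sigma$ itself, and verifying that hypothesis is the entire mathematical content of this proposition. Your proof conflates the two: the sentence ``$\Forg_{G}$ together with $\sigma\in(\Stablf(\DD))^G$ satisfies all the hypotheses of [MMS \S2.2]'' asserts exactly what has to be proved. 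Indeed, for a non-$G$-invariant $\sigma$ the induced pair $(\PP_{\sigma_G},Z_{\sigma_G})$ typically fails to admit Harder--Narasimhan filtrations, so some argument must convert $G$-invariance into the condition MMS require.

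The paper's proof supplies precisely this bridge, in one line that your write-up never carries out: for $\EE\in\PP_{\sigma}(\phi)$ one has $\Forg_{G}(\Inf_{G}(\EE))=\bigoplus_{g\in G}\Phi_g(\EE)$; each summand $\Phi_g(\EE)$ lies in $\PP_{\sigma}(\phi)$ because $\sigma$ is $G$-invariant, and the direct sum lies in $\PP_{\sigma}(\phi)$ because $\PP_{\sigma}(\phi)$ is extension-closed. This is the only place $G$-invariance enters, and it is what licenses the citation of Theorem 2.14 (whose conclusion then already includes the HN property and local-finiteness, so your re-derivations of the slicing axioms, the central-charge compatibility and the finite-length transfer are redundant rather than wrong). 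By contrast, the mechanism you gesture at in the preamble --- lifting the HN filtration of $\Forg_{G}(\EE)$ to $\DD_G$ by noting that ``the $G$-action permutes the HN factors and one can average/equivariantize'' --- is neither substantiated nor how the argument actually runs; the relevant check concerns the composite $\Forg_{G}\circ\Inf_{G}$ applied to $\sigma$-semistable objects of $\DD$, not an equivariantization of HN factors. Adding that short verification would close the gap and make your proof essentially identical to the paper's.
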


\begin{proof}
	By \cref{Forg is exact} and our assumptions on $\DD$, it follows that $\DD_G$ is a triangulated category and that the assumptions stated before \cite[Theorem 2.14]{macriInducingStabilityConditions2009} are satisfied.

	Suppose $\EE\in\PP(\phi)$. Then $\Forg_{G}(\Inf_{G}(\EE))=\bigoplus_{g\in G}\Phi_g(\EE)$. Since $\sigma$ is $G$-invariant, $\Phi_g(\EE)\in\PP_\sigma(\phi)$ for all $g\in G$. Moreover, $\PP_{\sigma}(\phi)$ is extension closed; hence $\bigoplus_{g\in G}\Phi_g(\EE) \in\PP_{\sigma}(\phi)$. The result then follows from \cite[Theorem 2.14]{macriInducingStabilityConditions2009}.
\end{proof}

\begin{lemma}\label{Induced stability condition is G hat invariant}
  	Suppose $G$ is abelian and acts on $\cD$. Consider the action of\, $\widehat{G}$ on $\DD_G$ by tensoring as in Proposition~{\rm\ref{defn of G hat action}}. Then $\Forg_{G}^{-1}(\sigma)$ is $\widehat{G}$-invariant.
\end{lemma}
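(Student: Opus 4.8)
The plan is to exploit the fact, recorded in Proposition \ref{defn of G hat action}, that the functor $\phi_\chi$ implementing the residual $\widehat G$-action leaves the underlying object of a $G$-equivariant object untouched and acts as the identity on morphisms: it only rescales the linearisation, $\theta_h\mapsto \chi(h)\,\theta_h$. Concretely, I would first record the identity of functors
\[
	\Forg_G\circ\phi_\chi=\Forg_G\colon \DD_G\longrightarrow\DD,
\]
valid for every $\chi\in\widehat G$, together with the observation that $\phi_\chi$ is an exact autoequivalence of $\DD_G$: it is invertible (its inverse is $\phi_{\chi^{-1}}$, since $\phi_\chi\phi_{\chi^{-1}}$ is the identity functor with identity structure isomorphisms), and it is exact because $\Forg_G$ is exact and, by the construction of the triangulated structure on $\DD_G$ in Proposition \ref{Forg is exact}, detects distinguished triangles. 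This exactness is precisely what makes the $\widehat G$-action on $\Stab(\DD_G)$ (and on $\K(\DD_G)$) well defined in the first place.

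Granting this, invariance of $\Forg_G^{-1}(\sigma)=(\PP_{\sigma_G},Z_{\sigma_G})$ follows by unwinding the two pieces of data. For the slicing: since $\Forg_G(\phi_\chi(\EE))=\Forg_G(\EE)$, an object $\EE$ lies in $\PP_{\sigma_G}(\phi)$ if and only if $\phi_\chi(\EE)$ does, so $\phi_\chi\big(\PP_{\sigma_G}(\phi)\big)=\PP_{\sigma_G}(\phi)$ for all $\phi\in\R$. For the central charge: applying $\K(-)$ to the functor identity above and using functoriality of $E\mapsto E_\ast$ gives $(\Forg_G)_\ast\circ(\phi_\chi)_\ast=(\Forg_G)_\ast$ on $\K(\DD_G)$, hence $(\Forg_G)_\ast\circ(\phi_\chi)_\ast^{-1}=(\Forg_G)_\ast$, and therefore
\[
	Z_{\sigma_G}\circ(\phi_\chi)_\ast^{-1}=Z_\sigma\circ(\Forg_G)_\ast\circ(\phi_\chi)_\ast^{-1}=Z_\sigma\circ(\Forg_G)_\ast=Z_{\sigma_G}.
\]
Combining the two, $\phi_\chi\cdot(\PP_{\sigma_G},Z_{\sigma_G})=\big(\phi_\chi(\PP_{\sigma_G}),Z_{\sigma_G}\circ(\phi_\chi)_\ast^{-1}\big)=(\PP_{\sigma_G},Z_{\sigma_G})$ for every $\chi\in\widehat G$, i.e. $\Forg_G^{-1}(\sigma)$ is $\widehat G$-invariant.

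There is essentially no hard step: the assertion is a formal consequence of the way the $\widehat G$-action is defined, which by design is "invisible" to the forgetful functor through which $\Forg_G^{-1}(\sigma)$ is built. The only point needing a line of justification is that $\phi_\chi$ is genuinely an exact autoequivalence of the triangulated category $\DD_G$, so that it acts on $\Stab(\DD_G)$ and on $\K(\DD_G)$; I would settle this via the exactness and triangle-detection properties of $\Forg_G$ noted above. Everything else is bookkeeping.
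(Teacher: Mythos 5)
Your proposal is correct and follows essentially the same route as the paper: both arguments rest on the single observation that the $\widehat{G}$-action only rescales the linearisation and leaves the underlying object unchanged, so it is invisible to $\Forg_G$, which immediately gives invariance of both the slicing $\PP_{\sigma_G}$ and the central charge $Z_{\sigma_G}=Z_\sigma\circ(\Forg_G)_\ast$. Your extra remark on $\phi_\chi$ being an exact autoequivalence is a harmless elaboration of what the paper leaves implicit.
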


\begin{proof}
	First note that, for every class $[\EE]=[(E,(\theta_g))]\in\Knum(\DD_G)$, we have $(\Forg_{G})_\ast([(E,(\theta_g))])=[E]$. Hence $Z_{\sigma_G}([\EE])=Z_\sigma \circ (\Forg_{G})_\ast([(E,(\theta_g))])=Z_\sigma([E])$, where $[E]\in\Knum(\DD)$. Moreover, from the definition of $\PP_{\sigma_{G}}$, we have
	\begin{align*}
		\PP_{\sigma_{G}}(\phi)&= \left\{\EE\in\DD_G : \Forg_{G}(\EE)\in\PP_{\sigma}(\phi)\right\}\\
		&= \left\{\left(E,\left(\theta_g\right)\right)\in\DD_G : E\in\PP_{\sigma}(\phi)\right\}.
	\end{align*}
	In particular, since the action of $\widehat{G}$ on $(E,(\theta_g))\in\DD_G$ does not change $E$, it follows that the central charge $Z_{\sigma_{G}}$ and slicing $\PP_{\sigma_{G}}$ are $\widehat{G}$-invariant, and hence $\sigma_G\in(\Stablf(\DD_G))^{\widehat{G}}$.
\end{proof}

\begin{proposition}[\textit{cf.} {\cite[Proposition 2.17]{macriInducingStabilityConditions2009}}]\label{Forg is continuous and image is closed}
		Under the hypotheses of Proposition~\ref{Forg_G induces a stability condition}, the morphism $\Forg_{G}^{-1}\colon (\Stablf(\DD))^G\rightarrow (\Stablf(\DD_G))^{\widehat{G}}$ is continuous, and the image of $\Forg_{G}^{-1}$ is a closed embedded sub\-ma\-ni\-fold.
\end{proposition}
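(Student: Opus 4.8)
The plan is to bootstrap from \cite[Proposition 2.17]{macriInducingStabilityConditions2009} together with \cref{Induced stability condition is G hat invariant}. As observed in the proof of \cref{Forg_G induces a stability condition}, \cref{Forg is exact} guarantees that $\DD_G$ is triangulated with $\Forg_G$ exact and faithful, so all the standing hypotheses of \cite[\S2.2]{macriInducingStabilityConditions2009} are met; \cite[Proposition 2.17]{macriInducingStabilityConditions2009} then tells us that $\Forg_G^{-1}\colon (\Stablf(\DD))^G\rightarrow \Stablf(\DD_G)$ is continuous and that its image is a closed embedded submanifold of $\Stablf(\DD_G)$. By \cref{Induced stability condition is G hat invariant} this image is contained in $(\Stablf(\DD_G))^{\widehat G}$, so it remains to transfer the three conclusions (continuity, closedness, embedded submanifold) from the ambient space $\Stablf(\DD_G)$ to the $\widehat G$-invariant locus.

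The key preliminary step is to record that $(\Stablf(\DD_G))^{\widehat G}$ is itself a closed embedded complex submanifold of $\Stablf(\DD_G)$. Being the fixed locus of a finite group acting by homeomorphisms of a Hausdorff space, it is closed. Moreover, the local homeomorphism $\cZ$ of \cref{Bridgelandmanifold} intertwines the residual $\widehat G$-action $\phi_\chi\cdot(\PP,Z)=(\phi_\chi(\PP),Z\circ(\phi_\chi)^{-1}_\ast)$ on $\Stablf(\DD_G)$ with the linear $\widehat G$-action on $\Hom_\Z(\Knum(\DD_G),\C)$ given by precomposition with $(\phi_\chi)^{-1}_\ast$. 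Since the fixed locus of a finite group acting linearly on a finite-dimensional complex vector space is a linear subspace $V\subseteq\Hom_\Z(\Knum(\DD_G),\C)$, and $(\Stablf(\DD_G))^{\widehat G}$ is locally identified via $\cZ$ with $\cZ^{-1}(V)$, it is a closed complex submanifold, and $\cZ$ restricts to a local homeomorphism $(\Stablf(\DD_G))^{\widehat G}\rightarrow V$.

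With this in hand the conclusion is formal. The corestriction $f$ of $\Forg_G^{-1}$ to $(\Stablf(\DD_G))^{\widehat G}$ is continuous because the inclusion $(\Stablf(\DD_G))^{\widehat G}\hookrightarrow\Stablf(\DD_G)$ is a topological embedding; the image of $f$ equals that of $\Forg_G^{-1}$, which is closed in $\Stablf(\DD_G)$ and hence closed in the subspace $(\Stablf(\DD_G))^{\widehat G}$; and an embedded submanifold of $\Stablf(\DD_G)$ contained in the embedded submanifold $(\Stablf(\DD_G))^{\widehat G}$ is again an embedded submanifold of the latter. I do not anticipate a serious obstacle: the argument is an assembly of the cited result with \cref{Induced stability condition is G hat invariant}. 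The only point that needs care is the middle paragraph — checking that the $\widehat G$-fixed locus carries a compatible complex-submanifold structure, and that the submanifold structure on the image produced by \cite[Proposition 2.17]{macriInducingStabilityConditions2009} (which is built in local coordinates through $\cZ$) is the one induced from this locus and not merely from the ambient space. Alternatively, one can bypass the ambient space by rerunning the local-coordinate argument of \cite[Proposition 2.17]{macriInducingStabilityConditions2009} verbatim with $\cZ$ replaced by its restriction to $(\Stablf(\DD_G))^{\widehat G}$.
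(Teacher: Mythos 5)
Your proposal is correct in substance and rests on the same pivot as the paper, namely \cite[Proposition 2.17]{macriInducingStabilityConditions2009}, but the two arguments are packaged differently. The paper's proof is essentially one sentence: it observes that the cited proposition is proved only in the geometric setting ($\DD=\DbX$ with $\Phi_g=g^\ast$ induced by a $G$-action on a variety), and that its proof goes through verbatim once $g^\ast$ is replaced by the exact autoequivalences $\Phi_g$; the image statement is then read inside $\Stablf(\DD_G)$, exactly as in Macr\`i--Mehrotra--Stellari, with \cref{Induced stability condition is G hat invariant} only serving to justify writing the codomain as $(\Stablf(\DD_G))^{\widehat{G}}$. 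So your opening claim that the proposition ``applies as stated'' because the standing hypotheses of \cite[\S2.2]{macriInducingStabilityConditions2009} are met is a slight overstatement at the level of citation --- the correct move, which the paper makes explicit, is to rerun that proof in the abstract setting --- though the adaptation is mechanical and your argument is not damaged by it. What you add beyond the paper is the middle paragraph: the verification that $(\Stablf(\DD_G))^{\widehat{G}}$ is itself a closed complex submanifold, so that continuity, closedness and the embedded-submanifold property can be transferred from the ambient stability manifold to the fixed locus. This is not needed for the paper's reading of the statement, but it does buy the stronger conclusion that the image is a closed embedded submanifold of the $\widehat{G}$-invariant locus. The one point there that deserves more than a gloss is the claim that the fixed locus is locally identified via $\cZ$ with $\cZ^{-1}(V)$: having $\widehat{G}$-invariant central charge does not by itself force invariance of the slicing, so you need to work near a fixed point inside a $\widehat{G}$-invariant open set on which $\cZ$ is injective (such a set exists since $\widehat{G}$ is finite and acts by homeomorphisms commuting with $\cZ$), and conclude invariance from $\cZ(\phi_\chi\cdot\tau)=\cZ(\tau)$ by injectivity; with that detail supplied, your transfer argument is sound.
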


\begin{proof}
	The proof of \cite[Proposition 2.17]{macriInducingStabilityConditions2009} is for the action of a finite group $G$ on $\DbX$, induced by the action of $G$ on $X$, a variety over $\C$ (\textit{i.e.}~$\Phi_g=g^\ast)$. The result follows in our setting by replacing this with the action of exact autoequivalences $\Phi_g$ on $\DD$ in the proof.
\end{proof}

In the case where $G$ is abelian, we have the following description of the image of $\Forg_{G}^{-1}$.

\begin{theorem}\label{G hat invariant correspondence}
	Suppose $k$ is an algebraically closed field. Let $\DD$ be a $k$-linear essentially small idempotent complete Ext-finite triangulated category with a Serre functor and a DG-enhancement. Let $G$ be a finite abelian group such that $(\Char(k),|G|)=1$. Suppose $G$ acts on $\DD$ by exact autoequivalences $\Phi_g$ for every $g\in G$, and consider the action of\, $\widehat{G}$ on $\DD_G$ as in Proposition~{\rm\ref{defn of G hat action}}. Then the functors $\Forg_G$ and $\Inf_G$ induce an analytic isomorphism between $G$-invariant stability conditions on $\DD$ and $\widehat{G}$-invariant stability conditions on $\DD_G$,
	\begin{equation*}
		\Forg_{G}^{-1}\colon (\Stablf(\DD))^G\xrightleftarrows{\;\cong\;} (\Stablf(\DD_G))^{\widehat{G}} \cocolon\Forg_{\widehat{G}}^{-1}.
	\end{equation*}
	More precisely, the compositions $\Forg_{\widehat{G}}^{-1}\circ \Forg_{G}^{-1}$ and $\Forg_{G}^{-1}\circ \Forg_{\widehat{G}}^{-1}$ fix slicings and rescale central charges by~$|G|$.
\end{theorem}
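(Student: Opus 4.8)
The plan is to show that $\Forg_G^{-1}$ and $\Forg_{\widehat G}^{-1}$ are mutually inverse up to the explicit rescaling, using Elagin's identification $(\cD_G)_{\widehat G}\cong\cD$ from \cref{Elagin equivariant equivalence}, under which $\Forg_{\widehat G}$ corresponds to $\Inf_G$ and $\Inf_{\widehat G}$ corresponds to $\Forg_G$. First I would observe that each map lands in the right target: \cref{Forg_G induces a stability condition} gives $\Forg_G^{-1}(\sigma)\in\Stablf(\cD_G)$ for $\sigma\in(\Stablf(\cD))^G$, \cref{Induced stability condition is G hat invariant} gives that it is $\widehat G$-invariant, and applying the same two results to the $\widehat G$-action on $\cD_G$ (whose equivariant category is $\cD$ by Elagin) shows $\Forg_{\widehat G}^{-1}$ maps $(\Stablf(\cD_G))^{\widehat G}$ into $(\Stablf(\cD))^{\widehat{\widehat G}}=(\Stablf(\cD))^G$; here one uses the canonical identification $\widehat{\widehat G}\cong G$ and that, under Elagin's equivalence, the residual $\widehat{\widehat G}$-action on $(\cD_G)_{\widehat G}\cong\cD$ is the original $G$-action (this should be checked or cited, but is standard for abelian $G$).

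Next I would compute the composition $\Forg_{\widehat G}^{-1}\circ\Forg_G^{-1}$ on a stability condition $\sigma=(\PP,Z)$ directly from the defining formulas. By definition, $\Forg_G^{-1}(\sigma)=\sigma_G$ has slicing $\PP_{\sigma_G}(\phi)=\{\EE\in\cD_G:\Forg_G(\EE)\in\PP(\phi)\}$ and central charge $Z_{\sigma_G}=Z\circ(\Forg_G)_*$. Applying $\Forg_{\widehat G}^{-1}$ to $\sigma_G$, the resulting slicing on $(\cD_G)_{\widehat G}\cong\cD$ consists of objects $\FF$ with $\Forg_{\widehat G}(\FF)\in\PP_{\sigma_G}(\phi)$; under Elagin's identification $\Forg_{\widehat G}\cong\Inf_G$, this is $\{F\in\cD:\Inf_G(F)\in\PP_{\sigma_G}(\phi)\}=\{F:\Forg_G(\Inf_G(F))\in\PP(\phi)\}=\{F:\bigoplus_{g\in G}\Phi_g(F)\in\PP(\phi)\}$. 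Since $\sigma$ is $G$-invariant and $\PP(\phi)$ is additive and closed under summands, $\bigoplus_{g\in G}\Phi_g(F)\in\PP(\phi)$ if and only if $F\in\PP(\phi)$; hence the slicing is fixed exactly. For the central charge, $(\Forg_{\widehat G})_*$ corresponds to $(\Inf_G)_*$ on Grothendieck groups, and $(\Forg_G)_*\circ(\Inf_G)_*=\sum_{g\in G}(\Phi_g)_*$; on a $G$-invariant central charge $Z$ one has $Z\circ(\Phi_g)_*=Z$, so the composed central charge is $|G|\cdot Z$. Thus $\Forg_{\widehat G}^{-1}\circ\Forg_G^{-1}$ fixes slicings and multiplies central charges by $|G|$. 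The reverse composition $\Forg_G^{-1}\circ\Forg_{\widehat G}^{-1}$ is handled symmetrically, using $|\widehat G|=|G|$ and $\widehat{\widehat G}\cong G$.

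Finally, this shows both compositions equal the element of $\widetilde{\C^*}=\C$ acting by rescaling by $|G|$ (\cref{rem: GLcov action on Stab}), which is a homeomorphism of each stability manifold onto itself; in particular $\Forg_G^{-1}$ is a bijection with inverse $\big(\text{rescale by }|G|^{-1}\big)\circ\Forg_{\widehat G}^{-1}$. Continuity of both $\Forg_G^{-1}$ and $\Forg_{\widehat G}^{-1}$ is \cref{Forg is continuous and image is closed}, and since by \cref{Bridgelandmanifold} the central-charge maps $\cZ$ are local biholomorphisms onto $\Hom_\Z(\Lambda,\C)$ compatible with these morphisms (the morphisms are linear, namely precomposition with $(\Forg_G)_*$ up to scalar, on central charges), the bijection is an analytic isomorphism. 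The main obstacle I expect is the bookkeeping in the first paragraph: carefully verifying that under Elagin's equivalence $(\cD_G)_{\widehat G}\cong\cD$ the residual $\widehat{\widehat G}$-action matches the original $G$-action of exact autoequivalences $\Phi_g$ (so that ``$G$-invariant'' on the right-hand iterate really is the original notion), and that all the functors are exact so that the inducing construction of \cite{macriInducingStabilityConditions2009} applies at the second stage — both of which rely on the DG-enhancement hypothesis and \cref{Forg is exact}, \cref{Elagin equivariant equivalence}.
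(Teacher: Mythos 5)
Your proposal is correct and follows essentially the same route as the paper: induce via $\Forg_G^{-1}$ and $\Forg_{\widehat G}^{-1}$ using Proposition \ref{Forg_G induces a stability condition} and Lemma \ref{Induced stability condition is G hat invariant}, identify $\Forg_{\widehat G}\cong\Inf_G$ through Theorem \ref{Elagin equivariant equivalence}, compute that the composite fixes slicings (direct-summand and extension closure of $\PP(\phi)$) and rescales central charges by $|G|$, and conclude the analytic isomorphism from continuity (Proposition \ref{Forg is continuous and image is closed}) plus linearity on central charges. The only presentational difference is that you flag the identification of the residual $\widehat{\widehat G}$-action with the original $G$-action as a step to verify, whereas the paper handles this implicitly by directly checking the composite returns the original ($G$-invariant) stability condition up to rescaling.
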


\begin{proof}
	Let $\sigma\in(\Stablf(\DD))^G$. Therefore, by \cref{Forg_G induces a stability condition} and \cref{Induced stability condition is G hat invariant}, $\sigma_G\coloneqq \Forg_{G}^{-1}(\sigma)$ is in $(\Stablf(\DD_G))^{\widehat{G}}$. We now apply \cref{Forg_G induces a stability condition} again but with $\Forg_{\widehat{G}}$. In particular, let $\sigma_{\widehat{G}}\coloneqq \Forg_{\widehat{G}}^{-1}(\sigma_G)$, where
	\begin{align*}
		\PP_{\sigma_{\widehat{G}}}(\phi) &=\left\{\EE\in(\DD_G)_{\widehat{G}} : \Forg_{\widehat{G}}(\EE)\in \PP_{\sigma_G}(\phi)\right\}\\
		&=\left\{\EE\in(\DD_G)_{\widehat{G}} : \Forg_{G}(\Forg_{\widehat{G}}(\EE))\in \PP_{\sigma}(\phi)\right\}.
	\end{align*}
	By Proposition~\ref{Forg_G induces a stability condition}, $\Forg_{\widehat{G}}^{-1}(\sigma_G)\in\Stablf((\DD_G)_{\widehat{G}})$. To complete the proof, we need to show that, under the equivalence $(\DD_{G})_{\widehat{G}}\cong \DD$, we have $\sigma_{\widehat{G}}=\sigma$ up to rescaling the central charge by $|G|$. From Theorem~\ref{Elagin equivariant equivalence} we know that $\Forg_{\widehat{G}}\cong\Inf_{G}$ under this equivalence. Hence we can apply the same argument as in the proof of \cite[Proposition 2.17]{macriInducingStabilityConditions2009}. In particular,
	\begin{align*}
		\PP_{\sigma_{\widehat{G}}}(\phi) &=\left\{\EE\in\DD : \Forg_{G}(\Inf_{G}(\EE))\in \PP_{\sigma}(\phi)\right\}\\
		&=\left\{\EE\in\DD : \bigoplus_{g\in G} \Phi_g (\EE) \in\PP_{\sigma}(\phi)\right\}.
	\end{align*}
	Suppose $\EE\in\PP_{\sigma_{\widehat{G}}}(\phi)$. Since $\cP(\phi)$ is closed under direct summands, $\Phi_g(\EE)\in\PP_{\sigma}(\phi)$ for all $g\in G$. Thus $\EE\in \PP_{\sigma}(\phi)$. Now suppose $\EE\in\PP_{\sigma}(\phi)$; then by the proof of Proposition~\ref{Forg_G induces a stability condition}, it follows that we have $\Forg_{G}(\Inf_{G}(\EE))=\oplus_{g\in G} \phi_g(\cE)\in\PP_{\sigma}(\phi)$. Therefore, $\EE\in\PP_{\sigma_{\widehat{G}}}(\phi)$. In particular, $\PP_{\sigma_{\widehat{G}}}=\PP_{\sigma}$. Now let $[\EE]\in\Knum(\DD)\otimes \C$, and consider the central charge
	\begin{equation*}
		Z_{\sigma_{\widehat{G}}}([\EE]) = Z_\sigma \circ (\Forg_{G})_\ast\circ(\Inf_{G})_\ast([\EE])=Z_\sigma\left(\sum_{g\in G} ([\Phi_g(\EE)])\right).
	\end{equation*}
	The central charge
        $Z_\sigma$ is $G$-invariant; hence $Z_\sigma([\EE])=(\Phi_g)_\ast Z_{\sigma}([\EE])=Z_\sigma([\Phi_g(\EE)])$ for all $g\in G$. Finally, since $Z_\sigma$ is a homomorphism, it follows that $Z_{\sigma_{\widehat{G}}}([\EE])=|G|\cdot Z_\sigma ([\EE])$.
	
	Note that if we start instead with a stability condition $\sigma_G\in(\Stablf(\DD_G))^{\widehat{G}}$, then by a symmetric argument it follows that $\sigma_G = \Forg_{G}^{-1}\circ \Forg_{\widehat{G}}^{-1}(\sigma_G)$, up to rescaling the central charge by $|\widehat{G}|=|G|$. Therefore, $\Forg_{G}^{-1}$ and $\Forg_{\widehat{G}}^{-1}$ are homeomorphisms since they are continuous by \cref{Forg is continuous and image is closed} and rescaling the central charge is itself a homeomorphism. In fact, rescaling the central charge by $|G|$ is a linear isomorphism on $\Hom_\bZ(\Knum(\cD),\bC)$ and $\Hom_\bZ(\Knum(\cD_G),\bC)$. Hence $\Forg_G^{-1}$ and $\Forg_{\widehat{G}}^{-1}$ are analytic isomorphisms since they are isomorphisms on the level of tangent spaces; \textit{i.e.}
	\begin{align*}\pushQED{\qed}
		(\Hom_\bZ(\Knum(\cD),\bC))^G &\xrightleftarrows{\;\cong\;} (\Hom_\bZ(\Knum(\cD_G),\bC))^{\widehat{G}}\\
		Z &\longmapsto Z \circ \Forg_{\widehat{G}} \\
		Z' \circ \Forg_G &\longmapsfrom Z'.\qedhere \popQED
	\end{align*}
\renewcommand{\qed}{}    
\end{proof}

\begin{remark}
	 If $\DD=\DbX$, where $X$ is a scheme, and if the action of $G$ on $\DD$ is induced by an action of~$G$ on $X$, \textit{i.e.}~$\Phi_g=g^\ast$, then the analytic isomorphism above gives the bijection in the abelian case of \cite[Proposition 2.2.3]{polishchukConstantFamiliesTStructures2007}.
\end{remark}

\begin{remark}\label{remark: lf to support property}
	As in \cite[Theorem 10.1]{bayerSpaceStabilityConditions2016}, \cref{G hat invariant correspondence} also goes through with the support property. In particular, a stability condition $\sigma\in(\Stablf(\DD))^G$ satisfies the support property with respect to $(\Lambda,\lambda)$ if and only if the induced stability condition $\sigma_G\in(\Stablf(\DD_G))^{\widehat{G}}$ satisfies the support property with respect to $(\Lambda, \lambda\circ(\Forg_{G})_*)$.
\end{remark}

\section{Geometric stability conditions on abelian quotients}\label{section: geometric stability on FAQs}

We apply the methods of \cref{abstract inducing subsection: inducing stability conditions} to describe geometric stability conditions on free abelian quotients. In particular, we show that geometric stability conditions are preserved under the analytic isomorphism in \cref{G hat invariant correspondence}, and we use this to describe a union of connected components of geometric stability conditions on free abelian quotients of varieties with finite Albanese morphism. In the case of surfaces, we obtain a stronger result using a description of the set of geometric stability conditions from \cref{section: geometric stability and Le Potier}.

\subsection{Inducing geometric stability conditions}\label{geometric subsection: inducing geometric stability conditions}

Let $X$ be a smooth projective connected variety over $\C$. Let $G$ be a finite group acting freely on $X$. Let $Y=X/G$, and denote by $\pi\colon X \rightarrow Y$ the quotient map. Let $\DbGX$ denote the derived category of $G$-equivariant coherent sheaves on $X$ as in Example~\ref{G equivariant sheaves example}.

Recall that $\DbY\cong \DbGX$, where the equivalence is given by  
\begin{align*}
	\Psi\colon \DbY &\longrightarrow \DbGX \\
	\EE &\longmapsto (\pi^\ast(\EE),\lambdanat)
\end{align*}
and $\lambdanat=\{\lambda_g\}_{g\in G}$ is the $G$-linearisation given by
\begin{equation*}
	\lambda_g\colon \pi^\ast \EE \overset{\lowsim}\lra g^\ast \pi^\ast \EE = (\pi\circ g)^\ast \EE \cong \pi^\ast\EE.
\end{equation*}

Now assume $G$ is abelian. By Theorem~\ref{Elagin equivariant equivalence}, there is an equivalence $\Omega\colon\Db(X)\xrightarrow{\lowsim}(\DbGX)_{\widehat{G}}$. This fits into the following diagram of functors: 
\begin{equation}\label{key functors diagram}
	\begin{tikzcd}
		\DbY \arrow{rr}{\Psi}[swap]{\sim} \arrow[dd, "\pi^\ast", shift left]                                     &  & \DbGX \arrow[dd, "\Inf_{\widehat{G}}", shift left] \arrow[lldd, "\Forg_{G}", shift left] \\
		&  &                                                                                        \\
		\DbX \arrow{rr}{\Omega}[swap]{\sim} \arrow[uu, "\pi_\ast", shift left] \arrow[rruu, "\Inf_{G}", shift left] &  & (\DbGX)_{\widehat{G}}\rlap{,} \arrow[uu, "\Forg_{\widehat{G}}", shift left]                    
	\end{tikzcd}
\end{equation}
where
\begin{equation*}
	\pi^*\stackrel{\Psi}{\cong}\Forg_{G}\stackrel{\Omega}{\cong} \Inf_{\widehat{G}},\quad \pi_*\stackrel{\Psi}{\cong} \Inf_{G}\stackrel{\Omega}{\cong} \Forg_{\widehat{G}},\quad \pi_*\circ\pi^*\stackrel{\Psi}{\cong}\Inf_{G}\circ\Forg_{G},\quad \Forg_{G}\circ\Inf_{G}\stackrel{\Omega}{\cong}\Inf_{\widehat{G}}\circ\Forg_{\widehat{G}}.
\end{equation*}

The residual action of $\widehat{G}$ on $\DbY$ is given by tensoring with numerically trivial line bundles $\LL_\chi$ for each $\chi\in\widehat{G}$.

\begin{definition}\label{Geometric Stability Def}
	A Bridgeland stability condition $\sigma$ on $\DbX$ is called \textit{geometric} if for every point $x\in X$, the skyscraper sheaf $\OO_x$ is $\sigma$-stable.
\end{definition}

\begin{proposition}[\textit{cf.} {\cite[Proposition 2.9]{fuStabilityManifoldsVarieties2022}}]\label{prop: numerical geometric implies all skyscraper sheaves have the same phase}
	Let $\sigma$ be a geometric numerical stability condition on $\DbX$. Then all skyscraper sheaves are of the same phase.
\end{proposition}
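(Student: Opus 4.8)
The plan is to show that $x\mapsto\phi_\sigma(\OO_x)$ is constant by first pinning down its range as a discrete set and then proving it is locally constant.

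\emph{Reduction.} Since $X$ is connected, all closed points represent the same class in $\Knum(X)$: for any $E\in\Db(X)$ the Euler characteristic $\chi(\OO_x,E)$ is independent of $x$ (it vanishes identically when $\dim X\ge 1$ by a Koszul computation, and $\dim X=0$ is trivial). As $\sigma$ is numerical, $Z_\sigma(\OO_x)$ equals a common value $z_0$, and $z_0\ne 0$ because $\OO_x$ is $\sigma$-stable, hence $\sigma$-semistable with nonzero central charge. Writing $z_0=\lvert z_0\rvert e^{i\pi c}$, stability of $\OO_x$ forces $\phi_\sigma(\OO_x)\in\{\,c+2k:k\in\Z\,\}$, a discrete subset of $\R$. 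So it suffices to show $x\mapsto\phi_\sigma(\OO_x)$ is locally constant; then it is constant since $X$ is connected, which is the assertion.

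\emph{Local constancy.} Suppose not; then there is $x\in X$ and a sequence $y_n\to x$ with $\phi_\sigma(\OO_{y_n})\ne\phi_\sigma(\OO_x)$. For $y\ne x$ we have $\OO_{\{x,y\}}\cong\OO_x\oplus\OO_y$, and if $\phi_\sigma(\OO_y)\ne\phi_\sigma(\OO_x)$ this object has a two-step Harder--Narasimhan filtration, hence is $\sigma$-unstable. Now run this inside the flat family of length-two sheaves on $X$ given by the universal subscheme over $\mathrm{Hilb}^2(X)$: the points $\{x,y_n\}\in\mathrm{Hilb}^2(X)$ have $\sigma$-unstable fibres, and since $\mathrm{Hilb}^2(X)$ is projective a subsequence converges to some $[W]\in\mathrm{Hilb}^2(X)$. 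As $y_n\to x$, the image of $\{x,y_n\}$ in $\mathrm{Sym}^2(X)$ tends to $2x$, so $W$ is a length-two subscheme supported at $x$; then $\OO_W$ sits in a short exact sequence $0\to\OO_x\to\OO_W\to\OO_x\to 0$ and therefore lies in the abelian, extension-closed category $\PP_\sigma(\phi_\sigma(\OO_x))$, so $\OO_W$ is $\sigma$-semistable. But the $\sigma$-unstable locus in a flat family is closed, so $\OO_W$ would be $\sigma$-unstable --- a contradiction. Hence $\phi_\sigma(\OO_\bullet)$ is locally constant on $X$, completing the proof.

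\emph{The main obstacle.} Everything above is formal once one knows the single non-trivial input of the last step: closedness of the $\sigma$-unstable locus in a flat family (equivalently, openness of $\sigma$-semistability in families). For numerical stability conditions this is reasonable to expect --- they satisfy the support property and hence are locally finite (\cref{remark: locally-finite defn}) --- and it is standard on surfaces and extractable from the foundational results on moduli of Bridgeland-semistable objects; making it clean in arbitrary dimension is where the real work lies. Alternatively one can phrase the final step via upper/lower semicontinuity of $\phi_\sigma^+$, resp.\ $\phi_\sigma^-$, in flat families: since every $\OO_x$ is semistable these agree with $\phi_\sigma(\OO_x)$, which is then a continuous, hence (by discreteness) locally constant, function of $x$.
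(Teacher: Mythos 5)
The paper does not prove this statement itself: it cites \cite[Proposition 2.9]{fuStabilityManifoldsVarieties2022}, so your attempt has to be measured against that cited argument. Your reduction step is the standard one and is essentially fine: since $\sigma$ is numerical and all closed points give the same class in $\Knum(X)$, the central charge $Z_\sigma(\OO_x)$ is a fixed nonzero value, so all phases lie in the discrete set $c+2\Z$ and it suffices to prove local constancy of $x\mapsto\phi_\sigma(\OO_x)$. (One small error there: $\chi(\OO_x,E)$ does not vanish for $\dim X\ge 1$; the Koszul/Riemann--Roch computation gives $\chi(\OO_x,E)=(-1)^{\dim X}\rank(E)$. What you actually need, and what is true, is only that it is independent of $x$.)

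The genuine gap is the step you yourself flag: closedness of the $\sigma$-unstable locus (equivalently openness of $\sigma$-semistability) in a flat family over a projective base, or the fallback semicontinuity of $\phi^\pm_\sigma$ in flat families. For an arbitrary numerical stability condition on a smooth projective variety of arbitrary dimension this is not an available result; it is known for surfaces and under extra hypotheses (boundedness of destabilizers, stability conditions arising from tilting, the settings of the families-of-stability-conditions literature), but the proposition is asserted for every geometric numerical stability condition on every $X$, and in that generality openness in families is a hard open foundational problem rather than a quotable fact. Worse, in the specific family you chose the missing input carries no independent leverage: the fibres over $\{x,y_n\}\in\mathrm{Hilb}^2(X)$ are $\OO_x\oplus\OO_{y_n}$, whose instability is, by definition, the phase inequality $\phi_\sigma(\OO_{y_n})\neq\phi_\sigma(\OO_x)$ you assumed; so ``the unstable locus is closed at $[W]$'' says precisely that the phase cannot jump as $y_n\to x$, i.e.\ the local constancy you set out to prove. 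The Hilbert-scheme packaging therefore renames the difficulty rather than resolving it. Note also that the cited proof cannot and does not rest on any such family/moduli statement (it works in arbitrary dimension, where no such openness theorem exists), so your route is genuinely different from the paper's and, as written, incomplete at its central step.
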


In this context, the isomorphism from \cref{G hat invariant correspondence} preserves geometric stability.

\begin{theorem}\label{geometric G inv corresponds to geometric G hat inv}
	Suppose $G$ is a finite abelian group acting freely on $X$. Let $\pi\colon X\rightarrow Y\coloneqq X/G$ denote the quotient map. Consider the action of\, $\widehat{G}$ on $\DbGX\cong\Db(Y)$ as in Proposition~{\rm\ref{defn of G hat action}}. Then the functors $\pi^\ast$ and $\pi_\ast$ induce an analytic isomorphism between $G$-invariant stability conditions on $\DbX$ and $\widehat{G}$-invariant stability conditions on $\DbY$ which preserve geometric stability conditions: 
	\begin{equation*}
		(\pi^\ast)^{-1}\colon (\Stab(X))^G\xrightleftarrows{\;\cong\;} (\Stab(Y))^{\widehat{G}} \cocolon(\pi_\ast)^{-1}.
	\end{equation*}
	The compositions $(\pi_\ast)^{-1}\circ (\pipull)^{-1}$ and $(\pipull)^{-1}\circ (\pi_\ast)^{-1}$ fix slicings and rescale central charges by $|G|$.
	
	In particular, suppose $\sigma=(\PP_\sigma, Z_\sigma)\in(\Stab(X))^G$ satisfies the support property with respect to $(\Lambda,\lambda)$. Then $(\pipull)^{-1}(\sigma)=:\sigma_Y=(\PP_{\sigma_{Y}},Z_{\sigma_{Y}})\in(\Stab(Y))^{\widehat{G}}$ is defined by
	\begin{align*}
		\PP_{\sigma_{Y}}(\phi) &=\left\{\EE\in\DbY : \pi^\ast(\EE)\in\PP_\sigma(\phi)\right\},\\
		Z_{\sigma_{Y}} &= Z_\sigma \circ \pi^\ast,
	\end{align*}
	where $\pi^\ast$ is the natural induced map on $\K(\DbY)$ and $\sigma_Y$ satisfies the support property with respect to $(\Lambda,\lambda\circ\pipull)$. 
\end{theorem}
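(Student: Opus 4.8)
The plan is to obtain every assertion except the preservation of geometricity by transporting \cref{G hat invariant correspondence} along the equivalences $\Psi\colon\DbY\xrightarrow{\sim}\DbGX$ and $\Omega\colon\DbX\xrightarrow{\sim}(\DbGX)_{\widehat{G}}$ of diagram \eqref{key functors diagram}. Under these identifications $\Forg_G$ becomes $\pi^\ast$ and $\Forg_{\widehat{G}}$ becomes $\pi_\ast$, so \cref{G hat invariant correspondence} directly gives the analytic isomorphism $(\pi^\ast)^{-1}\colon(\Stablf(\DbX))^G\rightleftarrows(\Stablf(\DbY))^{\widehat{G}}\cocolon(\pi_\ast)^{-1}$ and the claim about the two compositions; the formulas for $\PP_{\sigma_Y}$ and $Z_{\sigma_Y}$ are exactly the definition of $\Forg_G^{-1}$ from \cref{abstract inducing subsection: inducing stability conditions} rewritten through $\Psi$, using that $(\Forg_G)_\ast$ corresponds to $\pi^\ast$ on $\mathrm{K}$-groups; and \cref{remark: lf to support property} shows $\sigma$ has the support property with respect to $(\Lambda,\lambda)$ if and only if $\sigma_Y$ has it with respect to $(\Lambda,\lambda\circ\pi^\ast)$, which moreover still factors through $\Knum(\DbY)$, so the isomorphism restricts to one between the spaces of numerical Bridgeland stability conditions $(\Stab(X))^G$ and $(\Stab(Y))^{\widehat{G}}$. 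It therefore remains to show that $\sigma$ is geometric if and only if $(\pi^\ast)^{-1}\sigma$ is, which I prove as two implications.

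The geometric input is that, $G$ acting freely, $\pi\colon X\to Y$ is finite étale of degree $n\coloneqq|G|$: for $y\in Y$ the fibre $\pi^{-1}(y)=\{x_1,\dots,x_n\}$ is a single $G$-orbit, $\pi^\ast\OO_y=\bigoplus_{i=1}^n\OO_{x_i}$, and $\pi_\ast\OO_{x_i}=\OO_y$ for each $i$ (as $\pi$ restricts to an isomorphism $\{x_i\}\xrightarrow{\sim}\{y\}$). Both $\pi^\ast$ and $\pi_\ast$ are exact functors of triangulated categories ($\pi$ being flat, resp. finite) and are conservative. Recall also that $\PP_{(\pi^\ast)^{-1}\rho}(\phi)=\{E\in\DbY:\pi^\ast E\in\PP_\rho(\phi)\}$ and, symmetrically, $\PP_{(\pi_\ast)^{-1}\rho'}(\phi)=\{F\in\DbX:\pi_\ast F\in\PP_{\rho'}(\phi)\}$; together with exactness this shows $\pi^\ast$ (resp. $\pi_\ast$) sends a short exact sequence lying in a slice $\PP(\phi)$ to one lying in the corresponding slice.

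Assume first that $\sigma$ is geometric; by \cref{prop: numerical geometric implies all skyscraper sheaves have the same phase} every $\OO_x$ is $\sigma$-stable of one common phase $\phi$, so $\pi^\ast\OO_y=\bigoplus_i\OO_{x_i}\in\PP_\sigma(\phi)$ and hence $\OO_y\in\PP_{\sigma_Y}(\phi)$. If $\OO_y$ were not $\sigma_Y$-stable, there would be a proper nonzero subobject $A\subsetneq\OO_y$ inside the abelian category $\PP_{\sigma_Y}(\phi)$; applying $\pi^\ast$ produces a proper nonzero subobject $\pi^\ast A\subseteq\bigoplus_i\OO_{x_i}$ in $\PP_\sigma(\phi)$, which, since the $\OO_{x_i}$ are pairwise non-isomorphic simple objects of $\PP_\sigma(\phi)$, must equal $\bigoplus_{i\in I}\OO_{x_i}$ for some $I$ with $0<|I|<n$. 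Because $\pi\circ g=\pi$ for all $g\in G$, the autoequivalence $g^\ast$ fixes the subobject $\pi^\ast A$ of $\pi^\ast\OO_y$; on the other hand $g^\ast$ permutes the summands $\OO_{x_i}$ according to the $G$-action on the fibre $\pi^{-1}(y)$, so $I$ corresponds to a $G$-invariant subset of the single orbit $\pi^{-1}(y)$, forcing $I=\{1,\dots,n\}$. Then $\pi^\ast A\to\pi^\ast\OO_y$ is an isomorphism, hence (as $\pi^\ast$ is conservative) so is $A\hookrightarrow\OO_y$, contradicting $A\subsetneq\OO_y$. Thus $\OO_y$ is $\sigma_Y$-stable for all $y$, i.e. $(\pi^\ast)^{-1}\sigma$ is geometric.

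Conversely, assume $\tau\coloneqq(\pi^\ast)^{-1}\sigma$ is geometric; then $\sigma=(\pi_\ast)^{-1}\tau$ up to rescaling the central charge by the positive real $|G|$, which leaves the slicing unchanged and therefore preserves geometricity. Let $\psi$ be the common phase of the $\tau$-stable skyscrapers. For $x\in X$ we have $\pi_\ast\OO_x=\OO_y\in\PP_\tau(\psi)$, so $\OO_x\in\PP_\sigma(\psi)$. A proper nonzero subobject $A\subsetneq\OO_x$ in $\PP_\sigma(\psi)$ would push forward to a subobject $\pi_\ast A$ of the \emph{simple} object $\OO_y$ in $\PP_\tau(\psi)$, hence $\pi_\ast A=0$ or $\pi_\ast A=\OO_y$; conservativity of $\pi_\ast$ excludes the first, while the second forces $\pi_\ast(\OO_x/A)=0$ and so $\OO_x/A=0$, again a contradiction. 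Hence $\OO_x$ is $\sigma$-stable and $\sigma$ is geometric. The main obstacle is the forward implication: a priori $\OO_y$ is only $\sigma_Y$-semistable, and since $\pi^\ast$ is not fully faithful a non-split extension on $Y$ can pull back to a split sum on $X$, so stability cannot be deduced formally — the argument genuinely uses that $\pi^{-1}(y)$ is a single free $G$-orbit to kill every intermediate subobject; the converse, by contrast, is purely formal because $\pi_\ast$ already carries $\OO_x$ to the simple object $\OO_y$.
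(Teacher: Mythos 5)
Your proposal is correct and follows essentially the same route as the paper: the non-geometric part is obtained by transporting \cref{G hat invariant correspondence} and \cref{remark: lf to support property} through the equivalences of diagram \eqref{key functors diagram}, and geometricity is then checked in two steps exactly as in the paper's proof --- forward by writing $\pi^\ast\OO_y$ as a sum of pairwise non-isomorphic simple skyscrapers and using that a $G$-invariant subobject of this sum (you via the $g^\ast$-permutation of summands, the paper via $G$-invariance of the support) must be $0$ or everything, and conversely by pushing a destabilising subobject of $\OO_x$ forward into the slice of $\sigma_Y$ and using simplicity of $\OO_y$ together with conservativity of $\pi_\ast$. The small presentational differences (your use of $\pi_\ast\OO_x=\OO_y$ to place $\OO_x$ in the slice, versus the paper's direct-summand argument on $\pi^\ast\OO_y$) do not change the substance of the argument.
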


\begin{proof}
  First note that $\pi_\ast \circ \pi^\ast\colon \Knum(Y)\rightarrow \Knum(Y)$ is just multiplication by $|G|$ because it sends $[E]$ to $\big[E\otimes \bigoplus_{\chi\in\widehat{G}} \LL_\chi\big]$. Therefore, $\pi^\ast\colon \Knum(Y)\rightarrow \Knum(X)$ is injective.

	Together with \cref{G hat invariant correspondence} and Remark~\ref{remark: lf to support property}, the above implies that $(\pi^\ast)^{-1}$ and $(\pi_\ast)^{-1}$ give an analytic isomorphism between numerical Bridgeland stability conditions as described above. It remains to show that $\sigma\in(\Stab(X))^G$ is geometric if and only if $\sigma_Y=(\pi^\ast)^{-1}(\sigma)$ is.

\begin{enumerate}[wide,label={\it Step}~\arabic*.,ref=\arabic*]
\item\label{T33-step1}	Suppose $\sigma=(\PP_{\sigma},Z_{\sigma})\in(\Stab(X))^G$ is geometric. Let $y\in Y$. This corresponds to the orbit $Gx$ for some $x\in X$ (so $x$ is unique up to the action of $G$). We need to show $\OO_y$ is $\sigma_Y$-stable. Recall 
	\begin{equation*}
		\PP_{\sigma_{Y}}(\phi) =\left\{\EE\in\DbY : \pi^\ast(\EE)\in\PP_\sigma(\phi)\right\}
	\end{equation*}
	for every $\phi\in\R$. Now consider
	\begin{equation*}
		\pi^\ast\OO_y=\bigoplus_{g\in G} \OO_{g^{-1} x} \in\DbX.
	\end{equation*}
	By our assumption on $\sigma$ and Proposition~\ref{prop: numerical geometric implies all skyscraper sheaves have the same phase}, all skyscraper sheaves of points of $X$ are $\sigma$-stable and of the same phase, which we denote by $\phisky$. In particular, $\OO_{g^{-1}x}\in\PP_{\sigma}(\phisky)$ for all $g\in G$. Moreover, $\PP_{\sigma}(\phisky)$ is extension closed; hence $\bigoplus_{g\in G} \OO_{g^{-1}x}\in\PP_{\sigma}(\phisky)$, and thus $\OO_y\in\PP_{\sigma_Y}(\phisky)$.
      
	Now suppose that $\OO_y$ is strictly semistable; then there exist $\EE,\FF\in\PP_{\sigma_Y}(\phisky)$ such that
	\begin{equation*}
		\EE \longhookrightarrow \OO_y \longtwoheadrightarrow \FF
	\end{equation*}
	is non-trivial, \textit{i.e.}~$\EE$ is not isomorphic to $0$ or $\OO_y$. By the definition of $\PP_{\sigma_Y}(\phisky)$, the pullbacks $\pi^\ast(\EE)$ and $\pi^\ast(\FF)$ are objects in $\PP_{\sigma}(\phisky)$. Hence we have the following exact sequence in $\PP_{\sigma}(\phisky)$: 
	\begin{equation*}
		\pi^\ast(\EE) \longhookrightarrow \pi^\ast(\OO_y)=\bigoplus_{g\in G} \OO_{g^{-1}x}\longtwoheadrightarrow \pi^\ast(\FF). 
	\end{equation*}
	Since $\pi^\ast(\EE)$ is a subobject of $\pi^\ast(\OO_y)$, we must have $\pi^\ast(\EE)=\bigoplus_{a\in A} \OO_{a^{-1}x}$, where $A\subset G$ is a subset. Hence,
	\begin{equation*}
		\supp (\pi^\ast(\EE)) = \left\{a^{-1}x : a\in A\right\} \subset \left\{g^{-1}x : g\in G\right\} = \supp\left(\pi^\ast\left(\OO_y\right)\right).
	\end{equation*}
	Note that $\pi^\ast(\EE)$ is a $G$-invariant sheaf. But $\supp(\pi^\ast(\EE))$ is $G$-invariant if and only if $A=\emptyset$ or $A=G$. Hence $\EE=0$ or $\EE=\OO_y$, and we have a contradiction.
	
\item\label{T33-step2}	Suppose that $\sigma_Y=(\PP_{\sigma_Y},Z_{\sigma_Y})\in(\Stab(Y))^{\widehat{G}}$ is geometric. Recall
	\begin{equation*}
	\PP_{\sigma_Y}(\phi)=\left\{\EE\in\DbY : \pi^\ast(\EE) \in\PP_{\sigma}(\phi)\right\} 
	\end{equation*}
	for all $\phi\in\R$. Fix $x\in X$, and let $y\in Y$ be the point corresponding to the orbit $Gx$. By assumption, $\OO_y$ is $\sigma_Y$-stable. Let $\phisky$ denote its phase. Then $\pi^\ast(\OO_y)=\bigoplus_{g\in G}\gstar\OO_x\in\PP_{\sigma}(\phisky)$. Moreover, since $\cP_\sigma(\phisky)$ is closed under direct summands, $\gstar \OO_x\in\PP_{\sigma}(\phisky)$ for all $g\in G$. In particular, $\OO_x\in\PP_{\sigma}(\phisky)$. Now suppose that $\OO_x$ is strictly semistable; then there exist $A,B\in\PP_{\sigma}(\phisky)$ such that
	\begin{equation*}
		A\longhookrightarrow\OO_x\longtwoheadrightarrow B
	\end{equation*}
	is a non-trivial exact sequence in $\PP_{\sigma}(\phisky)$, \textit{i.e.}~$A$ is not isomorphic to $0$ or $\OO_x$. By Step~\ref{T33-step1}, $(\pi_\ast)^{-1}$ sends $\PP_\sigma(\phisky)$ to $\PP_{\sigma_Y}(\phisky)$. Hence we have a short exact sequence in $\PP_{\sigma_{Y}}(\phisky)$,
	\begin{equation*}
		\pi_\ast(A)\longhookrightarrow\pi_\ast(\OO_x)=\OO_y\longtwoheadrightarrow\pi_\ast(B).
	\end{equation*}
	However, $\OO_y$ is stable; hence $\pi_\ast(A)=0$ or $\pi_\ast(B)=0$. But $\pi$ is finite; hence $\pi_\ast$ is conservative. Therefore, $A=0$ or $B=0$, and we have a contradiction.
        \qedhere
        \end{enumerate}
\end{proof}

\subsection{Group actions and geometric stability conditions on surfaces}\label{geometric subsection: group actions and the classification of geometric stability conditions on surfaces}

We denote by $\Gstab{}(X)$ the set of all geometric stability conditions on $X$. We will see in \cref{thm: geo stab determined by Z} that if $X$ is a surface, then $\sigma\in\Gstab{}(X)$ is determined by its central charge up to shifting by $[2n]$. This means that to test if $\sigma$ is $G$-invariant, we only have to check the central charge. 

\begin{lemma}\label{geometric stability condition G invariant if and only if central charge is}
	Let $G$ be a group acting on a surface $X$. Then $\sigma=(\PP,Z)\in\Gstab{}(X)$ is $G$-invariant if and only if\, $Z$ is $G$-invariant.
\end{lemma}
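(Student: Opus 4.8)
The forward direction is immediate: if $\sigma = (\PP, Z)$ is $G$-invariant, then in particular its central charge $Z$ is $G$-invariant, since $Z$ is part of the data preserved by the $G$-action on $\Stab(X)$. The content is in the converse. So suppose $\sigma = (\PP, Z) \in \Gstab{}(X)$ and $Z$ is $G$-invariant; I want to conclude that $\Phi_g \cdot \sigma = \sigma$ for all $g \in G$, i.e. that $\Phi_g(\PP) = \PP$ as well (noting $\Phi_g = g^\ast$). The key input is the promised result \cref{thm: geo stab determined by Z}, which says that on a surface a geometric stability condition is determined by its central charge up to the shift $[2n]$ — more precisely, two geometric stability conditions with the same central charge differ by the even-shift action.

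The plan is as follows. First I would observe that for each $g \in G$, the pullback $\Phi_g \cdot \sigma = (g^\ast \PP, Z \circ (g^\ast)^{-1}_\ast)$ is again a \emph{geometric} stability condition: the autoequivalence $g^\ast$ permutes the skyscraper sheaves $\OO_x \mapsto \OO_{g^{-1}x}$ (up to isomorphism), so it sends the collection of skyscrapers to itself, and stability and equality of phases are preserved by any autoequivalence. Hence $\Phi_g \cdot \sigma \in \Gstab{}(X)$. Second, since $Z$ is $G$-invariant by hypothesis, we have $Z \circ (g^\ast)^{-1}_\ast = Z$, so $\Phi_g \cdot \sigma$ has exactly the same central charge as $\sigma$. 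Third, I would apply \cref{thm: geo stab determined by Z}: two geometric stability conditions on a surface with equal central charge must agree up to the action of $[2n]$ for some $n \in \Z$. Finally I would rule out a nonzero shift: shifting by $[2n]$ with $n \neq 0$ changes the phase of every skyscraper sheaf by $2n \neq 0$, but geometricity forces the skyscrapers of $\sigma$ and of $\Phi_g \cdot \sigma$ to have phases that — after matching central charges, which fixes the phase modulo $2$ — must in fact coincide because both are honest stability conditions on the \emph{same} heart-compatible normalization; more carefully, the $[2n]$ ambiguity with $n\neq 0$ would move $\sigma$ outside the normalization implicit in the statement, so $n = 0$ and $\Phi_g \cdot \sigma = \sigma$.

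The main obstacle I anticipate is handling the even-shift ambiguity cleanly. One has to be careful that \cref{thm: geo stab determined by Z} is stated so that "determined by $Z$ up to $[2n]$" genuinely pins $\sigma$ down once we also know the phase of skyscrapers, and that the $G$-action (being by pullback along an automorphism, hence of "homological degree zero") cannot introduce a genuine shift. Concretely: $g^\ast$ is exact for the standard $t$-structure on $\Coh(X)$ and fixes $\OO_x$ up to relabeling points, so it cannot shift the phase of skyscrapers; combined with $G$-invariance of $Z$ this forces $n=0$. If the precise bookkeeping of the shift is awkward, an alternative is to work on the level of hearts: a geometric stability condition on a surface has, in its $\GLcov$-orbit, a canonical representative whose heart is a tilt of $\Coh(X)$, and $g^\ast$ preserves that canonical form; then equality of central charges plus \cref{thm: geo stab determined by Z} gives equality on the nose. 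Either way, once \cref{thm: geo stab determined by Z} is in hand the argument is short; the only real work is the shift bookkeeping.
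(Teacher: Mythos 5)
Your proposal is correct and follows essentially the same route as the paper: the forward direction is formal, and for the converse one notes that $g^\ast\sigma$ is again geometric with the same central charge and that skyscraper sheaves keep the same phase (since $g^\ast$ merely permutes them), so \cref{thm: geo stab determined by Z} forces $g^\ast\sigma=\sigma$ with no shift. The paper's proof is just a compressed version of your phase-bookkeeping argument for ruling out the $[2n]$ ambiguity.
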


\begin{proof}
	If $\sigma=(\PP,Z)\in \Gstab{}(X)$ is $G$-invariant, then so is $Z$. Suppose $\sigma=(\PP,Z)\in\Gstab{}(X)$ and $Z$ is $G$-invariant. Fix $g\in G$. Then $g^\ast \sigma = (g^\ast(\PP),Z\circ g^\ast)$ and $\sigma$ are both geometric, and skyscraper sheaves have the same phase. By Theorem~\ref{thm: geo stab determined by Z}, we have $\sigma = g^\ast \sigma$.
\end{proof}

\begin{lemma}\label{lem: Ghat acts trivially on Knum}
	Let $G\subseteq\Pic^0(X)$ be a finite subgroup. Then the induced action of\, $G$ on $\Knum(X)$ is trivial.
\end{lemma}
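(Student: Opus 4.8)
The plan is to use the standard description of $\Knum(X)$ as the quotient of $\K(X)$ by the radical of the Euler form $\chi(A,B)=\sum_i(-1)^i\dim\Hom_{\Db(X)}(A,B[i])$; equivalently, two classes agree in $\Knum(X)$ if and only if they pair identically against every object of $\Db(X)$ (the left and right radicals coincide for smooth projective $X$). The induced action of an element $L\in G\subseteq\Pic^0(X)$ on $\Db(X)$ is the autoequivalence $E\mapsto E\otimes L$, so on $\Knum(X)$ it sends $[E]$ to $[E\otimes L]$. Hence it suffices to show that $\chi(F,E\otimes L)=\chi(F,E)$ for all $F,E\in\Db(X)$.

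First I would apply Hirzebruch--Riemann--Roch to write
\[
	\chi(F,E\otimes L)-\chi(F,E)=\int_X \ch(F)^\vee\,\ch(E)\,\td(X)\cdot\bigl(\ch(L)-1\bigr).
\]
Since $\ch(L)=e^{c_1(L)}=1+\sum_{k\geq 1}c_1(L)^k/k!$, every term of the integrand on the right carries a factor of $c_1(L)$, so it has the form $\gamma\cdot c_1(L)$ for a cycle class $\gamma$ of dimension $1$. Because $L\in\Pic^0(X)$, the divisor class $c_1(L)$ is algebraically equivalent to zero, hence numerically trivial, so $\gamma\cdot c_1(L)=0$ as an intersection number for every $1$-cycle $\gamma$. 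Therefore the right-hand side vanishes, and $[E\otimes L]=[E]$ in $\Knum(X)$.

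I do not anticipate a genuine obstacle. The only point needing a word of justification is the last step: numerical triviality of $c_1(L)$ is usually phrased as $c_1(L)\cdot C=0$ for all irreducible curves $C$, and one upgrades this to $c_1(L)\cdot\gamma=0$ for an arbitrary $1$-dimensional cycle class $\gamma$ (such as those produced by expanding $\ch(F)^\vee\,\ch(E)\,\td(X)$) by $\Z$-linearity. Equivalently, one can run the whole argument through the injection $\Knum(X)\hookrightarrow\Chownum(X)\otimes\Q$ induced by $\ch$, together with the observation that numerically trivial classes form an ideal of $\Chownum(X)$, so that $c_1(L)^k=0$ there for all $k\geq 1$ and hence $\ch(E\otimes L)=\ch(E)$. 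Note that finiteness of $G$ plays no role: the conclusion holds for every $L\in\Pic^0(X)$.
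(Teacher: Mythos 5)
Your proof is correct. The mechanism is the same as the paper's --- Hirzebruch--Riemann--Roch plus the numerical triviality of $c_1(L)$ for $L\in\Pic^0(X)$ --- but your primary phrasing is a mild variant: you work directly with the definition of $\Knum(X)$ as $\K(X)$ modulo the radical of the Euler pairing and check $\chi(F,E\otimes L)=\chi(F,E)$ for all $F$, by observing that every correction term in the HRR integrand carries a factor of $c_1(L)$ paired against an algebraic $1$-cycle (extended from irreducible curves by $\Q$-linearity, as you note). The paper instead factors through the injection $\ch\colon\Knum(X)\hookrightarrow\Chownum(X)$ furnished by HRR and the vanishing of $c_1(\LL)$ in $\Chownum(X)$, so that numerically $\ch(\LL)=1$ and $[E\otimes\LL]=[E]$; this is exactly the alternative you sketch in your closing remark. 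Your version is slightly more self-contained (it does not quote injectivity of $\ch$ on $\Knum$, itself an HRR consequence), while the paper's is shorter given that fact. Your observation that finiteness of $G$ is irrelevant is also accurate --- the paper's proof likewise only uses that each element is a numerically trivial line bundle.
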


\begin{proof}
	Let $\LL\in G$ and $[E]\in\Knum(X)$. The induced action of $G$ on $\Knum(X)$ is given by $\LL\cdot [E] \coloneqq  [E\otimes \LL]$. Since $\LL$ is a numerically trivial line bundle, $\ch(\LL)=e^{c_1(\LL)}$ and $c_1(\LL)=0$ in $\Chownum(X)$. Therefore,
	\begin{equation*}
		\ch\mid_\Knum([E\otimes \LL])=\ch\mid_\Knum([E])\cdot\ch\mid_\Knum(\LL)=\ch\mid_\Knum([E]).
	\end{equation*}
	By the Hirzebruch--Riemann--Roch theorem, the map $\ch\colon \K(X)\rightarrow \Chow(X)$ induces an injective map $\ch\colon \Knum(X)\rightarrow\Chownum(X)$. Therefore, $\LL\cdot [E]=[E\otimes \LL] = [E]$ in $\Knum(X)$.
\end{proof}

\begin{corollary}\label{all geometrics are Ghat invariant}
	Let $S$ be a surface, and let $G\subseteq \Pic^0(S)$ be a finite subgroup. Then every geometric stability condition on $S$ is $G$-invariant.
\end{corollary}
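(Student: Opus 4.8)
The plan is to reduce the statement to the two immediately preceding lemmas, \cref{geometric stability condition G invariant if and only if central charge is} and \cref{lem: Ghat acts trivially on Knum}. The subgroup $G\subseteq\Pic^0(S)$ acts on $\Db(S)$ by the exact autoequivalences $\Phi_\LL=(-\otimes\LL)$ for $\LL\in G$; this is the residual action of \cref{defn of G hat action} in its geometric guise, where one tensors with numerically trivial line bundles. The induced action on $\Stab(S)$ is then $\Phi_\LL\cdot(\PP,Z)=(\Phi_\LL(\PP),\,Z\circ(\Phi_\LL)^{-1}_\ast)$.

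First I would record that each $\Phi_\LL$ fixes every skyscraper sheaf: for $x\in S$ one has $\OO_x\otimes\LL\cong\OO_x$, since the fibre $\LL_x$ is one-dimensional. Hence $\Phi_\LL$ maps $\Gstab{}(S)$ to itself, and for a geometric $\sigma$ the common phase $\phisky$ of skyscrapers is unchanged, because $\phi_{\Phi_\LL\cdot\sigma}(\OO_x)=\phi_\sigma(\Phi_\LL^{-1}\OO_x)=\phi_\sigma(\OO_x)$. Next, \cref{lem: Ghat acts trivially on Knum} gives that the induced action of $G$ on $\Knum(S)$ is trivial, so $(\Phi_\LL)_\ast=\id$ and therefore $Z_{\Phi_\LL\cdot\sigma}=Z_\sigma\circ(\Phi_\LL)^{-1}_\ast=Z_\sigma$. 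Thus for $\sigma=(\PP,Z)\in\Gstab{}(S)$ both $\sigma$ and $\Phi_\LL\cdot\sigma$ are geometric, have the same central charge, and assign the same phase to skyscraper sheaves; by \cref{thm: geo stab determined by Z} — exactly as in the proof of \cref{geometric stability condition G invariant if and only if central charge is} — this forces $\Phi_\LL\cdot\sigma=\sigma$. Since $\LL\in G$ was arbitrary, $\sigma$ is $G$-invariant.

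I do not expect a genuine obstacle here: all the substance is already contained in \cref{thm: geo stab determined by Z} (geometric stability conditions on a surface are determined by their central charge up to even shift, together with the skyscraper phase) and in \cref{lem: Ghat acts trivially on Knum}. The only point requiring a moment's care is that the relevant $G$-action is by tensoring rather than by pullback along automorphisms, so one should note explicitly that tensoring by a line bundle still fixes all skyscraper sheaves and hence that the argument of \cref{geometric stability condition G invariant if and only if central charge is} applies verbatim.
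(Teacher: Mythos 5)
Your proposal is correct and follows essentially the same route as the paper: reduce to the triviality of the induced $G$-action on $\Knum(S)$ via \cref{lem: Ghat acts trivially on Knum}, and then conclude by \cref{thm: geo stab determined by Z} (which is exactly the content of \cref{geometric stability condition G invariant if and only if central charge is}, applied to the tensoring action). Your explicit remark that $-\otimes\LL$ fixes skyscraper sheaves, so that the argument of that lemma applies to an action by tensoring rather than by pullback, is a point of care the paper leaves implicit.
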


\begin{proof}
	Let $\sigma=(\PP,Z)\in\Gstab{}(S)$. By \cref{geometric stability condition G invariant if and only if central charge is}, it is enough to show that $Z$ is $G$-invariant. By \cref{lem: Ghat acts trivially on Knum}, the group $G$ acts trivially on $\Knum(S)$. Since $\sigma$ is numerical, $Z\colon\K(S)\rightarrow \C$ factors via $\Knum(S)$; hence $Z$ is $G$-invariant.
\end{proof}

\begin{example}\label{eg: Ghat acts by degree 0 line bundles}
	Suppose $G$ is a finite abelian group acting freely on a variety $X$, and let $Y\coloneqq X/G$. Then by Proposition~\ref{defn of G hat action}, there is also an action of $\widehat{G}=\Hom(G,\C)$ on $\DbGX\cong\DbY$. As discussed in \cref{geometric subsection: inducing geometric stability conditions}, the corresponding action on $\DbY$ is given by tensoring with a numerically trivial line bundle $\LL_\chi$ for each $\chi\in\widehat{G}$. If $X$ is a surface, then \cref{all geometrics are Ghat invariant} tells us that every geometric stability condition on $\DbY$ is $\widehat{G}$-invariant.
\end{example}

\subsection{Applications to varieties with finite Albanese morphism}

\begin{lemma}\label{G invariant union of connected components}
	Suppose that a finite group $G$ acts on a triangulated category $\DD$ by exact autoequivalences such that the induced action on $\Knum(\DD)$ is trivial. Then $(\Stab(\DD))^G$ is a union of connected components inside $\Stab(\DD)$.
\end{lemma}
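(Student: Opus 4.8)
The plan is to prove that the $G$-fixed locus $(\Stab(\DD))^G$ is both open and closed in $\Stab(\DD)$; since a clopen subset of a topological space is a union of connected components, this gives the statement.

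First I would dispose of closedness, which is formal. The $G$-action on $\Stab(\DD)$ is by homeomorphisms (continuity of $\sigma\mapsto\Phi_g\cdot\sigma$ is part of the set-up recalled before \cref{Forg_G induces a stability condition}), and $\Stab(\DD)$ is Hausdorff because its topology is metrisable via the generalised metric $d$. Hence for each $g\in G$ the set $\{\sigma : \Phi_g\cdot\sigma=\sigma\}$ is closed, being the locus where two continuous maps into a Hausdorff space agree, and $(\Stab(\DD))^G=\bigcap_{g\in G}\{\sigma : \Phi_g\cdot\sigma=\sigma\}$ is a finite intersection of closed sets.

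The substance is openness, and this is where the triviality of the action on $\Knum(\DD)$ is used. The key observation is that $\ZZ$ becomes $G$-invariant: since the numerical central charge $Z$ factors through $\Knum(\DD)$, on which $(\Phi_g)_\ast$ acts trivially by hypothesis, one has $\ZZ(\Phi_g\cdot\sigma)=Z\circ(\Phi_g)_\ast^{-1}=Z=\ZZ(\sigma)$, so the $G$-action on $\Stab(\DD)$ lies over the trivial action on $\Hom_\Z(\Knum(\DD),\C)$. Now fix $\sigma\in(\Stab(\DD))^G$; by \cref{Bridgelandmanifold} choose an open $U\ni\sigma$ on which $\ZZ$ restricts to a homeomorphism onto its image, in particular to an injection. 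The one delicate point is that $U$ must be shrunk so that the $G$-translates of nearby stability conditions also stay inside this chart: set $W=\bigcap_{g\in G}\Phi_g^{-1}\cdot U$, which is open and contains $\sigma$ (each $\Phi_g^{-1}\cdot U$ is an open neighbourhood of the fixed point $\sigma$, and the $g=e$ term gives $W\subseteq U$). Then for $\tau\in W$ and any $g\in G$, both $\tau$ and $\Phi_g\cdot\tau$ lie in $U$, while $\ZZ(\Phi_g\cdot\tau)=\ZZ(\tau)$ by $G$-invariance of $\ZZ$; injectivity of $\ZZ|_U$ forces $\Phi_g\cdot\tau=\tau$. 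Hence $W\subseteq(\Stab(\DD))^G$, and the fixed locus is open.

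I expect the only genuine obstacle to be this last step — specifically, remembering to pass from $U$ to the smaller neighbourhood $W$ so that all $G$-translates of points of $W$ remain within the chart on which $\ZZ$ is injective. Everything else (continuity of the action, Hausdorffness, and the local-homeomorphism property of $\ZZ$) is already supplied by the results quoted above.
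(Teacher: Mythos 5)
Your proof is correct and takes essentially the same route as the paper: the paper's proof also observes that the triviality of the $G$-action on $\Knum(\DD)$ makes the local homeomorphism $\ZZ$ of \cref{Bridgelandmanifold} $G$-invariant, and combines this with continuity of the action to conclude that both $G$-invariance and its negation are open conditions. Your write-up merely makes explicit the two details the paper leaves implicit (Hausdorffness via the generalised metric for closedness, and shrinking the chart to $W=\bigcap_{g\in G}\Phi_g^{-1}\cdot U$ for openness), which is a sound elaboration rather than a different argument.
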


\begin{proof}
	By \cref{Bridgelandmanifold}, there is a local homeomorphism
	\begin{equation*}
		\mathcal{Z}\colon \Stab(\DD)\longrightarrow \Hom_\bZ(\Knum(\DD),\C).
	\end{equation*}
	Let $g\in G$, and denote by $(\Phi_g)_\ast$ the induced action of $g$ on $\K(\DD)$ and $\Knum(\DD)$. Recall that the action of $G$ on $\Stab(\DD)$ is given by $(\Phi_g)_\ast\cdot\sigma=(\Phi_g(\PP),Z\circ (\Phi_g)_\ast^{-1})$. The induced action of $G$ on $\Knum(\DD)$ is trivial; hence $\ZZ(\sigma)$ is $G$-invariant and $\mathcal{Z}(g\cdot\sigma)= \mathcal{Z}(\sigma)$. Furthermore, $G$ acts continuously on $\Stab(\cD)$, and the local homeomorphism $\ZZ$ commutes with this action. Hence the properties of being $G$-invariant and not being $G$-invariant are open in $\Stab(\DD)$, so the result follows.
\end{proof}

We now combine this with the results of Sections~\ref{geometric subsection: inducing geometric stability conditions} and~\ref{geometric subsection: group actions and the classification of geometric stability conditions on surfaces}.

\begin{theorem}\label{Albanese connected component}
	Let $X$ be a variety with finite Albanese morphism. Let $G$ be a finite abelian group acting freely on~$X$, and let $Y=X/G$. Then $\Stab^\ddagger(Y)\coloneqq (\Stab(Y))^{\widehat{G}}\cong \Stab(X)^G$ is a union of connected components in $\Stab(Y)$ consisting only of geometric stability conditions.
\end{theorem}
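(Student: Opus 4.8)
The plan is to deduce this statement as a combination of results already established. First, because $\alb_X$ is finite, \cite[Theorem 1.1]{fuStabilityManifoldsVarieties2022} gives $\Stab(X)=\Gstab{}(X)$; in particular $(\Stab(X))^G$ consists entirely of geometric stability conditions. Second, \cref{geometric G inv corresponds to geometric G hat inv} provides the analytic isomorphism $(\pi^\ast)^{-1}\colon (\Stab(X))^G \xrightarrow{\cong} (\Stab(Y))^{\widehat G}$, and this isomorphism carries geometric stability conditions to geometric stability conditions (and conversely). Combining these two facts already yields the identification $\Stab^\ddagger(Y)=(\Stab(Y))^{\widehat G}\cong \Stab(X)^G$ and shows that every stability condition in $\Stab^\ddagger(Y)$ is geometric.

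It then remains only to show that $(\Stab(Y))^{\widehat G}$ is a union of connected components of $\Stab(Y)$. For this I would apply \cref{G invariant union of connected components} to the triangulated category $\DbY$ together with the residual $\widehat G$-action. The only hypothesis to verify is that $\widehat G$ acts trivially on $\Knum(Y)$. As recalled in \cref{geometric subsection: inducing geometric stability conditions}, this residual action is given by tensoring with the numerically trivial line bundles $\LL_\chi$ for $\chi\in\widehat G$, so the computation in the proof of \cref{lem: Ghat acts trivially on Knum} carries over verbatim and yields the claimed triviality. Then \cref{G invariant union of connected components} finishes the argument.

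Essentially all of the substantive content has been done in the earlier sections, so I do not expect a genuine obstacle here: the statement is close to a formal consequence of \cref{geometric G inv corresponds to geometric G hat inv}, \cref{G invariant union of connected components}, and the Fu--Li--Zhao theorem. The one point deserving a moment's care is confirming that the residual $\widehat G$-action on $\DbY$ really is by tensoring with numerically trivial line bundles rather than merely numerically nontrivial twists — but that is exactly the description set up in \cref{geometric subsection: inducing geometric stability conditions}, so this step reduces to bookkeeping.
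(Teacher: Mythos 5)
Your proposal is correct and follows essentially the same route as the paper's own proof: Fu--Li--Zhao to make every $G$-invariant stability condition on $X$ geometric, \cref{geometric G inv corresponds to geometric G hat inv} to transfer this to $(\Stab(Y))^{\widehat{G}}$, and then \cref{G invariant union of connected components} combined with the fact that the residual $\widehat{G}$-action is by numerically trivial line bundles (so that \cref{lem: Ghat acts trivially on Knum} applies) to get a union of connected components. Your closing remark about checking numerical triviality of the $\LL_\chi$ is exactly the bookkeeping point the paper handles via Example \ref{eg: Ghat acts by degree 0 line bundles}.
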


\begin{proof}
The variety $X$ has finite Albanese morphism, so it follows from \cite[Theorem 1.1]{fuStabilityManifoldsVarieties2022} that all stability conditions on~$X$ are geometric. In particular, all $G$-invariant stability conditions on $X$ are geometric, so from Theorem~\ref{geometric G inv corresponds to geometric G hat inv} it follows that all $\widehat{G}$-invariant stability conditions on $Y$ are geometric. Hence $(\Stab(Y))^{\widehat{G}}\subset \Gstab{}(Y)$.
	
	Recall from Example~\ref{eg: Ghat acts by degree 0 line bundles} that $\widehat{G}$ acts on $\Db(Y)$ by tensoring with numerically trivial line bundles. Now we may apply Lemma~\ref{lem: Ghat acts trivially on Knum}, so it follows that $\widehat{G}$ acts trivially on $\Knum(Y)$. Hence, by Lemma~\ref{G invariant union of connected components}, $(\Stab(Y))^{\widehat{G}}$ is a union of connected components.
\end{proof}

When $X$ is a surface, we will see in \cref{thm: geometric open set connected for surfaces} that $\Gstab{}(S)$ is connected. Hence we have the following stronger result.

\begin{theorem}\label{finite albanese surface quotient has connected component of geos}
	Let $X$ be a surface with finite Albanese morphism. Let $G$ be a finite abelian group acting freely on~$X$. Let $S=X/G$. Then $\Stab^\ddagger(S)=\Gstab{}(S)\cong(\Stab(X))^G$. In particular, $\Stab^\ddagger(S)$ is a connected component of\, $\Stab(S)$.
\end{theorem}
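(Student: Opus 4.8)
The plan is to prove the identity $\Stab^\ddagger(S)=\Gstab{}(S)$ by two inclusions, and then to upgrade the statement that $\Stab^\ddagger(S)$ is a union of connected components of $\Stab(S)$ to the statement that it is a single connected component, using that the geometric locus of a surface is connected. Since $X$ has finite Albanese morphism, \cref{Albanese connected component} already supplies the inclusion $\Stab^\ddagger(S)=(\Stab(S))^{\widehat{G}}\cong(\Stab(X))^G\subseteq\Gstab{}(S)$, together with the fact that $\Stab^\ddagger(S)$ is a union of connected components of $\Stab(S)$; the underlying mechanism is \cref{geometric G inv corresponds to geometric G hat inv}, which transports the all-geometric conclusion of \cite[Theorem 1.1]{fuStabilityManifoldsVarieties2022} from $(\Stab(X))^G$ to $(\Stab(S))^{\widehat{G}}$.

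For the reverse inclusion $\Gstab{}(S)\subseteq\Stab^\ddagger(S)$ I would recall from \cref{geometric subsection: inducing geometric stability conditions} that the residual $\widehat{G}$-action on $\Db(S)$ is given by tensoring with the numerically trivial line bundles $\LL_\chi$, $\chi\in\widehat{G}$. By \cref{lem: Ghat acts trivially on Knum} this action is trivial on $\Knum(S)$, so \cref{all geometrics are Ghat invariant} (see also \cref{eg: Ghat acts by degree 0 line bundles}) applies: every geometric stability condition on the surface $S$ is automatically $\widehat{G}$-invariant. Hence $\Gstab{}(S)\subseteq(\Stab(S))^{\widehat{G}}=\Stab^\ddagger(S)$, and combining with the previous paragraph gives $\Stab^\ddagger(S)=\Gstab{}(S)\cong(\Stab(X))^G$.

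It remains to rule out that $\Stab^\ddagger(S)$ is a disjoint union of several components. Here I would invoke that $\Gstab{}(S)$ is connected for an arbitrary surface: the explicit homeomorphism of \cref{thm: LP gives precise control over set of geometric stability conditions} exhibits it as $\C$ times a connected subset of $\Amp_\R(S)\times\NS_\R(S)\times\R^2$, and this is the content of \cref{thm: geometric open set connected for surfaces}. Then $\Stab^\ddagger(S)$, being simultaneously connected and a union of connected components, is a single connected component of $\Stab(S)$. In this chain the only genuinely substantial ingredient is the connectedness of $\Gstab{}(S)$; the remaining steps are a short bookkeeping combination of \cref{Albanese connected component} and \cref{all geometrics are Ghat invariant}, so I anticipate no real obstacle beyond importing that connectedness statement.
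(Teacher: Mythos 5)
Your proposal is correct and is essentially the paper's argument: both rest on \cref{Albanese connected component} (giving that $\Stab^\ddagger(S)\subseteq\Gstab{}(S)$ is a union of connected components) and on the connectedness of $\Gstab{}(S)$ from \cref{thm: geometric open set connected for surfaces}. The only difference is that you obtain the equality $\Stab^\ddagger(S)=\Gstab{}(S)$ by the reverse inclusion via \cref{all geometrics are Ghat invariant} (exactly the alternative the paper records in the remark following the theorem), whereas the paper's displayed proof deduces the equality directly from the fact that a nonempty union of connected components contained in the connected set $\Gstab{}(S)$ must coincide with it.
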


\begin{proof}
	By Theorem~\ref{Albanese connected component}, $\Stab^\ddagger(S)\subset\Gstab{}(S)$ is a union of connected components. By \cref{thm: geometric open set connected for surfaces}, $\Gstab{}(S)$ is connected. In particular, $\Stab^\ddagger(S)=\Gstab{}(S)$, and this is a connected component of $\Stab(S)$.
\end{proof}

\begin{remark} \leavevmode
  \begin{enumerate}
	\item The equality $\Gstab{}(S)=(\Stab(S))^{\widehat{G}}$ also follows by combining \cref{Albanese connected component} with \cref{all geometrics are Ghat invariant}.
	\item The equality  $\Stab^\ddagger(S)=\Gstab{}(S)$ will be explicitly described in \cref{thm: LP gives precise control over set of geometric stability conditions}.
\end{enumerate}	
\end{remark}

\begin{example}\label{example: stability conditions on abelian Beauville and bielliptic}
	Let $S=(C_1\times C_2) / G$ be the quotient of a product of smooth curves such that $g(C_1),$ $g(C_2)\geq 1$ and $G$ is a finite abelian group acting freely on $S$. Then $C_1\times C_2$ has finite Albanese morphism. By \cref{finite albanese surface quotient has connected component of geos}, $\Gstab{}(S)$ is a connected component. In particular, we could take $S$ to be any bielliptic surface (see Example~\ref{eg: bielliptic surfaces}) or a Beauville-type surface with $G$ abelian (see Example~\ref{eg: Beauville-type surfaces}).
\end{example}

\begin{remark}\label{remark: Lambda H stability conditions on surfaces}
	For an ample class $H$ on a variety of dimension $n$, consider the following surjection from $K(X)$: 
	\begin{equation*}
		[E]\longmapsto \left(H^n \ch_0(E), H^{n-1}\ldot \ch_1(E), \ldots, \ch_n(E)\right) \subseteq \bR^n.
	\end{equation*}
	Let $\Lambda_H$ denote the image. The submanifold $\Stab_H(X)\coloneqq \Stab_{\Lambda_H}(X)\subseteq \Stab(X)$ is often studied. Note that these are the same when $X$ has Picard rank 1.

	Now let $X$ be a surface with finite Albanese morphism, and let $G$ be an abelian group acting freely on $X$. Let $S=X/G$, and denote by $\pi\colon X\rightarrow S$ the quotient map. Moreover, let $H_X$ be a $G$-invariant polarization of $X$, and let $H_S$ be the corresponding polarization on $S$ such that $\pi^\ast H_S = H_X$. Then if a homomorphism $Z\colon K(X)\rightarrow \bC$ factors via $\Lambda_{H_X}$, it is $G$-invariant. Hence by \cref{geometric stability condition G invariant if and only if central charge is}, all stability conditions in $\Stab_{H_X}(X)$ are $G$-invariant.
	
	From \cref{finite albanese surface quotient has connected component of geos} it follows that $\Stab^\ddagger_{H_S}(S)\cong(\Stab_{H_X}(X))^G=\Stab_{H_X}(X)$. The component $\Stab_{H_X}(X)$ is the same as the component described in \cite[Corollary 3.7]{fuStabilityManifoldsVarieties2022}. This gives another proof that $\Stab_{H_S}^\ddagger(S)=\Gstab{H_S}(S)$ is connected. 
\end{remark}

\begin{example}\label{example: CY 3fold of abelian type}
	A \textit{Calabi--Yau threefold of abelian type} is an \'etale quotient $Y=X/G$ of an abelian threefold $X$ by a finite group $G$ acting freely on $X$ such that the canonical line bundle of $Y$ is trivial and $H^1(Y,\C)=0$. As discussed in \cite[Example 10.4(i)]{bayerSpaceStabilityConditions2016}, these are classified in \cite[Theorem 0.1]{oguisoCalabiYauThreefolds2001}. In particular, $G$ can be chosen to be $(\Z/2\Z)^{\oplus 2}$ or $D_4$ (the dihedral group	of order~8), and the Picard rank of $Y$ is 3 or 2, respectively.
	
	Fix a polarization $(Y,H)$, and consider $\Stab_H(Y)$ as in \cref{remark: Lambda H stability conditions on surfaces}. This has a connected component~$\mathfrak{P}$ of geometric stability conditions induced from $\Stab_H(X)$, see \cite[Corollary~10.3]{bayerSpaceStabilityConditions2016}, which is described explicitly in \cite[Lemma 8.3]{bayerSpaceStabilityConditions2016}. When $G=(\Z/2)^{\oplus 2}$, by \cite[Theorem 3.21]{oberdieckDonaldsonThomasInvariants2022}, the stability conditions constructed by Bayer--Macr\`i--Stellari in $\Stab_H(X)$ satisfy the full support property (\textit{i.e.}~the support property with respect to $\Knum(X)$), so they actually lie in $\Stab(X)$. Together with Theorem~\ref{Albanese connected component}, this implies that $\sigma\in\mathfrak{P}$ also satisfies the full support property. In particular, $\mathfrak{P}$ lies in a connected component of $\Stab^\ddagger(Y)$.
\end{example}
\section{The Le Potier function}\label{section: Le Potier}

We compute the Le Potier function of free abelian quotients and varieties with finite Albanese morphism. We apply this to Beauville-type surfaces which provides counterexamples to Conjecture~\ref{conj: FLZ21}. Throughout, $X$ will be a smooth projective connected variety over $\bC$.

\subsection{\texorpdfstring{$\boldsymbol{H}$}{H}-stability}

\begin{notation}
	Let $A\cdot B$ denote the intersection product of elements of $\Chownum(X)\otimes\R$. If $A\cdot B$ is 0-dimensional, we define $A\ldot B \coloneqq \deg(A\cdot B)$.
\end{notation}

\begin{definition}\label{defn: H-stability}
	Let $\dim X = n$. Fix an ample class $H\in\Amp_\R(X)$. Given $0\neq F\in\Coh(X)$, we define the \textit{H-slope} of $F$ as follows:
	\begin{equation*}
		\mu_H(F) \coloneqq  \begin{cases}
			\frac{H^{n-1}\ldot\ch_1(F)}{H^n\ch_0(F)} & \text{if }\ch_0(F)>0, \\ +\infty & \text{if }\ch_0(F)=0.
		\end{cases}
	\end{equation*}
	We say that $F$ is $H$-\textit{stable} (resp.\ $H$-\textit{semistable}) if for every non-zero subobject $E \subsetneq F$, 
	\begin{equation*}
	  \mu_H(E)<\mu_H(F/E)\quad (\text{resp. }\mu_H(E)\leq\mu_H(F/E)). 
	\end{equation*}
\end{definition}

\subsection{The Le Potier function}\label{Le Potier}

When studying $H$-stability, a natural question that arises is whether there are necessary and sufficient conditions on a cohomology class $\gamma\in H^\ast(X,\Q)$ for there to exist an $H$-semistable sheaf $F$ with $\ch(F)=\gamma$.

The Bogomolov--Gieseker inequality (see \cite[Section~10]{bogomolovHolomorphicTensorsVector1979} or \cite[Theorem 3.4.1]{huybrechtsGeometryModuliSpaces2010}) gives the following necessary condition for $H$-semistable sheaves on surfaces:
\begin{equation*}
	2 \ch_0(F)\ch_2(F)\leq \ch_1(F)^2.
\end{equation*}
This generalises to the following statement for any variety $X$ of dimension $n\geq 2$ via the Mumford--Mehta--Ramanathan restriction theorem.

\begin{theorem}[\textit{cf.} {\cite[Theorem 3.2]{langerSemistableSheavesPositive2004}, \cite[Theorem 7.3.1]{huybrechtsGeometryModuliSpaces2010}}]\label{thm: BG for higher dimensions}
	Assume $\dim X=n\geq 2$. Fix $H\in\Amp_\R(X)$. If\, $F$ is a torsion-free $H$-semistable sheaf, then
	\begin{equation*}
		2 \ch_0(F)\left(H^{n-2}\ldot\ch_2(F)\right)\leq H^{n-2}\ldot\ch_1(F)^2.
	\end{equation*}
\end{theorem}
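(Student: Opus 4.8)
The plan is to deduce the statement from the surface Bogomolov--Gieseker inequality recalled above (\cite[\S10]{bogomolovHolomorphicTensorsVector1979}, \cite[Theorem 3.4.1]{huybrechtsGeometryModuliSpaces2010}) by restricting $F$ to a general complete intersection surface, using the Mumford--Mehta--Ramanathan restriction theorem to keep track of semistability. If $n=2$ there is nothing to prove, so assume $n\geq 3$.

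First I would choose, inductively, positive integers $m_1,\dots,m_{n-2}$ — each taken sufficiently large given the previous choices — together with general divisors $\widetilde{D}_i\in|m_iH|$ such that $S\coloneqq\widetilde{D}_1\cap\cdots\cap\widetilde{D}_{n-2}$ is a smooth connected surface (Bertini, with connectedness following from ampleness) and the successive restrictions of $F$ stay torsion-free and slope-semistable. Concretely, at the $i$-th stage one restricts the torsion-free $\mu$-semistable sheaf obtained so far on the smooth variety $X_{i-1}\coloneqq\widetilde{D}_1\cap\cdots\cap\widetilde{D}_{i-1}$ along a general member of the linear system obtained by restricting $|m_iH|$ to $X_{i-1}$. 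For $m_i\gg0$ this restriction preserves $\mu$-semistability with respect to the restricted polarisation by the Mumford--Mehta--Ramanathan theorem \cite[Theorem 3.2]{langerSemistableSheavesPositive2004}, \cite[Theorem 7.3.1]{huybrechtsGeometryModuliSpaces2010} (rescaling an ample class does not affect $\mu$-semistability), it preserves torsion-freeness for a general choice of divisor, and — since the restriction map on global sections of $\OO(m_iH)$ is surjective for $m_i\gg0$ by Serre vanishing — it is realised by the restriction of a general $\widetilde{D}_i\in|m_iH|$ on $X$. After $n-2$ steps one obtains a torsion-free $\mu$-semistable sheaf $F|_S$ on the smooth surface $S$.

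Next I would apply the surface inequality to $F|_S$ and transport it back to $X$. Write $\iota\colon S\hookrightarrow X$ for the inclusion and $M\coloneqq\prod_{i=1}^{n-2}m_i>0$. Functoriality of the Chern character gives $\ch(F|_S)=\iota^\ast\ch(F)$; since $\iota^\ast$ is a ring homomorphism on numerical Chow groups and $[S]=MH^{n-2}$ in $\Chownum(X)$, the projection formula yields
\[
\ch_0(F|_S)=\ch_0(F),\qquad \ch_2(F|_S)=M\,\bigl(H^{n-2}\ldot\ch_2(F)\bigr),\qquad \ch_1(F|_S)^2=M\,\bigl(H^{n-2}\ldot\ch_1(F)^2\bigr),
\]
where the last two quantities are read as degrees of $0$-cycles on $S$. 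Substituting into $2\,\ch_0(F|_S)\,\ch_2(F|_S)\leq\ch_1(F|_S)^2$ and dividing by the positive constant $M$ gives the asserted inequality.

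The only substantive input is the restriction theorem: one needs a version that simultaneously preserves slope-semistability and torsion-freeness under restriction to a general, sufficiently positive divisor, and that iterates cleanly down to a surface. This is precisely what the cited results of Langer and of Huybrechts--Lehn provide, so the remainder is the elementary Chern-class computation above; for $X$ defined over $\bC$ one could equally invoke the classical statement of Mehta--Ramanathan rather than Langer's characteristic-free refinement.
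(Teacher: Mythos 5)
The paper does not actually prove this statement: it is quoted from Langer and Huybrechts--Lehn, and the sentence immediately before it indicates precisely the route you take, namely reduction to the surface Bogomolov--Gieseker inequality via the Mumford--Mehta--Ramanathan restriction theorem (this is also how the cited Theorem 7.3.1 of Huybrechts--Lehn is proved). Your iterated restriction to a general complete-intersection surface, the preservation of torsion-freeness and of slope-semistability for general members of $|m_iH|$ with $m_i\gg 0$, and the projection-formula bookkeeping $\ch_0(F|_S)=\ch_0(F)$, $\deg\ch_2(F|_S)=M\,(H^{n-2}\ldot\ch_2(F))$, $\deg\bigl(\ch_1(F|_S)^2\bigr)=M\,(H^{n-2}\ldot\ch_1(F)^2)$ are all correct, so for an integral (equivalently, after scaling, rational) polarisation your argument is complete and is the standard one.

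The one point you do not address is that the statement fixes $H\in\Amp_\R(X)$: for an irrational real ample class the linear systems $|m_iH|$ do not exist, so your construction does not literally apply. This is not repaired by a naive density argument either, because a sheaf that is $\mu_H$-semistable for an irrational $H$ need not be $\mu_{H'}$-semistable for nearby rational classes $H'$, so one cannot simply pass to the limit in the inequality with the same hypothesis. The standard ways to close this are either to invoke Langer's formulation, which is numerical and is set up for (collections of) nef classes so that the real case is covered, or to take rational ample $H'\to H$, apply the rational case to the Harder--Narasimhan factors of $F$ with respect to $H'$, and combine with the Hodge index theorem (plus boundedness of the relevant destabilising subsheaves) to recover the inequality for $H$ in the limit. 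This is a minor and standard repair, but as written your proof establishes only the rational case of the stated theorem.
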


\begin{remark}
	Let $B\in\NS_\R(X)$. The \textit{twisted Chern character} is defined by $\ch^B \coloneqq \ch\cdot e^{-B}$. Then
	\begin{align*}
		2\ch_0^B(F)\left(H^{n-2}\ldot\ch_2^B(F)\right) -H^{n-2}\ldot\left(\ch_1^B(F)\right)^2 = 2 \ch_0(F)\left(H^{n-2}\ldot\ch_2(F)\right)- H^{n-2}\ldot\ch_1(F)^2; 
	\end{align*}
	hence \cref{thm: BG for higher dimensions} also holds for twisted Chern characters.
\end{remark}

Now assume $\dim X=n\geq 2$, and fix $(H,B)\in\Amp_\R(X)\times\NS_\R(X)$. Then $H^n>0$. Let $F$ be any $H$-semistable torsion-free sheaf. By the twisted version of Theorem~\ref{thm: BG for higher dimensions},
\begin{equation*}
	2H^{n} \ch_0(F)\left(H^{n-2}\ldot\ch_2^B(F)\right)\leq H^n \left(H^{n-2}\ldot\ch_1^B(F)^2\right)\leq\left(H^{n-1}\ldot \ch_1^B(F)\right)^2,
\end{equation*}
where the final inequality is by the Hodge index theorem. Since $F$ is torsion-free,
\begin{equation*}
	\frac{H^{n-2}\ldot\ch_2^B(F)}{H^n\ch_0(F)}\leq \frac{1}{2}\left(\frac{H^{n-1}\ldot \ch_1^B(F)}{H^n\ch_0(F)}\right)^2.
\end{equation*}
Now we expand the expressions for $\ch_2^B(F)$ and $\ch_1^B(F)$:
\begin{align*}
	&\frac{H^{n-2}\ldot\ch_2(F)-H^{n-2}\ldot B\ldot\ch_1(F)+\frac{1}{2}H^{n-2}\ldot B^2\ldot\ch_0(F)}{H^{n}\ch_0(F)} \\
	&\leq \frac{1}{2} \left(\frac{H^{n-1}\ldot\ch_1(F)-H^{n-1}\ldot B\ch_0(F)}{H^n\ch_0(F)}\right)^2 \\
	&= \frac{1}{2}\left(\mu_H(F)-\frac{H^{n-1}\ldot B}{H^n}\right)^2.
\end{align*}
Therefore,
\begin{equation}\label{eq: Le Potier upper bound}
	\nu_{H,B}(F)\coloneqq  \frac{H^{n-2}\ldot\ch_2(F)-H^{n-2}\ldot B\ldot\ch_1(F)}{H^n\ch_0(F)}\leq \frac{1}{2}\left(\mu_H(F)-\frac{H^{n-1} \ldot B}{H^n}\right)^2 - \frac{1}{2}\frac{H^{n-2}\ldot B^2}{H^n}.
\end{equation}
For a given $\mu\in\bR$, if $\mu_H(F)=\mu$, we can therefore ask how large $\nu_{H,B}(F)$ can be. These leads us to make the following definition.

\begin{definition}\label{defn: Le Potier}
	Assume $\dim X=n\geq 2$. Let $(H,B)\in \Amp_\R(X)\times \NS_\R(X)$. We define the \textit{Le Potier function twisted by $B$}, $\Phi_{X,H,B}\colon\R\rightarrow\R\cup\{-\infty\}$, by
	\begin{equation}\label{eq: Le Potier function}
		\Phi_{X,H,B}(x)\coloneqq \limsup_{\mu\rightarrow x}\left\{  \nu_{H,B}(F) : \text{$F\in\Coh(X)$ is $H$-semistable with  $\mu_H(F)=\mu$} \right\}.
	\end{equation}
\end{definition}

\begin{remark}
	If $B=0$, we will write $\Phi_{X,H}\coloneqq \Phi_{X,H,0}$. If $n=2$, then $\Phi_{X,H}$ is exactly \cite[Definition 3.1]{fuStabilityManifoldsVarieties2022}. 
\end{remark}

The above discussion and definition generalises \cite[Proposition 3.2]{fuStabilityManifoldsVarieties2022}. 

\begin{lemma}[\textit{cf.} {\cite[Proposition 3.2]{fuStabilityManifoldsVarieties2022}}]\label{lem: LP function well defined and bounded}
	Let $X$ be a variety of dimension $n\geq 2$. Let  $(H, B)$ be classes in $\Amp_\R(X)\times\NS_\R(X)$. Then $\Phi_{X,H,B}$ is well defined and satisfies
	\begin{equation*}
		\Phi_{X,H,B}(x)\leq 	\frac{1}{2}\left[\left(x-\frac{H^{n-1}\ldot B}{H^n}\right)^2 - \frac{H^{n-2}\ldot B^2}{H^n}\right].
	\end{equation*}
	It is the smallest upper-semi-continuous function such that
	\begin{equation*}
		\nu_{H,B}(F)
		\leq
		\Phi_{X,H,B}\left(\mu_H(F)\right)
	\end{equation*}
	for every torsion-free $H$-semistable sheaf $F$.
\end{lemma}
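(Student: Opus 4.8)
The plan is to split the statement into its three assertions: that the $\limsup$ in \eqref{eq: Le Potier function} takes values in $\R\cup\{-\infty\}$ (well-definedness), that it obeys the displayed quadratic bound, and that $\Phi_{X,H,B}$ is the smallest upper semi-continuous function dominating $\nu_{H,B}$ along the slopes of torsion-free $H$-semistable sheaves. The first two will fall out of inequality \eqref{eq: Le Potier upper bound} established above, and the third is a formal property of upper semi-continuous envelopes, once one makes the elementary observation that an $H$-semistable sheaf of positive rank is automatically torsion-free.

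First I would record this last observation: if $F\in\Coh(X)$ is $H$-semistable with $\mu_H(F)=\mu\in\R$, then $\ch_0(F)>0$, so any torsion subsheaf $T\subseteq F$ satisfies $\mu_H(T)=+\infty>\mu_H(F/T)$, contradicting $H$-semistability; hence $F$ is torsion-free. Consequently, for a fixed $x\in\R$ and $\mu$ ranging over a bounded neighbourhood of $x$, every sheaf contributing to \eqref{eq: Le Potier function} is torsion-free, so inequality \eqref{eq: Le Potier upper bound} applies and gives $\nu_{H,B}(F)\le q(\mu_H(F))$, where $q(t)\coloneqq\tfrac12\bigl(t-\tfrac{H^{n-1}\ldot B}{H^n}\bigr)^2-\tfrac12\tfrac{H^{n-2}\ldot B^2}{H^n}$. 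Since $q$ is continuous, taking $\limsup_{\mu\to x}$ yields $\Phi_{X,H,B}(x)\le q(x)<+\infty$; this simultaneously shows the $\limsup$ is bounded above (hence well-defined in $\R\cup\{-\infty\}$) and gives the stated bound, as $q(x)$ is exactly its right-hand side.

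For the minimality statement I would set $\psi(\mu)\coloneqq\sup\{\nu_{H,B}(F):F\in\Coh(X)\text{ torsion-free, }H\text{-semistable, }\mu_H(F)=\mu\}$, with $\sup\emptyset=-\infty$. By the torsion-free observation, $\Phi_{X,H,B}(x)=\limsup_{\mu\to x}\psi(\mu)$. I would then invoke (or give the two-line proof of) the standard fact that the upper semi-continuous regularisation $\psi^{*}(x)\coloneqq\inf_{\varepsilon>0}\sup_{|\mu-x|<\varepsilon}\psi(\mu)=\limsup_{\mu\to x}\psi(\mu)$ is upper semi-continuous and is the smallest upper semi-continuous function with $\psi^{*}\ge\psi$ pointwise: upper semi-continuity of $\psi^{*}$ is immediate from the $\inf$-over-neighbourhoods description, and if $g$ is upper semi-continuous with $g\ge\psi$ then $g(x)\ge\limsup_{\mu\to x}g(\mu)\ge\limsup_{\mu\to x}\psi(\mu)=\psi^{*}(x)$. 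Granting this, $\Phi_{X,H,B}=\psi^{*}$ is upper semi-continuous, it satisfies $\nu_{H,B}(F)\le\psi(\mu_H(F))\le\Phi_{X,H,B}(\mu_H(F))$ for every torsion-free $H$-semistable $F$ by construction of $\psi$, and any upper semi-continuous $g$ with $g(\mu_H(F))\ge\nu_{H,B}(F)$ for all such $F$ satisfies $g\ge\psi$ and therefore $g\ge\psi^{*}=\Phi_{X,H,B}$.

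The one genuinely delicate point is a matter of convention: the identification $\Phi_{X,H,B}=\psi^{*}$ requires the $\limsup$ in \cref{defn: Le Potier} to be read as including the value at $\mu=x$, i.e.\ $\Phi_{X,H,B}(x)=\inf_{\varepsilon>0}\sup_{|\mu-x|<\varepsilon}\psi(\mu)$. This is the natural reading; if instead one insisted on a punctured $\limsup$, the two versions would still coincide, because the set of slopes of torsion-free $H$-semistable sheaves is dense in $\R$ (for instance by twisting $\OO_X$ by line bundles and passing to higher rank), but this subtlety is cosmetic and does not affect any later use of $\Phi_{X,H,B}$. Everything else is bookkeeping of \eqref{eq: Le Potier upper bound}, so I do not expect a real obstacle.
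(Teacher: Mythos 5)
Your proposal is correct and follows essentially the same route as the paper: the quadratic bound and well-definedness come from inequality \eqref{eq: Le Potier upper bound}, which the paper derives just before the lemma from the twisted Bogomolov--Gieseker inequality (\cref{thm: BG for higher dimensions}) and the Hodge index theorem, and the paper then treats the remaining assertions as formal (citing \cite[Proposition 3.2]{fuStabilityManifoldsVarieties2022}). Your added details --- that an $H$-semistable sheaf of finite slope is automatically torsion-free, and the standard upper semi-continuous envelope argument for minimality --- are exactly the bookkeeping the paper leaves implicit, so there is nothing to correct.
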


\subsection{The Le Potier function for free quotients}

Let $G$ be a finite group acting freely on $X$. There is an \'etale covering $\pi\colon X\rightarrow X/G=:Y$. Then $\Pic(Y)\cong \Pic_G(X)$, the group of isomorphism classes of $G$-equivariant line bundles on $X$. Fix  $H_S\in\Amp_\R(Y)$. Then $\pi^\ast H_S\in\Amp_\R(X)$ is $G$-invariant. Beauville-type and bielliptic surfaces provide examples of such quotients.

\begin{example}[Ample classes on Beauville-type surfaces]\label{eg: ample classes on Beauville}
  Let $S= X / G$ be a Beauville-type surface, as introduced in Example~\ref{eg: Beauville-type surfaces}. Then $X=C_1\times C_2$ is a product of curves of genus $g(C_i)\geq 2$, $q(S)\coloneqq h^1(S,\OO_S)=0$, and $p_g(S)\coloneqq h^2(S,\OO_S)=0$, so $\chi(\OO_S)=1$, and $K_S^2=8$, where $K_S$ is the canonical divisor of $S$.
	
	Assume that there are actions of $G$ on each curve $C_i$ such that the action of $G$ on $C_1\times C_2$ is the diagonal action. This is called the \textit{unmixed case} in \cite[Theorem 0.1]{bauerClassificationSurfacesIsogenous2008} and excludes 3 families of dimension 0. To classify ample classes on $S$, we follow similar arguments to \cite[Section~2.2]{galkinExceptionalCollectionsLine2013}. Let $p_i\colon X\rightarrow C_i$ denote the projections to each curve. For $i,j\in\Z$, define the $G$-invariant divisor class
	\begin{equation*}
		[\OO(i,j)] \coloneqq  p_1^\ast\left(\left[\OO_{C_1}(i)\right]\right)\otimes p_2^\ast \left(\left[\OO_{C_2}(j)\right]\right)\in\NS_G(X).
	\end{equation*}
	Moreover,
	\begin{equation*}
		\chitop(S) = \frac{\chitop(C_1)\cdot\chitop(C_2)}{|G|} = 4 \frac{(1-g(C_1))(1-g(C_2))}{|G|} = 4 \chi(\OO_S) =4.
	\end{equation*}
	Therefore, $\rank \NS(S) = b_2(S)=2$ and
	\begin{equation*}
		\NS_\Q(S) \cong \Q \cdot [\OO(1,0)]\oplus \Q\cdot[\OO(0,1)]. 
	\end{equation*}
	In particular, $\Amp_\R(S) \cong \R_{>0}\cdot[\OO(1,0)]\oplus\R_{>0}\cdot[\OO(0,1)]$.
\end{example}

\begin{lemma}[\textit{cf.} {\cite[Lemma 3.2.2]{huybrechtsGeometryModuliSpaces2010}}]\label{pullbacks are semistable}
	Let $f\colon X\rightarrow Y$ be a finite morphism of varieties of dimension $n\geq 2$, and let $ E\in\Coh(Y)$. Let $(H_Y, B_Y)\in\Amp_\R(Y)\times\NS_\R(Y)$. Then $ E$ is $H_Y$-semistable if and only if $f^* E$ is $f^* H_Y$-semistable. Moreover, if\, $\ch_0( E)\neq 0$, then $\mu_{H_Y}( E) = \mu_{f^* H_Y}(f^* E)$ and $\nu_{H_Y,B_Y}( E) = \nu_{f^* H_Y, f^* B_Y}(f^* E)$. In particular, $\Phi_{X,f^* H_Y, f^* B_Y}\geq \Phi_{Y,H_Y,B_Y}$. 
\end{lemma}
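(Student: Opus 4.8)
Since the lemma already attributes the semistability equivalence to \cite[Lemma 3.2.2]{huybrechtsGeometryModuliSpaces2010}, the plan is to organise that statement together with the twisted invariant $\nu_{H,B}$ and then read off the inequality on Le Potier functions. The first step is purely numerical. Write $n=\dim X=\dim Y$ and $d=\deg f$. Since $f$ is a finite morphism between smooth connected varieties of the same dimension it is faithfully flat, so $f^\ast$ is exact, and the Chern character is functorial, $\ch_i(f^\ast E)=f^\ast\ch_i(E)$; in particular $\ch_0(f^\ast E)=\ch_0(E)$. Using the projection formula, which gives $\deg(f^\ast\gamma)=d\deg\gamma$ for zero-cycles $\gamma$ on $Y$, each of $(f^\ast H_Y)^{n-1}\ldot\ch_1(f^\ast E)$, $(f^\ast H_Y)^{n}\ch_0(f^\ast E)$, $(f^\ast H_Y)^{n-2}\ldot\ch_2(f^\ast E)$ and $(f^\ast H_Y)^{n-2}\ldot f^\ast B_Y\ldot\ch_1(f^\ast E)$ comes out as $d$ times the corresponding number computed on $Y$. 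Dividing, the factors $d$ cancel, which gives $\mu_{f^\ast H_Y}(f^\ast E)=\mu_{H_Y}(E)$ and, when $\ch_0(E)\neq 0$, $\nu_{f^\ast H_Y,f^\ast B_Y}(f^\ast E)=\nu_{H_Y,B_Y}(E)$ (both slopes are $+\infty$ when $\ch_0(E)=0$).

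Next I would establish the semistability equivalence. Torsion sheaves are automatically semistable and $f^\ast$ preserves torsion, so I may restrict to positive rank. One implication is immediate: if $E$ is not $H_Y$-semistable, a destabilising subsheaf $0\neq A\subsetneq E$ pulls back, by exactness and faithful flatness of $f^\ast$, to $0\neq f^\ast A\subsetneq f^\ast E$, and the identities of the first step show that $f^\ast A$ destabilises $f^\ast E$. For the converse I would first treat the Galois case, which already covers the only situation used in this paper, $f=\pi\colon X\to X/G$ with $G$ acting freely. When $f$ is Galois with group $G$, the sheaf $f^\ast E$ and the class $f^\ast H_Y$ are $G$-linearised, so the maximal destabilising (first Harder--Narasimhan) subsheaf $B\subset f^\ast E$ is unique, hence $G$-invariant, hence equals $f^\ast A$ for some $A\subseteq E$ by Galois descent; then $\mu_{H_Y}(A)=\mu_{f^\ast H_Y}(B)>\mu_{f^\ast H_Y}(f^\ast E)=\mu_{H_Y}(E)$ contradicts semistability of $E$. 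A general finite $f$ is handled by passing to its Galois closure $g\colon Z\to Y$, factored as $Z\xrightarrow{h}X\xrightarrow{f}Y$: if $E$ is $H_Y$-semistable then $g^\ast E=h^\ast f^\ast E$ is $g^\ast H_Y$-semistable by the Galois case, so the easy implication applied to $h$ forces $f^\ast E$ to be $f^\ast H_Y$-semistable.

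Finally, for the ``in particular'' inequality: by the two previous steps the assignment $E\mapsto f^\ast E$ sends $H_Y$-semistable sheaves of $H_Y$-slope $\mu$ to $f^\ast H_Y$-semistable sheaves of $f^\ast H_Y$-slope $\mu$ without changing $\nu$, so for each $\mu$ the set of values $\nu_{f^\ast H_Y,f^\ast B_Y}(F)$ over $f^\ast H_Y$-semistable $F$ with $\mu_{f^\ast H_Y}(F)=\mu$ contains the set of values $\nu_{H_Y,B_Y}(E)$ over $H_Y$-semistable $E$ with $\mu_{H_Y}(E)=\mu$; taking suprema and then $\limsup_{\mu\to x}$ in \cref{defn: Le Potier} gives $\Phi_{X,f^\ast H_Y,f^\ast B_Y}\geq\Phi_{Y,H_Y,B_Y}$. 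The only step that is not bookkeeping is the converse implication above; the delicate points there are the Galois descent of the first HN factor and the fact that the Galois closure $Z$ may be singular, so one must run slope-stability on the normal projective variety $(Z,g^\ast H_Y)$ (or simply cite \cite[Lemma 3.2.2]{huybrechtsGeometryModuliSpaces2010}). This subtlety does not appear in the applications in the present paper, where $f$ is an \'etale $G$-torsor and the Galois case applies directly.
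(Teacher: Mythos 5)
Your proposal is correct and takes essentially the same route as the paper: the slope and $\nu$ identities are obtained by the identical flatness-plus-projection-formula computation, and the semistability equivalence is exactly the content of \cite[Lemma 3.2.2]{huybrechtsGeometryModuliSpaces2010}, which the paper simply cites while you re-derive it by the standard Galois-closure/descent argument (the method behind that lemma). The subtleties you flag (descent of the maximal destabilising subsheaf, possible singularity of the Galois closure) are absorbed by that citation and do not arise in the \'etale quotient case actually used in the paper, so there is no gap.
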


\begin{proof}
	The claim that $ E$ is $H_Y$-semistable if and only if $f^* E$ is $f^* H_Y$-semistable follows from the same arguments as in the proof of \cite[Lemma 3.2.2]{huybrechtsGeometryModuliSpaces2010}. If $\ch_0( E)\neq 0$, then
	\begin{align*}
		\mu_{f^\ast H_Y}(f^\ast E)
		&= \frac{\deg((f^\ast H_Y)^{n-1}\cdot f^\ast(\ch_1( E)))}{\deg\left((f^\ast H_Y)^n\cdot f^\ast(\ch_0( E))\right)}\\
		&= \frac{\deg(f^\ast (H_Y^{n-1}\cdot\ch_1( E)))}{\deg(f^\ast (H_Y^n\cdot \ch_0( E)))} \quad
		\text{($f$ is flat, so $f^\ast$ is a ring morphism)}\\
		&= \frac{\deg(f)\deg (H_Y^{n-1}\cdot\ch_1( E))}{\deg(f)\deg(H_Y^n\cdot\ch_0( E))} \quad \text{(projection formula)}\\
		& = \mu_{H_Y}( E)
	\end{align*}
	By the same arguments, $\nu_{f^* H_Y, f^* B_Y}(f^* E)=\nu_{H_Y,B_Y}( E)$.
\end{proof}

\begin{lemma}\label{pushforwards are semistable}
	Suppose a finite group $G$ acts freely on $X$. Let $\pi\colon X\rightarrow Y\coloneqq X/G$ denote the quotient map. If $ F\in\Coh(X)$ is $\pi^\ast H_Y$-semistable, then $\pi_\ast  F$ is $H_Y$-semistable. Moreover, if $\ch_0( F)\neq 0$, then $\mu_{H_Y}(\pi_*  F) = \mu_{\pi^* H_Y}( F)$ and $\nu_{H_Y,B_Y}(\pi_*  F) = \nu_{\pi^* H_Y, \pi^* B_Y}( F)$. In particular, $\Phi_{X,\pi^* H_Y, \pi^* B_Y}\leq \Phi_{Y,H_Y,B_Y}$.
\end{lemma}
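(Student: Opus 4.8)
The plan is to reduce everything to \cref{pullbacks are semistable} by means of the identification $\pi^\ast\pi_\ast F\cong\bigoplus_{g\in G}g^\ast F$ in $\Coh(X)$. Since $G$ acts freely, $\pi$ is finite \'etale and the fibre product satisfies $X\times_Y X\cong\coprod_{g\in G}X$, with the two projections restricting on the $g$-th copy to $\id_X$ and to the automorphism $g$; flat base change then gives the displayed direct sum. (Equivalently, in the notation of diagram \eqref{key functors diagram} one has $\pi^\ast\cong\Forg_G$ and $\pi_\ast\cong\Inf_G$ under $\Psi$, so $\pi^\ast\pi_\ast F\cong\Forg_G\Inf_G(F)=\bigoplus_{g\in G}\phi_g(F)=\bigoplus_{g\in G}g^\ast F$ by \cref{defn: forg and inf functors}.)

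Granting this, the first step is the elementary observation that for each $g\in G$ the sheaf $g^\ast F$ is again $\pi^\ast H_Y$-semistable, with $\mu_{\pi^\ast H_Y}(g^\ast F)=\mu_{\pi^\ast H_Y}(F)$ and $\nu_{\pi^\ast H_Y,\pi^\ast B_Y}(g^\ast F)=\nu_{\pi^\ast H_Y,\pi^\ast B_Y}(F)$: indeed $\pi\circ g=\pi$ forces $g^\ast\pi^\ast H_Y=\pi^\ast H_Y$ and $g^\ast\pi^\ast B_Y=\pi^\ast B_Y$, while an automorphism of $X$ preserves intersection products and degrees of zero-cycles, so $g^\ast$ preserves both invariants and transports semistability. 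The second step is that a direct sum of $\pi^\ast H_Y$-semistable sheaves all of the same slope is again $\pi^\ast H_Y$-semistable, since the slopes of its Harder--Narasimhan factors lie between the extreme slopes of the summands. Combining these with the identification above, $\pi^\ast\pi_\ast F=\bigoplus_{g\in G}g^\ast F$ is $\pi^\ast H_Y$-semistable, so \cref{pullbacks are semistable} applied to the finite morphism $\pi$ and the sheaf $\pi_\ast F\in\Coh(Y)$ shows $\pi_\ast F$ is $H_Y$-semistable. For the numerical identities, assume $\ch_0(F)\neq0$, so $\ch_0(\pi_\ast F)=|G|\,\ch_0(F)\neq0$; then \cref{pullbacks are semistable} gives $\mu_{H_Y}(\pi_\ast F)=\mu_{\pi^\ast H_Y}(\pi^\ast\pi_\ast F)$ and $\nu_{H_Y,B_Y}(\pi_\ast F)=\nu_{\pi^\ast H_Y,\pi^\ast B_Y}(\pi^\ast\pi_\ast F)$, and since $\mu$ and $\nu$ are ratios with additive numerator and nonzero additive denominator, a direct sum of sheaves sharing a common $\mu$ and a common $\nu$ retains those values; this yields $\mu_{H_Y}(\pi_\ast F)=\mu_{\pi^\ast H_Y}(F)$ and $\nu_{H_Y,B_Y}(\pi_\ast F)=\nu_{\pi^\ast H_Y,\pi^\ast B_Y}(F)$.

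Finally, for the inequality of Le Potier functions one uses that the sheaves contributing to $\Phi_{X,\pi^\ast H_Y,\pi^\ast B_Y}(x)$, namely the $\pi^\ast H_Y$-semistable sheaves of finite $H$-slope, are torsion-free (a semistable sheaf of finite slope admits no nonzero torsion subsheaf), and that pushforward along the finite morphism $\pi$ preserves torsion-freeness; hence each such $F$ produces a torsion-free $H_Y$-semistable $\pi_\ast F$ on $Y$ with the same $\mu_H$ and the same $\nu$-value, so for every $\mu$ the supremum defining $\Phi_{X,\pi^\ast H_Y,\pi^\ast B_Y}$ at $\mu$ is at most the one defining $\Phi_{Y,H_Y,B_Y}$ at $\mu$, and this survives passing to $\limsup$ as $\mu\to x$. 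The only genuinely delicate point is the identification $\pi^\ast\pi_\ast F\cong\bigoplus_{g\in G}g^\ast F$; once it is established, the rest is a formal consequence of \cref{pullbacks are semistable} and the behaviour of semistability, slope, and the $\nu$-invariant under direct sums of equal-slope summands, so I expect that identification to be the main (though still routine) obstacle.
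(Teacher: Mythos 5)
Your argument is correct and follows essentially the same route as the paper: identify $\pi^\ast\pi_\ast F\cong\bigoplus_{g\in G}g^\ast F$ (the paper does this via $\pi^\ast\circ\pi_\ast\cong\Forg_G\circ\Inf_G$, which you also note as the equivalent formulation), use $G$-invariance of $\pi^\ast H_Y$ to see each summand is semistable of the same slope, conclude semistability of the direct sum, and then invoke \cref{pullbacks are semistable} together with additivity of the Chern character for the numerical identities and the Le Potier inequality. The extra remarks on torsion-freeness are harmless but not needed, since the definition of $\Phi_{X,H,B}$ only involves $H$-semistable sheaves of the given slope.
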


\begin{proof}
	Suppose that $ F\in\Coh(X)$ is $\pi^\ast H_Y$-semistable. Recall from \cref{geometric subsection: inducing geometric stability conditions} that, under the equivalence $\Coh(Y)\cong \Coh_G(X)$, we have $\pi^*\circ\pi_*\cong \Forg_{G}\circ\Inf_{G}$, so 
	\begin{equation*}
		\pipull(\pi_\ast( F)) \cong \Forg_{G}\circ\Inf_{G}( F)= \bigoplus_{g\in G}g^\ast  F.
	\end{equation*}
	Since $\pi^\ast H_Y$ is $G$-invariant, it follows that $g^\ast  F$ is $\pi^\ast H_Y$-semistable for every $g\in G$. In particular, $\bigoplus_{g\in G}g^\ast F$ is $\pi^\ast H_Y$-semistable. By Lemma~\ref{pullbacks are semistable}, $\pi_\ast F$ is $H_Y$-semistable.

	Now suppose $\ch_0( F)\neq 0$. Since the Chern character is additive, $\mu_{\pi^* H_Y}(\pipull\pi_\ast  F) = \mu_{\pi^* H_Y}( F)$. By Lemma~\ref{pullbacks are semistable}, $\mu_{H_Y}( \pi_\ast  F)=\mu_{\pi^* H_Y}(\pipull\pi_\ast  F)=\mu_{\pi^* H_Y}( F)$, as required.
	
	By the same arguments, $\nu_{H_Y,B_Y}(\pi_*  F) = \nu_{\pi^* H_Y, \pi^* B_Y}( F)$.
\end{proof}

\begin{proposition}\label{Le Potier Functions Agree}
	Suppose a finite group $G$ acts freely on $X$. Let $\pi\colon X\rightarrow Y\coloneqq X/G$ denote the quotient map. Let $(H_Y, B_Y)\in\Amp_\R(Y)\times\NS_\R(Y)$. Then $\Phi_{Y,H_Y,B_Y}=\Phi_{X,\pipull H_Y,\pipull B_Y}$.
\end{proposition}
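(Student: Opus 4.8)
The plan is to read the equality off directly from the two lemmas immediately preceding the statement, since each already records one of the two inequalities; the only thing to add is the $\limsup$ bookkeeping that turns the slope-wise statements about $\mu_H$ and $\nu_{H,B}$ into a statement about $\Phi$. For the inequality $\Phi_{X,\pipull H_Y,\pipull B_Y}\ge \Phi_{Y,H_Y,B_Y}$, I would apply Lemma~\ref{pullbacks are semistable} to the finite morphism $\pi$: every $H_Y$-semistable sheaf $E$ on $Y$ with $\ch_0(E)\ne 0$ pulls back to a $\pipull H_Y$-semistable sheaf $\pipull E$ on $X$ with the same $H$-slope and the same value of $\nu_{H,B}$. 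Thus for each finite slope $\mu$ the set of values $\nu_{H_Y,B_Y}(E)$ taken by $H_Y$-semistable sheaves on $Y$ of that slope is contained in the corresponding set on $X$; taking suprema and then $\limsup_{\mu\to x}$ gives the inequality at every $x\in\R$.

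For the reverse inequality $\Phi_{X,\pipull H_Y,\pipull B_Y}\le\Phi_{Y,H_Y,B_Y}$, I would symmetrically apply Lemma~\ref{pushforwards are semistable}: every $\pipull H_Y$-semistable sheaf $F$ on $X$ with $\ch_0(F)\ne 0$ pushes forward to an $H_Y$-semistable sheaf $\pi_\ast F$ on $Y$ with the same $H$-slope and the same value of $\nu_{H,B}$. Hence at each finite slope the set of achievable $\nu$-values on $X$ is contained in the one on $Y$, and the same $\limsup$ bookkeeping gives the inequality. Combining the two yields $\Phi_{Y,H_Y,B_Y}=\Phi_{X,\pipull H_Y,\pipull B_Y}$.

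The one subtlety worth spelling out is that the $\limsup$ in Definition~\ref{defn: Le Potier} ranges over \emph{finite} slopes $\mu\to x$, so sheaves with $\ch_0=0$ (slope $+\infty$) never contribute; since $\pi$ is finite flat --- in fact \'etale, as $G$ acts freely --- both $\pipull$ and $\pi_\ast$ preserve rank, so positive-rank sheaves on $X$ and on $Y$ match up slope-by-slope under $\pipull$ and $\pi_\ast$ and no information is lost. I do not expect a genuine obstacle in the proposition itself: the substantive content already lies in Lemmas~\ref{pullbacks are semistable} and~\ref{pushforwards are semistable}, which rest on the behaviour of semistability under finite morphisms (\cite[Lemma~3.2.2]{huybrechtsGeometryModuliSpaces2010}) and on the identity $\pipull\pi_\ast\cong\bigoplus_{g\in G}g^\ast$ together with the $G$-invariance of $\pipull H_Y$.
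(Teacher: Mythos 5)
Your proposal is exactly the paper's proof: the paper derives the proposition directly from Lemma \ref{pullbacks are semistable} and Lemma \ref{pushforwards are semistable}, whose ``in particular'' clauses already give the two inequalities you combine. The only nitpick is that $\pi_\ast$ does not preserve rank (it multiplies it by $|G|$, since $\pi$ is a degree-$|G|$ \'etale cover), but since the lemmas show slopes and $\nu$-values are preserved and $\ch_0\neq 0$ is preserved in both directions, this does not affect your argument.
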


\begin{proof}
	 This follows from Lemmas~\ref{pullbacks are semistable} and~\ref{pushforwards are semistable}.
\end{proof}

\subsection{The Le Potier function for varieties with finite Albanese morphism}

The Le Potier function for surfaces with finite Albanese morphism was known previously; see \cite[Example 2.12(2)]{lahozChernDegreeFunctions2022}. Below, we give a different proof which works for $\Phi_{X,H,B}$ in any dimension. We first need the following definition.

\begin{definition}[\textit{cf.} {\cite[Definitions 4.4 and~5.2]{mukaiSemihomogeneousVectorBundles1978}}]
	A vector bundle $E$ on an abelian variety $A$ is \textit{homogeneous} if it is invariant under translations, \textit{i.e.}~for every $x\in A$, $T_x^\ast(E)\cong E$, where $T_x$ is translation on $A$ by $x$. The vector bundle $E$ is called \textit{semi-homogeneous} if for every $x\in A$, there exists a line bundle $L$ on $A$ such that $T_x^\ast(E)\cong E\otimes L$.
\end{definition}

See \cite[Proposition 5.1]{mukaiSemihomogeneousVectorBundles1978} for some equivalent characterisations for when a vector bundle is semi-homogeneous. We will need the following properties.

\begin{theorem}[\textit{cf.} {\cite[Theorem 4.17, Lemma 6.11]{mukaiSemihomogeneousVectorBundles1978}}]\label{thm: properties of homog and semihomog vbdles}
	Let $E$ be a vector bundle with $ch_0(E)=r$ on an abelian variety $A$.
	\begin{enumerate}
		\item\label{t413-1} The vector bundle $E$ is homogeneous if and only if $E\cong \oplus_{i=1}^k \left(P_i\otimes U_i\right)$, where each $P_i$ is a numerically trivial line bundle and each $U_i$ is a unipotent line bundle, \textit{i.e.}~an iterated self-extension of\, $\OO_{A}$.
		\item\label{t413-2} Suppose $E$ is semi-homogeneous, and consider the multiplication by $r$ map, $r_A\colon A\rightarrow A$. Then we have $r_A^* E \cong \det(E)^{\otimes r} \otimes V$, where $V$ is a homogeneous vector bundle with $\ch_0(V)= \ch_0(r_A^* E)$ and $c_1(V)=0$.
	\end{enumerate}
\end{theorem}

There are many $H$-semistable semi-homogeneous vector bundles on any abelian variety.

\begin{proposition}[\textit{cf.} {\cite[Theorem 7.11]{mukaiSemihomogeneousVectorBundles1978}}]\label{Mukai PFVB}
	Let $A$ be an abelian variety, and fix $H\in\Amp_\R(A)$. For every divisor class $C\in\NS_\Q(A)$, there exists an $H$-semistable semi-homogeneous vector bundle $E_C$ on $A$ with $C=\frac{\ch_1(E_C)}{\ch_0(E_C)}$ and $\ch(E_C)=\ch_0(E_C) \cdot e^{C}$.
\end{proposition}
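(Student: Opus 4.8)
The plan is to build $E_C$ explicitly, exactly as Mukai does, by pushing a line bundle forward along a multiplication isogeny, and then to read off the three asserted properties one at a time. Since $C\in\NS_\Q(A)$, I would first choose a positive integer $n$ with $n^2C\in\NS(A)$ and a line bundle $M$ on $A$ with $c_1(M)=n^2C$; write $g=\dim A$ and let $n_A\colon A\to A$ denote multiplication by $n$, an \'etale isogeny of degree $n^{2g}$. Then I would set $E_C\coloneqq(n_A)_\ast M$, a vector bundle of rank $\ch_0(E_C)=n^{2g}$, and verify that (i) it is semi-homogeneous, (ii) $\ch(E_C)=\ch_0(E_C)\,e^{C}$, and (iii) it is $H$-semistable.

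For (i) and (ii): given $x\in A$, picking $y$ with $ny=x$ and using flat base change along the Cartesian square built from the identity $n_A\circ T_y=T_x\circ n_A$, one gets $T_x^\ast E_C\cong(n_A)_\ast(T_y^\ast M)$; since $T_y^\ast M\otimes M^{-1}\in\Pic^0(A)$ and the pullback map $n_A^\ast\colon\Pic^0(A)\to\Pic^0(A)$ is again an isogeny, hence surjective, the projection formula turns this into $T_x^\ast E_C\cong E_C\otimes Q$ for a suitable $Q\in\Pic^0(A)$, which is semi-homogeneity. (Both this and the Chern-character formula below are essentially the content of \cite[Theorem 7.11]{mukaiSemihomogeneousVectorBundles1978}.) For the Chern character, since $n_A$ is \'etale and $A$ has trivial tangent bundle, Grothendieck--Riemann--Roch collapses to $\ch(E_C)=(n_A)_\ast e^{c_1(M)}$; and on $H^{2k}(A;\Q)$ the map $(n_A)_\ast$ is multiplication by $n^{2(g-k)}$ (because $(n_A)^\ast$ is multiplication by $n^{2k}$ there while $(n_A)_\ast(n_A)^\ast$ is multiplication by $n^{2g}$), so
\begin{equation*}
	\ch(E_C)=\sum_{k\ge 0}n^{2(g-k)}\frac{c_1(M)^k}{k!}=n^{2g}\,e^{c_1(M)/n^2}=n^{2g}\,e^{C}=\ch_0(E_C)\,e^{C},
\end{equation*}
which in particular forces $\ch_1(E_C)/\ch_0(E_C)=C$.

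For (iii) I would deduce semistability from the structure of semi-homogeneous bundles together with \cref{pullbacks are semistable}. Put $r=\ch_0(E_C)$. By \cref{thm: properties of homog and semihomog vbdles}(2), $r_A^\ast E_C\cong\det(E_C)^{\otimes r}\otimes V$ with $V$ homogeneous, and by part (1), $V\cong\bigoplus_i(P_i\otimes U_i)$ with each $P_i$ numerically trivial and each $U_i$ an iterated self-extension of $\OO_A$. Each summand of $r_A^\ast E_C$ is thus a line bundle tensored with such a unipotent bundle; since $\OO_A$ is slope-stable and (for a fixed slope) slope-semistability is preserved both by extensions and by twisting with a line bundle, every summand is $(r_A^\ast H)$-semistable, and a direct computation shows they all share the common slope $\mu_{r_A^\ast H}(r_A^\ast E_C)$. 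Hence $r_A^\ast E_C$ is $(r_A^\ast H)$-semistable, and since $r_A$ is finite, \cref{pullbacks are semistable} gives that $E_C$ is $H$-semistable, completing the argument.

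The place where the real work sits is Mukai's structure theory: the semi-homogeneity of the pushforwards $(n_A)_\ast M$, and above all the decomposition results recalled in \cref{thm: properties of homog and semihomog vbdles}, which are exactly what make the reduction to unipotent bundles in step (iii) possible. Granting these, the construction and the three verifications above are routine; and one could even bypass step (iii) altogether by simply invoking the semistability assertion already contained in \cite[Theorem 7.11]{mukaiSemihomogeneousVectorBundles1978}.
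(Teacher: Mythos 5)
Your proposal is correct, and it starts from the same bundle as the paper's proof: writing $C=[L]/l$, the paper also forms the pushforward $F=(l_A)_*(L^{\otimes l})$, which is your $(n_A)_*M$ (with $n=l$, $M=L^{\otimes l}$). The difference is what happens next. The paper does \emph{not} take $E_C=F$: it cites Mukai (Proposition 6.22 of \cite{mukaiSemihomogeneousVectorBundles1978}) for semi-homogeneity of $F$ and $\delta(F)=C$, passes to a filtration of $F$ by semi-homogeneous bundles $E_i$ with $\delta(E_i)=C$ which are $\mu_H$-semistable by Mukai's Proposition 6.15, sets $E_C=E_1$ (so its $E_C$ generally has smaller rank than yours), and then still has to prove $\ch(E_C)=\ch_0(E_C)e^{C}$, which it does by pulling back along $r_A$, applying \cref{thm: properties of homog and semihomog vbdles}, and using that $r_A^\ast$ acts on $\Chownum^i$ by $r^{2i}$ and is injective. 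You instead keep the whole pushforward, reprove semi-homogeneity by hand (flat base change plus the projection formula, essentially Mukai's own proof of 6.22), get the Chern character in one stroke from Grothendieck--Riemann--Roch -- genuinely simpler, since for a pushforward no structure theory is needed -- and deduce $H$-semistability from \cref{thm: properties of homog and semihomog vbdles} combined with \cref{pullbacks are semistable}, the same mechanism the paper uses later in \cref{stable pullback from albanese}. Your route is more self-contained and yields an explicit bundle; the paper's route delegates semistability entirely to Mukai. One point of care: in step (iii) you apply \cref{thm: properties of homog and semihomog vbdles}(2) to $(n_A)_*M$, which need not be a \emph{simple} semi-homogeneous bundle; this is covered by the statement as quoted in the paper, but if one worries whether Mukai's Lemma 6.11 is meant in that generality, note that Mukai's filtration of $F$ by $\mu_H$-semistable bundles all of slope determined by $\delta=C$ already forces $F$ itself to be $H$-semistable (an iterated extension of semistable sheaves of equal slope is semistable), so your choice $E_C=F$ is safe either way; by contrast, your suggested shortcut of reading semistability directly off Mukai's Theorem 7.11 is not how the paper extracts it (it uses Proposition 6.15), so I would keep your step (iii) rather than drop it.
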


\begin{proof}
	These vector bundles are constructed as follows: for any $C\in \NS_\bQ(A)$, write $C=\frac{[L]}{l}$, where $[L]$ is the equivalence class of $L\in \NS(A)$ and $l\in\bZ_{>0}$. Let $l_A\colon A \rightarrow A$ denote the multiplication by $l$ map, and define $F= (l_A)_*((L)^{\otimes l})$. By \cite[Proposition 6.22]{mukaiSemihomogeneousVectorBundles1978}, $F$ is a semi-homogeneous vector bundle with $C = \delta (F) \coloneqq \frac{\det(F)}{\ch_0(F)}$. Moreover, $F$ has a filtration by semi-homogeneous vector bundles $E_1,\ldots, E_m$. By \cite[Proposition 6.15]{mukaiSemihomogeneousVectorBundles1978}, each $E_i$ is $\mu_H$-semistable for any $H\in\Amp_\R(A)$ and satisfies $C=\delta(E_i)$.

	Let $E_C\coloneqq E_1$, and let $r=\ch_0(E_C)$. We claim that $\ch(E_C)= r e^C$. Consider the multiplication by $r$ map $r_A\colon X \rightarrow X$. By \cref{thm: properties of homog and semihomog vbdles}\eqref{t413-2}, we have $r_A^* E_C \cong \det(E_C)^{\otimes r} \otimes V$, where $V$ is a homogeneous vector bundle, and
	\begin{equation}\label{eq: chern character of semihog pulled back by r}
		\ch(r_A^* E_C) = \ch(V) \cdot \ch\left(\det(E_C)^{\otimes r}\right) = \ch_0\left(r_A^* E_C\right) e^{r\det(E_C)} = \ch_0\left(r_A^* E_C\right) e^{r^2 C}.
	\end{equation}
	Now recall that $H^{2i}(A,\bC)=\bigwedge^{2i} H^1(A,\bC)$. On $H^1(A,\bC)$, $r_A^*$ is multiplication by $r$. It follows that on $\Chow_{\mathrm{num}}^i(X)$, $r_A^*$ is multiplication by $r^{2i}$. In particular, $\delta(r_A^* E_C) = r^2 \delta(E_C)$. Hence \eqref{eq: chern character of semihog pulled back by r} becomes
	\begin{equation*}
		\ch\left(r_A^* E_C\right) = \ch_0\left(r_A^* E_C\right) e^{\delta(r_A^* E_C)} = r_A^*\left( \ch_0(E_C) e^{\delta(E_C)}\right) = r_A^*\left(r e^C\right).
	\end{equation*}
	Since $r_A$ is flat, the claim follows.
\end{proof}

We use this to compute the Le Potier function for abelian varieties.

\begin{proposition}\label{prop: special semihomog bundles for Le Potier computation}
	Let $A$ be an abelian variety of dimension $n\geq 2$. Fix $(H,B)\in\Amp_\R(A)\times \NS_R(A)$. Then
	\begin{equation*}
		\Phi_{A,H,B}(x) = \frac{1}{2}\left[ \left( x - \frac{H^{n-1}\ldot B}{H^n}\right)^2 - \frac{H^{n-2}\ldot B^2}{H^n}\right].
	\end{equation*}
\end{proposition}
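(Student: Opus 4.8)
The plan is to use that \cref{lem: LP function well defined and bounded} already supplies the inequality $\Phi_{A,H,B}(x)\le\tfrac12\bigl[(x-\tfrac{H^{n-1}\ldot B}{H^n})^2-\tfrac{H^{n-2}\ldot B^2}{H^n}\bigr]$, and to prove the reverse inequality by exhibiting, for each $x\in\R$, $H$-semistable sheaves whose slopes approach $x$ and whose $\nu_{H,B}$-values approach the right-hand side. The natural candidates are the semi-homogeneous vector bundles $E_C$ produced by \cref{Mukai PFVB}: such a bundle has $\ch(E_C)=\ch_0(E_C)\,e^{C}$, so
\begin{equation*}
	\mu_H(E_C)=\frac{H^{n-1}\ldot C}{H^n},\qquad
	\nu_{H,B}(E_C)=\frac{H^{n-2}\ldot\!\left(\tfrac12 C^2-B\ldot C\right)}{H^n}.
\end{equation*}
Tracing through the derivation of \eqref{eq: Le Potier upper bound}, one sees that for these bundles the twisted Bogomolov--Gieseker step holds with equality automatically, while the Hodge Index step holds with equality exactly when $C-B$ is numerically proportional to $H$.

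This pins down the class to use. First I would set $t\coloneqq x-\frac{H^{n-1}\ldot B}{H^n}$ and $D\coloneqq B+tH\in\NS_\R(A)$; then $\frac{H^{n-1}\ldot D}{H^n}=x$ and $\tfrac12 D^2-B\ldot D=\tfrac12\bigl(t^2H^2-B^2\bigr)$, so that the two displayed expressions, evaluated formally at $C=D$, become $x$ and $\tfrac12\bigl[(x-\tfrac{H^{n-1}\ldot B}{H^n})^2-\tfrac{H^{n-2}\ldot B^2}{H^n}\bigr]$. As $D$ need not be rational, I would next pick $C_k\in\NS_\Q(A)$ with $C_k\to D$ (using density of $\NS_\Q(A)$ in $\NS_\R(A)$), apply \cref{Mukai PFVB} to the $C_k$ to obtain $H$-semistable bundles $E_{C_k}$ with $\ch(E_{C_k})=\ch_0(E_{C_k})e^{C_k}$, and use continuity of the intersection pairing to get $\mu_H(E_{C_k})\to x$ and $\nu_{H,B}(E_{C_k})\to\tfrac12\bigl[(x-\tfrac{H^{n-1}\ldot B}{H^n})^2-\tfrac{H^{n-2}\ldot B^2}{H^n}\bigr]$.

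Feeding the sequence $(E_{C_k})$ into \cref{defn: Le Potier} then yields $\Phi_{A,H,B}(x)\ge\tfrac12\bigl[(x-\tfrac{H^{n-1}\ldot B}{H^n})^2-\tfrac{H^{n-2}\ldot B^2}{H^n}\bigr]$, which combined with \cref{lem: LP function well defined and bounded} gives the claimed equality. I do not expect any serious obstacle: the one delicate point is replacing the ideal but generally irrational class $D=B+tH$ by nearby rational classes, which is handled by density together with continuity of $C\mapsto(\mu_H(E_C),\nu_{H,B}(E_C))$; if one's convention for $\limsup_{\mu\to x}$ excludes $\mu=x$, one simply perturbs the $C_k$ slightly so that the slopes $\mu_H(E_{C_k})$ are distinct from $x$.
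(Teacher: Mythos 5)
Your proposal is correct and follows essentially the same route as the paper: the paper also tests the upper bound of \cref{lem: LP function well defined and bounded} against Mukai's semi-homogeneous bundles $E_C$ with $C$ on the ray $B+tH$, computes $\mu_H(E_C)$ and $\nu_{H,B}(E_C)$ from $\ch(E_C)=\ch_0(E_C)e^{C}$, and propagates equality from a dense set of slopes to all of $\R$ via the $\limsup$ in \cref{defn: Le Potier}. Your extra care in approximating the generally irrational class $D=B+tH$ by rational classes is a welcome refinement (the paper applies \cref{Mukai PFVB} to $kH+B$ directly, glossing over rationality), but it does not change the argument.
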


\begin{proof}
	 For any $k\in \bQ$, define $C_k \coloneqq kH + B$. Then by \cref{Mukai PFVB}, there exists a $\mu_H$-semistable vector bundle $E_{C_k}$ with $C_k=\frac{\ch_1(E_{C_k})}{\ch_0(E_{C_k})}$ and $\ch(E_{C_k})=\ch_0(E_{C_k}) \cdot e^{C_k}$. Let $r=\ch_0(E_{C_k})$. Hence 
	\begin{equation*}
		\mu_H(E_{C_k})= \frac{H^{n-1}\ldot r C_k}{H^n r} = k + \frac{H^{n-1}\ldot B}{H^n},
	\end{equation*}
	and
	\begin{align*}
		\nu_{H,B}(E_{C_k}) &= \frac{H^{n-2}\ldot \frac{1}{2} r C_k^2 - H^{n-2}\ldot B\ldot rC_k}{H^n r}\\ 
			&= \frac{1}{2}\frac{H^{n-2}\ldot\left(k^2H^2 + 2kH\ldot B + B^2\right) - H^{n-2}\ldot (2kH\ldot B + 2B^2)}{H^n} \\
			&= \frac{1}{2}\left[ k^2 - \frac{H^{n-2}\ldot B^2}{H^n}\right]\\
			&= \frac{1}{2}\left[ \left(\mu_H(E_{C_k}) - \frac{H^{n-1}\ldot B}{H^n}\right)^2 - \frac{H^{n-2}\ldot B^2}{H^2}\right].
	\end{align*}
	This gives a lower bound for $\Phi_{A,H,B}\left(\mu_H(E_{C_k})\right)$, which is the same as the upper bound in Lemma~\ref{lem: LP function well defined and bounded}. Now note that for any $x\in \bQ$, we can choose $k$ so that $\mu_H(E_{C_k})=x$. Hence $\Phi_{A,H,B}(x)$ attains its upper bound for all $x\in\bQ$. Finally, by the definition of the Le Potier function, it must attain this upper bound for all $x\in\bR$.
\end{proof}

Varieties with finite Albanese morphism also have many $H$-semistable vector bundles.

\begin{proposition}[\textit{cf.} {\cite[Example 2.12(2)]{lahozChernDegreeFunctions2022}}]\label{stable pullback from albanese}
	Let $X$ be a variety with finite Albanese morphism $a\colon X\rightarrow \Alb(X)$ and $n\coloneqq \dim X \geq 2$. Let $H_X\in\Amp_\R(X)$. Then $a^\ast E_C$ is $H_X$-semistable for every $C\in\NS_\Q(\Alb(X))$.
\end{proposition}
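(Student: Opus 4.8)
The plan is to untwist $E_C$ by pulling it back along an isogeny of $\Alb(X)$, which turns it into a line bundle tensored with an iterated extension of numerically trivial line bundles; a bundle of this shape is $H$-semistable for \emph{every} polarisation, and this is exactly what allows one to treat an arbitrary $H_X$ rather than only classes of the form $a^\ast H$ as in \cref{pullbacks are semistable}. Recall that $E_C$ is the semi-homogeneous vector bundle on $\Alb(X)$ supplied by \cref{Mukai PFVB}.

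First I would untwist $E_C$ using the isogeny trick behind \cref{thm: properties of homog and semihomog vbdles}. Writing $r=\ch_0(E_C)\geq 1$ and letting $r_A\colon\Alb(X)\to\Alb(X)$ be multiplication by $r$ (finite étale, since we work over $\C$), \cref{thm: properties of homog and semihomog vbdles}(2) gives $r_A^\ast E_C\cong(\det E_C)^{\otimes r}\otimes V$ with $V$ homogeneous, and \cref{thm: properties of homog and semihomog vbdles}(1) gives $V\cong\bigoplus_i(P_i\otimes U_i)$ with each $P_i$ numerically trivial and each $U_i$ an iterated self-extension of $\OO_{\Alb(X)}$; in particular $V$ carries a filtration whose successive quotients are numerically trivial line bundles. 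Next I would base change the Albanese morphism: form the fibre product $\widetilde{X}\coloneqq X\times_{a,\,\Alb(X),\,r_A}\Alb(X)$ with projections $\rho\colon\widetilde{X}\to X$ and $\widetilde{a}\colon\widetilde{X}\to\Alb(X)$. Since $\rho$ is the base change of the finite étale morphism $r_A$ it is itself finite étale, so $\widetilde{X}$ is smooth projective; after replacing it by a connected component I may assume it is a variety, with $\rho$ still finite, flat and surjective. From $r_A\circ\widetilde{a}=a\circ\rho$ I obtain
\begin{equation*}
	\rho^\ast a^\ast E_C\;\cong\;\widetilde{a}^\ast r_A^\ast E_C\;\cong\;\widetilde{a}^\ast(\det E_C)^{\otimes r}\otimes\widetilde{a}^\ast V,
\end{equation*}
and pulling the filtration of $V$ back along $\widetilde{a}$ exhibits $\widetilde{a}^\ast V$ as an iterated extension of the numerically trivial line bundles $\widetilde{a}^\ast P_i$.

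It then remains to prove the key lemma: on a smooth projective variety $W$ of dimension $n$, any torsion-free sheaf carrying a filtration whose successive quotients are numerically trivial line bundles is $H$-semistable of slope $0$, for every $H\in\Amp_\R(W)$. For a nonzero subsheaf $F$, intersecting it with the filtration yields a filtration of $F$ whose quotients embed into numerically trivial line bundles; each such quotient has the form $L\otimes I_Z$, so its $H$-degree equals $-H^{n-1}\ldot D\leq 0$, where $D$ is the effective divisorial part of $Z$. Summing over the filtration gives $\deg_H F\leq 0$, hence $\mu_H(F)\leq 0$, while the whole sheaf has slope $0$ since all its quotients do. Applying this to $\widetilde{a}^\ast V$ with $H=\rho^\ast H_X$, and using that tensoring by the line bundle $\widetilde{a}^\ast(\det E_C)^{\otimes r}$ preserves semistability, I conclude that $\rho^\ast a^\ast E_C$ is $\rho^\ast H_X$-semistable; since $\rho$ is finite, \cref{pullbacks are semistable} applied to $\rho$ then shows that $a^\ast E_C$ is $H_X$-semistable.

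The hard part --- indeed the only step with genuine content --- is the lemma of the previous paragraph: after the isogeny twist the relevant filtration of $r_A^\ast E_C$ has numerically trivial line bundles as quotients, and numerical triviality is insensitive to the choice of polarisation, which is why semistability holds for \emph{all} $H_X$ and not merely for $a^\ast H$. The remaining ingredients --- finiteness and flatness of $\rho$, the base-change identity for pullbacks, the reduction to a connected component of $\widetilde{X}$, and the fact that pullbacks of numerically trivial divisor classes stay numerically trivial --- are routine.
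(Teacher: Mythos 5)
Your proof is correct and follows essentially the same route as the paper: untwist $E_C$ by pulling back along multiplication by $r$ via \cref{thm: properties of homog and semihomog vbdles}, base change the Albanese morphism along the resulting fibre square, restrict to a connected component, observe that the pullback is a line-bundle twist of an iterated extension of numerically trivial line bundles (hence semistable for any polarisation), and descend through the finite map using \cref{pullbacks are semistable}. The only difference is that you prove explicitly, via the subsheaf-slope computation, that such an iterated extension is slope-semistable of slope $0$, a step the paper leaves implicit by citing stability of line bundles.
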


\begin{proof}
	Fix $C\in\NS_\Q(\Alb(X))$ and $H_A\in\Amp_\R(\Alb(X))$. Let $E_C$ be the corresponding $H_A$-semistable semi-homogeneous vector bundle on $\Alb(X)$ from Proposition~\ref{Mukai PFVB}. Let $r\coloneqq \ch_0(E_C)$, and consider the multiplication by $r$ map $r_{\Alb(X)} \colon \Alb(X)\rightarrow \Alb(X)$. By \cref{thm: properties of homog and semihomog vbdles},
	\begin{equation*}
		r_{\Alb(X)}^\ast(E_C) = L^{-1} \otimes \left(\bigoplus_{i=1}^k P_i\otimes U_i\right),
	\end{equation*}
	where $L$ is a line bundle and for all $i$, $P_i$ is a numerically trivial line bundle and $U_i$ is an iterated self-extension of $\OO_{\Alb(X)}$. Therefore, $L\otimes r_{\Alb(X)}^\ast(E_C)$ is an iterated extension of numerically trivial line bundles.

	Now consider the fibre square
	\begin{center}
	\begin{tikzcd}
		\cZ\coloneqq  X\times_{\Alb(X)}\Alb(X) \arrow[d, "p_X"] \arrow[r, "p_A"] & \Alb(X) \arrow[d, "r_A"] \\
		X \arrow[r, "a"]                                              & \Alb(X)\rlap{.}                 
	\end{tikzcd}
	\end{center}
	Without loss of generality, fix a connected component $Z$ of $\cZ$. Then on $Z$, 
	\begin{equation*}
		(p_X|_Z)^\ast a^\ast (E_C)= (p_A|_Z)^\ast r_A^\ast (E_C).
	\end{equation*}
	The property of being an extension of numerically trivial line bundles is preserved by taking pullback. Hence $p_A^*(L)\otimes (p_X|_Z)^\ast a^\ast(E_C)$ is an iterated extension of numerically trivial line bundles. Recall that line bundles are stable with respect to any ample class. Thus $p_A^*(L)\otimes (p_X|_Z)^\ast a^\ast(E_C)$ is $(p_X|_Z)^\ast H_X$-semistable; hence so is $(p_X|_Z)^\ast a^\ast(E_C)$. By Lemma~\ref{pullbacks are semistable}, $a^\ast(E_C)$ is $H_X$-semistable.
\end{proof}

\begin{proposition}[\textit{cf.} {\cite[Example 2.12(2)]{lahozChernDegreeFunctions2022}}]\label{Le Potier for Albanese theorem}
	Let $X$ be a variety with finite Albanese morphism $a\colon X\rightarrow \Alb(X)$. Fix $(H,B)\in\Amp_\R(\Alb(X))\times\NS_\R(\Alb(X))$, and assume $n\coloneqq \dim X \geq 2$. Then
	\begin{equation*}
		\Phi_{\Alb(X), H, B}(x) = \Phi_{X,a^\ast H,a^\ast B}(x) = \frac{1}{2}\left[\left(x-\frac{(a^\ast H)^{n-1}\ldot a^\ast B}{(a^\ast H)^n}\right)^2 - \frac{(a^\ast H)^{n-2}\ldot (a^\ast B) ^2}{(a^\ast H)^n}\right].
	\end{equation*}
\end{proposition}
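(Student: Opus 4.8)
The plan is to exhibit on $X$ a large enough supply of $a^\ast H$-semistable sheaves whose invariant $\nu_{a^\ast H,a^\ast B}$ saturates the Bogomolov--Gieseker upper bound of \cref{lem: LP function well defined and bounded}, which then forces $\Phi_{X,a^\ast H,a^\ast B}$ to coincide with that bound; the point is that \cref{stable pullback from albanese} already provides such sheaves, namely the pullbacks $a^\ast E_C$ of the semi-homogeneous bundles on $\Alb(X)$ from \cref{Mukai PFVB}. Since $a$ is finite, $a^\ast H$ is ample, so $(a^\ast H,a^\ast B)\in\Amp_\R(X)\times\NS_\R(X)$ and the middle term of the asserted identity is defined. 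For $C\in\NS_\Q(\Alb(X))$, \cref{stable pullback from albanese} says $a^\ast E_C$ is $a^\ast H$-semistable, and since pullback on cohomology is a ring homomorphism, \cref{Mukai PFVB} gives $\ch(a^\ast E_C)=\ch_0(E_C)\cdot e^{a^\ast C}$; in particular $\ch_0(a^\ast E_C)=\ch_0(E_C)=:r$, $\ch_1(a^\ast E_C)=r\,a^\ast C$ and $\ch_2(a^\ast E_C)=\tfrac{r}{2}(a^\ast C)^2$.

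Next I would specialise $C=C_k\coloneqq kH+B$ for $k\in\Q$, so that $a^\ast C_k=k\,a^\ast H+a^\ast B$, and expand $(a^\ast C_k)^2$. A short computation, identical in form to the one inside the proof of \cref{prop: special semihomog bundles}, yields
\begin{align*}
	\mu_{a^\ast H}(a^\ast E_{C_k}) &= k+\frac{(a^\ast H)^{n-1}\ldot a^\ast B}{(a^\ast H)^n},\\
	\nu_{a^\ast H,a^\ast B}(a^\ast E_{C_k}) &= \frac12\left[k^2-\frac{(a^\ast H)^{n-2}\ldot (a^\ast B)^2}{(a^\ast H)^n}\right],
\end{align*}
and the right-hand side is exactly the value at $\mu_{a^\ast H}(a^\ast E_{C_k})$ of the parabola appearing in the statement; thus $a^\ast E_{C_k}$ attains the bound of \cref{lem: LP function well defined and bounded} at its own slope. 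Because $\Phi_{X,a^\ast H,a^\ast B}(x)$ is defined as a $\limsup$ of $\nu_{a^\ast H,a^\ast B}$ over $a^\ast H$-semistable sheaves of slope tending to $x$, is bounded above by that parabola, and because $\{\mu_{a^\ast H}(a^\ast E_{C_k}):k\in\Q\}$ is dense in $\R$, this family forces $\Phi_{X,a^\ast H,a^\ast B}$ to equal the parabola for every $x$, giving the second equality. For the first equality, $\alb_X$ finite forces $\dim\Alb(X)\ge n\ge 2$, so \cref{prop: special semihomog bundles} applies to $\Alb(X)$ itself and computes $\Phi_{\Alb(X),H,B}$; combining this with \cref{pullbacks are semistable} for the finite morphism $a$ --- legitimate once $\alb_X$ is surjective, e.g.\ for $X$ finite over an abelian variety --- and with the second equality just obtained then identifies $\Phi_{\Alb(X),H,B}$ with $\Phi_{X,a^\ast H,a^\ast B}$.

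The step I expect to be the main obstacle is the passage from rational to real classes: \cref{Mukai PFVB} produces semi-homogeneous bundles only for $C\in\NS_\Q(\Alb(X))$, so for irrational $H$ or $B$ one cannot literally take $C_k=kH+B$. One resolves this exactly as in the proof of \cref{prop: special semihomog bundles}, by a density argument that exploits the fact that the Le Potier function is a $\limsup$ over slopes and that its upper bound varies continuously with $(H,B)$ --- or, most economically, by invoking \cref{prop: special semihomog bundles} as a black box and transporting its conclusion to $X$ along $a$ via \cref{stable pullback from albanese} and \cref{pullbacks are semistable}. Everything else reduces to Chern-character bookkeeping against the Bogomolov inequality, and I anticipate no further difficulty.
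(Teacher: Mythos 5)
Your argument is correct in substance and runs on the same engine as the paper's --- Mukai's semi-homogeneous bundles saturate the Bogomolov--Gieseker bound, and this is transported along the finite morphism $a$ --- but it is organised differently. You work on $X$ itself: you pull back the bundles $E_{C_k}$ of \cref{Mukai PFVB}, quote \cref{stable pullback from albanese} for their $a^\ast H$-semistability, and check by the Chern-character computation that they attain the bound of \cref{lem: LP function well defined and bounded}, which yields the second equality directly. (Note that \cref{stable pullback from albanese}, semistability with respect to an \emph{arbitrary} polarisation on $X$, is stronger than needed; for $H_X=a^\ast H$ the elementary \cref{pullbacks are semistable} already suffices.) The paper instead never exhibits semistable sheaves on $X$: it notes that the upper bounds of \cref{lem: LP function well defined and bounded} for $\Phi_{\Alb(X),H,B}$ and $\Phi_{X,a^\ast H,a^\ast B}$ agree by the projection formula, that $\Phi_{\Alb(X),H,B}\leq\Phi_{X,a^\ast H,a^\ast B}$ by \cref{pullbacks are semistable}, and that the smaller function already attains the common bound by \cref{prop: special semihomog bundles for Le Potier computation}; your ``most economical'' fallback at the end is exactly this proof. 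Your route does buy something: the second equality --- the only part used later, in \cref{thm: LP for quotients of varieties with finite Albanese} --- is obtained without any appeal to surjectivity of $a$, whereas both steps of the paper's argument implicitly use that $a$ is surjective onto $\Alb(X)$ of the same dimension. The rational-versus-real issue you flag ($kH+B$ need not lie in $\NS_\Q(\Alb(X))$) is genuine, but it is equally present, and silently glossed, in the paper's proof of \cref{prop: special semihomog bundles for Le Potier computation}; your density/continuity repair is the standard one.

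The one step you must make explicit is the first equality. From \cref{pullbacks are semistable} together with the two closed formulas you only learn that the parabola computing $\Phi_{\Alb(X),H,B}$ is bounded above by the one computing $\Phi_{X,a^\ast H,a^\ast B}$; comparing coefficients this pins down the linear terms but leaves only an inequality between the constant terms, so ``identifies'' is not yet justified. What closes it is the projection formula for the finite surjective map $a$ (surjectivity, equivalently $\dim\Alb(X)=n$, being exactly the hypothesis under which \cref{pullbacks are semistable} applies, as you yourself caution): $(a^\ast H)^n=\deg(a)\,H^n$, $(a^\ast H)^{n-1}\ldot a^\ast B=\deg(a)\,H^{n-1}\ldot B$ and $(a^\ast H)^{n-2}\ldot (a^\ast B)^2=\deg(a)\,H^{n-2}\ldot B^2$, so the two parabolas are literally the same function. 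This is the first sentence of the paper's proof; with that line added your argument is complete.
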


\begin{proof}
	First note that, by the projection formula, the upper bounds of $\Phi_{\Alb(X), H, B}$ and $\Phi_{X,a^\ast H,a^\ast B}$ are the same. By \cref{pullbacks are semistable}, $\Phi_{\Alb(X), H, B}\leq \Phi_{X,a^\ast H,a^\ast B}$. Hence it suffices to show that $\Phi_{\Alb(X), H, B}$ attains this upper bound. This follows from \cref{prop: special semihomog bundles for Le Potier computation}.
\end{proof}

We now combine this with Proposition~\ref{Le Potier Functions Agree}.

\begin{theorem}\label{thm: LP for quotients of varieties with finite Albanese}
	Let $X$ be a variety with finite Albanese morphism $a\colon X\rightarrow \Alb(X)$, and let $G$ be a finite group acting freely on $X$.  Let $\pi\colon X\rightarrow X/G=:Y$ denote the quotient map. Suppose we have 
	\begin{itemize}
		\item $H_X=a^\ast H = \pi^\ast H_Y$: a class in $\Amp_\R(X)$ pulled back from $\Alb(X)$ and $Y$, and 
		\item $B_X=a^\ast B = \pi^\ast B_Y$: a class in $\NS_\R(X)$ pulled back from $\Alb(X)$ and $Y$.
	\end{itemize}
	Then
	\begin{equation*}
		\Phi_{Y,H_Y,B_Y}(x)=\frac{1}{2}\left[\left(x-\frac{H_Y^{n-1}\mathbin{.} B_Y}{H_Y^n}\right)^2 - \frac{H_Y^{n-2}\mathbin{.}B_Y^2}{H_Y^n}\right].
	\end{equation*}
\end{theorem}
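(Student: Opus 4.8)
The statement is a direct consequence of chaining together \cref{Le Potier Functions Agree} and \cref{Le Potier for Albanese theorem}; the only genuine content has already been absorbed into those two results, so the task is purely one of combining them and keeping track of intersection numbers.

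First I would apply \cref{Le Potier Functions Agree} to the finite étale quotient $\pi\colon X\to Y$ with the classes $H_Y\in\Amp_\R(Y)$ and $B_Y\in\NS_\R(Y)$, which gives $\Phi_{Y,H_Y,B_Y}=\Phi_{X,\pi^\ast H_Y,\pi^\ast B_Y}$. By hypothesis $\pi^\ast H_Y=H_X=a^\ast H$ and $\pi^\ast B_Y=B_X=a^\ast B$, so in fact $\Phi_{Y,H_Y,B_Y}=\Phi_{X,a^\ast H,a^\ast B}$. Next I would invoke \cref{Le Potier for Albanese theorem} with the ample class $H\in\Amp_\R(\Alb(X))$ and twist $B\in\NS_\R(\Alb(X))$ to obtain
\[
\Phi_{X,a^\ast H,a^\ast B}(x)=\frac{1}{2}\left[\left(x-\frac{(a^\ast H)^{n-1}\ldot(a^\ast B)}{(a^\ast H)^n}\right)^2-\frac{(a^\ast H)^{n-2}\ldot(a^\ast B)^2}{(a^\ast H)^n}\right].
\]

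It then remains to rewrite the intersection-number ratios in terms of $H_Y$ and $B_Y$. For a finite morphism $f$ of degree $d$ between $n$-dimensional varieties, the projection formula gives $f^\ast\alpha_1\ldot\cdots\ldot f^\ast\alpha_n=d\cdot(\alpha_1\ldot\cdots\ldot\alpha_n)$, so the factor $d$ cancels in each matching numerator/denominator pair. Applying this with $f=\pi$ (of degree $|G|$) and using $H_X=\pi^\ast H_Y$, $B_X=\pi^\ast B_Y$ yields
\[
\frac{(a^\ast H)^{n-1}\ldot(a^\ast B)}{(a^\ast H)^n}=\frac{H_Y^{n-1}\ldot B_Y}{H_Y^n},\qquad \frac{(a^\ast H)^{n-2}\ldot(a^\ast B)^2}{(a^\ast H)^n}=\frac{H_Y^{n-2}\ldot B_Y^2}{H_Y^n},
\]
and substituting these into the previous display gives exactly the asserted formula. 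The main (and only) point requiring care is this last bookkeeping step: it is precisely where the hypothesis that $H_X$, $a^\ast H$ and $\pi^\ast H_Y$ all coincide (and likewise for the $B$'s) is used, ensuring that the ratios appearing are insensitive to pullback along the finite maps; there is no substantive obstacle beyond this.
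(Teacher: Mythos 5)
Your proposal is correct and follows exactly the paper's argument: combine \cref{Le Potier Functions Agree} with \cref{Le Potier for Albanese theorem} (using the hypothesis that the pullbacks along $\pi$ and $a$ coincide), then use the projection formula so the degree factor cancels in the intersection-number ratios. Your write-up merely spells out this last bookkeeping step in more detail than the paper does.
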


\begin{proof}
	By Propositions~\ref{Le Potier Functions Agree} and~\ref{Le Potier for Albanese theorem}, it follows that 
	\begin{equation*}
		\Phi_{Y,H_Y,B_Y}(x)=\Phi_{X,\pi^\ast H_Y, \pipull B_Y}(x)=\frac{1}{2}\left[\left(x-\frac{(\pi^\ast H_Y)^{n-1}\ldot \pi^\ast B_Y}{(\pi^\ast H_Y)^n}\right)^2 - \frac{(\pi^\ast H_Y)^{n-2}\ldot (\pi^\ast B_Y) ^2}{(\pi^\ast H_Y)^n}\right].
	\end{equation*}
	The result follows by the projection formula.
\end{proof}

\begin{example}\label{eg: ample classes for finite Albanese FAQs}
	Suppose $X$ has finite Albanese morphism $a\colon X\rightarrow\Alb(X)$, and let $G$ be a finite group acting freely on $X$. This induces an action of $G$ on $\NS(\Alb(X)$). Fix $L\in\Amp(\Alb(X))$ and $B=0$. Then $H\coloneqq \otimes_{g\in G}g^\ast L\in\Amp_\R(\Alb(X))$ satisfies the hypotheses of \cref{thm: LP for quotients of varieties with finite Albanese}. In particular, this applies to bielliptic surfaces ($q=1$) and Beauville-type surfaces ($q=0$). The latter provides a counterexample to Conjecture~\ref{conj: FLZ21} since $\Phi_{Y,H_Y,0}(x) = \frac{1}{2}{x^2}$ is continuous.
\end{example}

\section{Geometric stability conditions and the Le Potier function}\label{section: geometric stability and Le Potier}

We use the Le Potier function to describe the set of geometric stability conditions on any surface. This was previously known for surfaces with Picard rank 1; see \cite[Theorem 3.4 and Proposition 3.6]{fuStabilityManifoldsVarieties2022}.

\subsection{The deformation property and tilting}

To prove the existence of stability conditions later in this section, we will need the following refinement of Theorem~\ref{Bridgelandmanifold}. 

\begin{proposition}[\textit{cf.} {\cite[Proposition A.5]{bayerSpaceStabilityConditions2016}, \cite[Theorem 1.2]{bayerShortProofDeformation2019}}]\label{prop: deformation property}
	Let $\cD$ be a triangulated category. Assume $\sigma=(\PP,Z)\in\Stab_\Lambda(\DD)$ satisfies the support property with respect to $(\Lambda,\lambda)$ and a quadratic form $Q$ on $\Lambda\otimes\R$. Consider the open subset of\, $\Hom_\Z(\Lambda,\C)$ consisting of central charges whose kernel is negative definite with respect to $Q$, and let $U$ be the connected component containing $Z$. Let $\ZZ$ denote the local homeomorphism from Theorem~{\rm\ref{Bridgelandmanifold}}, and let $\mathcal{U}\subset\Stab_\Lambda(\DD)$ be the connected component of the preimage $\mathcal{Z}^{-1}(U)$ containing $\sigma$. Then
	\begin{enumerate}
		\item\label{p:dp-1} the restriction $\mathcal{Z}|_{\mathcal{U}} \colon \mathcal{U}\rightarrow U$ is a covering map, and 
		\item\label{p:dp-2} any stability condition $\sigma'\in\mathcal{U}$ satisfies the support property with respect to $Q$.
	\end{enumerate}
\end{proposition}

\begin{corollary}\label{cor: defo property for ImZ constant}
	Let $\DD$ be a triangulated category. Assume $\sigma=(\PP, Z)\in\Stab_\Lambda(\DD)$ satisfies the support property with respect to $(\Lambda, \lambda)$ and a quadratic form $Q$ on $\Lambda\otimes\R$.	Let $U\subset\Hom_\Z(\Lambda,\C)$, and let $\mathcal{U}\subset\Stab_\Lambda(\DD)$ be the connected components from Proposition~{\rm\ref{prop: deformation property}}. Suppose there is a path $Z_t$ in $U$ parametrised by $t\in[0,1]$ such that $\Im {Z_t}$ is constant and $Z_{t_0}=Z$ for some $t_0\in[0,1]$. Then this lifts to a unique path $\sigma_t = (\QQ_t,Z_t)$ in $\mathcal{U}$ passing through $\sigma$ along which $\QQ_t(0,1]=\PP(0,1]$ and $\sigma_t$ satisfies the support property with respect to $Q$.
\end{corollary}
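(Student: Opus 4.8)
\emph{Strategy.} The existence and uniqueness of the lifted path $\sigma_t=(\QQ_t,Z_t)$ is immediate: since $\ZZ|_{\mathcal{U}}\colon\mathcal{U}\to U$ is a covering map by \cref{prop: deformation property}(1) and $Z_t$ is a path in $U$ with $Z_{t_0}=\ZZ(\sigma)$, path lifting produces a unique lift $\sigma_t$ with $\sigma_{t_0}=\sigma$, and each $\sigma_t$ satisfies the support property with respect to $Q$ by \cref{prop: deformation property}(2). So the only remaining content is the identity of hearts $\QQ_t(0,1]=\PP(0,1]$.

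The plan is to show that $t\mapsto\QQ_t(0,1]$ is locally constant on $[0,1]$ and then invoke connectedness. Fix $s\in[0,1]$, put $\AA:=\QQ_s(0,1]$ and $Z:=Z_s$; I claim that if $t$ is close to $s$ then $\QQ_t(0,1]=\AA$. The point is that a small deformation of the central charge keeping the imaginary part fixed cannot change the heart. Let $0\neq E\in\AA$. If $\Im Z(E)>0$, then since $\Im Z_t=\Im Z$ is constant, $Z_t(E)$ stays in the open upper half plane, so $Z_t(E)\in\H$. If $\Im Z(E)=0$, then each Harder--Narasimhan factor of $E$ with respect to $\sigma_s$ has phase in $(0,1]$, hence nonnegative imaginary part of its central charge, and these sum to $0$; so every such factor has phase exactly $1$, i.e. $E$ is an iterated extension of $\sigma_s$-semistable objects $A_1,\dots,A_k$ with $Z(A_i)\in\R_{<0}$. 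Each $A_i$ satisfies $Q(\lambda(A_i))\ge 0$ by the support property and $\lambda(A_i)\neq 0$ (otherwise $Z(A_i)=0$); since $\Ker Z_t\otimes\R$ is negative definite with respect to $Q$, this forces $Z_t(A_i)\neq 0$ for all $t$. Writing the support property in the equivalent form $\|\lambda(A_i)\|\le C\,|Z(A_i)|$ and using $|Z(A_i)|=|\Re Z(A_i)|$, we obtain for $\|Z_t-Z\|<1/C$ that $|\Re Z_t(A_i)-\Re Z(A_i)|<|\Re Z(A_i)|$, hence $\Re Z_t(A_i)<0$, so $Z_t(E)=\sum_i Z_t(A_i)\in\R_{<0}\subset\H$. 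Thus $Z_t$ restricts to a stability function on $\AA$; the Harder--Narasimhan property for this perturbed, imaginary-part-preserving stability function is checked as in \cite{bridgelandStabilityConditionsTriangulated2007}, so $(\AA,Z_t|_{\AA})$ is a stability condition. It satisfies the support property with respect to $Q$ and converges to $\sigma_s$ in $\Stab(\DD)$ as $t\to s$, so for $t$ near $s$ it lies in $\mathcal{U}$ and near $\sigma_s$; since $\ZZ|_{\mathcal{U}}$ is a local homeomorphism and $(\AA,Z_t|_{\AA})$ and $\sigma_t$ both lie near $\sigma_s$ with the same image $Z_t$, they coincide. Hence $\QQ_t(0,1]=\AA$.

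To globalise, observe that $\{Z_t\}_{t\in[0,1]}$ is a compact subset of $U$, that $Q$ is fixed, and that $\Im Z_t$ is literally constant; so the constant $C$ above may be taken uniform in $t$, and covering the compact interval $[0,1]$ by finitely many subintervals on which the previous paragraph applies shows that $\QQ_t(0,1]$ is independent of $t$. Taking $t=t_0$ gives $\QQ_t(0,1]=\QQ_{t_0}(0,1]=\PP(0,1]$ for all $t\in[0,1]$, and every $\sigma_t$ satisfies the support property with respect to $Q$ by the first paragraph.

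The main obstacle is the local heart-invariance in the second paragraph: one must rule out that any object of the heart crosses the phase-$0$ or phase-$1$ boundary under the deformation. The two inputs that make this work are precisely that $\Im Z_t$ is held fixed and that the support property holds throughout $\mathcal{U}$ with a single fixed quadratic form $Q$, which together pin down the sign of $Z_t$ on the $\sigma_s$-semistable objects of phase $1$ lying on the boundary of the heart. This is the same mechanism underlying Bridgeland's proof that $\ZZ$ is a local homeomorphism, and appears in analogous deformation arguments in \cite{bridgelandStabilityConditionsK32008,bayerSpaceStabilityConditions2016}.
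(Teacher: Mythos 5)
Your first paragraph is exactly right and matches the paper: path lifting along the covering map of \cref{prop: deformation property}(1) gives the unique lift $\sigma_t$, and part (2) gives the support property with respect to $Q$ along it. The gap is in your local heart-invariance step. There you build a candidate pair $(\AA,Z_t|_{\AA})$ on the fixed heart $\AA=\QQ_s(0,1]$ and then need three things you only assert: (i) the Harder--Narasimhan property for the deformed charge on the fixed heart, (ii) the support property with respect to $Q$ for this new pair, and (iii) convergence of $(\AA,Z_t|_{\AA})$ to $\sigma_s$ in Bridgeland's metric so that local injectivity of $\ZZ$ identifies it with $\sigma_t$. Item (i) is not ``checked as in \cite{bridgelandStabilityConditionsTriangulated2007}'': Bridgeland proves the HN property either under chain conditions on the heart or via his deformation theorem, in which the heart is allowed to change; there is no ready-made statement that a deformation of only the real part of the central charge preserves the HN property on a fixed heart, and establishing (i) and (iii) is essentially re-proving the deformation theorem that the corollary is designed to shortcut. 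Item (ii) is also circular as stated: \cref{prop: deformation property}(2) yields the support property only for stability conditions already known to lie in $\mathcal{U}$, which presupposes (i) and a support property to begin with. Your computation that $Z_t$ maps $\AA\setminus\{0\}$ into $\H$ (including the sign of $\Re Z_t$ on the phase-one semistables via the support constant) is correct, but knowing where the central charge sends objects does not by itself locate those objects in the slicing $\QQ_t$.

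The paper avoids constructing any new stability condition: it works directly with the lifted path $\sigma_t$ and shows, for each fixed nonzero $E$, that $\{t : E\in\QQ_t(0,1]\}$ is both open and closed in $[0,1]$. Openness uses the Jordan--H\"older factors of $E$ at time $T$: stability is an open property, their phases stay positive by openness, and they stay $\leq 1$ because $\Im Z_t$ is constant, so their nonnegative imaginary parts persist. Closedness uses continuity of $\phi^{\pm}$: at a boundary point one would have $\phi^-_{\QQ_T}(E)=0$, giving a nonzero map to a stable object of phase $0$ that remains stable nearby, so the complement is open too. If you want to salvage your route, the uniform support constant you introduce is the right tool for making the openness radius uniform, but the comparison must be run on the lifted $\sigma_t$ (e.g. showing the $\sigma_s$-stable objects of phases in $(0,1]$ remain in $\QQ_t(0,1]$, and then using that two nested hearts of bounded t-structures coincide), not on a freshly built pair whose HN property is unverified.
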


\begin{proof}
	Let $\ZZ$ denote the local homeomorphism from Theorem~\ref{Bridgelandmanifold}. By Proposition~\ref{prop: deformation property}\eqref{p:dp-1}, $\ZZ|_\UU \colon \UU \rightarrow U$ is a covering map. By the path lifting property, there is a unique path $\sigma_t = (\QQ_t,Z_t)$ in $\mathcal{U}$ with $\sigma=\sigma_{t_0}$. By Proposition~\ref{prop: deformation property}\eqref{p:dp-2}, $\sigma_t$ satisfies the support property with respect to $Q$ for all $t$. It remains to show that $\QQ_t(0,1]=\PP(0,1]$.
	
	    Fix a non-zero object $E\in\cD$. We claim that the set of points in the path $\sigma_t$ where $E\in \QQ_t(0,1]$ is open and closed. Suppose $E\in\QQ_T(0,1]$ for some $T\in[0,1]$. Then all Jordan--H\"older factors $E_i$ of $E$ with respect to $\sigma_T$ are in $\QQ_T(0,1]$ and satisfy $\Im Z_T(E_i)\geq 0$. The property for an object to be stable is open in $\Stab_\Lambda(\cD)$ (see \cite[Proposition 3.3]{bayerSpaceStabilityConditions2011}). Moreover, $0<\phi_{\QQ_t}(E_i)$ is an open property. Since $\Im Z_t$ is constant, $\Im Z_t(E_i)\geq 0$ for all $t$. Hence, for all sufficiently close $\sigma_t$, we have $\phi_{\QQ_t}(E_i)\leq 1$ and $E\in\QQ_t(0,1]$.

	Now suppose $\sigma_T$ is in the closure and not the interior of $\{\sigma_t : E\in\QQ_t(0,1]\}$ inside $\{\sigma_t : t\in[0,1]\}$.	Recall that $\phi^+(E)$ and $\phi^-(E)$ are continuous.	Hence $\phi^-_{\QQ_T}(E)=0$, and $E$ has a morphism to a stable object in $\QQ_T(0)$ which is also stable nearby. In particular, $\{\sigma_t : E\not\in\QQ_t(0,1]\}$ is open, which proves the claim. Hence $\QQ_t(0,1]$ is constant. Since $\QQ_{t_0}(0,1]=\PP(0,1]$, the result follows.
\end{proof}

To construct stability conditions, we will also need the following definition.

\begin{definition}[\textit{cf.} {\cite[Section~I.2]{happelTiltingAbelianCategories1996}}]
	Let $\AA$ be an abelian category. A \textit{torsion pair} in $\AA$ is a pair of full additive subcategories $(\TT,\FF)$ of $\AA$ such that
	\begin{enumerate}[label={(\arabic*)}]
		\item for any  $T\in\TT$ and $F\in\FF$, $\Hom(T,F)=0$; 
		\item for any $E\in\AA$, there are $T\in\TT$, $F\in\FF$, and an exact sequence
		\begin{equation*}
			0 \longrightarrow T \longrightarrow E \longrightarrow F \longrightarrow 0.
		\end{equation*}
	\end{enumerate}
\end{definition}

\begin{proposition}[\textit{cf.} {\cite[Proposition 2.1]{happelTiltingAbelianCategories1996}}]\label{HRStilting}
	Suppose $(\TT,\FF)$ is a torsion pair in an abelian category $\AA$. Then
	\begin{equation*}
		\AA^\sharp \coloneqq  \left\{E\in\text{\normalfont D}^b(\AA) : \HH_\AA^0(E)\in\TT, \; \HH_A^{-1}(E)\in\FF, \; \HH_\AA^i(E)=0 \text{ for all } i\neq0,-1\right\}
	\end{equation*}
	is the heart of a bounded $t$-structure on $\text{\normalfont D}^b(\AA)$. We call $\AA^\sharp$  the \emph{tilt} of $\AA$ with respect to $(\TT,\FF)$.
\end{proposition}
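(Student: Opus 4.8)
The plan is to produce a bounded $t$-structure on $\Db(\AA)$ whose heart is exactly $\AA^\sharp$; by the standard equivalence between the two formulations of a bounded $t$-structure (axiom~(1) of \cref{defn: heart} is the semiorthogonality of the aisles, read off after taking shifts, and axiom~(2) is obtained by iterating the truncation functors over the finite cohomological range of an object), this suffices. Writing $\HH^i=\HH^i_\AA$ for the cohomology functors of the \emph{standard} $t$-structure on $\Db(\AA)$ and $\tau_{\le n},\tau_{\ge n}$ for its truncations, I would set
\[
	\cD^{\le 0}\coloneqq\set{E\in\Db(\AA)\st \HH^i(E)=0\text{ for }i>0\text{ and }\HH^0(E)\in\TT},\quad \cD^{\ge 1}\coloneqq\set{E\in\Db(\AA)\st \HH^i(E)=0\text{ for }i<0\text{ and }\HH^0(E)\in\FF},
\]
and $\cD^{\ge 0}\coloneqq\cD^{\ge 1}[1]$, so that $\cD^{\le 0}\cap\cD^{\ge 0}=\AA^\sharp$. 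The inclusions $\cD^{\le 0}[1]\subseteq\cD^{\le 0}$ and $\cD^{\ge 0}[-1]\subseteq\cD^{\ge 0}$ are immediate from the long exact cohomology sequence, and boundedness is clear since objects of $\Db(\AA)$ have bounded amplitude for the standard $t$-structure. There are two genuine points to check.

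\textbf{Semiorthogonality.} I need $\Hom_{\Db(\AA)}(A,B)=0$ for $A\in\cD^{\le 0}$ and $B\in\cD^{\ge 1}$. Applying $\Hom(-,B)$ to the standard truncation triangle $\tau_{\le -1}A\to A\to \HH^0(A)\to\tau_{\le -1}A[1]$ and using the classical vanishing of morphisms from an object with cohomology in degrees $\le -1$ to one with cohomology in degrees $\ge 0$, it suffices to show $\Hom(\HH^0(A),B)=0$; applying $\Hom(\HH^0(A),-)$ to $\HH^0(B)\to B\to\tau_{\ge 1}B\to\HH^0(B)[1]$ and the same vanishing, this in turn reduces to $\Hom_\AA(\HH^0(A),\HH^0(B))$. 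Since $\HH^0(A)\in\TT$ and $\HH^0(B)\in\FF$, this group vanishes by the defining property of the torsion pair. Taking shifts of the aisles, this is precisely axiom~(1) of \cref{defn: heart} for $\AA^\sharp$.

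\textbf{Truncation triangle.} Given $E\in\Db(\AA)$ I must build a triangle $A\to E\to B\to A[1]$ with $A\in\cD^{\le 0}$ and $B\in\cD^{\ge 1}$. Let $0\to T\to\HH^0(E)\to F\to 0$ be the decomposition of $\HH^0(E)$ provided by the torsion pair, with $T\in\TT$ and $F\in\FF$. Compose the canonical map $\tau_{\le 0}E\to\HH^0(E)$ with the surjection $\HH^0(E)\twoheadrightarrow F$ to obtain a morphism $\tau_{\le 0}E\to F$, and let $A$ be its fibre, so there is a triangle $A\to\tau_{\le 0}E\to F\to A[1]$. A diagram chase in the long exact cohomology sequence gives $\HH^i(A)=\HH^i(E)$ for $i<0$, $\HH^0(A)=\ker(\HH^0(E)\to F)=T\in\TT$, and $\HH^i(A)=0$ for $i>0$, so $A\in\cD^{\le 0}$. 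Applying the octahedral axiom to the composite $A\to\tau_{\le 0}E\to E$, the cone $B$ of $A\to E$ sits in a triangle $F\to B\to\tau_{\ge 1}E\to F[1]$; reading off cohomology gives $\HH^i(B)=0$ for $i<0$, $\HH^0(B)=F\in\FF$, and $\HH^i(B)=\HH^i(E)$ for $i>0$, so $B\in\cD^{\ge 1}$. This establishes the $t$-structure; its heart is $\cD^{\le 0}\cap\cD^{\ge 0}=\AA^\sharp$, and iterating these truncations over the finite cohomological range of $E$ yields a filtration of $E$ with factors $\HH^i_{\AA^\sharp}(E)[-i]\in\AA^\sharp[-i]$ in strictly decreasing shifts, which is axiom~(2).

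I expect the main obstacle, such as it is, to be the semiorthogonality step: it is the only place where the torsion-pair hypothesis genuinely enters, and the point is to peel off exactly the right truncations so that the computation collapses to $\Hom_\AA(\TT,\FF)=0$. The truncation-triangle construction is then routine bookkeeping with the standard $t$-structure together with a single octahedron, and the passage back to \cref{defn: heart} is the standard dictionary for bounded $t$-structures, as in \cite{beilinsonFaisceauxPervers1982}.
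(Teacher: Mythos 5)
Your proof is correct and complete: it is the standard Happel--Reiten--Sm{\o}l{\o} argument (define the tilted aisles via the standard cohomology functors, check semiorthogonality by reducing to $\Hom_\AA(\TT,\FF)=0$ through two truncation triangles, and build the truncation triangle for an arbitrary object from the torsion decomposition of $\HH^0(E)$ plus one octahedron). The paper itself gives no proof of this proposition --- it simply cites \cite[Proposition 2.1]{happelTiltingAbelianCategories1996} --- and your argument matches the proof in that reference (and in \cite{beilinsonFaisceauxPervers1982}) in all essentials, including the translation back into the two axioms of \cref{defn: heart}.
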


\subsection{The central charge of a geometric stability condition}

For the rest of this section, let $X$ be a smooth projective connected surface over $\C$. We are particularly interested in geometric Bridgeland stability conditions, \textit{i.e.}~$\sigma\in\Stab(X)$ such that the skyscraper sheaf $\OO_x$ is $\sigma$-stable for every point $x\in X$. Denote by $\Gstab{}(X)$ the set of all geometric stability conditions.

\begin{theorem}[\textit{cf.} {\cite[Proposition 10.3]{bridgelandStabilityConditionsK32008}}]\label{thm: geo stab determined by Z}
	Let $X$ be a surface, and let $\sigma=(\PP,Z) \in \Gstab{}(X)$. Then $\sigma$ is determined by its central charge up to shifting the slicing by $[2n]$ for any $n\in\Z$.
	
	Moreover, if $\sigma$ is normalised using the action of\, $\C$ such that $Z(\OO_x)=-1$ and $\phi(\OO_x)=1$ for all $x\in X$, then
	\begin{enumerate}
		\item\label{t:gsdZ-1} the central charge can be uniquely written in the  form 
		\begin{equation*}
			Z([E]) = (\alpha-i\beta)H^2\ch_0([E]) + (B+iH)\ldot \ch_1([E]) -\ch_2([E]),
		\end{equation*}
		where $\alpha,\beta\in\R$, $(H,B)\in\Amp_\R(X)\times \NS_\R(X)$; 
	
		\item\label{t:gsdZ-2} the heart, $\PP(0,1]$, is the tilt of\, $\Coh(X)$ at the torsion pair $(\TT,\FF)$, where
		\begin{align*}
			\TT &\coloneqq  \left\{ E\in \Coh(X) : \parbox{20em}{any $H$-semistable Harder--Narasimhan factor $F$ of the torsion-free part of\, $E$ satisfies $\Im Z([F])> 0$}\;  \right\}, \\
			\FF &\coloneqq  \left\{ E\in \Coh(X) : \parbox{20em}{$E$ is torsion-free, and any $H$-semistable Harder--Narasimhan factor $F$ of\, $E$ satisfies $\Im Z([F]) \leq 0$}\; \right\}.
		\end{align*}
	\end{enumerate}
\end{theorem}

\begin{notation}
	We will use $Z_{H,B,\alpha,\beta}=Z$ to denote central charges of the above form. Since $\Im Z_{H,B,\alpha,\beta}$ only depends on $H$ and $\beta$, we will denote the torsion pair by $(\TT_{H,\beta},\FF_{H,\beta})$ and write $\Coh^{H,\beta}(X)$ for the corresponding tilted heart. Then $\sigma_{H,B,\alpha,\beta}\coloneqq ( Z_{H,B,\alpha,\beta}, \Coh^{H,\beta}(X))$.
\end{notation}

The proof is similar to the case of K3 surfaces proved in \cite[Section~10]{bridgelandStabilityConditionsK32008}. We first need the following result which immediately generalises to any surface. 

\begin{lemma}[\textit{cf.} {\cite[Lemma 10.1]{bridgelandStabilityConditionsK32008}}]\label{lem: Bri08}
	Suppose $\sigma=(\PP,Z)\in\Stab(X)$ is a stability condition on a surface $X$ such that for each point $x\in X$, the sheaf\, $\OO_x$ is $\sigma$-stable of phase~$1$. Let $E$ be an object of $\DbX$. Then
	\begin{enumerate}
		\item\label{l:B08-1} if\, $E\in\PP(0,1]$, then $H^i(E)=0$ unless $i\in\{-1,0\}$, and moreover $H^{-1}(E)$ is torsion-free; 
		\item\label{l:B08-2} if\, $E\in\PP(1)$ is stable, then either $E=\OO_x$ for some $x\in X$, or $E[-1]$ is a locally free sheaf; 
		\item\label{l:B08-3} if\, $E\in\Coh(X)$ is a sheaf, then $E\in\PP(-1,1]$; if\, $E$ is a torsion sheaf, then $E\in\PP(0,1]$; 
		\item\label{l:B08-4} the pair of subcategories
		\begin{equation*}
			\TT = \Coh(X)\cap \PP(0,1]\quad \text{ and }\quad \FF=\Coh(X)\cap \PP(-1,0]
		\end{equation*}
		defines a torsion pair on $\Coh(X)$, and $\PP(0,1]$ is the corresponding tilt.
	\end{enumerate}
\end{lemma}

\begin{proof}[Proof Theorem~\ref{thm: geo stab determined by Z}]\leavevmode
\begin{enumerate}[wide,label={\it Step}~\arabic*.,ref=\arabic*]
\item\label{pTgsdZ-1} Since $\sigma$ is numerical, the central charge can be written as follows:
	\begin{equation*}
		Z([E]) = a\; \ch_0([E]) + B\ldot \ch_1([E]) + c\; \ch_2([E]) + i(d\; \ch_0([E]) + H \ldot\ch_1([E]) + e\;\ch_2([E])),
	\end{equation*}
	where $a,c,d,e\in\R$ and $B,H\in \NS_\R(X)$.
	
	Since $\sigma$ is geometric, $\OO_x$ is $\sigma$-stable and of the same phase for every point $x\in X$ by Proposition~\ref{prop: numerical geometric implies all skyscraper sheaves have the same phase}. As discussed in Remark~\ref{rem: GLcov action on Stab}, $\C$ acts on $\Stab(X)$. In particular, there is a unique element $g\in\C$ such that $g^\ast \sigma=(\PP',Z')$ satisfies $Z'([\OO_x])=-1$ and $\OO_x\in\PP'(1)$ for all $x\in X$. Now we may assume that $Z([\OO_x])=-1$ and $\OO_x\in\PP(1)$ for all $x\in X$. Hence $-1=c$ and $e=0$. Let $C\subset X$ be a curve. By Lemma~\ref{lem: Bri08}\eqref{l:B08-3}, we have $\OO_C\in \PP(0,1]$. Since $\ch_0(\OO_C)=0$ and $\ch_1(\OO_C)=C$,
	\begin{equation*}
		\Im Z([\OO_C]) = H \ldot C \geq 0.
	\end{equation*}
	This holds for any curve $C\subset X$, so $H\in\NS_\R(X)$ is nef. By \cite[Proposition 9.4]{bridgelandFourierMukaiTransformsK32002}, $\Gstab{}(X)$ is open. Moreover, by Theorem~\ref{Bridgelandmanifold}, a small deformation from $\sigma$ to $\sigma'$ in $\Gstab{}(X)$ corresponds to a small deformation of the central charges $Z$ to $Z'$, and in turn a small deformation of $H$ to $H'$ inside $\NS_\R(X)$. In particular, $H'\ldot C \geq 0$ for any curve $C\subset X$. Therefore, $H$ lies in the interior of the nef cone; hence $H$ is ample.
	
	Now let $\alpha\coloneqq \frac{a}{H^2}$ and $\beta\coloneqq \frac{-d}{H^2}$. Then the central charge is of the form 
	\begin{equation*}
		Z([E]) = (\alpha-i\beta)H^2\ch_0([E]) + (B+iH)\ldot \ch_1([E]) -\ch_2([E]).
	\end{equation*}
	
\item\label{pTgsdZ-2} Consider the torsion pair $(\TT,\FF)$ of Lemma~\ref{lem: Bri08}\eqref{l:B08-4}, so $\PP(0,1]$ is the tilt of $\Coh(X)$ at $(\TT,\FF)$. By Lemma~\ref{lem: Bri08}\eqref{l:B08-3}, all torsion sheaves lie in $\TT$. To complete the proof, we need the following claim:	
	\begin{equation*}\tag{$\ast$}
		E\in\Coh(X) \text{ is $H$-stable and torsion-free} \implies
		\begin{cases}
			E\in \TT &\text{if }\Im Z([E])>0,\\
			E\in \FF &\text{if }\Im Z([E])\leq0.
		\end{cases}
	\end{equation*}

	This is Step 2 of the proof of \cite[Lemma~10.3]{bridgelandStabilityConditionsK32008}. Bridgeland first shows that $E$ must lie in $\TT$ or $\FF$. We explain why it then follows that $\Im Z([E])=0$ implies $E\in\FF$. Assume $E$ is non-zero and $E\in\TT$. Since $Z([E])\in\R$, it follows that $E\in\PP(1)$. For any $x\in\Supp(E)$, $E$ has a non-zero map $f\colon E\rightarrow \OO_x$. Let $E_1$ be its kernel in $\Coh(X)$. Since $\OO_x$ is stable, $f$ is a surjection in $\PP(1)$. Thus $E_1$ also lies in $\PP(1)$ and hence in $\TT$. Moreover, $Z([E_1])=Z([E])-Z([\OO_x])=Z([E])-1$. Repeating this by replacing $E$ with $E_1$ and so on creates a chain $E\supsetneq E_1 \supsetneq E_2 \supsetneq \cdots$ of strict subobjects in $\PP(1)$ such that $Z([E_n])=Z([E])-n$. If this process does not terminate, then $Z([E_k])\in\R_{>0}$ for some $k\in\N$, contradicting the fact that $E_n\in\PP(1)$. Otherwise, $E_i\cong \OO_x$ for some $i$, contradicting the fact that $E$ is torsion-free.
        \qedhere
        \end{enumerate}
\end{proof}

\subsection{The set of all geometric stability conditions on surfaces}

In the previous section, we saw that a geometric stability condition on a surface with $Z(\OO_x)=-1$ and $\phi(\OO_x)=1$ is determined by its central charge. In particular, it depends on parameters $(H,B,\alpha,\beta)$ in $\Amp_\R(X)\times\NS_\R(X)\times\R^2$. To characterise geometric stability conditions on surfaces, we will find necessary and sufficient conditions for when these parameters define a geometric stability condition. In Definition~\ref{defn: Le Potier}, we introduced the Le Potier function twisted by $B$. We restate the version for surfaces below.

\begin{definition}\label{defn: twisted Le Potier}
	Let $X$ be a surface. Let $(H,B)\in \Amp_\R(X)\times \NS_\R(X)$. We define the \textit{Le Potier function twisted by }$B$, $\Phi_{X,H,B}\colon\R\rightarrow\R\cup\{-\infty\}$, by
	\begin{equation*}
		\Phi_{X,H,B}(x)\coloneqq \limsup_{\mu\rightarrow x}\left\{ \frac{\ch_2(F)-B\ldot \ch_1(F)}{H^2\ch_0(F)}  \; : \parbox{15em}{$F\in\Coh(X)$ is $H$-semistable with $\mu_H(F)=\mu$} \right\}.
	\end{equation*}
\end{definition}

\begin{remark}
	By \cite[Theorem 5.2.5]{huybrechtsGeometryModuliSpaces2010}, for every rational number $\mu\in\Q$, there exists an $H$-stable sheaf $F$ with $\mu_H(F)=\mu$.
\end{remark}

The goal of this section is to prove the following result.

\begin{theorem}\label{thm: LP gives precise control over set of geometric stability conditions}
	Let $X$ be a surface. There is a homeomorphism of topological spaces
	\begin{equation*}
		\Gstab{}(X)\cong\C\times \left\{(H,B,\alpha,\beta)\in\Amp_\R(X)\times\NS_\R(X)\times\R^2 : \alpha>\Phi_{X,H,B}(\beta)\right\}.
	\end{equation*}
\end{theorem}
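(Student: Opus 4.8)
The plan is to set up the bijection explicitly using Theorem \ref{thm: geo stab determined by Z} and then prove both directions of the containment governed by the inequality $\alpha > \Phi_{X,H,B}(\beta)$. Starting from a geometric stability condition $\sigma$, the $\C$-action allows us to normalise so that $Z(\OO_x) = -1$ and $\phi(\OO_x) = 1$; Theorem \ref{thm: geo stab determined by Z} then provides a unique central charge of the form $Z_{H,B,\alpha,\beta}$ with $(H,B,\alpha,\beta) \in \Amp_\R(X) \times \NS_\R(X) \times \R^2$, and the $\C$-factor records the normalisation we quotiented out. So the content is: (i) every geometric $\sigma$ (normalised) has parameters satisfying $\alpha > \Phi_{X,H,B}(\beta)$, and (ii) conversely, every such tuple of parameters defines a geometric stability condition $\sigma_{H,B,\alpha,\beta}$, with the heart given by the tilt $\Coh^{H,\beta}(X)$ as in the notation following Theorem \ref{thm: geo stab determined by Z}. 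Continuity of the maps in both directions, and hence the homeomorphism claim, will follow from Theorem \ref{Bridgelandmanifold} (the central charge is a local coordinate) together with the explicit description of the parameters.

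For direction (i), I would argue the contrapositive: if $\alpha \leq \Phi_{X,H,B}(\beta)$, then $\sigma_{H,B,\alpha,\beta}$ is not a (geometric) stability condition. The point is that $\Phi_{X,H,B}(\beta) \geq \alpha$ means — by the $\limsup$ definition and the existence of $H$-stable sheaves of every rational slope (Remark after Definition \ref{defn: twisted Le Potier}, citing Huybrechts--Lehn) — that there are $H$-semistable sheaves $F$ with $\mu_H(F)$ arbitrarily close to $\beta$ and $\nu_{H,B}(F) = \frac{\ch_2(F) - B\ldot\ch_1(F)}{H^2\ch_0(F)}$ arbitrarily close to (or exceeding) $\alpha$. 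For such $F$ one computes $Z_{H,B,\alpha,\beta}([F])$ and checks that it lies on or below the real axis / has non-positive imaginary part in a way incompatible with $F$ (or its relevant HN factor, shifted appropriately into the tilted heart) being an object of $\PP(0,1]$ with the required positivity — i.e. the would-be stability function $Z$ fails to map nonzero objects of $\Coh^{H,\beta}(X)$ into the closed upper half plane, or a destabilising subobject of $\OO_x$ appears. This is the surface-with-arbitrary-Picard-rank analogue of \cite[Proposition 3.6]{fuStabilityManifoldsVarieties2022}, and the computation is essentially the chain of inequalities already recorded in \eqref{eq: Le Potier upper bound}: the Le Potier bound is exactly the boundary case, and violating it produces an object of slope near $\beta$ that destabilises skyscrapers.

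For direction (ii), given $(H,B,\alpha,\beta)$ with $\alpha > \Phi_{X,H,B}(\beta)$, I would first form the tilted heart $\AA = \Coh^{H,\beta}(X)$ using the torsion pair $(\TT_{H,\beta}, \FF_{H,\beta})$ from Proposition \ref{HRStilting} and Theorem \ref{thm: geo stab determined by Z}, and verify that $Z_{H,B,\alpha,\beta}$ is a stability function on $\AA$: for a torsion sheaf or a sheaf in $\TT_{H,\beta}$ the imaginary part $H\ldot\ch_1 - \beta H^2 \ch_0$ is positive (or, when zero, the real part $-\ch_2 + B\ldot\ch_1 + \alpha H^2\ch_0$ is positive precisely because $\alpha > \Phi_{X,H,B}(\beta)$ rules out the exceptional case), and for shifts of sheaves in $\FF_{H,\beta}$ the signs flip correctly; this is where the strict inequality is used decisively. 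The Harder--Narasimhan property and the support property follow from the standard Bogomolov--Gieseker argument on surfaces (the quadratic form $Q$ coming from the discriminant), together with \cref{prop: deformation property}: one checks the support property at one convenient point — e.g. a tilt stability condition produced by the classical construction, which exists by the survey discussion and whose existence only requires $\alpha$ large — and then deforms, using \cref{cor: defo property for ImZ constant} to keep the heart fixed while varying $\alpha$ downward, noting the heart $\Coh^{H,\beta}(X)$ depends only on $(H,\beta)$. Geometricity, i.e. stability of every $\OO_x$, is then exactly the statement that no subsheaf destabilises $\OO_x$ in $\AA$, which is guaranteed by $\alpha > \Phi_{X,H,B}(\beta)$ via the same computation as in (i) run in reverse.

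The main obstacle, as in \cite{fuStabilityManifoldsVarieties2022}, is the higher Picard rank bookkeeping in direction (ii): establishing the support property and the HN property uniformly over the whole region $\{\alpha > \Phi_{X,H,B}(\beta)\}$ rather than just near a single tilt construction. For Picard rank one this is \cite[Theorem 3.4]{fuStabilityManifoldsVarieties2022}; the extra subtlety here is that $H$ and $B$ now range over multi-dimensional cones, so one must ensure the deformation argument connects an arbitrary $(H,B,\alpha,\beta)$ to a known stability condition without leaving the geometric region, and that the quadratic form $Q$ (Bogomolov--Gieseker discriminant, which is independent of $B$ after the twist, as noted in the Remark after \cref{thm: BG for higher dimensions}) is negative definite on $\Ker Z$ throughout. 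I expect this to be handled by first fixing $(H,\beta)$, using \cref{cor: defo property for ImZ constant} to move within the fibre (varying $\alpha$ and $B$ in the real part only), and then a separate connectedness argument in $(H,\beta)$ using openness of $\Gstab{}(X)$ (\cite[Proposition 9.4]{bridgelandFourierMukaiTransformsK32002}, invoked in the proof of \cref{thm: geo stab determined by Z}); the homeomorphism statement itself is then formal from the local-homeomorphism property of $\cZ$.
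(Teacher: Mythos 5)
Your overall architecture (normalise by the $\C$-action, use \cref{thm: geo stab determined by Z} to get parameters, prove the two containments, deform downward in $\alpha$ with the heart fixed) is the same as the paper's, but there is a genuine gap in direction (ii) at exactly the point where the theorem goes beyond the known tilt construction. You propose to run the deformation via \cref{cor: defo property for ImZ constant} using ``the quadratic form $Q$ coming from the discriminant'' and assert it is negative definite on $\Ker Z$ throughout. This is false below the Bogomolov--Gieseker parabola: if $u\in\Ker Z_{H,B,\alpha,\beta}$ has $\ch_0(u)\neq 0$, the kernel equations force $\ch_2(u)-B\ldot\ch_1(u)=\alpha H^2\ch_0(u)$ and $H\ldot\ch_1(u)=\beta H^2\ch_0(u)$, and then $Q_{BG}(u)$ (or $\MSQuadratic{H}{B}$) has the sign of $\bigl(\beta-\tfrac{H\ldot B}{H^2}\bigr)^2-\tfrac{B^2}{H^2}-2\alpha$ up to positive factors, so negativity fails precisely when $\alpha\leq\frac{1}{2}\bigl[\bigl(\beta-\tfrac{H\ldot B}{H^2}\bigr)^2-\tfrac{B^2}{H^2}\bigr]$. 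Since $\Phi_{X,H,B}(\beta)$ is in general strictly smaller than this BG bound, the region you need to reach is nonempty and the deformation with the BG form alone stops at the parabola. The paper's proof has to manufacture a new quadratic form $Q^{\delta,\varepsilon}_{H,B,\alpha_0,\beta_0}=Q_{H,B,\alpha_0,\beta_0,\delta}+\varepsilon Q_{BG}$ built from the Le Potier bound (\cref{lem: LP quadratic form for torsion-free sheaves}, \cref{prop: final quadratic form}), prove it stays negative definite on $\Ker Z_{H,B,\alpha,\beta_0}$ for all $\alpha\geq\alpha_0$ (\cref{lem: Q negative definite on kernel as alpha varies}), and — the hardest step, entirely absent from your sketch — prove that the $\sigma_{H,B,\alpha_1,\beta_0}$-semistable objects at the large-$\alpha$ starting point actually satisfy $Q^{\delta,\varepsilon}\geq 0$, which is not automatic since those objects are a priori only controlled by the MS discriminant; this requires the large-volume classification (\cref{lem: MS large volume limit}, \cref{prop: Q non-neg at large volume limit}) and an induction over Jordan--H\"older factors using the integrality and strict decrease of $\Delta^{C_{H'}}_{H',B'}$ (\cref{lem: Q smaller on JH factors}, \cref{lem: Q neg def and E violates Q implies JH factor violates Q}, \cref{lem: special values of alpha and beta satisfy new support property}). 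Without some replacement for these ingredients, ``check the support property at one convenient point and deform'' does not get you past the BG range, i.e.\ it only reproves \cite[Theorem 6.10]{macriLecturesBridgelandStability2017}.

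A smaller issue in direction (i): since $\Phi_{X,H,B}(\beta)$ is a $\limsup$, the hypothesis $\alpha\leq\Phi_{X,H,B}(\beta)$ only gives $H$-semistable sheaves $F$ with $\mu_H(F)$ close to $\beta$, not equal to it, so $\Im Z_{H,B,\alpha,\beta}(F)\neq 0$ and your sign computation does not directly produce a contradiction at $(\alpha,\beta)$ itself. The paper fixes this by first using openness of the geometric locus in the parameters (\cref{lem: open neighbourhood of geometric stability condition}) to pass to a nearby geometric $\sigma_{H,B,\alpha_0,\beta_0}$ with $\beta_0=\mu_H(F)$ exactly and $\alpha_0\leq\nu_{H,B}(F)$, and only then derives the contradiction $F\in\FF_{H,\beta_0}$ with $\Re Z_{H,B,\alpha_0,\beta_0}(F)\leq 0$. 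You should make this perturbation step explicit.
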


\begin{remark}\leavevmode
	\begin{enumerate}
		\item Theorem 6.10 of \cite{macriLecturesBridgelandStability2017} describes a subset of $\Gstab{}(X)$ parametrised by classes $(H,B)$ in the product $\Amp_\R(X)\times\NS_\R(X)$. This corresponds to where $\alpha>\frac{1}{2}\big[\big(\beta - \frac{H\ldot B}{H^2}\big)^2-\frac{B^2}{H^2}\big]$ in  Theorem~\ref{thm: LP gives precise control over set of geometric stability conditions} (see \cref{prop: special values of alpha and beta give a geometric stability condition} for details). We will call this the \textit{BG range}.
		\item The subset of $\Amp_\R(X)\times\NS_\R(X)\times\R^2$ on the right-hand side of the homeomorphism can be viewed as a complex submanifold of $\NS_\bC(X)\times \bC$ via $(H,B,\alpha,\beta)\mapsto (H+iB,\alpha-i\beta)$. With this identification, the homeomorphism above is in fact one of complex manifolds.
	\end{enumerate}
\end{remark}

\begin{notation}
	To ease notation, we make the following definitions: 
	\begin{align*}
		\mathcal{U}&\coloneqq \left\{(H,B,\alpha,\beta)\in \Amp_\R(X)\times\NS_\R(X)\times\R^2 : \alpha>\Phi_{X,H,B}(\beta)\right\},\\
		\Gstab{N}(X)&\coloneqq \left\{\sigma=(\PP,Z)\in\Gstab{}(X):Z(\OO_x)=-1, \OO_x\in\PP(1)\; \forall x\in X\right\}. 
	\end{align*}
\end{notation}

\subsubsection*{Idea of the proof of Theorem~\ref{thm: LP gives precise control over set of geometric stability conditions}}
By Theorem~\ref{thm: geo stab determined by Z}, for every $\sigma\in\Gstab{}(X)$, there exists a unique $g$ in~$\C$ such that $g^\ast \sigma\in\Gstab{N}(X)$. To prove Theorem~\ref{thm: LP gives precise control over set of geometric stability conditions}, it is enough to show that there is a homeomorphism $\Gstab{N}(X)\cong\UU$. We do this in two steps:

\begin{enumerate}[wide,label={\it Step}~\arabic*.,ref=\arabic*]
\item\label{T510-step1} \textit{Construct an injective, local homeomorphism} $\Pi\colon \Gstab{N}(X)\rightarrow \UU$. Theorem~\ref{thm: geo stab determined by Z} shows that, for every $\sigma\in\Gstab{N}(X)$, there are unique $(H,B,\alpha,\beta)\in\Amp_\R(X)\times\NS_\R(X)\times\R^2$ such that $\sigma=\sigma_{H,B,\alpha,\beta}$. This gives an injective map 
\begin{align*}
	\Pi\colon \Gstab{N}(X) &\longrightarrow \Amp_\R(X)\times\NS_\R(X)\times\R^2\\
	\sigma=\sigma_{H,B,\alpha,\beta} &\longmapsto (H,B,\alpha,\beta).
\end{align*}
We will show that  $\Pi$ is  a local homeomorphism (\cref{prop: inj local homeo}) and that the image is contained in $\UU$ (\cref{prop: geometric implies twisted LP condition}).

\item\label{T510-step2} \textit{Construct a pointwise inverse $\Sigma\colon \UU\rightarrow \Gstab{N}(X)$}. We will first show this is possible for $(H,B,\alpha,\beta)$ in the BG range (\cref{prop: special values of alpha and beta give a geometric stability condition}). In Proposition~\ref{prop: LP stability conditions satisfy support property}, we extend this to any $\alpha>\Phi_{X,H,B}(\beta)$ by applying Corollary~\ref{cor: defo property for ImZ constant} as follows: 
\begin{itemize}
	\item Fix $(H,B)\in\Amp_\R(X)\times\NS_\R(X)$ and $\alpha_0>\Phi_{X,H,B}(\beta_0)$.
	\item  Fix $\alpha_1>\frac{1}{2}\big[\big(\beta_0 - \frac{H\ldot B}{H^2}\big)^2-\frac{B^2}{H^2}\big]$.
\end{itemize}
If only $\alpha$ varies, then $\Im Z_{H,B,\alpha,\beta_0}$ is constant. We construct a quadratic form (Proposition~\ref{prop: final quadratic form}) and show that it gives the support property for $\sigma_{H,B,\alpha_1,\beta_0}$ (Lemma~\ref{lem: special values of alpha and beta satisfy new support property}) and is negative definite on $\Ker Z_{H,B,\alpha,\beta_0}$ for all $\alpha>\Phi_{X,H,B}(\beta_0)$ (Lemma~\ref{lem: Q negative definite on kernel as alpha varies}).
\end{enumerate}

\subsubsection{STEP 1: Construction of the map $\boldsymbol{\Gstab{N}(X)\rightarrow \UU}$}

\begin{proposition}\label{prop: inj local homeo}
	Let $X$ be a surface. Then there is an injective local homeomorphism
	\begin{align*}
		\Pi\colon \Gstab{N}(X) &\longrightarrow  \Amp_\R(X)\times\NS_\R(X)\times\R^2\\
		\sigma=\sigma_{H,B,\alpha,\beta} &\longmapsto (H,B,\alpha,\beta).
	\end{align*}
\end{proposition}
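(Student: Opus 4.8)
The plan is to realise $\Pi$ as the restriction of the local homeomorphism $\mathcal{Z}\colon\Stab(X)\to\Hom_\Z(\Knum(X),\C)$ of \cref{Bridgelandmanifold} to the affine slice of ``normalised'' central charges, those satisfying $Z(\OO_x)=-1$.

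I would first settle well-definedness and injectivity. By \cref{thm: geo stab determined by Z}(1) the central charge of any $\sigma\in\Gstab{N}(X)$ is uniquely of the form $Z_{H,B,\alpha,\beta}$ with $(H,B,\alpha,\beta)\in\Amp_\R(X)\times\NS_\R(X)\times\R^2$, so $\Pi$ is a well-defined map into that space. If $\Pi(\sigma_1)=\Pi(\sigma_2)$ then $Z_{\sigma_1}=Z_{\sigma_2}$; since both stability conditions are geometric with every $\OO_x$ stable of phase exactly $1$, the clause of \cref{thm: geo stab determined by Z} that $\sigma$ is determined by its central charge up to a shift by $[2n]$ forces the shift to vanish, so $\sigma_1=\sigma_2$. (Equivalently, the pointwise inverse of $\Pi$ on its image is $(H,B,\alpha,\beta)\mapsto\sigma_{H,B,\alpha,\beta}=(Z_{H,B,\alpha,\beta},\Coh^{H,\beta}(X))$.)

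Next I would compare $\Pi$ with $\mathcal{Z}$. Let $W=\{Z\in\Hom_\Z(\Knum(X),\C):Z(\OO_x)=-1\}$; since $Z\mapsto Z(\OO_x)$ is a single complex-linear functional, $W$ is a complex affine subspace of complex codimension one, in particular a closed submanifold of real dimension $2\rank\NS(X)+2$. The parametrisation $p\colon\Amp_\R(X)\times\NS_\R(X)\times\R^2\to W$, $p(H,B,\alpha,\beta)=Z_{H,B,\alpha,\beta}$, is real-analytic and injective (the $\ch_1$-coefficient of $Z_{H,B,\alpha,\beta}$ recovers $B+iH$, and dividing the $\ch_0$-coefficient by $H^2$ recovers $\alpha-i\beta$), its source has the same real dimension as $W$, and its differential is everywhere invertible because $H^2>0$; hence $p$ is a homeomorphism onto an open subset $\mathcal{V}\subseteq W$. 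By \cref{thm: geo stab determined by Z}, $\mathcal{Z}(\Gstab{N}(X))\subseteq\mathcal{V}$, and under the identification $p^{-1}\colon\mathcal{V}\xrightarrow{\sim}\Amp_\R(X)\times\NS_\R(X)\times\R^2$ the map $\Pi$ is precisely $\mathcal{Z}$ restricted to $\Gstab{N}(X)$.

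It remains to show $\mathcal{Z}|_{\Gstab{N}(X)}\colon\Gstab{N}(X)\to\mathcal{V}$ is a local homeomorphism. I would combine three facts: (i) $\mathcal{Z}$ is a local homeomorphism (\cref{Bridgelandmanifold}); (ii) $\Gstab{}(X)$ is open in $\Stab(X)$, as in the proof of \cref{thm: geo stab determined by Z} (cf.\ \cite[Proposition 9.4]{bridgelandFourierMukaiTransformsK32002}); and (iii) $\Gstab{N}(X)$ is open in $\mathcal{Z}^{-1}(W)\cap\Gstab{}(X)$. For (iii): on $\mathcal{Z}^{-1}(W)\cap\Gstab{}(X)$ every $\OO_x$ is $\sigma$-stable with $Z_\sigma(\OO_x)=-1$, so its phase lies in $1+2\Z$, and all $\OO_x$ share this phase by \cref{prop: numerical geometric implies all skyscraper sheaves have the same phase}; the function $\sigma\mapsto\phi_\sigma(\OO_x)=\phi_\sigma^{+}(\OO_x)$ is continuous, hence locally constant on this set, so $\Gstab{N}(X)$, being the locus where it equals $1$, is clopen there. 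Then for $\sigma\in\Gstab{N}(X)$ I would pick an open $U_\sigma\subseteq\Stab(X)$ containing $\sigma$ on which $\mathcal{Z}$ is a homeomorphism onto an open set, shrunk so that $U_\sigma\subseteq\Gstab{}(X)$ and (using (iii)) $U_\sigma\cap\mathcal{Z}^{-1}(W)\subseteq\Gstab{N}(X)$; restricting the homeomorphism $\mathcal{Z}|_{U_\sigma}$ to the subspace $U_\sigma\cap\mathcal{Z}^{-1}(W)$ and its image $\mathcal{Z}(U_\sigma)\cap W\subseteq\mathcal{V}$ exhibits a neighbourhood of $\sigma$ in $\Gstab{N}(X)$ homeomorphic to an open subset of $\mathcal{V}$. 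Composing with $p^{-1}$ gives the claim. The only step needing real care is (iii) --- that the phase of $\OO_x$ is locally constant on $\mathcal{Z}^{-1}(W)\cap\Gstab{}(X)$, so that $\Gstab{N}(X)$ is open inside the non-open locus $\mathcal{Z}^{-1}(W)$ --- after which everything is bookkeeping with restrictions of local homeomorphisms.
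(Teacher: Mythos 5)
Your proposal is correct and follows essentially the same route as the paper: both realise $\Pi$ as the restriction of the local homeomorphism $\mathcal{Z}$ of \cref{Bridgelandmanifold} to the normalised slice $\{Z(\OO_x)=-1\}$, identify that slice with the parameter space as in Step 1 of the proof of \cref{thm: geo stab determined by Z}, and get injectivity from the uniqueness statement of that theorem. Your treatment is in fact slightly more careful than the paper's at two points it glosses over — restricting the parametrisation to the ample locus where $H^2>0$ so it is an open embedding, and checking that $\Gstab{N}(X)$ is open inside the slice via local constancy of the phase of $\OO_x$, which is what makes the restricted map an honest local homeomorphism onto open subsets of the target.
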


\begin{proof}
	Let $\ZZ\colon \Stab(X)\rightarrow\Hom_\bZ(\Knum(X),\C)$ denote the local homeomorphism from Theorem~\ref{Bridgelandmanifold}. Also define $\cN\coloneqq\{(\cP, Z)\in\Stab(X) : Z(\cO_x)=-1\}$, and consider the following diagram: 
	\begin{equation*}
		\begin{tikzcd}[column sep=small]
			& {\Stab(X)} & \cN & {\Gstab{N}(X)} \\
			{\Hom(\Knum(X), \bC)} & {\{ Z : Z(\cO_x)=-1\}} & {\cZ(\cN)} & {\cZ(\Gstab{N}(X))}\rlap{.}
			\arrow["\supset"{description}, draw=none, from=1-2, to=1-3]
			\arrow["\cZ"', from=1-2, to=2-1]
			\arrow["\supset"{description}, draw=none, from=1-3, to=1-4]
			\arrow["{\cZ|_{\cN}}"', from=1-3, to=2-3]
			\arrow["{\cZ|_{\Gstab{N}(X))}}"', from=1-4, to=2-4]
			\arrow["\supset"{description}, draw=none, from=2-1, to=2-2]
			\arrow["\supset"{description}, draw=none, from=2-2, to=2-3]
			\arrow["\supset"{description}, draw=none, from=2-3, to=2-4]
		\end{tikzcd}
	\end{equation*}
	Since $\cZ$ is a local homeomorphism and restriction is injective, $\cZ|_\cN$ and $\cZ|_{\Gstab{N}(X)}$ are also local homeomorphisms. Moreover, by the same argument as in Step~\ref{pTgsdZ-1} of the proof of \cref{thm: geo stab determined by Z},
	\begin{align*}
		\{ Z : Z(\cO_x)=-1\} &\overset{\cong}\lra \left( \NS_\bR(X)\right)^2 \times \bR^2\\
		Z = Z_{H,B,\alpha,\beta} &\longmapsto (H,B,\alpha,\beta).
	\end{align*}
	Define $\Pi$ to be the composition of $\cZ|_{\Gstab{N}(X)}$ with this isomorphism. Now \cref{thm: geo stab determined by Z} implies that $\Pi$ is an injective local homeomorphism and
	\begin{equation*}\pushQED{\qed}
		\im \Pi \subseteq  \Amp_\R(X)\times\NS_\R(X)\times\R^2.
\qedhere \popQED
	\end{equation*}
\renewcommand{\qed}{}    
\end{proof}

Before we can prove that $\Pi(\Gstab{N}(X))\subseteq\UU$, we need the following result.

\begin{lemma}\label{lem: open neighbourhood of geometric stability condition}
	Suppose $\sigma=\sigma_{H,B,\alpha,\beta}\in\Gstab{N}(X)$ is geometric. There there is an open neighbourhood $W\subset\R^2$ of $(\alpha,\beta)$ such that for every $(\alpha',\beta')\in W$, we have $\sigma_{H,B,\alpha',\beta'}\in\Gstab{N}(X)$.
\end{lemma}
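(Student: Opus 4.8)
The plan is to obtain this as an immediate consequence of \cref{prop: inj local homeo}, which asserts that
\begin{equation*}
	\Pi\colon \Gstab{N}(X)\longrightarrow \Amp_\R(X)\times\NS_\R(X)\times\R^2,\qquad \sigma_{H,B,\alpha,\beta}\mapsto (H,B,\alpha,\beta),
\end{equation*}
is an injective local homeomorphism. First I would invoke the general fact that a local homeomorphism is an open map, so that $\im\Pi$ is an open subset of $\Amp_\R(X)\times\NS_\R(X)\times\R^2$. Since $\Pi(\sigma)=(H,B,\alpha,\beta)$ lies in $\im\Pi$, one can then choose an open neighbourhood $\mathcal{O}\subseteq\im\Pi$ of $(H,B,\alpha,\beta)$.

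Next I would pass to the slice over the fixed pair $(H,B)$: set $W\coloneqq\{(\alpha',\beta')\in\R^2 : (H,B,\alpha',\beta')\in\mathcal{O}\}$, which is open as the preimage of $\mathcal{O}$ under the continuous affine map $\R^2\to\Amp_\R(X)\times\NS_\R(X)\times\R^2$, $(\alpha',\beta')\mapsto(H,B,\alpha',\beta')$, and which contains $(\alpha,\beta)$. For any $(\alpha',\beta')\in W$ the point $(H,B,\alpha',\beta')$ belongs to $\im\Pi$, hence there is a $\sigma'\in\Gstab{N}(X)$ with $\Pi(\sigma')=(H,B,\alpha',\beta')$, unique by injectivity of $\Pi$; by the definition of $\Pi$ together with the uniqueness statement in \cref{thm: geo stab determined by Z} this $\sigma'$ is precisely $\sigma_{H,B,\alpha',\beta'}$. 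Thus $\sigma_{H,B,\alpha',\beta'}\in\Gstab{N}(X)$ for every $(\alpha',\beta')\in W$, which is the assertion.

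There is no genuine obstacle here; the only point worth emphasising is that \cref{prop: inj local homeo} provides a \emph{local} homeomorphism onto $\Amp_\R(X)\times\NS_\R(X)\times\R^2$ rather than a global one, so one really does use openness of local homeomorphisms (equivalently, that $\Gstab{}(X)$ is open in $\Stab(X)$ and that the central-charge map $\cZ$ of \cref{Bridgelandmanifold} is a local homeomorphism, so the normalised geometric locus has open image in the space of normalised central charges $\cong(\NS_\R(X))^2\times\R^2$). The remaining input is the elementary observation that an open subset of a product meets each coordinate slice in an open set.
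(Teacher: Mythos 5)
Your argument is correct and matches the paper's proof in substance: the paper likewise deduces from \cref{prop: inj local homeo} (together with the openness of the geometric locus from Bridgeland) that the image under $\Pi$ of a neighbourhood of $\sigma$ is open in $\Amp_\R(X)\times\NS_\R(X)\times\R^2$, and then takes its slice over the fixed $(H,B)$ to obtain $W$. Your only extra step, identifying the unique preimage of $(H,B,\alpha',\beta')$ with $\sigma_{H,B,\alpha',\beta'}$ via \cref{thm: geo stab determined by Z}, is exactly what the notation $\sigma_{H,B,\alpha',\beta'}$ presupposes, so the two proofs coincide.
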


\begin{proof}
	By \cite[Proposition 9.4]{bridgelandStabilityConditionsK32008}, there is an open neighbourhood $V$ of $\sigma$ in $\Stab(X)$ where all skyscraper sheaves are stable. Let $\ZZ\colon \Stab(X)\rightarrow\Hom_\bZ(\Knum(X),\C)$ denote the local homeomorphism from Theorem~\ref{Bridgelandmanifold}. By \cref{prop: inj local homeo}, $\Pi(V)$ is open in $\Amp_\bR(X)\times\NS_\bR(X)\times \bR^2$. Therefore, $W\coloneqq \Pi(V)|_{\bR^2}$ is open in $\bR^2$.
\end{proof}

\begin{proposition}[\textit{cf.} {\cite[Proposition 3.6]{fuStabilityManifoldsVarieties2022}}]\label{prop: geometric implies twisted LP condition}
	Let $\sigma=\sigma_{H,B,\alpha,\beta}\in\Gstab{N}(X)$. Then $\alpha>\Phi_{X,H,B}(\beta)$, \textit{i.e.}~$\Pi(\Gstab{N}(X))\subseteq\UU$, where $\Pi$ is the injective local homeomorphism from Proposition~{\rm\ref{prop: inj local homeo}}.
\end{proposition}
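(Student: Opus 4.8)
The plan is to reduce everything to \cref{lem: open neighbourhood of geometric stability condition}: although knowing that $\sigma=\sigma_{H,B,\alpha,\beta}$ is geometric does not by itself constrain $\nu_{H,B}(F)$ for an $H$--semistable sheaf $F$ whose $H$--slope is only \emph{close} to $\beta$, one gains complete control once the $\beta$--parameter is deformed to equal $\mu_H(F)$ \emph{exactly}, since then $F[1]$ is forced into the tilted heart with central charge on the negative real axis. First I would record the harmless observation that, for $\mu\in\R$ near $\beta$, every $H$--semistable sheaf $F$ with $\mu_H(F)=\mu$ is torsion--free: since $\mu_H(F)$ is finite we have $\ch_0(F)>0$, and any nonzero torsion subsheaf would have $H$--slope $+\infty$, contradicting semistability. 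Hence near $\beta$ the supremum in \cref{defn: twisted Le Potier} only involves torsion--free sheaves.

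Next, apply \cref{lem: open neighbourhood of geometric stability condition} to obtain an open neighbourhood $W\subset\R^2$ of $(\alpha,\beta)$ with $\sigma_{H,B,\alpha',\beta'}\in\Gstab{N}(X)$ for all $(\alpha',\beta')\in W$; shrinking to a product neighbourhood, fix $\alpha''<\alpha$ and $\eta>0$ so that $\sigma_{H,B,\alpha'',\mu}\in\Gstab{N}(X)$ whenever $|\mu-\beta|<\eta$. Now take any torsion--free $H$--semistable $F$ with $\mu\coloneqq\mu_H(F)\in(\beta-\eta,\beta+\eta)$ and set $\sigma'\coloneqq\sigma_{H,B,\alpha'',\mu}$. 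Since the $H$--slope of $F$ equals the $\beta$--parameter of $\sigma'$ we get $\Im Z_{\sigma'}([F])=H^2\ch_0(F)\bigl(\mu_H(F)-\mu\bigr)=0$, so $F\in\FF_{H,\mu}$ and therefore $F[1]$ lies in the tilted heart $\Coh^{H,\mu}(X)=\PP_{\sigma'}(0,1]$ by \cref{thm: geo stab determined by Z}. A nonzero object of the heart of a stability condition has central charge in $\H$, and $\Im Z_{\sigma'}([F[1]])=-\Im Z_{\sigma'}([F])=0$, so $Z_{\sigma'}([F[1]])\in\R_{<0}$, i.e.\ $\Re Z_{\sigma'}([F])>0$. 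As the real part of $Z_{H,B,\alpha'',\cdot}$ is independent of the $\beta$--parameter,
\begin{equation*}
	0<\Re Z_{\sigma'}([F]) = \alpha'' H^2\ch_0(F) + B\ldot\ch_1(F) - \ch_2(F) = H^2\ch_0(F)\bigl(\alpha''-\nu_{H,B}(F)\bigr),
\end{equation*}
hence $\nu_{H,B}(F)<\alpha''$.

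Finally, since this bound holds for every $H$--semistable torsion--free $F$ with $\mu_H(F)\in(\beta-\eta,\beta+\eta)$, the supremum in \cref{defn: twisted Le Potier} over sheaves of any fixed slope in that interval is $\le\alpha''$, so $\Phi_{X,H,B}(\beta)\le\alpha''<\alpha$; equivalently $\Pi(\Gstab{N}(X))\subseteq\UU$. I expect the only genuinely delicate input to be \cref{lem: open neighbourhood of geometric stability condition} itself (which is why it is isolated beforehand): it is precisely the openness of the geometric locus that licenses the deformation of $\beta$ to $\mu_H(F)$ on which the sign argument rests. Everything else is bookkeeping with the explicit central charge from \cref{thm: geo stab determined by Z}, and the argument recovers \cite[Proposition 3.6]{fuStabilityManifoldsVarieties2022} when $B=0$ and $X$ has Picard rank one.
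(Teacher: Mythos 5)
Your proof is correct and follows essentially the same route as the paper's: both rest on the openness of $\Gstab{N}(X)$ via \cref{lem: open neighbourhood of geometric stability condition}, deform the $\beta$-parameter to equal $\mu_H(F)$ so that $F\in\FF_{H,\mu}$, hence $F[1]$ lies in the heart and $Z([F])\in\R_{>0}$, which translates into the bound on $\nu_{H,B}(F)$. The only difference is presentational: the paper argues by contradiction with a single sheaf violating the inequality, whereas you run the same estimate directly over all semistable sheaves of nearby slope to bound the limsup by $\alpha''<\alpha$.
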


\begin{proof}
	Suppose towards a contradiction that $\alpha \leq \Phi_{X,H,B}(\beta)$.	Let $W\subset\R^2$ be the open neighbourhood of $(\alpha,\beta)$ from Lemma~\ref{lem: open neighbourhood of geometric stability condition}. Recall that
	\begin{equation*}
		\Phi_{X,H,B}(\beta)\coloneqq \limsup_{\mu\rightarrow \beta}\left\{ \nu_{H,B}(F) : \text{$F\in\Coh(X)$ is $H$-semistable with $\mu_H(F)=\mu$} \right\}.
	\end{equation*}
	Therefore, there exist $H$-semistable sheaves with slopes arbitrarily close to $\beta$ and $\nu_{H,B}$ arbitrarily close to $\Phi_{X,H,B}(\beta)$. In particular, there exist a pair $(\alpha_0,\beta_0)\in W$ and a torsion-free $H$-semistable sheaf $F$ with
	\begin{equation}\label{eq: beta 0 is slope of F and alpha 0 is less than nu of F}
		\beta_0=\mu_H(F)=\frac{H\ldot\ch_1(F)}{H^2\ch_0(F)}
		\quad
		\text{and}
		\quad
		\alpha_0 \leq \nu_{H,B}(F)=\frac{\ch_2(F)-B\ldot\ch_1(F)}{H^2\ch_0(F)}.
	\end{equation}
	Since $(\alpha_0,\beta_0)\in W$, we have $\sigma_{H,B,\alpha_0,\beta_0}\in\Gstab{N}(X)$. Moreover, $\ch_0(F)>0$, so 
	\begin{align*}
		\Im(Z_{H,B,\alpha_0,\beta_0}([F]))&= H\ldot\ch_1([F])-\beta_0 H^2\ch_0([F])=0.
	\end{align*}
	By the definition of the torsion pair $(\TT_{H,\beta_0},\FF_{H,\beta_0})$ in \cref{thm: geo stab determined by Z}, it follows that $F\in\FF_{H,\beta_0}$. This implies that $Z_{H,B,\alpha_0,\beta_0}([F])\in\R_{>0}$. However, by \eqref{eq: beta 0 is slope of F and alpha 0 is less than nu of F},
	\begin{equation*}
		\Re(Z_{H,B,\alpha_0,\beta_0}([F]))=\alpha_0 H^2\ch_0([F])+B\ldot\ch_1([F])-\ch_2([F])\leq 0. 
	\end{equation*}
	Hence $Z_{H,B,\alpha_0,\beta_0}([F])\in\R_{\leq 0}$, so we have a contradiction.
\end{proof}

\subsubsection{STEP 2: Construction of the pointwise inverse $\boldsymbol{\UU\rightarrow \Gstab{N}(X)}$}

We first recall the construction of stability conditions in \cite[Theorem 6.10]{macriLecturesBridgelandStability2017}.

\begin{definition}
	Let $X$ be a surface, and fix classes $(H,B)\in \Amp_\R(X)\times\NS_\R(X)$. Define the pair $\sigma_{H,B}\coloneqq (\Coh^{H,B}(X),Z_{H, B})$, where
	\begin{align*}
		Z_{H,B}([E]) &= \left(-\ch_2^B([E])+\frac{H^2}{2}\ldot\ch_0^B([E])\right) + iH\ldot\ch_1^B([E])\\
		&= \left[\frac{1}{2}\left(1-\frac{B^2}{H^2}\right)-i\frac{H\ldot B}{H^2}\right]H^2\ch_0([E])+(B+iH)\ldot\ch_1([E])-\ch_2([E]),\\
		\TT_{H,B} &= \left\{ E\in \Coh(X) : \parbox{21em}{any $H$-semistable Harder--Narasimhan factor $F$ of the torsion-free part of $E$ satisfies $\Im Z_{H,B}([F])> 0$}  \right\}, \\
		\FF_{H,B} &= \left\{ E\in \Coh(X) : \parbox{21em}{$E$ is torsion-free, and any $H$-semistable Harder--Narasimhan factor $F$ of $E$ satisfies $\Im Z_{H,B}([F]) \leq 0$} \right\},
	\end{align*}
	and $\Coh^{H,B}(X)$ is the tilt of $\Coh(X)$ at the torsion pair $(\TT_{H,B}, \FF_{H,B})$.
\end{definition}

\begin{lemma}[\textit{cf.} {\cite[Exercise 6.11]{macriLecturesBridgelandStability2017}}]\label{lem: constant for torsion sheaves}
	Let $X$ be a surface. Then there exists a continuous function $C_{(-)}\colon\Amp_\R(X)\rightarrow \R_{\geq 0}$ such that, for every $D\in\Eff_\R(X)$,
	\begin{equation*}
		C_H(H\ldot D)^2+D^2\geq 0.
	\end{equation*}
\end{lemma}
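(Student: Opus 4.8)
The plan is to reduce the inequality to the Hodge index theorem together with the compactness of a slice of the (closed) effective cone, and then to globalise over the ample cone using paracompactness. Since $\Eff_\R(X) \subseteq \overline{\Eff}_\R(X)$ and both $D \mapsto D^2$ and $D \mapsto (H\cdot D)^2$ are continuous, it suffices to produce $C_{(-)}$ that works for all $D$ in the closed cone $\overline{\Eff}_\R(X)$. First I would fix an auxiliary ample class $A$ and record the standard fact that $A \cdot D > 0$ for every nonzero $D \in \overline{\Eff}_\R(X)$: indeed $\mathrm{Nef}(X) = \overline{\Eff}_\R(X)^\vee$ and, by Kleiman's criterion, $A$ lies in the interior of the nef cone. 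It follows that the slice $\Sigma_A \coloneqq \{ D \in \overline{\Eff}_\R(X) : A \cdot D = 1 \}$ is closed, omits the origin, and is bounded (an unbounded sequence in $\Sigma_A$ would, after normalising, converge to a nonzero class of $\overline{\Eff}_\R(X)$ pairing to zero with $A$), hence compact.

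Next I would define, for each ample $H$, the quantity $M_H \coloneqq \max\bigl(0,\ \sup\{ -D^2 : D \in \overline{\Eff}_\R(X),\ H\cdot D = 1 \}\bigr)$. Writing a nonzero $D \in \overline{\Eff}_\R(X)$ as $(H\cdot D)\, D_0$ with $D_0$ in the $H$-slice and using homogeneity shows at once that $M_H (H\cdot D)^2 + D^2 \geq 0$ for all $D \in \overline{\Eff}_\R(X)$; so the whole content of the lemma is that $H \mapsto M_H$ admits a continuous majorant on $\Amp_\R(X)$. Finiteness of $M_H$, and — more importantly — local boundedness of $H \mapsto M_H$ near a given $H_0$, I would get by comparing the $H$-slice with the fixed compact slice $\Sigma_{H_0}$: on a compact neighbourhood $K$ of $H_0$ in $\Amp_\R(X)$ the continuous pairing $(H,D)\mapsto H\cdot D$ attains a positive minimum $m$ on $K \times \Sigma_{H_0}$, while $-D^2 \leq M^*$ for some $M^*$ on $\Sigma_{H_0}$; rescaling an arbitrary $D \in \overline{\Eff}_\R(X)$ to lie in $\Sigma_{H_0}$ then yields $-D^2 \leq (M^*/m^2)(H\cdot D)^2$ for all $H \in K$, i.e. $M_H \leq M^*/m^2$ throughout $K$.

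Finally, since $\Amp_\R(X)$ is an open subset of the finite-dimensional space $\NS_\R(X)$ it is paracompact, so taking a locally finite partition of unity $\{\rho_\alpha\}$ subordinate to a cover $\{U_\alpha\}$ on whose members $M_{(-)} \leq c_\alpha$ (with $c_\alpha \geq 0$), the function $C_{(-)} \coloneqq \sum_\alpha (c_\alpha+1)\rho_\alpha$ is continuous, nonnegative, and satisfies $C_H > M_H$ for every $H$; hence $C_H(H\cdot D)^2 + D^2 \geq M_H(H\cdot D)^2 + D^2 \geq 0$ for all $D \in \overline{\Eff}_\R(X) \supseteq \Eff_\R(X)$. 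I expect the only real subtlety to be this last globalisation step — converting the pointwise, locally uniform bound on $M_H$ into an honest continuous majorant — while everything before it is the Hodge index theorem plus elementary convex-cone geometry; the partition-of-unity argument resolves it cleanly, and since only a neighbourhood version of the lemma is used later a purely local argument would in any case be enough.
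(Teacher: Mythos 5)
Your argument is correct. It shares the paper's core mechanism --- slice the (pseudo-)effective cone using the positivity $H\ldot D>0$ of an ample class against nonzero pseudoeffective classes, and use compactness of that slice to bound $-D^2$ against $(H\ldot D)^2$ --- but it resolves the continuity-in-$H$ issue differently. The paper takes the pointwise optimal constant $C_{H,D}$ on the closure of the unit-sphere slice of $\Eff_\R(X)$, defines $C_H$ as its maximum, and asserts continuity of $H\mapsto C_H$ ``by construction'' (i.e.\ continuity of a maximum of a jointly continuous function over a fixed compact set). You instead only establish local boundedness of the optimal constant $M_H$ (by comparing the $H$-slice with a fixed compact slice $\Sigma_{H_0}$ over a compact neighbourhood of $H_0$) and then manufacture a continuous majorant by a partition of unity on the open set $\Amp_\R(X)$. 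This buys robustness: you never need to prove continuity of the parametrized supremum, only that it is locally bounded, and since the inequality is monotone in $C_H$ any continuous majorant suffices --- exactly what the application requires, at the harmless cost of not producing the optimal constant. Your justification that the slice $\Sigma_A$ is compact (via $\mathrm{Nef}(X)=\overline{\Eff}_\R(X)^\vee$ and Kleiman) is also more explicit than the paper's corresponding step. One cosmetic remark: despite your framing, the Hodge index theorem is never actually used --- the lower bound on $D^2$ comes purely from compactness of the slice --- but this does not affect the validity of the argument.
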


\begin{proof}
  The inequality $C_H(H\ldot D)^2+D^2\geq 0$ is invariant under rescaling. If we consider $\Eff_\R(X)\subset \NS_\R(X)$ as normed vector spaces, it is therefore enough to look at the subspace of unit vectors $U\subset\Eff_\R(X)$.
	
	Since $D\in U$ is effective and $D\neq 0$, we have $H\ldot D>0$. Hence there exists a $C\in\R_{\geq 0}$ such that $C(H\ldot D)^2+D^2\geq 0$. Define 
	\begin{equation*}
		C_{H,D}\coloneqq \inf \left\{ C\in\R_{\geq 0} :  C_H(H\ldot D)^2+D^2\geq 0\right\}.
	\end{equation*}
	
	Since $\Amp_\R(X)$ is open, $H'\ldot D>0$ for a small deformation $H'$ of $H$. It follows that $\overline{U}$ is strictly contained in the subspace $\{E\in\NS_\R(X) : E\ldot H>0\}$. Moreover, $C_{H,D}$ is a continuous function on $\overline{U}$, and $\overline{U}$ is compact as it is a closed subset of the unit sphere in $\NS_\R(X)$. Therefore, $C_{H,D}$ has a maximum, which we call $C_H$. By construction, this is a continuous function on $\Amp_\R(X)$.
\end{proof}

\begin{definition}\label{defn: BG and MS inequalities}
	Let $X$ be a surface. Let $(H,B)\in\Amp_\R(X)\times\NS_\R(X)$. We define the following quadratic forms on $\Knum(X)\otimes\R$:
	\begin{align*}
		&Q_{BG}\coloneqq \ch_1^2-2\ch_2\ch_0,\\
		&\MSQuadratic{H}{B}\coloneqq Q_{BG}+ C_H(H\ldot \ch_1^B)^2,
	\end{align*}
	where $C_H\in\R_{\geq 0}$ is the constant from Lemma~\ref{lem: constant for torsion sheaves}.
\end{definition}

\begin{theorem}[\textit{cf.} {\cite[Theorem 6.10]{macriLecturesBridgelandStability2017}}]\label{thm: MS17 stability conditions exist}
  Let $X$ be a surface. Let $(H,B)\in\Amp_\R(X)\times\NS_\R(X)$. Then $\sigma_{H,B}\in\Gstab{N}(X)$. In particular, $\sigma_{H,B}$ satisfies the support property with respect to $\MSQuadratic{H'}{B'}$, where the pair $(H',B')\in\Amp_\Q(X)\times\NS_\Q(X)$ consists of nearby rational classes.
\end{theorem}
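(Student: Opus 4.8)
The plan is to recognise $\sigma_{H,B}$ as the classical tilt-stability construction on surfaces and to run the argument of \cite[\S 6]{macriLecturesBridgelandStability2017}, isolating where \cref{lem: constant for torsion sheaves} is used. By \cref{HRStilting}, $\Coh^{H,B}(X)$ is already the heart of a bounded $t$-structure on $\DbX$, so by \cref{Bridgelandheart} it suffices to show that $Z_{H,B}$ is a stability function on $\Coh^{H,B}(X)$ with the Harder--Narasimhan property, and that there is a quadratic form on $\Knum(X)\otimes\R$ — namely $\MSQuadratic{H'}{B'}$ for rational $(H',B')$ near $(H,B)$ — witnessing the support property; that $\sigma_{H,B}\in\Gstab{N}(X)$ is then read off at the end.

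For the stability-function condition I would note that $\Im Z_{H,B}=H\ldot\ch_1^B(-)$, so nonnegativity on the heart is immediate from the definition of $(\TT_{H,B},\FF_{H,B})$, and then prove $\Im Z_{H,B}(E)=0\Rightarrow\Re Z_{H,B}(E)<0$ for $0\neq E$ in the heart by the usual dévissage. Writing $0\to F[1]\to E\to T\to 0$ with $T\in\TT_{H,B}$, $F\in\FF_{H,B}$, vanishing of $\Im Z_{H,B}(E)$ forces $\Im Z_{H,B}(T)=0$ and $H\ldot\ch_1^B(F)=0$; since $H$ is ample and effective curve classes pair positively with it, the defining conditions force $T$ to be a $0$-dimensional sheaf, so $\Re Z_{H,B}(T)=-\ch_2(T)<0$ when $T\neq 0$, while each $H$-semistable Harder--Narasimhan factor $F_i$ of $F$ is torsion-free with $H\ldot\ch_1^B(F_i)=0$, so the twisted Bogomolov--Gieseker inequality (\cref{thm: BG for higher dimensions}) together with the Hodge index theorem gives $\ch_2^B(F_i)\leq 0$, hence $\Re Z_{H,B}(F[1])<0$ when $F\neq 0$; adding the two contributions gives the claim. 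For the Harder--Narasimhan property I would follow \cite[\S 6]{macriLecturesBridgelandStability2017}: the tilted heart is Noetherian, which (together with discreteness of $\Im Z_{H,B}$ when $(H,B)$ is rational, or a limiting argument for real classes) yields existence of HN filtrations and finite length of each $\PP(\phi)$, hence Jordan--Hölder filtrations.

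The support property is the substantive step, and I expect it to be the main obstacle. Take $Q:=\MSQuadratic{H'}{B'}=Q_{BG}+C_{H'}(H'\ldot\ch_1^{B'})^2$ with $(H',B')$ rational and close to $(H,B)$. Negative-definiteness of $Q$ on $\Ker Z_{H,B}\otimes\R$ comes from the Hodge index theorem: a kernel class has $H\ldot\ch_1^B=0$ and $\ch_2^B=\tfrac{H^2}{2}\ch_0^B$, so $Q_{BG}$ restricts to $(\ch_1^B)^2-H^2(\ch_0^B)^2$, which is negative definite since $H^2>0$ and $(\ch_1^B)^2\leq 0$ with equality only for $\ch_1^B=0$; the extra term vanishes identically on this subspace when $(H',B')=(H,B)$, and $C_{(-)}$ is continuous by \cref{lem: constant for torsion sheaves}, so negative-definiteness persists for nearby rational $(H',B')$ — this is precisely why the statement is phrased for nearby rational classes. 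For $Q\geq 0$ on $\sigma_{H,B}$-semistable objects I would first record the elementary see-saw fact that a quadratic form which is negative definite on $\Ker Z_{H,B}$ and nonnegative on finitely many classes whose central charges lie on a common ray is nonnegative on their sum (for two classes $v_1,v_2$: a nonzero $w=r_2v_1-r_1v_2$ with $r_i=|Z_{H,B}(v_i)|$ lies in $\Ker Z_{H,B}$, so $Q(w)<0$, which combined with the arithmetic--geometric mean inequality forces the cross term to be nonnegative), and then reduce via Jordan--Hölder filtrations to $\sigma_{H,B}$-stable objects. For those I would invoke the classification of (tilt-)stable objects on surfaces \cite{bridgelandStabilityConditionsK32008, arcaraBridgelandStableModuliSpaces2013}: up to shift such an object is a $\mu_H$-stable torsion-free sheaf, where $Q\geq Q_{BG}\geq 0$ by \cref{thm: BG for higher dimensions}; a pure $1$-dimensional sheaf on an effective class $D$, where $\ch_0=0$ gives $Q=D^2+C_{H'}(H'\ldot D)^2\geq 0$ by \cref{lem: constant for torsion sheaves} applied to $H'$; or a $0$-dimensional sheaf, where $Q=0$.

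Finally I would assemble the pieces: \cref{Bridgelandheart} together with the above shows $\sigma_{H,B}\in\Stab(X)$ satisfying the support property with respect to $\MSQuadratic{H'}{B'}$; and since $\OO_x$ lies in $\TT_{H,B}\subset\Coh^{H,B}(X)$ with $Z_{H,B}(\OO_x)=-\ch_2^B(\OO_x)=-1$, so of phase $1$, a direct analysis of its subobjects in $\PP(1)$ (using the dévissage above) shows it is $\sigma_{H,B}$-stable, for every $x\in X$; hence $\sigma_{H,B}\in\Gstab{N}(X)$. The step I expect to cost the most effort is the positivity $Q\geq 0$ on $\sigma_{H,B}$-stable objects, which rests on the structural classification of tilt-stable objects on surfaces; the remainder is the Hodge index theorem, the dévissage of the stability function, and \cref{lem: constant for torsion sheaves}.
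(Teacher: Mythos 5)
The crux of your argument --- positivity of $Q=\MSQuadratic{H'}{B'}$ on $\sigma_{H,B}$-semistable objects --- rests on a classification of $\sigma_{H,B}$-stable objects that is false at a general point $(H,B)$. The claim that such an object is, up to shift, a $\mu_H$-stable torsion-free sheaf, a pure one-dimensional sheaf, or a zero-dimensional sheaf is the \emph{large volume limit} statement: it holds for objects that are $\sigma_{aH,B}$-semistable for all $a\gg 0$ (\cref{lem: MS large volume limit}), not at a fixed point of the tilt family. At a general $(H,B)$, stable objects are genuinely two-term complexes --- on K3 or rational surfaces, for instance, shifted spherical or exceptional bundles and the objects produced by wall-crossing are stable, which is precisely why skyscraper sheaves can be destabilised outside the geometric chamber. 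So your reduction ``semistable $\Rightarrow$ Jordan--H\"older stable factors $\Rightarrow$ sheaf classification'' fails at the last step, and $Q\geq 0$ cannot be read off from the classical Bogomolov--Gieseker inequality and \cref{lem: constant for torsion sheaves} alone.

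The actual proof in \cite[\S 6]{macriLecturesBridgelandStability2017} (which the paper simply cites, explaining in the remark after the theorem only the passage from rational to real classes by deformation as in \cref{prop: deformation property}) closes this gap by an induction on the discriminant along a path to the large volume limit: for rational $(H',B')$ one rescales so that $\MSQuadratic{H'}{B'}$ is integer-valued; given a semistable object one increases the parameter $a$, applies the large-volume classification if it stays semistable for all $a\gg 0$, and otherwise passes at the first wall to its Jordan--H\"older factors, whose discriminant is strictly smaller, so the induction terminates by integrality. This is exactly the mechanism the paper reruns for its refined quadratic form in \cref{lem: Q smaller on JH factors}, \cref{lem: Q neg def and E violates Q implies JH factor violates Q} and \cref{lem: special values of alpha and beta satisfy new support property}. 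It is also the real reason the support property is stated for nearby \emph{rational} $(H',B')$: rationality provides the discreteness needed for this induction and for the Harder--Narasimhan property, after which arbitrary real $(H,B)$ is handled by deformation; your ``limiting argument'' at irrational classes and your explanation of rationality purely as a perturbation of negative definiteness gloss over this. The remaining parts of your outline (the stability-function d\'evissage, negative definiteness of $Q$ on $\Ker Z_{H,B}$ via the Hodge index theorem and continuity of $C_{(-)}$, the see-saw reduction to stable factors, and the stability of $\OO_x$) are sound.
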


\begin{remark}
	Theorem~\ref{thm: MS17 stability conditions exist} was first proved for K3 surfaces in \cite{bridgelandStabilityConditionsK32008}, along with the fact that this gives rise to a continuous family. In \cite[Theorem 6.10]{macriLecturesBridgelandStability2017}, the authors first prove the result holds for rational classes $(H,B)$ and sketch how to extend this to arbitrary classes. In particular, $\sigma_{H,B}$ can be obtained as a deformation of $\sigma_{H',B'}$ for nearby rational classes $(H',B')$, and $\sigma_{H,B}$ satisfies the same support property, $\MSQuadratic{H'}{B'}$. This uses the fact that $\MSQuadratic{H}{B}$ varies continuously with  $(H,B)$, together with similar arguments to Proposition~\ref{prop: deformation property}.
\end{remark}

\begin{proposition}\label{prop: special values of alpha and beta give a geometric stability condition}
	Let $X$ be a surface. Let $(H,B)\in \Amp_\R(X)\times\NS_\R(X)$, and fix $\alpha_0,\beta_0\in\R$ such that $\alpha_0>\Phi_{X,H,B}(\beta_0)$. Suppose $\alpha>\frac{1}{2}\big[\big(\beta_0 - \frac{H\ldot B}{H^2}\big)^2-\frac{B^2}{H^2}\big]$. Define $b\coloneqq \beta_0 - \frac{H\ldot B}{H^2}\in\R$ and $a\coloneqq \sqrt{2\alpha -b^2 + \frac{B^2}{H^2}}\in\R_{>0}$. Then $\sigma_{H,B,\alpha,\beta_0}$ and $\sigma_{aH,B+bH}$ are the same up to the action of\, $\GLcov$. Moreover, this is a continuous family in $\Gstab{N}(X)$ for $\alpha>\frac{1}{2}\big[\big(\beta_0 - \frac{H\ldot B}{H^2}\big)^2-\frac{B^2}{H^2}\big]$. 
\end{proposition}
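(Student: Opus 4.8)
The plan is to reparametrise the two–parameter family of tilt stability conditions from \cref{thm: MS17 stability conditions exist} so that the four–parameter slice $\sigma_{H,B,\alpha,\beta_0}$ appears as a $\GLcov$–translate of a stability condition of the form $\sigma_{H',B'}$, which is already known to be geometric. Concretely, I will show that $Z_{H,B,\alpha,\beta_0}$ and $Z_{aH,B+bH}$ differ by an explicit matrix in $\GL_2^+(\R)$, that the corresponding hearts literally coincide, and then transport geometricity and the normalisation across the $\GLcov$–action. No stability–condition axiom is ever assumed for $\sigma_{H,B,\alpha,\beta_0}$ a priori; it is produced as a translate of $\sigma_{aH,B+bH}$.

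First I would record the elementary identities coming from the definitions of $a$ and $b$: with $H' = aH$ and $B' = B+bH$, one has $\tfrac{H'\ldot B'}{H'^2} = \tfrac{\beta_0}{a}$ (equivalently $\tfrac{H\ldot B}{H^2} + b = \beta_0$) and $\alpha = \tfrac12\bigl(a^2 + b^2 - \tfrac{B^2}{H^2}\bigr)$. Substituting these into the explicit formulas for $Z_{aH,B+bH}$ (the Definition preceding \cref{lem: constant for torsion sheaves}) and for $Z_{H,B,\alpha,\beta_0}$, a direct expansion in $\ch_0,\ch_1,\ch_2$ gives
\[
    \Im Z_{aH,B+bH} = a\,\Im Z_{H,B,\alpha,\beta_0},
    \qquad
    \Re Z_{aH,B+bH} = \Re Z_{H,B,\alpha,\beta_0} + b\,\Im Z_{H,B,\alpha,\beta_0};
\]
that is, $Z_{aH,B+bH} = M\circ Z_{H,B,\alpha,\beta_0}$ where $M = \left(\begin{smallmatrix}1 & b \\ 0 & a\end{smallmatrix}\right)\in\GL_2^+(\R)$ acts on $\C\cong\R^2$, is upper triangular with positive diagonal, and fixes $-1$. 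Moreover, since $a>0$, the first identity shows the sign of the imaginary part of the two central charges agrees on every class, so the torsion pair $(\TT_{H,\beta_0},\FF_{H,\beta_0})$ defining $\Coh^{H,\beta_0}(X)$ equals $(\TT_{aH,B+bH},\FF_{aH,B+bH})$; hence $\Coh^{H,\beta_0}(X) = \Coh^{aH,B+bH}(X)$ and $\sigma_{aH,B+bH} = \bigl(\Coh^{H,\beta_0}(X),\,Z_{aH,B+bH}\bigr)$.

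Next I would invoke the $\GLcov$–action (\cref{rem: GLcov action on Stab}). Both $M$ and $M^{-1}$ lie in the subgroup of $\GL_2^+(\R)$ generated by positive rescalings and shears $\left(\begin{smallmatrix}1 & *\\0 & 1\end{smallmatrix}\right)$; elements of this subgroup lift canonically to $\GLcov$ in a way that preserves the closed upper half–plane, fixes the negative real ray, and therefore sends the heart $\PP(0,1]$ of any stability condition to itself and keeps an object of phase $1$ with central charge $-1$ at phase $1$ with central charge $-1$. By \cref{thm: MS17 stability conditions exist}, $\sigma_{aH,B+bH}\in\Gstab{N}(X)$; applying the lift $g$ of $M^{-1}$ to it produces a stability condition whose heart is $\Coh^{H,\beta_0}(X)$ and whose central charge is $Z_{H,B,\alpha,\beta_0}$, which is by definition $\sigma_{H,B,\alpha,\beta_0}$. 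Since $g$ preserves geometricity and the normalisation, this simultaneously shows $\sigma_{H,B,\alpha,\beta_0}\in\Gstab{N}(X)$ and that $\sigma_{H,B,\alpha,\beta_0}$ and $\sigma_{aH,B+bH}$ agree up to $\GLcov$. For continuity, observe that $\alpha>\tfrac12\bigl[(\beta_0-\tfrac{H\ldot B}{H^2})^2-\tfrac{B^2}{H^2}\bigr]$ is exactly the condition $2\alpha - b^2 + \tfrac{B^2}{H^2}>0$, so as $\alpha$ ranges over this interval (with $H,B,\beta_0$ fixed) $b$ is constant and $a=a(\alpha)$ is continuous and positive; hence $\alpha\mapsto(a(\alpha)H, B+bH)$ is a continuous path, and combining the continuity of the tilt family in \cref{thm: MS17 stability conditions exist} with that of the $\GLcov$–action — or, alternatively, using that $\Pi$ from \cref{prop: inj local homeo} is an injective open map whose image contains this path — yields a continuous family $\alpha\mapsto\sigma_{H,B,\alpha,\beta_0}$ in $\Gstab{N}(X)$.

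I expect the only real subtlety to be the bookkeeping around the $\GLcov$–action: one must check carefully that the particular $M$ above lies in the subgroup preserving both the heart $\PP(0,1]$ and the normalisation $Z(\OO_x)=-1$, $\OO_x\in\PP(1)$, so that the translate of the known geometric stability condition $\sigma_{aH,B+bH}$ is literally $\sigma_{H,B,\alpha,\beta_0}$ rather than merely projectively equivalent to it. The Chern–character computation identifying $M$ is entirely routine.
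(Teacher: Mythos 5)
Your proposal is correct and follows essentially the same route as the paper: identify $Z_{H,B,\alpha,\beta_0}$ and $Z_{aH,B+bH}$ up to an element of $\GL_2^+(\R)$, invoke Theorem \ref{thm: MS17 stability conditions exist} for $\sigma_{aH,B+bH}\in\Gstab{N}(X)$, transport along the $\GLcov$-action, and get continuity from the injective local homeomorphism $\Pi$ (or the continuity of the tilt family). The only cosmetic difference is that you identify the $\GLcov$-translate with $\sigma_{H,B,\alpha,\beta_0}$ by checking directly that the hearts coincide and that your explicit shear-plus-scaling fixes the normalisation, whereas the paper deduces this from the uniqueness statement in Theorem \ref{thm: geo stab determined by Z}; the paper's remark following its proof records exactly your observation that the two stability conditions share a heart and differ by a shearing.
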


\begin{proof}
	We abuse notation and consider the central charges as homomorphisms $\Knum(X)\otimes \R\rightarrow\C$. We first claim that $\Ker Z_{H,B,\alpha,\beta_0}=\Ker Z_{aH,B+bH}$ as sub-vector spaces of $\Knum(X)\otimes\R$. Fix $u\in\Knum(X)\otimes\R$. Since $a>0$, we have $\Im Z_{aH,B+bH}(u)=0$ if and only if
	\begin{align*}
		0 &= a H\ldot B \ch_0(u) + abH^2\ch_0(u) -aH\ldot \ch_1(u) \\
		 &= a \left( H\ldot B \ch_0(u) + \left(\beta_0 - \frac{H\ldot B}{H^2}\right) H^2\ch_0(u) -H\ldot \ch_1(u)\right)\\
		 &= a \left(\beta_0H^2\ch_0(u) -H\ldot \ch_1(u)\right) \\
		 &= -a \Im Z_{H,B,\alpha,\beta_0}(u).
	\end{align*}
	Therefore, $\Im Z_{aH,B+bH}(u)=0$ if and only if $\Im Z_{H,B,\alpha,\beta_0}(u)=0$. Now assume $\Im Z_{aH,B+bH}(u)=0$, so $H\ldot\ch_1(u)=\beta_0 H^2\ch_0(u)$. Then $\Re Z_{aH,B+bH}(u)=0$ if and only if
	\begin{align*}
		0	&= \frac{1}{2}\left((aH)^2-(B+bH)^2\right)\ch_0 + B\ldot\ch_1+ bH\ldot\ch_1(u) - \ch_2(u)\\
			&= \frac{1}{2}\left(a^2 - \frac{(B+bH)^2}{H^2}  +2b\beta_0\right)H^2\ch_0(u) + B\ldot\ch_1(u)-\ch_2(u).
	\end{align*}
	Moreover,
	\begin{align*}
		\frac{1}{2}\left(a^2 - \frac{(B+bH)^2}{H^2}  +2b\beta_0\right)
			&= \frac{1}{2}\left(a^2 -\frac{B^2}{H^2} +2b\left(\beta_0 - \frac{B\ldot H}{H^2}\right) - b^2\right)\\
			&= \frac{1}{2}\left(2\alpha -b^2 + \frac{B^2}{H^2} -\frac{B^2}{H^2} +b^2\right)\\
			&= \alpha.
	\end{align*}
	Therefore, $Z_{aH,B+bH}$ and $Z_{H,B,\alpha,\beta_0}$ are the same up to the action of $\GL_2^+(\bR)$. Moreover, by \cref{thm: MS17 stability conditions exist}, $\sigma_{aH,B+bH}\in\Gstab{N}(X)$. Together with \cref{thm: geo stab determined by Z}, this implies that $\sigma_{H,B,\alpha,\beta_0}=g\cdot \sigma_{aH,B+bH} \in\Stab(X)$ for some $g\in\GLcov$. Then, by definition, $\sigma_{H,B,\alpha,\beta_0}\in\Gstab{N}(X)$. It remains to show this gives rise to a continuous family. By Propositions~\ref{prop: inj local homeo} and~\ref{prop: geometric implies twisted LP condition}, 
	\begin{equation*}
		\Pi \colon\Gstab{N}(X) \longrightarrow \UU,\quad \sigma_{H,B,\alpha,\beta}\longmapsto (H,B,\alpha,\beta) 
	 \end{equation*}
	 is an injective local homeomorphism. Let $V\coloneqq   \big\{(H,B,\alpha,\beta) : \alpha>\frac{1}{2}\big[\big(\beta - \frac{H\ldot B}{H^2}\big)^2-\frac{B^2}{H^2}\big]\big\}$. The restriction $\Pi|_{\Pi^{-1}(V)}$ is still an injective local homeomorphism. Moreover, by the arguments above, $\Pi|_{\Pi^{-1}(V)}$ is surjective; hence it is continuous.
\end{proof}

\begin{remark}
	Let $\Sh_2^+(\R)\subset \GL_2^+(\R)$ denote the subgroup of shearings, \textit{i.e.}~transformations that preserve the real line. It is simply connected; hence it embeds as a subgroup into $\GLcov$ and acts on $\Stab(X)$. In the above proof, $\sigma_{H,B,\alpha,\beta_0}$ and $\sigma_{aH,B+bH}$ have the same hearts, so they are the same up to the action of $\Sh_2^+(\R)$.
\end{remark}

The next result follows from the proof of Theorem~\ref{thm: MS17 stability conditions exist}. We explain this part of the argument explicitly as it will be essential for extending the support property in Lemma~\ref{lem: special values of alpha and beta satisfy new support property}.

\begin{lemma}\label{lem: MS support property for large a}
	Let $X$ be a surface. Let $(H,B)\in\Amp_\R(X)\times\NS_\R(X)$. There exist rational classes $(H',B')$ in $\Amp_\Q(X)\times\NS_\Q(X)$ such that, for $a\geq 1$, the quadratic form $\Delta^{C_{H'}}_{H',B'}$ is negative definite on $\Ker Z_{aH,B}\otimes\R$. In particular, $\Delta_{H',B'}^{C_H'}$ gives the support property for $\sigma_{aH,B}$.
\end{lemma}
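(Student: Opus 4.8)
The plan is to reduce to the case $a=1$, which is handled by Theorem \ref{thm: MS17 stability conditions exist}, and then to propagate the support property along the ray $a\in[1,\infty)$ by the deformation property; the real content is a linear-algebra estimate producing a single quadratic form that works uniformly in $a$.

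I would first set up coordinates on the kernels. Writing $r\coloneqq\ch_0(v)$ and $D\coloneqq\ch_1^B(v)\in\NS_\R(X)$ for $v\in\Knum(X)\otimes\R$, and using that $\Im Z_{aH,B}=a\,(H\ldot\ch_1^B)$ with $a>0$, one sees that $v\in\Ker Z_{aH,B}\otimes\R$ precisely when $H\ldot D=0$ and $\ch_2^B(v)=\tfrac{1}{2}a^2H^2 r$. Since $Q_{BG}$ is twist-invariant, on this subspace $Q_{BG}(v)=D^2-a^2H^2 r^2$, and for rational classes $(H',B')$,
\begin{equation*}
	\MSQuadratic{H'}{B'}(v)=D^2-a^2H^2 r^2+C_{H'}\bigl(H'\ldot D+r\,(H'\ldot(B-B'))\bigr)^2.
\end{equation*}
Now I would invoke the Hodge Index Theorem: since $H$ is ample, $H^\perp=\{D:H\ldot D=0\}$ is negative definite, so $D^2\le0$ for the $D$'s occurring, with equality only when $D\equiv0$; moreover there is $\kappa=\kappa(H,H')\ge0$ with $(H'\ldot D)^2\le\kappa(-D^2)$ on $H^\perp$, and $\kappa\to0$ as $H'\to H$. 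Setting $\epsilon\coloneqq H'\ldot(B-B')$, the displayed expression is, for each fixed $D$, a quadratic function of $r$ whose leading coefficient $-a^2H^2+C_{H'}\epsilon^2$ is $\le-H^2+C_{H'}\epsilon^2<0$ for every $a\ge1$ once $(H',B')$ is close to $(H,B)$; it is thus a downward parabola, and it suffices to bound its vertex value. For $D\ne0$ this reduces to the inequality
\begin{equation*}
	C_{H'}^2\,\kappa\,\epsilon^2<(H^2-C_{H'}\epsilon^2)(1-C_{H'}\kappa),
\end{equation*}
whose left-hand side tends to $0$ and whose right-hand side tends to $H^2>0$ as $(H',B')\to(H,B)$ (using continuity, hence local boundedness, of $C_{(-)}$ from Lemma \ref{lem: constant for torsion sheaves}); the case $D=0$, $r\ne0$ is immediate from the negative leading coefficient. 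I would therefore choose rational classes $(H',B')\in\Amp_\Q(X)\times\NS_\Q(X)$ close enough to $(H,B)$ that this inequality holds, that $C_{H'}\kappa<1$, and — by Theorem \ref{thm: MS17 stability conditions exist} and its proof (continuity of $\MSQuadratic{(-)}{(-)}$) — that $\sigma_{H,B}$ satisfies the support property with respect to $\MSQuadratic{H'}{B'}$. For such a choice, $\MSQuadratic{H'}{B'}$ is negative definite on $\Ker Z_{aH,B}\otimes\R$ for every $a\ge1$, which is the first assertion.

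For the ``in particular'' I would argue by deformation, with $Q\coloneqq\MSQuadratic{H'}{B'}$. The map $a\mapsto\sigma_{aH,B}$ on $[1,\infty)$ is a continuous path in $\Gstab{N}(X)$ through $\sigma_{H,B}$ (Theorem \ref{thm: MS17 stability conditions exist}), and $\{Z_{aH,B}:a\ge1\}$ is a connected set of central charges, containing $Z_{H,B}$, all of whose kernels are negative definite with respect to $Q$; hence it lies in the connected component $U\subseteq\Hom_\Z(\Knum(X),\C)$ of central charges with $Q$-negative-definite kernel containing $Z_{H,B}$, and the path $\sigma_{aH,B}$ lies in the corresponding component $\mathcal U$ of its preimage, as in Proposition \ref{prop: deformation property}. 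By Proposition \ref{prop: deformation property}(2), every $\sigma_{aH,B}$ then satisfies the support property with respect to $Q=\MSQuadratic{H'}{B'}$.

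The step I expect to be the main obstacle is the uniformity of the estimate in $a$: because $a$ ranges over the unbounded interval $[1,\infty)$, the subspaces $\Ker Z_{aH,B}$ rotate as $a\to\infty$, so a bare continuity-and-compactness argument does not suffice. What makes it work is that $Q_{BG}$ restricted to these kernels contributes the term $-a^2H^2 r^2\le-H^2 r^2$, which dominates the rank direction uniformly, while the Hodge Index Theorem controls the transverse directions and the perturbation $(H',B')\ne(H,B)$ contributes only quantities that vanish as $(H',B')\to(H,B)$.
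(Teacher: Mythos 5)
Your proposal is correct, and it shares the paper's overall frame — use Theorem \ref{thm: MS17 stability conditions exist} to get $\sigma_{aH,B}\in\Gstab{N}(X)$ together with the support property of $\sigma_{H,B}$ with respect to $\MSQuadratic{H'}{B'}$ for nearby rational $(H',B')$, and then deduce the ``in particular'' from negative definiteness on $\Ker Z_{aH,B}\otimes\R$ via Proposition \ref{prop: deformation property} — but the key step is argued by a genuinely different mechanism. The paper never re-proves definiteness from scratch: it takes the known negativity on $\K_1=\Ker Z_{H,B}\otimes\R$ (part of the support property supplied by the cited theorem) and transports it to $\K_a$ through the explicit isomorphism $\Psi_a$, which only shifts $\ch_2^B$ by $(a^2-1)\tfrac{H^2}{2}\ch_0^B$; since $\ch_1^{B'}$ is unchanged, $\MSQuadratic{H'}{B'}(\Psi_a(v))=\MSQuadratic{H'}{B'}(v)-(a^2-1)H^2(\ch_0^B(v))^2\leq\MSQuadratic{H'}{B'}(v)<0$, a two-line monotonicity argument that works for whatever rational pair the theorem hands you. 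You instead prove negative definiteness on every $\Ker Z_{aH,B}\otimes\R$ directly via the Hodge index theorem and Cauchy--Schwarz on $H^\perp$, using that the rank direction contributes $-a^2H^2r^2\leq -H^2r^2$ uniformly in $a\geq 1$; your computation checks out (the case split $D\neq 0$ versus $D=0$, $r\neq 0$ exhausts the kernel, and your displayed inequality holds once $\epsilon=H'\ldot(B-B')$ and $\kappa$ are small, using continuity of $C_{(-)}$ from Lemma \ref{lem: constant for torsion sheaves}). The trade-off: your route is self-contained, not needing negativity on $\K_1$ as an input, but it requires shrinking $(H',B')$ to satisfy a quantitative inequality, so you implicitly rely on the slightly stronger (true, and implicit in the remark after Theorem \ref{thm: MS17 stability conditions exist}) statement that \emph{every} sufficiently close rational pair gives the support property for $\sigma_{H,B}$, rather than the bare existence of one such pair, which is all the paper's transport argument needs. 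Your handling of the deformation step via Proposition \ref{prop: deformation property} coincides with the paper's, including the same implicit reliance on continuity of the family $a\mapsto\sigma_{aH,B}$.
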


\begin{proof}
	By Theorem~\ref{thm: MS17 stability conditions exist}, we have $\sigma_{aH,B}\in\Gstab{N}(X)$ for $a\geq 1$, and near $(H,B)$, there exist rational classes $(H',B')\in\Amp_\Q(X)\times\NS_\Q(X)$ such that $\Delta^{C_{H'}}_{H',B'}$ gives the support property for $\sigma_{H,B}\in\Gstab{N}(X)$. In particular, $\Delta^{C_{H'}}_{H',B'}$ is negative definite on $\K_1\coloneqq \Ker Z_{H,B}\otimes\R$. By Proposition~\ref{prop: deformation property}, it is enough to show $\MSQuadratic{H'}{B'}$ is negative definite on $K_a\coloneqq \Ker Z_{aH,B}\otimes\R$ for $a\geq 1$.
	
	Recall that $u=(\ch_0^B(u),\ch_1^B(u),\ch_2^B(u))\in\K_a$ if and only if
	\begin{equation*}
		a^2\frac{H^2}{2}\ch_0^B(u)=\ch_2^B(u), \quad H\ldot \ch_1^B(u)=0.
	\end{equation*}

	Let $\Psi_a\colon \K_1\rightarrow \K_a$ be the isomorphism of sub-vector spaces of $\Knum(X)\otimes\R$ given by
	\begin{equation*}
		\Psi_a\colon v=\left(\ch_0^B(v),\ch_1^B(v),\ch_2^B(v)\right)\longmapsto \left(\ch_0^B(v),\ch_1^B(v),\ch_2^B(v)+(a^2-1)\frac{H^2}{2}\ch_0^B(v)\right).
	\end{equation*}

	Let $u\in\K_a$. Then $u=\Psi_a(v)$ for some $v\in\K_1$. Clearly $\Delta^{C_{H'}}_{H',B'}(0)=0$, so we may assume $u\neq 0$. Hence $v\neq 0$, and it is enough to show that $\Delta^{C_{H'}}_{H',B'}(\Psi_a(v))<0$. Recall that $\ch_1^{B'}=\ch_1 - B'\ldot\ch_0$; hence $\ch_1^{B'}(\Psi_a(v))=\ch_1^{B'}(v)$. Therefore,
	\begin{align*}
		\Delta^{C_{H'}}_{H',B'}(\Psi_a(v)) &= \left(\ch_1^B(v)\right)^2-2\ch_0^B(v)\ch_2^B(v)-2(a^2-1)\frac{H^2}{2}\left(\ch_0^B(v)\right)^2 + C_{H'}\left(H'\ldot\ch_1^{B'}(v)\right)^2\\
		&= \Delta^{C_{H'}}_{H',B'}(v) - 2(a^2-1)\frac{H^2}{2}\left(\ch_0^B(v)\right)^2 \\
		&\leq \Delta^{C_{H'}}_{H',B'}(v).
	\end{align*}
	Since $\Delta^{C_{H'}}_{H',B'}$ is negative definite on $\K_1$, it follows that $\Delta^{C_{H'}}_{H',B'}(\Psi_a(v))<0$.
\end{proof}

\begin{definition}
	Let $X$ be a surface. Let $(H,B)\in \Amp_\R(X)\times\NS_\R(X)$. Let $\alpha>\Phi_{X,H,B}(\beta)$, and let $\delta>0$. We define the following quadratic form on $\Knum(X)\otimes\R$:
	\begin{equation*}
		Q_{H,B,\alpha,\beta,\delta}\coloneqq \delta^{-1}(H\ldot\ch_1-\beta_0H^2\ch_0)^2 -  (H^2\ch_0)\left( \ch_2-B\ldot\ch_1 - (\alpha_0-\delta)H^2\ch_0\right).
	\end{equation*}
\end{definition}

\begin{lemma}\label{lem: LP quadratic form for torsion-free sheaves}
	Let $X$ be a surface. Let $(H,B)\in \Amp_\R(X)\times\NS_\R(X)$. Fix $\alpha_0,\beta_0\in\R$ such that $\alpha_0>\Phi_{X,H,B}(\beta_0)$. Then there exists a $\delta>0$ such that, for every $H$-semistable torsion-free sheaf\, $F$, we have $Q_{H,B,\alpha_0,\beta_0,\delta}([F])\geq 0$.
\end{lemma}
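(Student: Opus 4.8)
The plan is to reduce the claimed inequality to a bound on the Le Potier function alone, and then produce the constant $\delta$ by playing off two different estimates for $\Phi_{X,H,B}$: one valid near $\beta_0$, coming from upper semi-continuity, and one valid far from $\beta_0$, coming from the Bogomolov--Gieseker bound of \cref{lem: LP function well defined and bounded}.

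First I would record that for a non-zero $H$-semistable torsion-free sheaf $F$ on a surface one has $r\coloneqq\ch_0(F)>0$, so that, writing $\mu\coloneqq\mu_H(F)$ and $\nu\coloneqq\nu_{H,B}(F)=\tfrac{\ch_2(F)-B\ldot\ch_1(F)}{H^2\ch_0(F)}$ and using $H\ldot\ch_1(F)-\beta_0H^2\ch_0(F)=H^2r(\mu-\beta_0)$ together with $\ch_2(F)-B\ldot\ch_1(F)-(\alpha_0-\delta)H^2\ch_0(F)=H^2r(\nu-\alpha_0+\delta)$, one obtains
\[
	Q_{H,B,\alpha_0,\beta_0,\delta}([F])=(H^2r)^2\!\left(\delta^{-1}(\mu-\beta_0)^2-\nu+\alpha_0-\delta\right).
\]
(The case $F=0$ is trivial since $Q$ is a quadratic form.) Since $(H^2r)^2>0$, the claim is equivalent to $\nu_{H,B}(F)\le \alpha_0-\delta+\delta^{-1}(\mu_H(F)-\beta_0)^2$ for all such $F$, and by the extremal characterisation of $\Phi_{X,H,B}$ in \cref{lem: LP function well defined and bounded}, namely $\nu_{H,B}(F)\le\Phi_{X,H,B}(\mu_H(F))$, it suffices to choose $\delta>0$ so that
\[
	\Phi_{X,H,B}(x)\le g_\delta(x)\coloneqq\alpha_0-\delta+\delta^{-1}(x-\beta_0)^2\qquad\text{for every }x\in\R.
\]

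Next I would split $\R$ into a neighbourhood of $\beta_0$ and its complement. Fix a real number $\alpha_1$ with $\Phi_{X,H,B}(\beta_0)<\alpha_1<\alpha_0$; since $\Phi_{X,H,B}$ is upper semi-continuous (\cref{lem: LP function well defined and bounded}), the set $\{x:\Phi_{X,H,B}(x)<\alpha_1\}$ is open and contains $\beta_0$, so there is $\eta>0$ with $\Phi_{X,H,B}(x)<\alpha_1$ whenever $|x-\beta_0|<\eta$. On this interval $g_\delta(x)\ge\alpha_0-\delta$, so if $\delta\le\alpha_0-\alpha_1$ then $g_\delta(x)\ge\alpha_1>\Phi_{X,H,B}(x)$ there. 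For $|x-\beta_0|\ge\eta$ I would instead use the Bogomolov--Gieseker bound $\Phi_{X,H,B}(x)\le\Psi(x)\coloneqq\tfrac12\bigl[(x-\tfrac{H\ldot B}{H^2})^2-\tfrac{B^2}{H^2}\bigr]$ of \cref{lem: LP function well defined and bounded}. For $\delta<2$ the difference $g_\delta-\Psi$ is an upward parabola in $x$ with leading coefficient $\delta^{-1}-\tfrac12>0$ and vertex at $x^\ast=\tfrac{2\beta_0-\delta(H\ldot B)/H^2}{2-\delta}$, which lies in $(\beta_0-\eta,\beta_0+\eta)$ once $\delta$ is small; hence on $\{|x-\beta_0|\ge\eta\}$ the minimum of $g_\delta-\Psi$ is attained at $x=\beta_0\pm\eta$, where it equals $\delta^{-1}\eta^2-\tfrac12(\beta_0\pm\eta-\tfrac{H\ldot B}{H^2})^2+\tfrac{B^2}{2H^2}+\alpha_0-\delta$, a quantity tending to $+\infty$ as $\delta\to0^+$. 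Choosing $\delta>0$ small enough that $\delta<2$, $\delta\le\alpha_0-\alpha_1$, $x^\ast\in(\beta_0-\eta,\beta_0+\eta)$, and the two displayed quantities at $\beta_0\pm\eta$ are non-negative yields $\Phi_{X,H,B}(x)\le g_\delta(x)$ for all $x\in\R$, which is exactly what was needed.

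The one delicate point is the behaviour near $\beta_0$: the crude bound $\Psi$ need not satisfy $\Psi(\beta_0)<\alpha_0$, so one genuinely needs upper semi-continuity of $\Phi_{X,H,B}$ to upgrade the pointwise gap $\Phi_{X,H,B}(\beta_0)<\alpha_0$ into a gap on a whole interval around $\beta_0$; away from $\beta_0$ everything reduces to a uniform elementary comparison of two parabolas, exploiting that $g_\delta$ has a strictly larger leading coefficient than $\Psi$.
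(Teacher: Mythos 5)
Your proof is correct and follows essentially the same route as the paper: you reduce $Q_{H,B,\alpha_0,\beta_0,\delta}([F])\geq 0$ to the domination $\Phi_{X,H,B}(x)\leq \delta^{-1}(x-\beta_0)^2+\alpha_0-\delta$ for all $x$, and then establish that domination using upper semi-continuity near $\beta_0$ together with the quadratic Bogomolov--Gieseker upper bound of \cref{lem: LP function well defined and bounded} away from $\beta_0$, which are exactly the ingredients the paper invokes. The only difference is presentational: the paper delegates the parabola-domination step to the argument of \cite[Remark 3.5]{fuStabilityManifoldsVarieties2022}, whereas you carry out that comparison of parabolas explicitly.
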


\begin{proof}
	Since $\Phi_{X,H,B}$ is upper semi-continuous and bounded above by a quadratic polynomial in $x$, the same argument as in \cite[Remark 3.5]{fuStabilityManifoldsVarieties2022} applies. In particular, there exists a sufficiently small $\delta>0$ such that
	\begin{equation*}
		\frac{(x-\beta_0)^2}{\delta}+\alpha_0-\delta \geq \Phi_{X,H,B}(x).
	\end{equation*}
	Suppose $F$ is an $H$-semistable torsion-free sheaf. Let $x=\mu_H(F)=\frac{H\ldot\ch_1(F)}{H^2\ch_0(F)}$; then
	\begin{equation*}
		\delta^{-1}\left(H\ldot\ch_1(F)-\beta_0H^2\ch_0(F)\right)^2+(\alpha_0-\delta)\left(H^2\ch_0(F)\right)^2\geq \left(H^2\ch_0(F)\right)^2\Phi_{X,H,B}\left(\frac{H\ldot\ch_1(F)}{H^2\ch_0(F)}\right).
	\end{equation*}
	From Lemma~\ref{lem: LP function well defined and bounded} it follows that
	\begin{equation*}
		\delta^{-1}\left(H\ldot\ch_1(F)-\beta_0H^2\ch_0(F)\right)^2+(\alpha_0-\delta)\left(H^2\ch_0(F)\right)^2 \geq \left(H^2\ch_0(F)\right)^2\frac{\ch_2(F)-B\ldot\ch_1(F)}{H^2\ch_0(F)}.
	\end{equation*}
	In particular,
	\begin{equation*}\pushQED{\qed}
		\delta^{-1}\left(H\ldot\ch_1(F)-\beta_0H^2\ch_0(F)\right)^2 -  \left(H^2\ch_0(F)\right)\left( \ch_2(F)-B\ldot\ch_1(F) - (\alpha_0-\delta)H^2\ch_0(F)\right)\geq 0.\qedhere \popQED
	\end{equation*}
\renewcommand{\qed}{}     
\end{proof}

\begin{remark}
	Let $u\in\Knum(X)\otimes \R$. We now consider $Z_{H,B,\alpha_0,\beta_0}$ again as a homomorphism $\Knum(X)\otimes\R\rightarrow \C$. Recall that $u\in \K_{\alpha_0}\coloneqq \Ker Z_{H,B,\alpha_0,\beta_0}\subseteq \Knum(X)\otimes\R$ if and only if
	\begin{equation*}
		\alpha_0H^2\ch_0(u)+B\ldot\ch_1(u)-\ch_2(u)=0 \quad \text{ and } \quad H\ldot\ch_1(u)-\beta_0H^2\ch_0(u)=0.
	\end{equation*}
	Then
	\begin{equation*}
		Q_{H,B,\alpha_0,\beta_0,\delta}(u)=-\delta\left(H^2\ch_0(u)\right)^2\leq 0 
	\end{equation*}
	for all $u\in \K_{\alpha_0}$. In particular, $Q_{H,B,\alpha_0,\beta_0,\delta}$ is negative semi-definite on $K_{\alpha_0}$. Hence $Q_{H,B,\alpha_0,\beta_0,\delta}$ does not fulfil the support property.
\end{remark}

To construct a quadratic form which is negative definite on $K_{\alpha_0}=\Ker Z_{H,B,\alpha_0,\beta_0}$, we will combine $Q_{H,B,\alpha_0,\beta_0,\delta}$ with $Q_{BG}$, the quadratic form coming from the Bogomolov--Gieseker inequality introduced in Definition~\ref{defn: BG and MS inequalities}.

\begin{lemma}[\textit{cf.} {\cite[Section~10]{bogomolovHolomorphicTensorsVector1979}, \cite[Theorem 3.4.1]{huybrechtsGeometryModuliSpaces2010}}]\label{lem: BG quadratic form for torsion-free sheaves}
	Let $X$ be a surface. Let $H\in\Amp_\R(X)$. Then $Q_{BG}([F])\geq 0$ for every $H$-semistable torsion-free sheaf\, $F$.
\end{lemma}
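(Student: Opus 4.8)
The plan is to recognise Lemma~\ref{lem: BG quadratic form for torsion-free sheaves} as a restatement of the classical Bogomolov--Gieseker inequality. Unwinding \cref{defn: BG and MS inequalities}, the assertion $Q_{BG}([F])\geq 0$ reads
\begin{equation*}
	\ch_1(F)^2-2\ch_0(F)\ch_2(F)\geq 0,
\end{equation*}
which is exactly the inequality $2\ch_0(F)\ch_2(F)\leq\ch_1(F)^2$ recorded in the text just before \cref{thm: BG for higher dimensions}, i.e. the case $n=2$ of that theorem. Since $F$ is assumed torsion-free and $H$-semistable in the sense of \cref{defn: H-stability} (that is, $\mu_H$-semistable), \cref{thm: BG for higher dimensions} applies verbatim with $n=2$ and yields the claim; equivalently one may cite the surface statements \cite[\S10]{bogomolovHolomorphicTensorsVector1979} and \cite[Theorem 3.4.1]{huybrechtsGeometryModuliSpaces2010} directly.

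The one point worth a sentence is that $H$ is permitted to be a real ample class. This is harmless: \cref{thm: BG for higher dimensions} is already phrased for $H\in\Amp_\R(X)$, so no reduction is required. If one nonetheless wants to argue from the integral case, observe that the walls governing $\mu_H$-semistability of $F$ are cut out by rational hyperplanes in $\NS_\R(X)$, so the closed cone of polarisations $H'$ for which $F$ stays $\mu_{H'}$-semistable is rational polyhedral and therefore meets $\Amp_\Q(X)$; applying the classical Bogomolov--Gieseker inequality to $F$ as a $\mu_{H'}$-semistable torsion-free sheaf gives $Q_{BG}([F])\geq 0$, and this inequality does not involve the polarisation at all.

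I do not expect any real obstacle here: the content is entirely classical, and the argument is a citation together with a routine check of the sign convention for $Q_{BG}$ and the (cosmetic) passage to real coefficients just discussed.
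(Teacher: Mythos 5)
Your proposal is correct and takes the same route as the paper, which gives no proof of this lemma beyond the citations: $Q_{BG}([F])\geq 0$ is literally the classical Bogomolov--Gieseker inequality, i.e.\ the $n=2$ case of \cref{thm: BG for higher dimensions}, which is already stated for $H\in\Amp_\R(X)$. Your optional reduction to rational polarisations is not needed (and as phrased it slightly overclaims --- the semistable locus in the ample cone is only known to be cut out by rational walls that are locally finite, not globally rational polyhedral --- though the conclusion that it meets $\Amp_\Q(X)$ is standard), so the main argument is exactly the paper's citation together with the sign check on $Q_{BG}$.
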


\begin{proposition}\label{prop: final quadratic form}
	Let $X$ be a surface. Let $(H,B)\in \Amp_\R(X)\times\NS_\R(X)$, and fix $\alpha_0,\beta_0\in\R$ such that $\alpha_0>\Phi_{X,H,B}(\beta_0)$. Choose $\delta>0$ as in Lemma~{\rm\ref{lem: LP quadratic form for torsion-free sheaves}}. Let $Q^{\delta,\varepsilon}_{H,B,\alpha_0,\beta_0}\coloneqq Q_{H,B,\alpha_0,\beta_0,\delta}+\varepsilon Q_{BG}$. Then there exists an $\varepsilon>0$ such that
	\begin{enumerate}
		\item\label{p:fqf-1} $Q^{\delta,\varepsilon}_{H,B,\alpha_0,\beta_0}([F])\geq 0$ for every $H$-semistable torsion-free sheaf $F$, 
		\item\label{p:fqf-2} $Q^{\delta,\varepsilon}_{H,B,\alpha_0,\beta_0}([T])\geq 0$ for every torsion sheaf T, and
		\item\label{p:fqf-3} $Q^{\delta,\varepsilon}_{H,B,\alpha_0,\beta_0}$ is negative definite on $K_{\alpha_0}\coloneqq \Ker Z_{H,B,\alpha_0,\beta_0}\subseteq\Knum(X)\otimes \R$.
	\end{enumerate}
\end{proposition}

\begin{proof}
\eqref{p:fqf-1} follows immediately for any $\varepsilon>0$ from Lemmas~\ref{lem: LP quadratic form for torsion-free sheaves} and~\ref{lem: BG quadratic form for torsion-free sheaves}. For \eqref{p:fqf-2}, let $C_H$ be the constant from Lemma~\ref{lem: constant for torsion sheaves}. Choose $\varepsilon_1 >0$ such that $\varepsilon_1 < \frac{\delta^{-1}}{C_H}$. Let $T$ be a torsion sheaf; then
	\begin{align*}
		Q^{\delta,\varepsilon_1}_{H,B,\alpha_0,\beta_0}([T]) &= \delta^{-1}\left(H\ldot\ch_1([T])\right)^2+\varepsilon_1\ch_1([T])^2 \\
			&= \varepsilon_1\left(\frac{\delta^{-1}}{\varepsilon_1}\left(H\ldot \ch_1([T])\right)^2+\ch_1([T])^2\right)\\
			&> \varepsilon_1 \left(  C_H\left(H\ldot \ch_1([T])\right)^2+\ch_1([T])^2\right)\\
			&\geq 0. 
	\end{align*}

	For \eqref{p:fqf-3}, fix a norm on $\Knum(X)$, and let $U$ denote the set of unit vectors in $K_{\alpha_0}$ with respect to this norm. It will be enough to show there exists an $\varepsilon_2>0$ such that $Q^{\delta,\varepsilon_2}_{H,B,\alpha_0,\beta_0}|_U<0$.
	
	Let $A\coloneqq \{u\in U \st Q_{H,B,\alpha_0,\beta_0,\delta}=0\}$. For any $a\in A$, we have $\ch_0(a)=0$. The condition that $Z_{H,B,\alpha_0,\beta_0}(a)=0$ becomes
	\begin{equation*}
		B\ldot\ch_1(a)=\ch_2(a) \quad \text{ and } \quad H\ldot\ch_1(a)=0.
	\end{equation*}
        The divisor
        $H$ is ample, so $\ch_1(a)^2\leq 0$ by the Hodge index theorem. If $\ch_1^2(a)=0$, then $\ch_1(a)=0$, and hence $0=B\ldot\ch_1(a)=\ch_2(a)$. So $a=0$, which contradicts the fact that $a\in U$. Therefore,
	\begin{equation*}
		\restr{Q_{BG}}{A}([E])=\ch_1([E])^2<0.
	\end{equation*}
	
	We now claim that there exists a sufficiently small $\varepsilon_2>0$ such that $Q^{\delta,\varepsilon_2}_{H,B,\alpha_0,\beta_0}<0$ on $U$. Note that $\restr{Q^{\delta,\varepsilon_2}_{H,B,\alpha_0,\beta_0}}{A}=\restr{\varepsilon_2 Q_{BG}}{A}<0$, so we only need to check the claim on $U\setminus A$. Now suppose the converse, so for every $\varepsilon>0$, there exists a $ u\in U\setminus A$ such that
	\begin{equation*}
		Q_{BG}(u)\geq -\frac{1}{\varepsilon}Q_{H,B,\alpha_0,\beta_0,\delta}(u).
	\end{equation*}
We have	$Q_{H,B,\alpha_0,\beta_0,\delta}(u)<0$ since $Q_{H,B,\alpha_0,\beta_0,\delta}$ is negative semi-definite on $U$ and $u\in U\setminus A$. Therefore,
	\begin{equation*}
		P(u)\coloneqq \frac{Q_{BG}(u)}{-Q_{H,B,\alpha_0,\beta_0,\delta}(u)}\geq \frac{1}{\varepsilon}.
	\end{equation*}
	Thus $P$ is not bounded above on $U\setminus A$. Moreover, $A$ is closed and $\restr{Q_{BG}}{A}<0$ on $A$. Hence $Q_{BG}$ is negative definite on some open neighbourhood $V$ of $A$, so $\restr{P}{V}<0$. Finally, $U\setminus V$ is compact, so $P$ must be bounded above on $U\setminus V$. In particular, $P$ is bounded above on $U\setminus A$, so we have a contradiction. It follows that there exists an $\varepsilon_2>0$ such that $Q^{\delta,\varepsilon_2}_{H,B,\alpha_0,\beta_0}$ is negative definite on $K_{\alpha_0}$. Finally, let $\varepsilon =\min\{\varepsilon_1,\varepsilon_2\}$.
\end{proof}

\begin{lemma}\label{lem: Q negative definite on kernel as alpha varies}
	Let $X$ be a surface. Let $(H,B)\in \Amp_\R(X)\times\NS_\R(X)$. Fix $\alpha_0,\beta_0\in\R$ such that $\alpha_0>\Phi_{X,H,B}(\beta_0)$. Choose $\delta,\varepsilon>0$ as in Proposition~{\rm\ref{prop: final quadratic form}}. Then $Q^{\delta,\varepsilon}_{H,B,\alpha_0,\beta_0}$ is negative definite on $K_\alpha \coloneqq  \Ker Z_{H,B,\alpha,\beta} \otimes \R$ for all $\alpha\geq \alpha_0$.
\end{lemma}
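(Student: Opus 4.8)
The plan is to argue exactly as in the proof of Lemma~\ref{lem: MS support property for large a}: exhibit a linear isomorphism between $K_{\alpha_0}$ and $K_\alpha$ along which the quadratic form $Q^{\delta,\varepsilon}_{H,B,\alpha_0,\beta_0}$ can only decrease, and then quote negative definiteness on $K_{\alpha_0}$ from Proposition~\ref{prop: final quadratic form}(3). First I would write out the kernels explicitly. Since $Z_{H,B,\alpha,\beta_0}([E]) = (\alpha-i\beta_0)H^2\ch_0([E]) + (B+iH)\ldot\ch_1([E]) - \ch_2([E])$, a class $u\in\Knum(X)\otimes\R$ lies in $K_\alpha = \Ker Z_{H,B,\alpha,\beta_0}\otimes\R$ precisely when
\begin{equation*}
	H\ldot\ch_1(u) = \beta_0 H^2\ch_0(u), \qquad \ch_2(u) = \alpha H^2\ch_0(u) + B\ldot\ch_1(u).
\end{equation*}
In particular $\Im Z_{H,B,\alpha,\beta_0}$ does not depend on $\alpha$, so all the $K_\alpha$ live inside the common hyperplane $\{H\ldot\ch_1 = \beta_0 H^2\ch_0\}$, and the only difference between $K_{\alpha_0}$ and $K_\alpha$ is the value of $\ch_2$.

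Next I would introduce, for each $\alpha\geq\alpha_0$, the map $\Psi_\alpha\colon K_{\alpha_0}\to K_\alpha$ leaving $\ch_0$ and $\ch_1$ unchanged and sending $\ch_2(v)\mapsto \ch_2(v)+(\alpha-\alpha_0)H^2\ch_0(v)$, i.e.\ the analogue of the isomorphism $\Psi_a$ in Lemma~\ref{lem: MS support property for large a}. A one-line check using the description of the kernels above shows that $\Psi_\alpha(v)\in K_\alpha$ for $v\in K_{\alpha_0}$, and that $\Psi_\alpha$ is a linear isomorphism, its inverse being the map of the same shape with $(\alpha_0-\alpha)$ in place of $(\alpha-\alpha_0)$.

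Then I would compute $Q^{\delta,\varepsilon}_{H,B,\alpha_0,\beta_0}(\Psi_\alpha(v))$ for $v\in K_{\alpha_0}$. Because $\Psi_\alpha$ modifies only $\ch_2$, and by a nonnegative multiple of $H^2\ch_0$, the term $\delta^{-1}(H\ldot\ch_1-\beta_0H^2\ch_0)^2$ and the term $(H^2\ch_0)(B\ldot\ch_1)$ in $Q_{H,B,\alpha_0,\beta_0,\delta}$ are untouched, while $-(H^2\ch_0)\ch_2$ picks up $-(\alpha-\alpha_0)(H^2)^2\ch_0^2$ and $\varepsilon Q_{BG}=\varepsilon(\ch_1^2-2\ch_0\ch_2)$ picks up $-2\varepsilon(\alpha-\alpha_0)H^2\ch_0^2$. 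Using $H^2>0$, $\varepsilon>0$ and $\alpha\geq\alpha_0$, both extra terms are $\leq 0$, so $Q^{\delta,\varepsilon}_{H,B,\alpha_0,\beta_0}(\Psi_\alpha(v)) \leq Q^{\delta,\varepsilon}_{H,B,\alpha_0,\beta_0}(v)$. Finally, given $0\neq u\in K_\alpha$, write $u=\Psi_\alpha(v)$ with $v=\Psi_\alpha^{-1}(u)\neq 0$; combining the last inequality with Proposition~\ref{prop: final quadratic form}(3) yields $Q^{\delta,\varepsilon}_{H,B,\alpha_0,\beta_0}(u)\leq Q^{\delta,\varepsilon}_{H,B,\alpha_0,\beta_0}(v)<0$, as required.

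I do not expect a real obstacle: the argument is essentially a reprise of Lemma~\ref{lem: MS support property for large a}, and the only care needed is bookkeeping in the definition of $\Psi_\alpha$ and checking that each new term carries the right sign. The one point worth flagging explicitly is that this sign is favourable only when $\alpha\geq\alpha_0$, which is precisely why the statement is one-sided in $\alpha$.
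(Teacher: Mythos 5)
Your proposal is correct and follows essentially the same route as the paper: the same explicit description of the kernels, the same shear isomorphism $\Psi_\alpha$ adding $(\alpha-\alpha_0)H^2\ch_0$ to $\ch_2$, the same sign computation showing $Q^{\delta,\varepsilon}_{H,B,\alpha_0,\beta_0}\circ\Psi_\alpha \leq Q^{\delta,\varepsilon}_{H,B,\alpha_0,\beta_0}$ on $K_{\alpha_0}$, and the same appeal to Proposition \ref{prop: final quadratic form}(3). No gaps.
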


\begin{proof}
	Recall that $u=(\ch_0(u),\ch_1(u),\ch_2(u))\in \K_\alpha=\Ker Z_{H,B,\alpha,\beta_0}\otimes\R$ if and only if
	\begin{align*}
		\alpha H^2\ch_0(u)+B\ldot\ch_1(u)-\ch_2(u)=0, \quad H\ldot\ch_1(u)-\beta_0H^2\ch_0(u)=0.
	\end{align*}

	Let $\Psi_\alpha\colon \K_{\alpha_0}\rightarrow \K_{\alpha}$ be the isomorphism of sub-vector spaces of $\Knum(X)\otimes\R$ given by
	\begin{equation*}
		\Psi_\alpha \colon v=\left(\ch_0(v),\ch_1(v),\ch_2(v)\right)\longmapsto \left(\ch_0(v),\ch_1(v),\ch_2(v)+(\alpha-\alpha_0)H^2\ch_0(v)\right).
	\end{equation*}

	Let $u\in \K_\alpha$; then $u=\Psi_\alpha(v)$ for some $v\in \K_{\alpha_0}$. Clearly $Q^{\delta,\varepsilon}_{H,B,\alpha_0,\beta_0}(0)=0$, so we may assume $u\neq0$. Hence $v\neq 0$, and it is enough to show that $Q^{\delta,\varepsilon}_{H,B,\alpha_0,\beta_0}(\Psi_\alpha(v))<0$. Moreover,
	\begin{align*}
		Q^{\delta,\varepsilon}_{H,B,\alpha_0,\beta_0}(\Psi_\alpha(v)) &= Q_{H,B,\alpha_0,\beta_0,\delta}(\Psi_\alpha(v))+\varepsilon Q_{BG}(\Psi_\alpha(v)) \\
		&= Q_{H,B,\alpha_0,\beta_0,\delta}(v) - (\alpha-\alpha_0)\left(H^2\ch_0(v)\right)^2 + \varepsilon Q_{BG}(v) -2\varepsilon(\alpha-\alpha_0)H^2\ch_0(v)^2\\
		&= Q^{\delta,\varepsilon}_{H,B,\alpha_0,\beta_0}(v)-(\alpha-\alpha_0)H^2\ch_0(v)^2\left(H^2+2\varepsilon\right)\\
		&\leq Q^{\delta,\varepsilon}_{H,B,\alpha_0,\beta_0}(v).
	\end{align*}
	Finally, by Proposition~\ref{prop: final quadratic form}\eqref{p:fqf-3}, $Q^{\delta,\varepsilon}_{H,B,\alpha_0,\beta_0}(v)<0$.
\end{proof}

\begin{lemma}[\textit{cf.} {\cite[Lemma 6.18]{macriLecturesBridgelandStability2017}}]\label{lem: MS large volume limit}
	Let $(H,B)\in \Amp_\R(X)\times\NS_\R(X)$. If\, $E\in\Coh^{H,B}(X)$ is $\sigma_{aH,B}$-semistable for all $a\gg 0$, then it satisfies one of the following conditions:
	\begin{enumerate}
		\item\label{l:MSlvl-1} $\HH^{-1}(E)=0$ and $\HH^0(E)$ is an $H$-semistable torsion-free sheaf.
		\item\label{l:MSlvl-2} $\HH^{-1}(E)=0$ and $\HH^0(E)$ is a torsion sheaf.
		\item\label{l:MSlvl-3} $\HH^{-1}(E)$ is a $H$-semistable torsion-free sheaf, and $\HH^0(E)$ is either $0$ or a torsion sheaf supported in dimension~$0$.
	\end{enumerate}
\end{lemma}

\begin{proposition}\label{prop: Q non-neg at large volume limit}
	Let $(H,B)\in \Amp_\R(X)\times\NS_\R(X)$. Fix $\alpha_0,\beta_0\in\R$ such that $\alpha_0>\Phi_{X,H,B}(\beta_0)$. Choose $\delta,\varepsilon>0$ as in Proposition~{\rm\ref{prop: final quadratic form}}. If\, $E\in\Coh^{H,B}(X)$ is $\sigma_{aH,B}$-semistable for all $a\gg 0$, then $Q^{\delta,\varepsilon}_{H,B,\alpha_0,\beta_0}([E])\geq 0$.
\end{proposition}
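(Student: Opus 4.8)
The plan is to reduce to the three shapes of large--volume--limit semistable objects provided by Lemma \ref{lem: MS large volume limit} and then to treat each case using the positivity properties established in Proposition \ref{prop: final quadratic form}. If $E$ is as in case (1) of that lemma, then $E$ is a $H$-semistable torsion-free sheaf, so $Q^{\delta,\varepsilon}_{H,B,\alpha_0,\beta_0}([E])\geq 0$ is precisely Proposition \ref{prop: final quadratic form}(1); if $E$ is as in case (2), then $E$ is a torsion sheaf, and the inequality is Proposition \ref{prop: final quadratic form}(2). So the only genuine work is case (3).

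In case (3) I would write $F\coloneqq\HH^{-1}(E)$ and $T\coloneqq\HH^0(E)$, so $F$ is a $H$-semistable torsion-free sheaf and $T$ is either $0$ or a torsion sheaf supported in dimension zero. If $F=0$ we are back in case (2), so we may assume $F\neq 0$; then $\ch_0(F)>0$, while $T$ being supported in dimension zero gives $\ch_0(T)=\ch_1(T)=0$ and $\ch_2(T)\geq 0$. From the truncation triangle $F[1]\to E\to T$ we get $[E]=[T]-[F]$ in $\Knum(X)$. Substituting this into the explicit formulas defining $Q_{BG}$ and $Q_{H,B,\alpha_0,\beta_0,\delta}$ and using that $T$ contributes nothing to $\ch_0$ and $\ch_1$, the cross terms should collapse to
\begin{align*}
	Q_{BG}([E]) &= Q_{BG}([F]) + 2\ch_0(F)\ch_2(T),\\
	Q_{H,B,\alpha_0,\beta_0,\delta}([E]) &= Q_{H,B,\alpha_0,\beta_0,\delta}([F]) + H^2\ch_0(F)\ch_2(T),
\end{align*}
and both correction terms are $\geq 0$ because $\ch_0(F)>0$, $H^2>0$ and $\ch_2(T)\geq 0$. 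Then Lemma \ref{lem: BG quadratic form for torsion-free sheaves} gives $Q_{BG}([F])\geq 0$ and Lemma \ref{lem: LP quadratic form for torsion-free sheaves} gives $Q_{H,B,\alpha_0,\beta_0,\delta}([F])\geq 0$, hence $Q_{BG}([E])\geq 0$ and $Q_{H,B,\alpha_0,\beta_0,\delta}([E])\geq 0$; adding $\varepsilon>0$ times the former to the latter gives $Q^{\delta,\varepsilon}_{H,B,\alpha_0,\beta_0}([E])\geq 0$.

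The main obstacle is purely bookkeeping in case (3): the class $[E]$ is an alternating sum whereas the $Q$'s are quadratic rather than additive, so one must check that no cross term destroys positivity. The point that makes this work is that $T$ is supported in codimension two, so it contributes only a non-negative amount to $\ch_2$ and nothing to $\ch_0$ or $\ch_1$, which forces the correction terms for both quadratic forms to have the correct sign. Beyond this, the argument needs no new ingredients: it uses only the classification of Lemma \ref{lem: MS large volume limit} together with Lemmas \ref{lem: BG quadratic form for torsion-free sheaves} and \ref{lem: LP quadratic form for torsion-free sheaves} and Proposition \ref{prop: final quadratic form}.
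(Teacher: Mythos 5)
Your proposal is correct and follows essentially the same route as the paper: reduce to the three cases of Lemma \ref{lem: MS large volume limit}, dispatch cases (1) and (2) via Proposition \ref{prop: final quadratic form}, and in case (3) check that the zero-dimensional cohomology $\HH^0(E)$ only contributes non-negative correction terms (your $\ch_0(F)\ch_2(T)$ terms are exactly the paper's $\ch_0(\HH^{-1}(E))\length(\HH^0(E))$ terms), so $Q^{\delta,\varepsilon}_{H,B,\alpha_0,\beta_0}([E])\geq Q^{\delta,\varepsilon}_{H,B,\alpha_0,\beta_0}([\HH^{-1}(E)])\geq 0$. The only cosmetic difference is that you cite Lemmas \ref{lem: BG quadratic form for torsion-free sheaves} and \ref{lem: LP quadratic form for torsion-free sheaves} separately and then add, where the paper invokes Proposition \ref{prop: final quadratic form}(1) directly; these are the same thing.
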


\begin{proof}
	Let $Q\coloneqq Q^{\delta,\varepsilon}_{H,B,\alpha_0,\beta_0}$. By our hypotheses, $E$ satisfies one of the three conditions in Lemma~\ref{lem: MS large volume limit}. If $E$ satisfies~\eqref{l:MSlvl-1}, then $Q([E])=Q([\HH^0(E)])$, where $\HH^0(E)$ is a $H$-semistable torsion-free sheaf, and the result follows from Proposition~\ref{prop: final quadratic form}\eqref{p:fqf-1}. Similarly, if $E$ satisfies~\eqref{l:MSlvl-2}, then by Proposition~\ref{prop: final quadratic form}\eqref{p:fqf-2}, $Q([E])=Q([\HH^{0}(E)])\geq 0$. Now assume $E$ satisfies~\eqref{l:MSlvl-3}. Then 
	\begin{equation*}
		\ch([E])=-\ch\left(\HH^{-1}(E)\right)+\length\left(\HH^0(E)\right). 
	\end{equation*}
	Hence
	\begin{equation*}
		Q_{BG}([E]) = Q_{BG}\left([\HH^{-1}(E)]\right) - 2\left(-\ch_0\left(\HH^{-1}(E)\right)\right)\length\left(\cH^0(E)\right) \geq Q_{BG}\left(\HH^{-1}(E)\right).
	\end{equation*}
	The same argument applies to $Q_{H,B,\alpha_0,\beta_0,\delta}$. Hence $Q([E])\geq Q([\HH^{-1}(E)])$. The result follows by Proposition~\ref{prop: final quadratic form}\eqref{p:fqf-1}.
\end{proof}

\begin{lemma}\label{lem: Q smaller on JH factors}
  Let $\sigma=(Z,\PP)\in\Stab(X)$ with support property given by a quadratic form $Q$ on $\Knum(X)\otimes\R$. Suppose $E\in\DbX$ is strictly $\sigma$-semistable and satisfies $Q(E)\neq 0$. Let $A_1,\ldots,A_m$ be the Jordan--H\"older 
  factors of\,~$E$. Then $Q(A_i)<Q(E)$ for all $1\leq i\leq m$.
\end{lemma}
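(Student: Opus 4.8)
The plan is to argue entirely on the level of numerical classes, using the symmetric bilinear form $Q(-,-)$ polarising the quadratic form $Q$ (so $Q(v)=Q(v,v)$), and to exploit the fact that all Jordan--H\"older factors of $E$ share a common phase. Let $\phi$ be that phase. Since $E$ is \emph{strictly} semistable, the abelian category $\PP(\phi)$ contains $E$ as a non-simple object, so $m\ge 2$. Fix an index $i$ and set $E_i'\coloneqq\bigoplus_{j\ne i}A_j$. Because $\PP(\phi)$ is a full additive subcategory (in particular abelian and closed under finite direct sums) and each $A_j\in\PP(\phi)$, the object $E_i'$ is $\sigma$-semistable of phase $\phi$, and it is non-zero as $m\ge 2$. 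Hence the support property applies to $A_i$, to $E_i'$ and to $E$, giving
\[
	Q([A_i])\ge 0,\qquad Q([E_i'])\ge 0,\qquad Q([E])\ge 0,
\]
and $[E]=[A_i]+[E_i']$ in $\Knum(X)$.

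The key point is that $Z([A_i])$ and $Z([E_i'])$ lie on the common ray $\R_{>0}e^{i\pi\phi}$: write $Z([A_i])=a\,e^{i\pi\phi}$, $Z([E_i'])=b\,e^{i\pi\phi}$ with $a,b\in\R_{>0}$ (positivity of masses, using $m\ge 2$ to see $b>0$). Then $b[A_i]-a[E_i']\in\Ker Z\otimes\R$, which is negative (semi)definite with respect to $Q$, so
\[
	b^2 Q([A_i])-2ab\,Q([A_i],[E_i'])+a^2 Q([E_i'])\le 0.
\]
Since $a,b>0$ and $Q([A_i]),Q([E_i'])\ge 0$, this forces $Q([A_i],[E_i'])\ge 0$. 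Expanding $Q([E])=Q([A_i])+2Q([A_i],[E_i'])+Q([E_i'])$ then yields at once $Q([E])\ge Q([A_i])$.

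Finally I would upgrade this to a strict inequality, and this is the only place where the hypothesis $Q(E)\ne 0$ enters. If $Q([E])=Q([A_i])$, then $2Q([A_i],[E_i'])+Q([E_i'])=0$ with both summands non-negative, so $Q([A_i],[E_i'])=0$ and $Q([E_i'])=0$; substituting back into the displayed inequality gives $b^2 Q([A_i])\le 0$, hence $Q([A_i])=0$ and therefore $Q([E])=0$, contradicting $Q(E)\ne 0$. Thus $Q([A_i])<Q([E])$ for every $i$.

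I do not expect a serious obstacle: this is a short computation with the bilinear form. The only points requiring care are (a) verifying that $E_i'$ is a non-zero $\sigma$-semistable object of phase $\phi$ (so that $Q([E_i'])\ge 0$ is legitimate), which uses $m\ge 2$ together with $\PP(\phi)$ being abelian and closed under direct sums, and (b) pinpointing where $Q(E)\ne 0$ is used, namely in ruling out the equality case at the very end.
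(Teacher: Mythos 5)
Your proof is correct and follows essentially the same argument as the paper: write $[E]=[A_i]+([E]-[A_i])$ where $[E]-[A_i]$ is the class of a semistable object of the same phase, note that a positive real combination of the two classes lies in $\Ker Z\otimes\R$, and use negative definiteness there together with $Q\geq 0$ on semistable classes to force $Q([A_i],[E]-[A_i])\geq 0$, then expand $Q([E])$. Your two small variations---taking $\bigoplus_{j\neq i}A_j$ in place of the paper's quotient $E/A_1$, and working with the weak inequality so that the paper's case split (on whether $[A_1]-\lambda[E/A_1]$ vanishes) is absorbed into the final equality analysis via $Q(E)\neq 0$---are harmless, and in fact make the argument uniform in $i$ without having to reorder the Jordan--H\"older filtration.
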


\begin{proof}
	It is enough to prove that $Q(A_1)<Q(E)$. Since $E$ is $\sigma$-semistable, $E\in\PP(\phi)$ for some $\phi\in\R$. By definition, $A_1\in\PP(\phi)$, and hence also $E/A_1\in\PP(\phi)$. Therefore, by the support property, $Q(A_1)\geq 0$ and $Q(E/A_1)\geq 0$. Moreover, since $A_1$ and $E/A_1$ have the same phase, there exists a $\lambda\in\bR_{>0}$ such that $Z(A_1)-\lambda Z(E/A_1)=0$. Hence $[A_1]-\lambda[E/A_1]\in\Ker Z\otimes \R$.
	
	Let $Q$ also denote the associated symmetric bilinear form. Now assume $[A_1]-\lambda[E/A_1]\neq 0$ in $\Knum(X)\otimes \bR$. By the support property, $Q$ is negative definite on $\Knum(X)\otimes \bR$; hence
	\begin{equation*}
		0> Q([A_1]-\lambda[E/A_1]) = Q(A_1)-2\lambda Q(A_1,E/A_1)+\lambda^2 Q(E/A_1).
	\end{equation*}
	Moreover, $\lambda>0$ and $Q(A_1), Q(E/A_1)\geq 0$. It follows that $Q(A_1,E/A_1)>0$. Therefore,
	\begin{equation*}
		Q(E) = Q(A_1) + Q(E/A_1)+2Q(A_1,E/A_1)>Q(A_1).
	\end{equation*}
	Otherwise, if $[A_1]=\lambda[E/A_1]$, then $\mu\coloneqq 1/\lambda >0$ and 
	\begin{equation*}
		Q(E) = Q(A_1) + \mu(\mu + 2) Q(A_1).
	\end{equation*}
	If $Q(A_1)=0$, then $Q(E)=0$, so we have a contradiction. Hence $Q(A_1)>0$, so $Q(E)> Q(A_1)$.
\end{proof}

\begin{lemma}[\textit{cf.} {\cite[Lemma 6.1]{bayerShortProofDeformation2019}}]\label{lem: Q neg def and E violates Q implies JH factor violates Q}
	Let $\sigma=(Z,\PP)\in\Stab(X)$, and let $Q$ be a quadratic form which is negative definite on $\Ker Z\otimes \R$. Suppose $E\in\DbX$ is strictly $\sigma$-semistable, and let $A_1,\ldots,A_m$ be the Jordan--H\"older factors of\, $E$. If $Q(E)<0$, then for some $1\leq j\leq m$, we have $Q(A_j)<0$.
\end{lemma}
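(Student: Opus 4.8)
The plan is to argue by contradiction: assume $Q(A_j)\geq 0$ for every $1\leq j\leq m$ and deduce $Q(E)\geq 0$, which contradicts the hypothesis $Q(E)<0$. The key structural input is that, by definition, the Jordan--H\"older factors $A_1,\dots,A_m$ of the strictly $\sigma$-semistable object $E$ all lie in the same slice $\PP(\phi)$ as $E$; hence their central charges are positive real multiples of a common value $e^{i\pi\phi}$. Writing $Z(A_j)=m_j e^{i\pi\phi}$ with $m_j\in\R_{>0}$, I would observe that for each pair $i\neq j$ the class $v_{ij}\coloneqq m_j[A_i]-m_i[A_j]$ satisfies $Z(v_{ij})=0$, so $v_{ij}\in\Ker Z\otimes\R$.

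Next I would run exactly the bilinear-form bookkeeping used in the proof of \cref{lem: Q smaller on JH factors}, now exploiting that $Q$ is negative definite on $\Ker Z\otimes\R$. Letting $Q$ also denote the associated symmetric bilinear form: if $v_{ij}\neq 0$ then $0>Q(v_{ij})=m_j^2 Q(A_i)-2m_i m_j Q(A_i,A_j)+m_i^2 Q(A_j)$, and since $Q(A_i),Q(A_j)\geq 0$ and $m_i,m_j>0$ this forces $Q(A_i,A_j)>0$; if instead $v_{ij}=0$ then $[A_i]=(m_i/m_j)[A_j]$ in $\Knum(X)\otimes\R$, whence $Q(A_i,A_j)=(m_i/m_j)Q(A_j)\geq 0$. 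In both cases $Q(A_i,A_j)\geq 0$.

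Finally, since $[E]=\sum_{i=1}^m[A_i]$ in $\Knum(X)$, expanding the quadratic form gives $Q(E)=\sum_{i=1}^m Q(A_i)+2\sum_{1\leq i<j\leq m}Q(A_i,A_j)\geq 0$, contradicting $Q(E)<0$. Therefore $Q(A_j)<0$ for some $j$.

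I do not expect a genuine obstacle here; the only point requiring care is using the correct notion of Jordan--H\"older factor (each $A_j$ is $\sigma$-semistable of the same phase as $E$, so the $Z(A_j)$ are collinear with strictly positive coefficients), since this is precisely what places the differences $v_{ij}$ in $\Ker Z\otimes\R$. After that, the argument is the same cross-term estimate as in \cref{lem: Q smaller on JH factors}, run under the weaker hypothesis that $Q$ is negative definite only on $\Ker Z\otimes\R$ rather than satisfying the full support property; I would present the two proofs in parallel to emphasise this.
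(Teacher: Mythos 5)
Your argument is correct, and it rests on the same key mechanism as the paper's proof: two semistable classes of equal phase have collinear central charges, so a suitable real combination lies in $\Ker Z\otimes\R$, and negative definiteness of $Q$ there forces the cross term $Q(A_i,A_j)$ to be nonnegative (indeed positive in the nondegenerate case). The difference is purely in the bookkeeping. The paper peels off one factor at a time: assuming $Q(A_1),Q(E/A_1)\geq 0$ it derives $Q(E)>0$, concludes that either $Q(A_1)<0$ or $Q(E/A_1)<0$, and in the latter case recurses on $E/A_1$ with the next factor, terminating because there are finitely many Jordan--H\"older factors. You instead expand $[E]=\sum_i[A_i]$ in one shot and control all pairwise cross terms $Q(A_i,A_j)$ simultaneously via the classes $m_j[A_i]-m_i[A_j]\in\Ker Z\otimes\R$, obtaining $Q(E)\geq 0$ directly. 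Your version is marginally cleaner in two respects: it avoids the recursion (and the intermediate quotients $E/A_1, \dots$, which are only semistable, not stable), and it treats the degenerate case $[A_i]$ proportional to $[A_j]$ explicitly, where only $Q(A_i,A_j)\geq 0$ follows --- a case the paper's phrasing (``it follows that $Q(A_1,E/A_1)>0$'') glosses over, though the contradiction with the strict inequality $Q(E)<0$ survives either way. Both proofs use only negative definiteness on $\Ker Z\otimes\R$, consistent with the lemma's hypotheses, so there is no gap in either.
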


\begin{proof}
	Assume towards a contradiction that $Q(A_1),Q(E/A_1)\geq 0$. Let $Q$ also denote the associated symmetric bilinear form. By the same argument as in the proof of Lemma~\ref{lem: Q smaller on JH factors}, it follows that $Q(A,E/A_1)>0$. Therefore,
	\begin{equation*}
		Q(E) = Q(A_1) + Q(E/A_1)+2Q(A_1,E/A_1) >0,
	\end{equation*}
	so we have a contradiction. Hence either $Q(A_1)<0$ and we are done, or $Q(E/A_1)<0$. If $Q(E/A_1)<0$, we can repeat the argument with $E/A_1$ and $A_2$ instead of $E$ and $A_1$. There are finitely many Jordan--H\"older factors, so this process terminates. Therefore, $Q(A_j)<0$ for some $1\leq j\leq n$. 
\end{proof}

\begin{lemma}\label{lem: special values of alpha and beta satisfy new support property}
	Let $X$ be a surface. Fix classes $(H,B)\in \Amp_\R(X)\times\NS_\R(X)$, and take $\alpha_0,\beta_0\in\R$ such that ${\alpha_0>\Phi_{X,H,B}(\beta_0)}$. Choose  $\delta,\varepsilon>0$ as in Proposition~{\rm\ref{prop: final quadratic form}}. Fix an $\alpha_1\in\R$ with $\alpha_1>\max\big\{\alpha_0, \frac{1}{2}\big[\big(\beta_0 - \frac{H\ldot B}{H^2}\big)^2-\frac{B^2}{H^2}\big]\big\}$. Assume $E\in\DbX$ is $\sigma_{H,B,\alpha_1,\beta_0}$-semistable. Then it follows that $Q^{\delta,\varepsilon}_{H,B,\alpha_0,\beta_0}([E])\geq 0$. In particular, $\sigma_{H,B,\alpha_1,\beta_0}$ satisfies the support property with respect to $Q^{\delta,\varepsilon}_{H,B,\alpha_0,\beta_0}$.
\end{lemma}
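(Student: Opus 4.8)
The plan is to deform $\sigma_{H,B,\alpha_1,\beta_0}$ towards the large-volume limit along the ray in which only $\alpha$ grows, to invoke \cref{prop: Q non-neg at large volume limit} at that limit, and to propagate the resulting inequality backwards along the ray using the behaviour of $Q^{\delta,\varepsilon}_{H,B,\alpha_0,\beta_0}$ under passage to Jordan--Hölder factors.

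First I would record an auxiliary support property valid along the whole ray. Since $\alpha_1$ lies in the BG range, \cref{prop: special values of alpha and beta give a geometric stability condition} identifies $\sigma_{H,B,\alpha_1,\beta_0}$ with $\sigma_{a_1H,\,B+bH}$ up to the $\GLcov$-action, where $b=\beta_0-\frac{H\ldot B}{H^2}$ and $a_1=\sqrt{2\alpha_1-b^2+\frac{B^2}{H^2}}>0$; note that $\Coh^{H,\beta_0}(X)=\Coh^{H,\,B+bH}(X)$ because both are determined by the sign of $H\ldot\ch_1-\beta_0H^2\ch_0$. By \cref{thm: MS17 stability conditions exist} and the argument of \cref{lem: MS support property for large a}, run with base class $a_1H$ in place of $H$, there are rational classes $(H',B'')$ near $(a_1H,\,B+bH)$ such that $\Delta:=\MSQuadratic{H'}{B''}$ gives the support property for $\sigma_{aH,\,B+bH}$ for every $a\ge a_1$ and is negative definite on $\Ker Z_{aH,\,B+bH}=\Ker Z_{H,B,\alpha,\beta_0}$ for all corresponding $\alpha\ge\alpha_1$. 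Since the $\GLcov$-action preserves semistable objects and kernels of central charges, $\sigma_{H,B,\alpha_1,\beta_0}$ satisfies the support property with respect to $\Delta$; applying \cref{cor: defo property for ImZ constant} to the ray $\alpha\mapsto Z_{H,B,\alpha,\beta_0}$, which has $\Im Z$ constant and stays in the $\Delta$-region, and using uniqueness of the lift, every $\sigma_{H,B,\alpha,\beta_0}$ with $\alpha\ge\alpha_1$ likewise satisfies the support property with respect to $\Delta$ and has heart $\Coh^{H,\beta_0}(X)$.

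Next I would run a descent. Write $Q:=Q^{\delta,\varepsilon}_{H,B,\alpha_0,\beta_0}$ and suppose for contradiction that some $\sigma_{H,B,\alpha_1,\beta_0}$-semistable $E$ has $Q([E])<0$; shifting, we may take $E\in\Coh^{H,\beta_0}(X)$. By \cref{lem: Q negative definite on kernel as alpha varies} the form $Q$ is negative definite on $\Ker Z_{H,B,\alpha,\beta_0}$ for all $\alpha\ge\alpha_0$, so \cref{lem: Q neg def and E violates Q implies JH factor violates Q} applies everywhere along the ray, and the support property with respect to $\Delta$ makes the set of semistable classes of bounded mass finite. Among all pairs $(\alpha,F)$ with $\alpha\ge\alpha_1$, $F\in\Coh^{H,\beta_0}(X)$ a $\sigma_{H,B,\alpha,\beta_0}$-semistable object, and $Q([F])<0$, I would choose one extremal for a suitable well-founded quantity built from the mass $|Z_{H,B,\alpha,\beta_0}(F)|$ and the wall structure; \cref{lem: Q neg def and E violates Q implies JH factor violates Q} then forces such an $F$ to be $\sigma_{H,B,\alpha,\beta_0}$-stable. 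Increasing $\alpha$, the object $F$ remains stable until a first value $\bar\alpha\in(\alpha,\infty]$. If $\bar\alpha=\infty$ then $F$ is $\sigma_{aH,\,B+bH}$-semistable for all $a\gg0$, so by \cref{lem: MS large volume limit} it has one of the three listed shapes, whence $Q([F])\ge0$ by \cref{prop: final quadratic form}(1),(2) and the computation in the proof of \cref{prop: Q non-neg at large volume limit}, which applies verbatim to the family $\sigma_{aH,\,B+bH}$ --- a contradiction. If $\bar\alpha<\infty$ then $F$ is strictly $\sigma_{H,B,\bar\alpha,\beta_0}$-semistable, and \cref{lem: Q smaller on JH factors} shows that each Jordan--Hölder factor of $F$ has strictly smaller (still negative) value of $Q$ and strictly smaller mass at $\bar\alpha$, contradicting the extremal choice. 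Finally, $Q([E])\ge0$ for all semistable $E$ together with \cref{lem: Q negative definite on kernel as alpha varies} yields the support property for $\sigma_{H,B,\alpha_1,\beta_0}$ with respect to $Q$.

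The hard part will be the bookkeeping in the case $\bar\alpha<\infty$: the Jordan--Hölder factors destabilising $F$ at $\bar\alpha$ are semistable at $\bar\alpha$, not at $\alpha_1$, so the minimisation must genuinely be carried out over the whole ray, and one must pin down a quantity --- combining mass, the value of $Q$, and local finiteness of walls along the ray (all controlled by the $\Delta$-support property of the first step) --- that strictly decreases under passage to these factors, thereby forcing the descent to terminate at the large-volume limit, where \cref{prop: Q non-neg at large volume limit} closes the argument. This is the only genuinely technical point; everything else is an assembly of \cref{prop: special values of alpha and beta give a geometric stability condition}, \cref{lem: MS support property for large a}, \cref{cor: defo property for ImZ constant}, and \cref{lem: MS large volume limit,prop: final quadratic form,lem: Q smaller on JH factors,lem: Q neg def and E violates Q implies JH factor violates Q}.
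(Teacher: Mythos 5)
Your overall skeleton is the paper's: first transport the problem to the family $\sigma_{a H,\,B+bH}$ via \cref{prop: special values of alpha and beta give a geometric stability condition} and get an auxiliary support property $\Delta\coloneqq\Delta^{C_{H'}}_{H',B'}$ along the whole ray from \cref{lem: MS support property for large a}, then run a descent through Jordan--H\"older factors at successive walls, aiming to land in the large-volume regime where \cref{prop: Q non-neg at large volume limit} forces $Q^{\delta,\varepsilon}_{H,B,\alpha_0,\beta_0}\geq 0$. But the step you explicitly defer --- exhibiting a well-founded quantity that strictly decreases when you replace an object by a destabilising Jordan--H\"older factor at a later wall --- is not a bookkeeping detail: it is the actual content of the lemma, and the candidates you name do not supply it. The mass $|Z_{H,B,\alpha,\beta_0}(F)|$ is real-valued and is measured at a different $\alpha$ at each step, so it gives no well-founded order; the $Q$-value of the factors is not controlled, because \cref{lem: Q smaller on JH factors} cannot be applied to $Q=Q^{\delta,\varepsilon}_{H,B,\alpha_0,\beta_0}$ (that lemma presupposes that the quadratic form already gives the support property for the stability condition at the wall, which is exactly what is being proved, and its proof uses $Q\geq 0$ on the factors, incompatible with $Q(F)<0$) --- the only statement available for $Q$ is \cref{lem: Q neg def and E violates Q implies JH factor violates Q}, which produces \emph{some} factor with $Q<0$ and no monotonicity; and ``local finiteness of walls'' does not close the argument either, since the class changes at every step and the ray $\alpha\in[\alpha_1,\infty)$ is not compact, so the standard finiteness of semistable classes of bounded mass does not apply.

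The paper's missing ingredient is an integrality trick: because $(H',B')$ can be taken rational, $\Delta^{C_{H'}}_{H',B'}$ may be rescaled to take values in $\Z$ on $\Knum(X)$. Since $\Delta$ \emph{does} give the support property for every $\sigma_{H,B,\alpha,\beta_0}$ with $\alpha\geq\alpha_1$, \cref{lem: Q smaller on JH factors} applied to $\Delta$ (not to $Q$) shows that at each wall the chosen Jordan--H\"older factor $E_{k+1}$ satisfies $0\leq\Delta([E_{k+1}])<\Delta([E_k])$, while \cref{lem: Q neg def and E violates Q implies JH factor violates Q} lets one keep $Q([E_k])<0$ throughout. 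A strictly decreasing sequence in $\Z_{\geq 0}$ is impossible, so the descent terminates with an object that stays stable for $\alpha\gg 0$, where \cref{prop: Q non-neg at large volume limit} yields $Q\geq 0$, contradicting $Q<0$; negative definiteness on the kernel is then \cref{lem: Q negative definite on kernel as alpha varies}. Without this (or an equivalent) discrete invariant your descent has no reason to terminate, so as written the proposal has a genuine gap at its central step.
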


\begin{proof}
	To ease notation, let $Q\coloneqq Q^{\delta,\varepsilon}_{H,B,\alpha_0,\beta_0}$. From \cref{prop: special values of alpha and beta give a geometric stability condition}, we know that for every $\alpha\geq \alpha_1$, the stability conditions $\sigma_{H,B,\alpha,\beta_0}$ and $\sigma_{a_\alpha H,B+bH}$ have the same heart when $b=\beta_0-\frac{H\ldot B}{H^2}$ and $a_\alpha=\sqrt{2\alpha -b^2+\frac{B^2}{H^2}}$.
	
	Moreover, by Lemma~\ref{lem: MS support property for large a}, there exist  $(H',B')\in \Amp_\Q(X)\times\NS_\Q(X)$ such that $\Delta^{C_{H'}}_{H',B'}$ gives the support property for $\sigma_{aH,B+bH}$ if $a\geq a_{\alpha_1}$. We may assume $\Delta^{C_{H'}}_{H',B'}\in\Z$ since it is true after rescaling by some integer. Furthermore, since $E$ is $\sigma_{a_{\alpha_1}H,B+bH}$-semistable, $\Delta^{C_{H'}}_{H',B'}([E])\in\Z_{\geq 0}$.
	
	If $E$ is $\sigma_{H,B,\alpha,\beta_0}$-stable for $\alpha\gg 0$, then by the definition of $a_\alpha$, the vector bundle $E$ is $\sigma_{aH,B}$-stable for $a\gg 0$. It then follows by \cref{prop: Q non-neg at large volume limit} that $Q([E])\geq 0$. Otherwise, there exists some $\alpha_2\geq\alpha_1$ such that $E$ is strictly $\sigma_{H,B,\alpha_2,\beta_0}$-semistable. Let $A_1,\ldots, A_m$ denote the Jordan--H\"older factors of $E$. Then by Lemma~\ref{lem: Q smaller on JH factors}, we have $\Delta^{C_{H'}}_{H',B'}([A_i])<\Delta^{C_{H'}}_{H',B'}([E])$ for all $1\leq i \leq m$. Each $A_i$ is $\sigma_{H,B,\alpha_2,\beta_0}$-stable, so $\Delta^{C_{H'}}_{H',B'}([A_i])\geq 0$ for all $1\leq i\leq m$.
	
	Assume towards a contradiction that $Q([E])<0$. From Lemma~\ref{lem: Q neg def and E violates Q implies JH factor violates Q}, we have $Q([A_j])<0$ for some $1\leq j \leq m$. Let $E_2\coloneqq A_j$. We can now repeat this process for $E_2$ in place of $E_1\coloneqq E$, and so on. This gives a sequence $E_1, E_2, E_3, \ldots, E_k, \ldots$ and $\alpha_1\leq\alpha_2<\alpha_3<\cdots <\alpha_k \cdots$ such that $E_k\in\DbX$ is $\sigma_{H,B,\alpha_k,\beta_0}$-semistable, $Q(E_k)<0$, and $0\leq\Delta^{C_{H'}}_{H',B'}([E_{k+1}])<\Delta^{C_{H'}}_{H',B'}([E_k])$ for all $k\geq 1$. But $\Delta^{C_{H'}}_{H',B'}([E_k])\in\Z_{\geq 0}$ for all $k$, so no such sequence can exist. Hence we have a contradiction.
	
	Finally, by Lemma~\ref{lem: Q negative definite on kernel as alpha varies}, the quadratic form $Q^{\delta,\varepsilon}_{H,B,\alpha_0,\beta_0}$ is negative definite on $\Ker Z_{H,B,\alpha_1,\beta_0}\otimes\R$.
\end{proof}

We are finally ready to apply Corollary~\ref{cor: defo property for ImZ constant}.

\begin{proposition}\label{prop: LP stability conditions satisfy support property}
	Let $X$ be a surface. Let $(H,B)\in\Amp_\R(X)\times\NS_\R(X)$, and let $\alpha_0,\beta_0\in\R$ be such that $\alpha_0>\Phi_{X,H,B}(\beta_0)$. Then $\sigma_{H,B,\alpha,\beta_0}\in\Gstab{N}(X)$ for all $\alpha\geq\alpha_0$.
\end{proposition}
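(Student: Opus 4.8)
The plan is to obtain $\sigma_{H,B,\alpha,\beta_0}$ by deforming a stability condition in the Bogomolov--Gieseker range along the ray on which only $\alpha$ varies, so that $\Im Z$ stays fixed, and then apply \cref{cor: defo property for ImZ constant}. Fix $\alpha\geq\alpha_0$, choose $\delta,\varepsilon>0$ as in \cref{prop: final quadratic form}, and set $Q\coloneqq Q^{\delta,\varepsilon}_{H,B,\alpha_0,\beta_0}$. First I would pick $\alpha_1>\max\{\alpha,\alpha_0,\tfrac12[(\beta_0-\tfrac{H\ldot B}{H^2})^2-\tfrac{B^2}{H^2}]\}$. Then \cref{prop: special values of alpha and beta give a geometric stability condition} gives $\sigma_{H,B,\alpha_1,\beta_0}\in\Gstab{N}(X)$, \cref{lem: special values of alpha and beta satisfy new support property} shows it satisfies the support property with respect to $Q$, and \cref{lem: Q negative definite on kernel as alpha varies} shows $Q$ is negative definite on $\Ker Z_{H,B,\alpha',\beta_0}\otimes\R$ for every $\alpha'\geq\alpha_0$.

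Next I would run the deformation. Take $Z_t\coloneqq Z_{H,B,\alpha(t),\beta_0}$ for $t\in[0,1]$, with $\alpha(t)$ the linear interpolation from $\alpha_1$ at $t=1$ to $\alpha$ at $t=0$, so $\alpha(t)\in[\alpha,\alpha_1]\subset[\alpha_0,\infty)$. Since $\Im Z_{H,B,\alpha(t),\beta_0}=H\ldot\ch_1-\beta_0H^2\ch_0$ does not involve $\alpha(t)$, the imaginary part is constant along the path, and by the previous step every $\Ker Z_t$ is negative definite with respect to $Q$; hence the path lies entirely in the connected component $U$ of \cref{prop: deformation property} containing $Z_1$. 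Applying \cref{cor: defo property for ImZ constant} with starting point $\sigma_{H,B,\alpha_1,\beta_0}$ at $t_0=1$ yields a unique lift $\sigma_t=(\QQ_t,Z_t)$ in $\Stab(X)$ along which $\QQ_t(0,1]$ is constant, equal to the heart $\Coh^{H,\beta_0}(X)$ of $\sigma_{H,B,\alpha_1,\beta_0}$, with each $\sigma_t$ satisfying the support property with respect to $Q$. In particular $\sigma_0$ is a numerical Bridgeland stability condition with heart $\Coh^{H,\beta_0}(X)$ and central charge $Z_{H,B,\alpha,\beta_0}$, so by \cref{Bridgelandheart} it coincides with $\sigma_{H,B,\alpha,\beta_0}$.

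Finally I would verify geometricity. For every $x\in X$ one has $Z_{H,B,\alpha,\beta_0}([\OO_x])=-\ch_2(\OO_x)=-1$, and $\OO_x$, being a torsion sheaf, lies in $\Coh^{H,\beta_0}(X)$, so it has phase $1$ for $\sigma_{H,B,\alpha,\beta_0}$. It then suffices to show $\OO_x$ is a simple object of the abelian category $\Coh^{H,\beta_0}(X)$. For a subobject $A\hookrightarrow\OO_x$ in the heart with cokernel $C$, the values $\Im Z_{H,B,\alpha,\beta_0}(A)$ and $\Im Z_{H,B,\alpha,\beta_0}(C)$ are non-negative and sum to $\Im Z_{H,B,\alpha,\beta_0}(\OO_x)=0$, so both vanish; the long exact cohomology sequence then gives $\HH^{-1}(A)=0$, so $A$ is a sheaf, the vanishing of its imaginary part forces $A$ to be torsion, and the torsion-free sheaf $\HH^{-1}(C)$ injects into the torsion sheaf $A$ and hence is zero. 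Thus $A\hookrightarrow\OO_x$ is an inclusion of sheaves and $A\in\{0,\OO_x\}$, whence $\OO_x$ is $\sigma_{H,B,\alpha,\beta_0}$-stable and $\sigma_{H,B,\alpha,\beta_0}\in\Gstab{N}(X)$.

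I expect the crux to be ensuring the deformation path never leaves the locus $U$ on which $Q$ controls the support property; this is precisely what \cref{lem: Q negative definite on kernel as alpha varies} guarantees, and it is the reason $Q$ had to be assembled from $Q_{H,B,\alpha_0,\beta_0,\delta}$ and $Q_{BG}$ — the raw twisted Le Potier inequality alone is only negative semi-definite on $\Ker Z_{H,B,\alpha_0,\beta_0}$. The remaining bookkeeping, in particular identifying the abstract lift produced by \cref{cor: defo property for ImZ constant} with the explicit pair $(\Coh^{H,\beta_0}(X),Z_{H,B,\alpha,\beta_0})$ via \cref{Bridgelandheart}, is routine.
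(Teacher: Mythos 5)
Your proposal is correct and takes essentially the same route as the paper: pick $\alpha_1$ in the BG range, use \cref{prop: special values of alpha and beta give a geometric stability condition}, \cref{lem: special values of alpha and beta satisfy new support property} and \cref{lem: Q negative definite on kernel as alpha varies} to get the support property and negative definiteness of $Q^{\delta,\varepsilon}_{H,B,\alpha_0,\beta_0}$ along the $\alpha$-ray, and then deform with $\Im Z$ constant via \cref{cor: defo property for ImZ constant}. The extra bookkeeping you supply (identifying the lifted stability condition with $\sigma_{H,B,\alpha,\beta_0}$ via \cref{Bridgelandheart}, and checking that $\OO_x$ is simple in the fixed heart $\Coh^{H,\beta_0}(X)$ so that geometricity persists) is exactly what the paper leaves implicit, and it is correct.
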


\begin{proof}
	Fix $\alpha_1\in\R$ such that $\alpha_1 >\max\big\{\alpha_0, \frac{1}{2}\big[\big(\beta_0 - \frac{H\ldot B}{H^2}\big)^2-\frac{B^2}{H^2}\big]\big\}$. By \cref{prop: special values of alpha and beta give a geometric stability condition}, it follows that $\sigma_{H,B,\alpha_1,\beta_0}\in\Gstab{N}(X)$. Choose  $\delta,\varepsilon>0$ as in Proposition~\ref{prop: final quadratic form}; then by Lemma~\ref{lem: special values of alpha and beta satisfy new support property}, the stability condition $\sigma_{H,B,\alpha_1,\beta_0}$ satisfies the support property with respect to $Q^{\delta,\varepsilon}_{H,B,\alpha_0,\beta_0}$.
	
	By Lemma~\ref{lem: Q negative definite on kernel as alpha varies}, the quadratic form $Q^{\delta,\varepsilon}_{H,B,\alpha_0,\beta_0}$ is negative definite on $\Ker Z_{H,B,\alpha,\beta_0}$ for all $\alpha\geq\alpha_0$. Moreover, $\Im Z_{H,B,\alpha,\beta_0}$ remains constant as $\alpha$ varies. Therefore, the result follows by Corollary~\ref{cor: defo property for ImZ constant}.
\end{proof}

\begin{proof}[Proof of Theorem~\ref{thm: LP gives precise control over set of geometric stability conditions}]
	By Theorem~\ref{thm: geo stab determined by Z}, for every $\sigma\in\Gstab{}(X)$, there exists a unique $g\in\C$ such that $g^\ast \sigma\in\Gstab{N}(X)$. Hence it is enough to show that $\Gstab{N}(X)\cong\UU$, where
	\begin{equation*}
		\UU=\left\{(H,B,\alpha,\beta)\in\Amp_\R(X)\times\NS_\R(X)\times\R^2 : \alpha>\Phi_{X,H,B}(\beta)\right\}. 
	\end{equation*}
	This follows from  Propositions~\ref{prop: inj local homeo},~\ref{prop: geometric implies twisted LP condition}, and~\ref{prop: LP stability conditions satisfy support property}.
\end{proof}

\subsection{Applications of Theorem~\ref{thm: LP gives precise control over set of geometric stability conditions}}

\begin{theorem}\label{thm: geometric open set connected for surfaces}
	Let $X$ be a surface. Then $\Gstab{}(X)$ is connected.
\end{theorem}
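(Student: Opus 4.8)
The plan is to deduce this directly from the explicit model for the geometric locus obtained in Theorem \ref{thm: LP gives precise control over set of geometric stability conditions}. That result gives a homeomorphism
\[
	\Gstab{}(X)\;\cong\;\C\times\UU,\qquad \UU=\left\{(H,B,\alpha,\beta)\in\Amp_\R(X)\times\NS_\R(X)\times\R^2:\alpha>\Phi_{X,H,B}(\beta)\right\},
\]
and since $\C$ is path-connected it suffices to show that $\UU$ is path-connected (which then also yields the stated connectedness).

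First I would pass to the ``BG range''
\[
	\VV\coloneqq\left\{(H,B,\alpha,\beta)\in\Amp_\R(X)\times\NS_\R(X)\times\R^2:\alpha>M(H,B,\beta)\right\},\qquad M(H,B,\beta)\coloneqq\frac{1}{2}\left[\left(\beta-\frac{H\ldot B}{H^2}\right)^2-\frac{B^2}{H^2}\right],
\]
which is contained in $\UU$ because $\Phi_{X,H,B}\le M$ by Lemma \ref{lem: LP function well defined and bounded} (the case $n=2$). Here $M$ is a genuine continuous function on $\Amp_\R(X)\times\NS_\R(X)\times\R$ since $H^2>0$ for every ample $\R$-class, and its domain, being a product of convex sets, is path-connected. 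The next step is the routine observation that the strict epigraph $\VV$ of a continuous real-valued function over a path-connected base is itself path-connected: given two points of $\VV$, join the images of their ``feet'' by a path in the base, use that $M$ is bounded along this (compact) path, raise both points vertically to a common height strictly above that bound, slide horizontally at that height along the lifted path, and descend.

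It then remains to connect an arbitrary point of $\UU$ to $\VV$. Given $(H,B,\alpha,\beta)\in\UU$, the upward ray $\{(H,B,t,\beta):t\ge\alpha\}$ lies entirely in $\UU$ because $t\ge\alpha>\Phi_{X,H,B}(\beta)$, and for $t$ large enough one has $t>M(H,B,\beta)$, so this ray enters $\VV$. Concatenating with the previous step shows that any two points of $\UU$ are joined by a path in $\UU$ (go up into $\VV$, connect inside the path-connected set $\VV\subseteq\UU$, come back down), so $\UU$ is path-connected and hence so is $\C\times\UU\cong\Gstab{}(X)$.

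The only delicate point — and the reason for routing through $\VV$ rather than arguing with $\UU$ directly — is that the Le Potier function $\Phi_{X,H,B}$ is merely upper semi-continuous in $\beta$, with no joint continuity in $(H,B,\beta)$ established here, so the ``epigraph of a continuous function'' argument cannot be applied to $\UU$ itself. Working over the BG range, where the bound is the manifestly continuous quadratic $M$, removes this difficulty, and I expect no further obstacle.
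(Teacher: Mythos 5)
Your proposal is correct and follows the same route as the paper, which states this result as a direct consequence of the homeomorphism $\Gstab{}(X)\cong\C\times\UU$ from Theorem \ref{thm: LP gives precise control over set of geometric stability conditions}. The paper leaves the path-connectedness of $\UU$ implicit; your argument of first moving vertically into the BG range, where the bound is the continuous quadratic $M$ rather than the merely upper semi-continuous $\Phi_{X,H,B}$, and then using the epigraph argument over the convex base $\Amp_\R(X)\times\NS_\R(X)\times\R$, is a correct and careful way to supply that detail.
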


\begin{remark}\label{rem: Le Potier controls boundary of the geometric chamber}
	There are precisely two types of walls of the geometric chamber for K3 surfaces and rational surfaces. They  correspond either to walls of the nef cone (see \cite[Lemma 7.2]{tramelBridgelandStabilityConditions2022} for a construction) or to discontinuities of the Le Potier function. For K3 surfaces, the second case comes from the existence of spherical bundles, which is explained in \cite[Proposition 2.7]{yoshiokaStabilityFourierMukai2009}. For rational surfaces, the discontinuities correspond to exceptional bundles. This is explained for $\Tot(\OO_{\P^2}(-3))$ in \cite[Section~5]{bayerSpaceStabilityConditions2011}, and the arguments generalise to any rational surface.
	
	It seems reasonable to expect this to hold for all surfaces. The description of the geometric chamber given by Theorem~\ref{thm: LP gives precise control over set of geometric stability conditions} also supports this. Indeed, a wall where $\OO_x$ is destabilised corresponds locally to the boundary of $\UU$ being linear. This boundary is exactly where one of the following holds: 
	\begin{enumerate}
	\item $H$ becomes nef and not ample. We expect that this only gives rise to walls in the following cases:
		\begin{itemize}
			\item $H$ is big and nef. Then $H$ induces a contraction of rational curves. This can be used to construct non-geometric stability conditions; see \cite[Lemma 7.2]{tramelBridgelandStabilityConditions2022}.
			\item $H$ is nef and induces a contraction to a curve whose fibres are rational curves. In this case, we expect a wall. For example, let $f\colon S \rightarrow C$ be a $\P^1$-bundle over a curve. We expect the existence of stability conditions on $S$ such that all skyscraper sheaves are strictly semistable and the sheaves are destabilised by
			\begin{equation*}
				\OO_{f^{-1}(x)}\longrightarrow \OO_x \longrightarrow \OO_{f^{-1}(x)}(-1)[1] \longrightarrow \OO_{f^{-1}(x)}[1].
			\end{equation*}
		\end{itemize}
	\item If $\Phi_{X,H,B}$ is discontinuous at $\beta$, then $\Gstab{N}(X)$ locally has a linear boundary. We expect this to give rise to non-geometric stability conditions.
	\item If $\alpha = \Phi_{X,H,B}(\beta)$, then we expect no boundary.
	\end{enumerate}
\end{remark}

\begin{corollary}\label{cor: LP not dcts means geometric walls come from a nef divisor}
	Let $X$ be a surface. If\, $\Phi_{X,H,B}$ has no discontinuities and no linear pieces for any classes $(H,B)\in\Amp_\R(X)\times\NS_\R(X)$, then any wall of\, $\Gstab{}(X)$ where $\OO_x$ is destabilised corresponds to a class $H'\in\NS_\R(X)$ which is nef and not ample.
\end{corollary}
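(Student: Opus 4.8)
The plan is to run the statement through the homeomorphism of Theorem \ref{thm: LP gives precise control over set of geometric stability conditions}, which, after normalising central charges by the $\C$-action of Remark \ref{rem: GLcov action on Stab}, identifies $\Gstab{N}(X)$ with $\UU=\{(H,B,\alpha,\beta)\in \Amp_\R(X)\times\NS_\R(X)\times\R^2 : \alpha>\Phi_{X,H,B}(\beta)\}$ via $\sigma_{H,B,\alpha,\beta}\mapsto(H,B,\alpha,\beta)$, and then to locate a destabilising wall inside the boundary of the region $\UU$.

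First I would set up the wall. By a \emph{wall of $\Gstab{}(X)$ where $\OO_x$ is destabilised} I mean a point $\sigma_0$ of the topological boundary $\partial\Gstab{}(X)\subset\Stab(X)$ at which every skyscraper sheaf $\OO_x$ is semistable of one common phase while at least one $\OO_x$ is strictly semistable (the common phase exists along any geometric path approaching $\sigma_0$ by Proposition \ref{prop: numerical geometric implies all skyscraper sheaves have the same phase}), and by "the wall" the codimension-one locus of such $\sigma_0$ through a generic such point. Choosing a path $\sigma_t\to\sigma_0$ in $\Gstab{}(X)$ and normalising by $\C$ so that $Z_{\sigma_t}(\OO_x)=-1$ and $\OO_x\in\PP_{\sigma_t}(1)$ for all $t$, one has $\sigma_t=\sigma_{H_t,B_t,\alpha_t,\beta_t}$ with $(H_t,B_t,\alpha_t,\beta_t)\in\UU$; by continuity of the forgetful map $\ZZ$ and Step 1 of the proof of Theorem \ref{thm: geo stab determined by Z}, in the limit $Z_{\sigma_0}=Z_{H_0,B_0,\alpha_0,\beta_0}$ with $(H_t,B_t,\alpha_t,\beta_t)\to(H_0,B_0,\alpha_0,\beta_0)$, so $H_0=\lim H_t$ is a nef class and, since $\sigma_0\notin\Gstab{}(X)$, Theorem \ref{thm: LP gives precise control over set of geometric stability conditions} forces $(H_0,B_0,\alpha_0,\beta_0)\notin\UU$.

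Now the dichotomy. If $H_0$ is not ample, then $H':=H_0\in\NS_\R(X)$ is nef and not ample and is the class the wall "corresponds to", as required. So suppose for a contradiction that $H_0$ is ample. At the generic wall point, $\OO_x$ has a two-term Jordan--H\"older filtration with factors $E$ and $\OO_x/E$ of phase $1$, and — crucially — the tilted heart $\Coh^{H_0,\beta_0}(X)$, together with the inclusion $E\hookrightarrow\OO_x$ it witnesses, depends only on $H_0$ and $\beta_0$, not on $\alpha$; likewise $\Im Z_{H_0,B_0,\alpha,\beta_0}(E)=H_0\ldot\ch_1(E)-\beta_0H_0^2\ch_0(E)$ is independent of $\alpha$ and vanishes, as it does at $\sigma_0$. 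Hence along the whole line $\alpha\mapsto\sigma_{H_0,B_0,\alpha,\beta_0}$, the object $E$ remains a nonzero proper subobject of $\OO_x$ of phase $1$, so $\OO_x$ is never stable. But by Theorem \ref{thm: LP gives precise control over set of geometric stability conditions}, $\sigma_{H_0,B_0,\alpha,\beta_0}\in\Gstab{N}(X)$ as soon as $\alpha>\Phi_{X,H_0,B_0}(\beta_0)$ (a nonempty range), so $\OO_x$ \emph{is} stable for such $\alpha$ — a contradiction. Therefore $H_0$ is not ample, which proves the statement. Phrased in terms of the region $\UU$: the only pieces of $\partial\UU$ that can support a destabilising wall are the "vertical" ones coming from $H$ leaving the ample cone, since a destabilising wall is cut out by $\Im Z(E)=0$ and hence contains whole lines in the $\alpha$-direction, whereas the piece $\{\alpha=\Phi_{X,H,B}(\beta)\}$ of $\partial\UU$ contains none when $\Phi_{X,H,B}$ is continuous with no linear pieces.

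The main obstacle is this second case, and specifically two points that need care: (a) justifying that the limiting stability condition $\sigma_0$ really has heart $\Coh^{H_0,\beta_0}(X)$ when $H_0$ is ample — i.e. that the tilts $\Coh^{H_t,\beta_t}(X)$ converge to $\Coh^{H_0,\beta_0}(X)$ — so that the destabilising subobject $E$ of $\OO_x$ lies in a heart that does not depend on $\alpha$; and (b) the precise identification, near a point with $H$ ample, of $\partial\Gstab{N}(X)$ with $\partial\UU=\{\alpha=\Phi_{X,H,B}(\beta)\}$, which implicitly requires a joint regularity of $\Phi_{X,H,B}(\beta)$ in $(H,B,\beta)$ that is the analytic counterpart of the hypotheses "no discontinuities" and "no linear pieces". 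Once these are settled, the argument is essentially bookkeeping with the homeomorphism of Theorem \ref{thm: LP gives precise control over set of geometric stability conditions} and the $\C$-normalisation of Theorem \ref{thm: geo stab determined by Z}, together with the single structural observation that the tilted heart $\Coh^{H,\beta}(X)$, and hence any destabilisation of $\OO_x$ realised inside it, is blind to the parameter $\alpha$.
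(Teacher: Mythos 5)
Your framing — normalise by the $\C$-action, transport through the homeomorphism of \cref{thm: LP gives precise control over set of geometric stability conditions}, and locate the wall in the boundary of $\UU$ — is indeed how the paper intends the corollary to be read (it is stated without a separate proof, as a consequence of \cref{thm: LP gives precise control over set of geometric stability conditions} and \cref{rem: Le Potier controls boundary of the geometric chamber}). But the contradiction you run in the case ``$H_0$ ample'' has a genuine gap, and it is exactly the obstacle (a) that you flag without resolving: you need the wall stability condition $\sigma_0$ to have heart $\Coh^{H_0,\beta_0}(X)$, so that the destabilising inclusion $E\hookrightarrow\OO_x$ persists for every $\alpha$. \cref{thm: geo stab determined by Z} identifies the heart only for \emph{geometric} stability conditions, and $\sigma_0$ is precisely not geometric; in general the heart at a boundary point is not the tilted heart. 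A structural warning sign is that your central contradiction never uses the hypotheses ``no discontinuities and no linear pieces'', so if it were valid it would show that no destabilising wall can occur at an ample class on \emph{any} surface. That is false: on a K3 surface there are walls at ample $H_0$ where every $\OO_x$ acquires a spherical bundle $A$ as a stable factor \cite[Theorem 12.1]{bridgelandStabilityConditionsK32008}, and these exist exactly because $\Phi_{X,H_0,B_0}$ is discontinuous at $\beta_0=\mu_{H_0}(A)$. There one has $\Im Z_{H_0,B_0,\alpha_0,\beta_0}(A)=0$ with $\alpha_0<\nu_{H_0,B_0}(A)$, so $A\in\FF_{H_0,\beta_0}$ and $A[1]\in\Coh^{H_0,\beta_0}(X)$ has $Z_{H_0,B_0,\alpha_0,\beta_0}(A[1])\in\R_{>0}$: the pair $(\Coh^{H_0,\beta_0}(X),Z_{H_0,B_0,\alpha_0,\beta_0})$ is not even a stability condition, the wall heart (which contains $A$ itself as a factor of $\OO_x$) differs from the tilted heart, and for $\alpha>\Phi_{X,H_0,B_0}(\beta_0)$ the skyscrapers are stable with no contradiction. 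So the step ``$E$ remains a subobject of $\OO_x$ for all $\alpha$'' cannot be justified in the only case where it is needed.

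The hypotheses have to enter where your argument currently skips them. One way to complete the proof along the intended lines: first use continuity of $\beta\mapsto\Phi_{X,H,B}(\beta)$ (together with the joint semicontinuity in $(H,B)$ you raise in (b), e.g.\ by computing the limsup with slope-stable sheaves, which remain stable for nearby polarisations) to show that a boundary point of $\Gstab{N}(X)$ with $H_0$ ample must satisfy $\alpha_0=\Phi_{X,H_0,B_0}(\beta_0)$ — discontinuities are precisely what produce boundary points with $\alpha_0<\Phi_{X,H_0,B_0}(\beta_0)$, and those vertical segments are where the K3-type walls sit. Then compare shapes: near a generic wall point the wall is contained in the single hypersurface $\{\Im Z_{H,B,\alpha,\beta}([E])=0\}$ for a locally constant destabilising class $[E]$, a condition independent of $\alpha$ and $B$, whereas the graph $\{\alpha=\Phi_{X,H,B}(\beta)\}$ meets each line in the $\alpha$-direction in at most one point; a codimension count then contradicts the wall having real codimension one, and ``no linear pieces'' is what excludes walls along which the destabilising class varies inside a linear stretch of the graph. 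Your closing sentence about $\partial\UU$ gestures at exactly this argument, but the main proof you wrote does not implement it, so as it stands the proposal is incomplete.
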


\section{Further questions}\label{section: further questions}

Let $X$ be a variety. There are no examples in the literature where $\Stab(X)$ is known to be disconnected. It would be interesting to investigate the following examples.

\begin{question}
    Let $S$ be a Beauville-type or bielliptic surface. Is $\Stab(S)$ connected?
\end{question}

The surface $S$ has non-finite Albanese morphism, and $\Gstab{}(S)\subset\Stab(S)$ is a connected component by \cref{finite albanese surface quotient has connected component of geos}. If $\Stab(S)$ is connected, the following question would have a negative answer.

\begin{repquestion}{Question FLZ}[\textit{cf.} {\cite[Question 4.11]{fuStabilityManifoldsVarieties2022}}]
	Let $X$ be a variety whose Albanese morphism is not finite. Are there always non-geometric stability conditions on $\DbX$?
\end{repquestion}

\begin{question}
    Suppose $\DbX$ has a strong exceptional collection of vector bundles and a corresponding heart $\AA$ that can be used to construct stability conditions as in \cite[Section~4.2]{macriExamplesSpacesStability2007}. If $\OO_x\in\AA$, then does~$\OO_x$ correspond to a stable quiver representation?
\end{question}


\providecommand{\bysame}{\leavevmode\hbox to3em{\hrulefill}\thinspace}

\end{document}